\documentclass[a4paper] {article}[12pt]

\usepackage[top=3cm, bottom=3cm, left=2.2cm, right= 2.2cm]{geometry}

\makeatletter
\renewcommand*\l@section{\@dottedtocline{1}{1.5em}{2.3em}}
\makeatother

\usepackage{amsfonts}
\usepackage{amssymb}
\usepackage[T1]{fontenc}

\usepackage{tikz}
\usetikzlibrary{calc}

\usepackage{CJK}
\usepackage{amsmath}
 
\usepackage{amsfonts}
\usepackage{amssymb}
\usepackage{amsthm}
\usepackage{amssymb}
\usepackage{enumerate}
\usepackage[calc]{picture}
\usepackage[all,cmtip]{xy}

\usepackage[mathscr]{eucal}
\usepackage{eqlist}

\usepackage{color}
\usepackage{abstract} 
\usepackage[T1]{fontenc}
 
\setlength{\abovecaptionskip}{0pt}
\setlength{\belowcaptionskip}{0pt}

\theoremstyle{plain}
\newtheorem{theorem}{Theorem}
\newtheorem{proposition}[theorem]{Proposition}
\newtheorem{lemma}[theorem]{Lemma}

\newtheorem{example}[theorem]{Example}
\newtheorem{corollary}[theorem]{Corollary}

\theoremstyle{definition}

\usepackage{etoolbox}
\newtheoremstyle{myrem}
 {3pt}
 {3pt}
 {\normalsize}
 { }
 {\itshape}
 {:}
 { }
 {}

 \theoremstyle{myrem}
 \newtheorem{remark}{Remark}
 \appto\remark{\leftskip\parindent}
 \appto\remark{\rightskip\parindent}

\usepackage{amsmath}

\numberwithin{equation}{section}
\numberwithin{theorem}{section}

\begin{document}

\begin{center}
{\Large{\textbf{ Homological  obstructions       for   
regular  embeddings  of  graphs
 }}}

 \vspace{0.5cm}
 
 Shiquan Ren 

  \vspace{0.5cm}

{\small
\begin{quote}
\begin{abstract}
In  \cite[Section~8]{jgp},  the  present  author  proposed  the  
hypergraph  obstruction  for  the  existence  of  $k$-regular  embeddings.  
In  this  paper,  we  develop  the  hypergraph  obstruction  concretely 
and  give  some    homological  obstructions  for  the  $k$-regular  embeddings  of  graphs
by  using  the embedded  homology  of  sub-hypergraphs  of  the 
$(k-1)$-skeleton of  the  independence complexes.   Regular  embeddings
of  graphs  can  be  regarded  equivalently  
as  geometric  realizations  of  the  independence  complexes and  consequently  
be regarded  equivalently  as  simplicial  embeddings  of  the  independence  complexes
into  the  vectorial  matroids.   
We  prove that  if  there  exists  a  $k$-regular  embedding of a graph, 
then there  is an induced  homomorphism from  the embedded  homology 
of  the  sub-hyper(di)graphs  of the $(k-1)$-skeleton  of  the (directed)   independence  complexes 
to  the  homology  of  (directed)  matroids.  
Moreover,  if there  exists  certain     triple of graphs
where  each graph  has  a  $k$-regular  embedding,
then  there  are  induced  commutative  diagrams   of certain  Mayer-Vietoris  sequences
of  the  embedded  homology  of   hyper(di)graphs,
the  homology  of    (directed)  independence  complexes 
  and  the  homology  of  matroids.  
Furthermore,  if there  exists  certain     couple  of  graphs
where  each graph  has  a  $k$-regular  embedding,
then  there  are  induced  commutative  diagrams   of certain  Kunneth type  short  exact sequences 
of  the  embedded  homology  of   hyper(di)graphs,
the  homology  of     (directed)  independence  complexes 
  and  the  homology  of  matroids.  
  \end{abstract}
\end{quote}
}

\begin{quote}
 {\bf 2020 Mathematics Subject Classification.}  	Primary  55U10,  55U15; Secondary   55R80, 	05C65

{\bf Keywords and Phrases.}    
$k$-regular  maps,   configuration  spaces,   independence  complexes,    hypergraphs,
chain  complexes, 
homology 
\end{quote}

\end{center}

\section{Introduction}

  \subsection{The regular  maps}\label{ss1.1}

  Let  $X$  be  a  topological  space.  
  Let  $\mathbb{F}$  be  a field.  
  Let  $\mathcal{T}$  be  a  topology  on  $\mathbb{F}$.  
  Then  the  vector  space $\mathbb{F}^N$  has  the  product  topology $\prod_N \mathcal{T}$.   
  A   continuous  map $f:  X\longrightarrow  \mathbb{F}^N$  is  called  {\it $k$-regular}
  if  for  any distinct  $k$-vertices  $x_1,\ldots, x_k\in  X$,  
  their  images  $f(x_1),\ldots, f(x_k)$  are  linearly  independent  over   $\mathbb{F}$.  
  For  any  $k\geq  2$,  such  a  $k$-regular  map  is   an  embedding.  
  Going back to Chebychev,
Haar \cite{haar17} and Kolmogorov  \cite{kol48} 
(cf.  \cite{cohen1,alg-geom}),
 the  $k$-regular  embedding     is  related  to the  interpolation  problem. 
 In  particular,  when    
   $X=\mathbb{R}^d$,     
  the  existence of   an  $N$-dimensional  $k$-interpolating  space  on $\mathbb{R}^d$   
     was  proved  to  be   equivalent  to  
       the existence  a $k$-regular  embedding  of  $\mathbb{R}^d$  into  $\mathbb{R}^N$
   (cf. \cite[Theorem~1.3]{alg-geom}).

  In  1957,   K.  Borsuk \cite{Borsuk}  studied  
the   regular  embedding   problem  of  Euclidean  spaces.  
In  1978,    F.R. Cohen and D. Handel  \cite{cohen1} 
used the  Stiefel-Whitney  classes  of  the  canonical  vector  bundles  over  configuration  spaces
 as     obstructions    
 for  regular  embeddings  and    gave  the  infimum  of  $N$ 
  for any  $k\geq  2$
   such that  there  exists  a   $k$-regular  embedding  of  $\mathbb{R}^2$     into  $\mathbb{R}^N$.  
   In 1979, M.E. Chisholm \cite{chisholm} extended  \cite{cohen1}  and  gave     lower  bounds  
    of  
   $N$  for  the  existence  of    $k$-regular  embeddings  of    $\mathbb{R}^d$  into  $\mathbb{R}^N$ 
   where   $d$  is   a  power of  $2$. 
      In   2016,  P. Blagojevi\'{c},   W. L{\"u}ck and G. Ziegler \cite{high1}  
      extended  \cite{cohen1,chisholm} and  gave     lower  bounds  
    of  
   $N$  for  the  existence  of    $k$-regular  embeddings  of    $\mathbb{R}^d$  into  $\mathbb{R}^N$ 
   for  any    $d\geq  2$;  and  
   P. Blagojevi\'{c},  F.R. Cohen, W. L{\"u}ck and G. Ziegler  \cite{high2}
used the  mod  $p$  Chern    classes    of  the  complexification  of  the  canonical  vector  bundles  over  configuration  spaces  for any  odd  primes  $p$  
 as     obstructions    
 for  complex  regular  embeddings 
 of  $\mathbb{C}^d$  into  
 $\mathbb{C}^N$  and  gave    lower  bounds  
    of  
   $N$  for  the  existence  of   complex  $k$-regular  embeddings  of    $\mathbb{C}^d$  into  $\mathbb{C}^N$ 
   where     $d $   is  a  power  of  $p$.    
   In  2017,       M. Michalek and C. Miller
   \cite{alg-geom} 
   applied  tools from algebraic geometry   and  
constructed    a  complex   $4$-regular polynomial map from $\mathbb{C}^3$  to  
$\mathbb{C}^{11}$  and  
  a complex  $5$-regular polynomial map from  $\mathbb{C}^3$  to  $\mathbb{C}^{14}$.  
In  2019,  J. Buczy\'nski,   T.  Januszkiewicz,  J.  Jelisiejew   and  M.  Michalek \cite{jems}
constructed  new   complex  $k$-regular  maps  and  
studied  the  upper  bounds  of  $N$  such  that  there  exists  a  complex  $k$-regular  map
 of   $\mathbb{C}^d$  into  $\mathbb{C}^N$  
by investigating   
the dimension of the locus of certain Gorenstein schemes in the punctual Hilbert scheme.

   Besides  the  regular  embeddings  of   Euclidean  spaces,
   regular   embeddings  of  manifolds  have been  studied.   
   In  1980,  D.  Handel  \cite[Theorem~2.4]{handel2}  
   used  the  Stiefel-Whitney  classes  of  the  tangent  bundles  
   to  give  lower  bounds  of  $N$   for the existence   of  regular  embeddings  
   of  disjoint  unions  of  closed,  connected  manifolds  into  $\mathbb{R}^N$.  
   In  2016, the    $3$-regular embedding   of  the  sphere   $S^N$ into  $\mathbb{R}^{N+2}$  
   was  given  in  \cite[Example~2.6~(3)]{high1}. 
   In  2018,  the  present   author  \cite{reg-2018}
   applied  the  Stiefel-Whitney  classes  \cite{cohen1}  as  well  as  the  mod  $p$  Chern  classes 
   \cite{high2}  of  the  canonical  vector  bundles  over  configuration  spaces 
   and  gave  lower  bounds  of  $N$  for   $2$-regular  embeddings  of  
   real,  complex  and  quaternionic  projective  spaces into
     $\mathbb{R}^N$  
   as  well  as  for  complex  $2$-regular  embeddings  of  complex  projective  spaces 
   into  $\mathbb{C}^N$.

      Let  $d:  X\times  X\longrightarrow  [0,+\infty]$  be  a  distance     on  $X$    
  so  that    $(X,d)$  is  a  metric space.  
Let   $f:  (X,d)\longrightarrow  \mathbb{F}^N$    be  a    continuous  map.  
 We  say that  $f$  is  
  {\it $k$-regular 
with  respect  to  $r$} 
  if  for  any distinct  $k$-vertices  $x_1,\ldots, x_k\in  X$  such that  
  $d(x_i,x_j)>2r$,     $1\leq  i<j\leq  k$,  
  their  images  $f(x_1),\ldots, f(x_k)$  are  linearly  independent  over   $\mathbb{F}$.    
  Such  regularities  with  respect  to  $r$ 
  were  considered  by  the  present  author \cite[Section~6]{2025-reg}.  
      We  call  a  vector space  $\mathcal{F}$  of continuous functions on  $X$  with  values  in  $\mathbb{F}$
    a  {\it    $k$-interpolating  space  
  with  respect to  $r$}
   if for any distinct points $x_1,\ldots,x_k\in X$  such  that  $d(x_i,x_j)>2r$,     $1\leq  i<j\leq  k$,      and any scalars  $\lambda_1, \ldots,  \lambda_k\in  \mathbb{F}$,  
there exists an   $\mathbb{F}$-valued  function $f \in  \mathcal{F}$  such that $f(x_i) = \lambda_i$ 
for all $ i = 1, \ldots, k$.  
Analogous  to  \cite[Sect. 2,  Proof  of  Theorem~1.3]{alg-geom}, 
 it  can  be  proved  that  there  exists     an  $N$-dimensional $k$-interpolating space $\mathcal{F}$ on  $X$  
 with  respect to  $r$
 if and only if there
exists a  continuous  map $f: X
\longrightarrow  \mathbb{F}^N$ that  is $k$-regular  with  respect to  $r$.

 The  significance  of  the  above   hypothesis   that  the  interpolating  points  $x_1,\ldots,x_k\in  X$
  have  their  mutually distances  greater  than  $2r$  (cf.  \cite[Section~6]{2025-reg}) 
   is  to  avoid  excessive  concentration  
   of  the  interpolating  points.  With  the  help  of  this  hypothesis,  
   it    will   
  reduce  the  effects  of  small  perturbations  of  the  values  at  the  interpolating  points 
      on  the  global  behavior of  the  interpolation functions.

     \subsection{The  topology  of  configuration  spaces,   independence  complexes  and  matroids}

For  any  topological  space  $X$,  
   the  $k$-th  {\it  ordered  configuration  space}  ${\rm  Conf}_k(X)$  on  $X$  is  the open  subspace 
   of  $\prod_k  X$  consisting   of  all  the  ordered  $k$-tuples  $(x_1,\ldots,x_k)$  
   such  that  $x_i\neq  x_j$  for  any  
   $i\neq  j$.  
   The  $k$-th  symmetric  group  $\Sigma_k$  acts  on  ${\rm  Conf}_k(X)$  
   freely  and  properly  discontinuously  on   ${\rm  Conf}_k(X)$  by  permuting  the  coordinates
   from  the  right.  
   The  orbit  space   ${\rm  Conf}_k(X)/\Sigma_k$  is  called  the  
   $k$-th  {\it  unordered  configuration  space}  on  $X$,
   which  consists  of  all  the  $k$-subsets  $\{x_1,\ldots,x_k\}$  of  $X$.

   Since  1980's,  
   the  homology  of  configuration  spaces  have  been  extensively  studied. 
      In  1987    and  1989,   
   C.-F.   B\"odigheimer,  F.  Cohen  and  L.   Taylor    \cite{87-h-conf,89-h-conf} 
   studied  the  additive  structure  of  the  
   homology  of  the  configuration  spaces  ${\rm  Conf}_k(M)$  
    and    ${\rm  Conf}_k(M)/\Sigma_k$   for  manifolds  $M$   with   coefficients  from    fields.  
      In  1988,   C.-F.   B\"odigheimer  and  F.R.    Cohen 
      \cite{88-h-conf}
      studied    the  rational  cohomology of  the  configuration  spaces  ${\rm  Conf}_k(M_g)$  
    and    ${\rm  Conf}_k(M_g)/\Sigma_k$  where  $M_g$  is  a  closed  surface  of  genus  $g$.  
    In 2012,  T.  Church  \cite{invent2012}     
    proved  the  homological  stability  
    for  the configuration  spaces  ${\rm  Conf}_\bullet(M)/\Sigma_\bullet$  with  rational  coefficients.   
    In  2014,    T. Church, J. S. Ellenberg and B. Farb   \cite{duke} applied 
    the  theory  of   FI-modules and stability for representations of symmetric groups   to obtain results about  the  cohomology of configuration spaces.       
       In  the  study  of  the  $k$-regular  embedding  problems  of  $X$,  
   the   cohomology  of  the  configuration  space  ${\rm  Conf}_k(X)/\Sigma_k$ 
    is  used  to  give  obstructions  for  the  existence  of  $k$-regular  embeddings
    (cf.    \cite{cohen1,high1,high2}).  
    Since  the  associated  vector bundle  of  the  canonical  covering  map  
    over  ${\rm  Conf}_k(X)/\Sigma_k$    has   flat  connections,
    if  we  consider  the  characteristic  classes  proposed  in  \cite{cohen1,high1,high2}, 
     then 
    only  the  torsion  part  of  the  cohomology  of      ${\rm  Conf}_k(X)/\Sigma_k$  
    or  the  cohomology  with  mod  $p$  coefficients,  
    where  $p$  is  a  prime,   is  essential  
    for  the  obstructions  of    $k$-regular  embeddings.

  Besides  the  homology  of  configuration spaces,
   the  homotopy  type  of  configuration  spaces   attracts  lots  of  attention.    
    Note  that   a  homeomorphism   $X\cong  X'$  induces 
     a  homeomorphism ${\rm  Conf}_k(X)\cong   {\rm  Conf}_k(X')$
     while 
   a  homotopy  equivalence         $X\simeq  X'$ 
    does  not   necessarily  induce   a  homotopy  equivalence 
     ${\rm  Conf}_k(X)\simeq   {\rm  Conf}_k(X')$.     
   In  2005,  
   R. Longoni and P. Salvatore \cite{topol05} 
   gave   an   example   and  showed   that  even  if  $X$  is  a  closed   manifold  $M$,  
   the  homotopy type of  
   $M$  cannot   determine  the homotopy type of   ${\rm   Conf}_k(M)$.       
   Nevertheless,  
   if  $X$  is  a  closed  simply-connected  manifold  $M$,  then   in  2008,  
   P. Lambrechts  and  D. Stanley \cite{agt,poincare}  
    constructed  some  explicit  CDGA-models  that  satisfy  the  Poincar\'e  duality,      
     and  in  2019, 
    N.  Idrissi \cite{najib}  used  the  
    Lambrechts-Stanley model  and  
   proved  that   the  rational  homotopy  type  of   ${\rm  Conf}_k(M)$  
   is  determined  by  the  rational  homotopy type  of  $M$.  
   To  calculate  the  characteristic  classes  of  the    classifying  maps
   and  thus  give   obstructions  for   $k$-regular  embeddings  of  $X$, 
   the  homotopy  classes  of  maps  from  ${\rm  Conf}_k(X) /\Sigma_k$  
   to  the  Grassmannians  were  investigated  
     (cf. \cite{cohen1,high1,high2}).  
     Therefore,  the  homotopy  type  of  ${\rm  Conf}_k(X) /\Sigma_k$  
     is  essential  for  the  obstructions  of  $k$-regular  embeddings.

   Let  $r\geq  0$.  
   For any  metric space  $(X,d)$,  
   the  $k$-th  {\it  ordered  configuration  space of  $r$-spheres}  ${\rm  Conf}_k(X,r)$     is  the open  subspace 
   of  ${\rm  Conf}_k(X)$   consisting   of  all  the  ordered  $k$-tuples  $(x_1,\ldots,x_k)$  
   such  that  $d(x_i,  x_j)>2r$  for  any  
   $i\neq  j$  (cf.  \cite{gt,imrn1,phys}).  
   When  $r=0$,   ${\rm  Conf}_k(X,0)$   is  ${\rm  Conf}_k(X)$;  and  
   when  $r\to +\infty$,   $\bigcap_{r\geq  0}   {\rm  Conf}_k(X,r) =\emptyset$.  
         The  orbit  space   ${\rm  Conf}_k(X,r)/\Sigma_k$  is     the 
     {\it  unordered  configuration  space of  $r$-spheres}.
      We  have
        a  
        $\Sigma_k$-equivariant  filtration   ${\rm  Conf}_k(X,-)$  of    ${\rm  Conf}_k(X)$   
      and  an  induced    filtration   ${\rm  Conf}_k(X,-)/\Sigma_k$  of   ${\rm  Conf}_k(X)/\Sigma_k$.             
      As  $r$  goes from  $0$  to  $+\infty$,  
      there  are  finite  or  countable   points       
      such that  at  each  point,   the  homotopy type  of  ${\rm  Conf}_k(X,-)$ 
      or   the  homotopy type  of  ${\rm  Conf}_k(X,-)/\Sigma_k$  changes.   
      Applying  the  homology  functor,  we  obtain  
      the  $\Sigma_k$-equivariant 
        persistent homology  of  ${\rm  Conf}_k(X,-)$ 
      and  the  persistent homology  of  ${\rm  Conf}_k(X,-)/\Sigma_k$
       (cf.  \cite{2025-reg}).  
       By  considering the  homology  of  homotopy  type  of  
       the  configuration   spaces  of  $r$-spheres,   
       we  can   obtain   obstructions  for  the  existence  of  
      $k$-regular  embeddings  of  $(X,d)$  with  respect  to  $r$
       in  terms  of  the   Stiefel-Whitney  classes  and  the  mod  $p$  
       Chern  classes  in  a  similar  way  of   \cite{cohen1,high1,high2}.

      From the  unordered configuration  spaces  of  $1/2$-spheres  on  the  vertex  set 
      with  respect  to the  geodesic  distances  of  graphs,
      we  can  construct the  skeletons  of   the  independence  complexes.  
      In  2006,  A.J. Berrick, F.R. Cohen, Y.L. Wong and J. Wu  \cite{jams-06}  
      studied  the 
        simplicial  structures  of  the  configuration  spaces and  gave 
        certain connections between braid groups and the homotopy groups of the sphere.    
        Let   $G=(V,E)$   be  a  graph.
        Let  $d_G$  be  the  geodesic distance  of  $G$.        
        We  have a  metric  space  $(V, d_G)$.  
          By   the  simplicial  structures  of  the  configuration  spaces  of $(V, d_G)$,  
      the   independence  complex  ${\rm  Ind}(G)$     
      can  be  constructed  such  that  for  any  $k\geq  1$,  
      the  $(k-1)$-simplices  are  elements  of      
      ${\rm  Conf}_k(V,1/2)/\Sigma_k$.  
      The  homology  (for  example,  \cite{zhwt-whh})  and  the  homotopy  type 
      (for   examples,  \cite{split-ind,kozlov}) of  
     ${\rm  Ind}(G)$   have  been  found      significant   and  interesting   in  graph  theory.

      In  1935,  H.  Whitney  \cite{1935mat}  introduced  
       {\it  matroids}  to  capture    the  abstract   essence of  linear  dependence. 
      For  any  finite  subset  $S$  in  a  fixed  vector  space,  
      a  subset   $\sigma$  of  $S$   is  called  an   {\it  independent  set}  if  
      the  vectors  in  $\sigma$  are  linearly  independent.  
      the    collection  of  all  the   independent  sets  $\sigma\subseteq  S$  gives  
      a  matroid  $\mathcal{M}$,  called  the  {\it  vectorial  matroid}  (cf.  \cite[Section~1.3]{matroid}).  
        The    vectorial  matroids  are    geometric  ramifications  
      of  the  Stiefel  manifolds   and   certain  subspaces  of  
       the  unordered  configuration  spaces  of   projective  spaces  (cf.     \cite{matroid-conf-77}).  
      Precisely, 
        if  the  vector  space  is  $\mathbb{R}^N$  or  $\mathbb{C}^N$,  
      then  the  elements  in    $\mathcal{M}$  are  $\Sigma_k$-orbits  
      of  the  elements  in  the  Stiefel manifolds $V_k(\mathbb{R}^N)$  or    $V_k(\mathbb{C}^N)$,  
      where  $1\leq  k\leq  N$  and  $\Sigma_k$  acts  on  $V_k(\mathbb{R}^N)$  or    $V_k(\mathbb{C}^N)$ by  permuting  the  coordinates'  orders  of  the  frames.   
      Alternatively,  the   elements    in    $\mathcal{M}$ 
      are   sets  of  linearly  independent  vectors  in  $\mathbb{R}^N$
      or  $\mathbb{C}^N$,  
      which  can  be  seen  as  
      certain  points  in  the  unordered  configuration  spaces  of  
      $\mathbb{R}P^{N-1}$  or  $\mathbb{C}P^{N-1}$.    
      The  simplicial  structures  of  the  Stiefel  manifolds  and  the  configuration  spaces  
      where  the   face  maps  and  the  degeneracies  are   
      obtained  by  removing  and  adding  coordinates  respectively  (cf.  \cite[Section~3]{jams-06}) 
       will  give  the  
      combinatorial  structures  of  the  vectorial  matroids. 

    \subsection{The  embedded  homology  of  hypergraphs}

The  embedded  homology  of  hypergraphs  is  
a  generalization  of  the  simplicial  homology  by  regarding  a  
hypergraph  as  a  simplicial  complex  with  certain  non-maximal  simplices  removed.  
Given  a  vertex  set  $V$,  
a  {\it  hypergraph}  $\mathcal{H}$  on  $V$   is  
a  family  of  non-empty  finite  subsets of  $V$.  
The   elements  of  $\mathcal{H}$  are  {\it  hyperedges}  (cf.  \cite{berge}).  
In  1991,  A. D. Parks and S. L. Lipscomb  \cite{parks} 
considered  the  associated  simplicial  complex of  a  hypergraph  $\mathcal{H}$, 
which  is  the  smallest  simplicial  complex  containing $\mathcal{H}$,  
and  studied the  simplicial  homology.   
 In  2019,  S.  Bressan,  J.  Li,    J.  Wu   and  the  present  author  \cite{h1}
 proved   that  the  smallest  chain  complex  containing  the graded  module  of  
      the  hyperedges  and  the  largest  chain  complex  contained  in  the  
      graded  module  of  the  hyperedges  are  quasi-isomorphic  and  thereby   
    defined  the   embedded  homology  of  a  hypergraph  as  a  generalization
      of  the  usual  homology  of  simplicial  complexes. 
  A  Mayer-Vietoris  sequence   of  the  embedded  homology  
     was  proved  in  \cite[Theorem~3.10]{h1}.    
  In  2022, 
       some  more  Mayer-Vietoris  sequences for  the  embedded   homology  of  hypergraphs  and
  the  homology  of  the  (lower-)associated  simplicial  complexes
  were  proved  by  J.  Wu, M.  Zhang  and  the  present  author   \cite{jktr2022-2}.  
 In  2023,   the  persistence  of  
  some   K\"unneth-type  formulae for  the  embedded  homology  of    hypergraphs  and  the  homology  of  the  (lower-)associated  simplicial  complexes 
 was   studied  by    J.  Wu  and  the  present  author  \cite[Section~4.4]{stab-hg};  
  and  some  random  versions  of  
    the   K\"unneth-type  formulae  for    the  embedded  homology  of  random   hypergraphs  and  the  homology  of  the  random  (lower-)associated  simplicial  complexes
    were  proved     
    by  C.  Wu,  J.  Wu  and  the  present  author  \cite[Theorem~1.3]{jktr2023}.

     Besides  the  embedded  homology  of  hypergraphs,  
     the  embedded  homology  of  hyperdigraphs  and  
     double  complexes  of  hyper(di)graphs  on  manifolds  have  also  been  studied.  
     In  2023, D. Chen,    J.  Liu,  
  J. Wu   and G.-W.  Wei  \cite{hdg}  considered  hyperdigraphs by  
  assigning  orientations  on  the  hyperedges  and  
   studied   the  persistence  of   the  embedded  homology  as  well  as    
    the  Laplacians  of  hyperdigraphs.  
   In  2025,   the  present  author  \cite{jgp}
  considered  a  hyper(di)graph   whose  vertices  are  moving  on  
   a  manifold.   Consequently,  \cite{jgp}   embedded  the  hyperdigraph  as  
 a  graded  submanifold   of   the  ordered  configuration  spaces 
  and  embedded  the  hypergraph  as  a  graded  submanifold  of  the  
  unordered  configuration  spaces.  
  The  infimum  double  complex  and  the  supremum  double  complex  
 for  hyper(di)graphs  on  manifolds  were  constructed  and  were  proved  
 to  be  quasi-isomorphic  with  respect  to  the  boundary  map  induced  
   by  vertex-deletions. 
       For    the   hyper(di)graphs   on  manifolds  
   which are      not     closed  under  the  vetex-deletion  operations,   
   for  example,   the  collection  of  all  the   finite  open  coverings  of  a  manifold, 
     the  collection  of  all  the   finite  atlases  of  a  manifold,  
   the  collection  of  all  the  finite  tight  packings  of   geodesic   balls  of  a  fixed  radius 
   in  a  Riemannian  manifold,  etc.,       
   the  infimum  double  complex  and  the  supremum  
   double  complex  are  not  equal.

    The  associated   vector bundle  of  the  canonical  covering  map  
    over   a  $k$-uniform   hypergraph  on  a  manifold  
    is  the  restriction  of  the  associated   vector bundle  of  the  canonical  covering  map  
    over   the  $k$-th  unordered  configuration  space.  
    Since the  cohomology  of  the    $k$-uniform   hypergraphs  on  a  manifold
      contains  more  information 
      than the  cohomology  of  the  $k$-th  unordered  configuration  space, 
    it  is  reasonable  to  expect  that  
    the  characteristic  classes  in  the  cohomology  of   the    $k$-uniform   hypergraphs
    on    manifolds   would  hopefully  give  more  obstructions  for  the  
    existence  of  $k$-regular  embeddings  of  the  manifolds  (cf.  \cite[Section~8]{jgp}).

                  \subsection{Results  of  this  paper}

    In  this  paper,  we  consider   the     extremal  case  of  the  
    $k$-regular  embedding problem  of  the  metric  space  $(X,d)$  
    into  $\mathbb{F}^N$  with  respect  to  $r>0$  
    such  that  $X$  is  a  discrete  set  $V$   and     $d=d_G$  
    is  the  geodesic  distance  of  a  graph  $G$  on  $V$.     
     We  give  obstructions  for  the  existence 
     of  $k$-regular  embeddings 
     by  the  embedded  homology  of  sub-hyper(di)graphs
      of  the  (directed)  independence  complexes,  
      the  Mayer-Vietoris  sequences  and  the  K\"unneth-type  formulae.

 Let  $G=(V,E)$  be  a  graph.  
 For  any  $v,u\in  V$,  
 we  say  that  the  sequence  $v_0v_1\ldots  v_l$   is  a  path of  length  $l$ 
  in  $G$  from $v$ to  $u$
 if  $v_0,v_1,\ldots,v_l\in  V$,  $v_0=v$,  $v_l=u$ 
 and $(v_{i-1},v_i)\in  E$  for any  $1\leq  i\leq  l$
 \footnote[1]{An  edge  in  $E$  is  $2$-set  $\{v,u\}\subseteq  V$.  Nevertheless,  
 by  an  abuse  of  notation,  we  also  write  $\{v,u\}$  as  $(v,u)$  or  $(u,v)$  with  the  equivalence 
 relation  
 $(v,u)\sim  (u,v)$  (cf.  Example~\ref{ex-25-10-19-1}).   }.  
 The  {\it  geodesic  distance}  of  $G$  is  a  function  $d_G:  V\times  V\longrightarrow [0,+\infty]$ 
 where   for  any  $v,u\in  V$,
    $d_G(v,u)$  is  defined  as   the  infimum of the   lengths  $l$  of  all  the  paths  in  $G$ 
  from   $v $  to  $u$. 
  In  particular,  (i)  $d_G(v,u)=0$  if  and  only  if $v=u$;  
  (ii)  $d_G(v,u)=1$  if  and  only  if $(v,u)\in  E$;  
  (iii)  $d_G(v,u)=+\infty$  if   and  only  if  $v$  and  $u$  are  not  connected  in  $G$;
   (iv)     for  any  $l\geq  1$,   taking  the  $l$-th  distance  power  $G^l$    
   (cf.  \cite{split-ind}),    we  have        $d_{G^l}(v,u)>1$  
    if  and  only  if       $d_{G}(v,u)>l$.   
  Note  that  $(V,d_G)$  is  a  metric  space  and $V$  is equipped  with     the  discrete  topology.  
 Hence a  map  $f:  (V,d_G)\longrightarrow \mathbb{F}^N$  is    $k$-regular   with  respect to  $r=1/2$  
 if  and  only  if     for  any  distinct  vertices  $v_1,\ldots,v_k\in V$  
  that   are  mutually  non-adjacent  in  $G$,  
 their images  $f(v_1)$, $\ldots$,  $f(v_l)$  are  linearly independent  in  $\mathbb{F}^N$. 
Moreover,   by  (iv),  
 the  $k$-regular  embedding  problem  of  $(V,d_{G^{l}})$ 
    into  $\mathbb{F}^N$  
    with respect  to  $r=1/2$   is  equivalent  to  the  
    $k$-regular  embedding  problem  of  $(V,d_G)$  into  $\mathbb{F}^N$
     with respect  to  $r=l/2$.  
    Therefore,  in  order  to  consider       $k$-regular  embeddings  
    of  $(V,d_G)$  with  respect  to  any  $r>0$  for  any  graph   $G$,
    it  is  sufficient  to   consider  the  special  case  
      $r=1/2$.   
 For  simplicity,  we  call  $f$   a  {\it  $k$-regular  map} on $G$  and  denote it  as 
 $f: G\longrightarrow \mathbb{F}^N$. 
 Moreover,  if  $f: G\longrightarrow \mathbb{F}^N$  is  $k$-regular  for  any  $k\geq  2$, 
 then  we say that  $f$  is  {\it   $G$-regular}.

An  {\it  independent  set}    of  $G$  is  a  set  $\sigma\subseteq  V$         
such  that  the  vertices  in  $\sigma$  are  mutually  non-adjacent.  
The  independence  complex  of  $G$   is  the  simplicial  complex  ${\rm  Ind}(G)$
whose  simplices  are  the  finite  independent  sets  of  $G$.  
We  define  the  {\it  directed  independence  complex}    $\overrightarrow{\rm  Ind}(G)$ 
as  the   hyperdigraph  on  $V$  whose  directed  hyperedges    $\vec{\sigma}$
are   finite   ordered    tuples  $(v_0,v_1,\ldots,v_k)$  of  distinct 
and  mutually  non-adjacent  vertices  
 such  that  the  underlying  set $\{v_0,v_1,\ldots,v_k\}$  is  an   independent  set  of  $G$.  
Then    $\overrightarrow{\rm  Ind}(G)$  is  $\Sigma_\bullet$-invariant  and 
  ${\rm  Ind}(G)$ 
  is  the  underlying  hypergraph  of  
     $\overrightarrow{\rm  Ind}(G)$
     with  the  canonical  projection  $\pi: \overrightarrow{\rm  Ind}(G)\longrightarrow 
     {\rm  Ind}(G)$.  
    An  equivalent   characterization  for     a  
     $k$-regular  embedding   of  $G$  into  $\mathbb{F}^N$  is  
     a  geometric  realization   of the  $(k-1)$-skeleton  of      $ {\rm  Ind}(G)$  in  $\mathbb{F}^N$;  
     and  an equivalent   characterization   for  a  $G$-regular  embedding   of  $G$ 
      into  $\mathbb{F}^N$ 
     is  a  geometric  realization   of   $ {\rm  Ind}(G)$  in  $\mathbb{F}^N$.

 For  any  finite  set  $S\subseteq  \mathbb{F}^N$,  
 let  $\mathcal{M}$  be  the  matroid  consisting  of  all  the   subsets  of  $S$ 
whose  vectors  are  linearly  independent.    
Then  $\mathcal{M}$ 
 is  an  {\it   augmented  simplicial  complex}  (cf.    \cite[Section~5]{comalg}),    
 i.e.  $\mathcal{M}\setminus\{\emptyset\}$  is  a  simplicial  complex.  
     We  define  the  {\it  directed  matroid}  $\vec{\mathcal{M}}$  
     as  the  collection  of  all   the   finite  ordered  tuples  of  the  vectors  in  $S$
   that   are  linearly  independent.  
   Then  $\vec{\mathcal{M}}\setminus \{\emptyset\}$  is  a  hyperdigraph.  
    There  is  a  canonical  projection  $\pi: \vec{\mathcal{M}}\longrightarrow 
    \mathcal{M}$  sending  each  ordered  tuple  to  its  underlying  set.    
    Hence 
      we  say that  $\mathcal{M}$  is  the {\it  underlying  matroid}  of  $\vec{\mathcal{M}}$.  
     An  equivalent   characterization  for      a    
     geometric  realization   of the  $(k-1)$-skeleton  of      $ {\rm  Ind}(G)$  in  $\mathbb{F}^N$
     is  a  simplicial  embedding  of   the  $(k-1)$-skeleton  of      $ {\rm  Ind}(G)$
     into  $\mathcal{M}$;  
     and  an equivalent   characterization   for   a geometric  realization   of   $ {\rm  Ind}(G)$  in  $\mathbb{F}^N$  is  a  simplicial  embedding  of   $ {\rm  Ind}(G)$
     into  $\mathcal{M}$.

 Let  $f:  G\longrightarrow  \mathbb{F}^N$  be  a  $k$-regular  embedding  
such  that  $f(V)\subseteq  S$.   
  The  first  main  result  of  this  paper  is  that  for  
  any  sub-hyperdigraph  $\vec{\mathcal{H}}$  of  ${\rm  sk}^{k-1}(\overrightarrow{\rm  Ind}(G))$
  whose  underlying  hypergraph  $\mathcal{H}$  is   a  sub-hypergraph  
  of  ${\rm  sk}^{k-1}{\rm  Ind}(G)$,  
         there   is  an  induced  commutative  diagram  of  homology  groups  
  \begin{eqnarray}\label{diag-251230-1}
\xymatrix{
H_\bullet(\vec{\mathcal{H}}) \ar[rr]^-{f_*} \ar[d]_-{\pi_*}
&&
 H_\bullet(\vec{\mathcal{M}})\ar[d]^-{\pi_*}\\
H_\bullet(\mathcal{H}) \ar[rr]^-{f_*}   &&
H_\bullet(\mathcal{M})  
}
\end{eqnarray}  
which  is  functorial  with  respect  to  morphisms  between  sub-hyperdigraphs  
of  ${\rm  sk}^{k-1}(\overrightarrow{\rm  Ind}(G))$
as  well  as  their  induced  morphisms  between  sub-hypergraphs  of 
          ${\rm  sk}^{k-1}{\rm  Ind}(G)$  (cf.  Theorem~\ref{diag-251121-h3}).  
   Here  $H_\bullet( {\mathcal{H}})$  is the  embedded  homology  of hypergraphs  
   (cf.  \cite{h1})  
   and   $H_\bullet(\vec{\mathcal{H}})$   is  the  embedded  homology  of  hyperdigraphs   
   (cf.  \cite{hdg}).  
   In  particular,   there   is  an  induced  commutative  diagram  of  homology  groups  
     (cf.  Theorem~\ref{diag-251121-h3})
  \begin{eqnarray} \label{diag-251230-2}
\xymatrix{
H_\bullet({\rm  sk}^{k-1}(\overrightarrow{\rm  Ind}(G))) \ar[rr]^-{f_*} \ar[d]_-{\pi_*}
&&
 H_\bullet(\vec{\mathcal{M}})\ar[d]^-{\pi_*}\\
H_\bullet({\rm  sk}^{k-1}{\rm  Ind}(G)) \ar[rr]^-{f_*}   &&
H_\bullet(\mathcal{M}).    
}
\end{eqnarray}  
The  diagrams  (\ref{diag-251230-1})  and  (\ref{diag-251230-2})
give  homological  obstructions  for  the  existence  of  $k$-regular  embeddings  
of  $G$  into  $\mathbb{F}^N$  such  that  the  images  of  the  vertices  are  in  $S$.

Let $G'$,  $G''$  and  $G'''$  be  graphs  with  disjoint  sets  of  vertices.  
Suppose   $f': G'\longrightarrow  \mathbb{F}^N$,   
 $f'': G''\longrightarrow  \mathbb{F}^N$  and  
 $f''': G'''\longrightarrow  \mathbb{F}^N$  are   $k$-regular  maps 
   such  that  the  images  of  the  vertices  are  in  $S$.  
   Let  $\tilde  *$  denote  the   join  of  graphs  (cf.   Example~\ref{ex-25-10-19-1}).     
   The  second  main  result  of  this  paper   is 
           that
             for  any  sub-hyperdigraph    $\vec{\mathcal{H}}'$  of  
             ${\rm  sk}^{k-1}(\overrightarrow{\rm  Ind}(G'  \tilde{*}  G'''))$        
           and  any   sub-hyperdigraph    $\vec{\mathcal{H}}''$  of  
             $ {\rm  sk}^{k-1}(\overrightarrow{\rm  Ind}(G''  \tilde{*}  G'''))$
             with  their  underlying  hypergraphs  $\mathcal{H}'$  and  $\mathcal{H}''$  respectively 
             such  that  (cf.  Subsection~\ref{ss-5.1})
              \begin{enumerate}[(I)]
 \item
  both $\vec{\mathcal{H}}'$  and  $\vec{\mathcal{H}}''$  are  $\Sigma_\bullet$-invariant, 
  \item
   for  any  $\sigma'\in \mathcal{H}'$  and  any  $\sigma''\in\mathcal{H}''$,  
   either  $\sigma'\cap\sigma''$  is  the  empty-set  or  
  $\sigma'\cap\sigma''\in \mathcal{H}'\cap\mathcal{H}''$,  
  \end{enumerate}
                   there  is   an  induced   commutative  diagram  of  Mayer-Vietoris  sequences
                   (in  (\ref{diag-251230-3})  and  (\ref{diag-251230-5}), 
                      MV   denotes     Mayer-Vietoris  type 
                   long  exact  sequences  of  homology  groups
                   and   the  arrows  are    homomorphisms  between  the  sequences) 
  \begin{eqnarray} \label{diag-251230-3}
 \xymatrix{
 {\bf  \rm  MV}(\vec{\mathcal{H}}', 
\vec{\mathcal{H}}'') 
\ar[d]_-{\pi_*} \ar[r] &  {\bf  \rm  MV}(\vec{\mathcal{M}}', \vec{\mathcal{M}}'') \ar[d]^-{\pi_*}\\
 {\bf  \rm  MV}( \mathcal{H}',  
 \mathcal{H}'')   \ar[r]  &  {\bf  \rm  MV}( {\mathcal{M}}',  {\mathcal{M}}'')  
 }
 \end{eqnarray}    
 where    $\vec{ \mathcal{M}}'$     and   $\vec{ \mathcal{M}}''$  
 are   the directed  matroids   of  all  the  finite  sequences   of  the linearly  independent  vectors 
   in  $ f'(V') \cup  f'''(V''')$   and   $ f'(V'') \cup  f'''(V''')$  respectively   with  their   
 underlying  matroids    $ \mathcal{M}'$  and   $ \mathcal{M}''$
 \footnote[2]{The  diagram  (\ref{diag-251230-3})  does  not  necessarily  satisfy  
 the  functoriality  with  respect  to  morphisms  between   hyperdigraphs  
 as  well  as their  induced  mophisms  of  the  underlying 
 hypergraphs  because  the  hypotheses
 (I)  and  (II)  may  not   be  functorial.  }
   (cf.  Theorem~\ref{th-mv-reg-hg}).    
  In  particular, 
              there  is   an  induced   commutative  diagram  of  Mayer-Vietoris  sequences 
 \begin{eqnarray} \label{diag-251230-5}
 \xymatrix{
 {\bf  \rm  MV}({\rm  sk}^{k-1}(\overrightarrow{\rm  Ind}(G'  \tilde{*}  G''')), 
 {\rm  sk}^{k-1}(\overrightarrow{\rm  Ind}(G''  \tilde{*}  G'''))) 
\ar[d]_-{\pi_*} \ar[r] &  {\bf  \rm  MV}(\vec{\mathcal{M}}', \vec{\mathcal{M}}'') \ar[d]^-{\pi_*}\\
 {\bf  \rm  MV}( {\rm  sk}^{k-1}({\rm  Ind}(G'  \tilde{*}  G''')),  
 {\rm  sk}^{k-1}({\rm  Ind}(G''  \tilde{*}  G''')))   \ar[r]  &  {\bf  \rm  MV}( {\mathcal{M}}',  {\mathcal{M}}'').  
 }
 \end{eqnarray}
The  diagrams  (\ref{diag-251230-3})  and  (\ref{diag-251230-5})
give    homological  obstructions  for  the  existence  of  $k$-regular  embeddings  
of  $G'$,  $G''$  and  $G'''$  into  $\mathbb{F}^N$  
such  that  the  images  of  the  vertices  are  in  $S$
  (cf.  Theorem~\ref{th-mv-reg}).

  Let  $G$  and  $G'$  be  graphs  with  disjoint sets  of  vertices.  
  Let  $S$  and  $S'$  be  finite  subsets  of  $\mathbb{F}^{N}$  and  $\mathbb{F}^{N'}$  
  respectively.  
  Suppose  $f: G\longrightarrow  \mathbb{F}^N$  
  is    $k$-regular  
  and   
  $f': G'\longrightarrow  \mathbb{F}^{N'}$
  is      $k'$-regular    
 such  that  $f(V)\subseteq  S$  and  $ f'(V')\subseteq  S'$.  
  The  third  main  result  of  this  paper  is  that  
  for  any sub-hyperdigraph  $\vec{\mathcal{H}}$
  of  ${\rm  sk}^{k-1}(\overrightarrow{\rm  Ind}(G))$  and  
any sub-hyperdigraph  $\vec{\mathcal{H}}'$
  of  ${\rm  sk}^{k-1}(\overrightarrow{\rm  Ind}(G'))$
with  their  underlying  hypergraphs  
$\mathcal{H}$  and  $\mathcal{H}'$  respectively,  
there  is  an  induced    commutative  diagram  of  short  exact  sequences  
  (in   (\ref{diag-251230-6})   and  (\ref{diag-251230-7}), 
    KU   denotes     K\"unneth   type 
                   short  exact  sequences  of  homology  groups
                   and   the  arrows  are   homomorphisms  between  the  sequences) 
\begin{eqnarray}\label{diag-251230-6}
 \xymatrix{
 {\bf  \rm  KU}(\vec{\mathcal{H}}, 
\vec{\mathcal{H}}') 
\ar[d]_-{\pi_*} \ar[r] &  {\bf  \rm  KU}(\vec{\mathcal{M}}, \vec{\mathcal{M}}') \ar[d]^-{\pi_*}\\
 {\bf  \rm  KU}( \mathcal{H},  
\mathcal{H})   \ar[r]  &  {\bf  \rm  KU}( {\mathcal{M}},  {\mathcal{M}}')
 }
 \end{eqnarray}
 which  is  functorial  with  respect  to  morphisms  between  sub-hyperdigraphs  
of  ${\rm  sk}^{k-1}(\overrightarrow{\rm  Ind}(G))$  and  
${\rm  sk}^{k-1}(\overrightarrow{\rm  Ind}(G'))$
as  well  as  their  induced  morphisms  between  sub-hypergraphs  of 
          ${\rm  sk}^{k-1}{\rm  Ind}(G)$  and
             ${\rm  sk}^{k-1}{\rm  Ind}(G')$   (cf.  Theorem~\ref{th-251111-ku-hg}).  
 In  particular, 
              there  is   an  induced   commutative  diagram  of  short  exact  sequences 
  \begin{eqnarray}\label{diag-251230-7}
 \xymatrix{
 {\bf  \rm  KU}({\rm  sk}^{k-1}(\overrightarrow{\rm  Ind}(G)), 
 {\rm  sk}^{k-1}(\overrightarrow{\rm  Ind}(G'))) 
\ar[d]_-{\pi_*} \ar[r] &  {\bf  \rm  KU}(\vec{\mathcal{M}}, \vec{\mathcal{M}}') \ar[d]^-{\pi_*}\\
 {\bf  \rm  KU}( {\rm  sk}^{k-1}({\rm  Ind}(G)),  
 {\rm  sk}^{k-1}({\rm  Ind}(G')))   \ar[r]  &  {\bf  \rm  KU}( {\mathcal{M}},  {\mathcal{M}}'). 
 }
 \end{eqnarray}
The  diagrams  (\ref{diag-251230-6})  and  (\ref{diag-251230-7})
give    homological  obstructions  for  the  existence  of  $k$-regular  embeddings  
of  $G $   into  $\mathbb{F}^N$   and  $G'$  into  $\mathbb{F}^{N'}$  
such  that  the  images  of  the  vertices  are  in  $S$  and  $S'$  respectively
  (cf.  Theorem~\ref{th-251111-ku}).

    The  results  in  this  paper    are  hopefully  to  be  applied    
    in   the  study  of  homological  obstructions  for  $k$-regular  embeddings 
  of  a  general  metric  space  $(X,d)$,    
   where   double  complexes   are  applied   instead  of  
   the  usual  chain  complexes  and  
   consequently     double  homology  or  the  Dolbeault  homology
   are  applied  
     instead  
   of  the  usual homology     (cf.  \cite{2025-reg}). 
   In  particular,  if   $(X,d)$  is  a  Riemannian  manifold  $(M,g)$
   with  the  geodesic  distance on  $M$  induced  by  $g$,  
   then  
   the  double  complexes  can  be  constructed  
   from  the  differential  forms  on  the  sub-hyperdigraphs  of  
   ${\rm  Conf}_\bullet(M)$,  where
   the  sub-hyperdigraphs  can  be  seen  as     
     graded  submanifolds   of    
     simplicial  manifolds    (cf.  \cite{jgp}).

  \subsection{Organization }

In  Section~\ref{s2},  we  give  some  basic  properties  and    examples  
for  the  regular  embeddings  of  graphs.  In  Sections~\ref{s3} - \ref{s5},  
 we  review  the  embedded  homology  of  hyper(di)graphs   and   
 the  homology  of  (lower)-associated   simplicial  complexes.   
 We  give  explicit  statements  on  the  Mayer-Vietoris  sequences
     as  well  as  the  K\"unneth-type  formulae  
     for   the  embedded  homology  of  hyper(di)graphs,  
     the  homology  of  (lower)-associated   directed  simplicial  complexes   and   
 the  homology  of  (lower)-associated   simplicial  complexes.            
                In  Section~\ref{s6},   by  applying  Section~\ref{s4}   to 
                the  independence  complexes and  the  directed  independence  complexes
                of  graphs,  
       we  study  the  homology  and   give  the  
       Mayer-Vietoris  sequences  as  well  as     the  K\"unneth-type  formulae  for  
         the  (directed)  independence  complexes.           
   In  Section~\ref{s7}, by  applying  Section~\ref{s4}   to 
               matroids  and  directed  matroids,    
       we  study  the  homology  and    give  the  
       Mayer-Vietoris  sequences  as  well as   the  K\"unneth-type  formulae  for  
         (directed)  matroids.     
                In  Section~\ref{s8},  we  prove the  main  results  of  this  paper
                with  the  help  of  Sections~\ref{s5} - \ref{s7}.      
                We  prove  the  first  main  result  in  Theorem~\ref{th-251107-1},  
                   prove  the  second  main  result  in  Theorem~\ref{th-mv-reg}  and  
                   Theorem~\ref{th-mv-reg-hg},  
                and   prove  the  third  main  result  in  Theorem~\ref{th-251111-ku}
                and  Theorem~\ref{th-251111-ku-hg}.

\section{Regular  maps  on  graphs}\label{s2}

In  this  section,  we  give  the  definition    of  (affinely)  regular  embeddings  of  graphs  
as  well  as   its   equivalent  characterization  by  using 
simplcial  embeddings  of  the  independence 
complexes  into  matroids.  
We  give  some  examples  for  regular  embeddings  of  graphs  
and  consequently  illustrate 
how  the  homotopy  types  of  the  independence  complexes 
 give  obstructions  for  the  existence  of  regular  embeddings  of   graphs.

Let   $G=(V,E)$  be  a  graph.  
Let  $\mathbb{F}$  be  a  field.  
Let $\mathbb{F}^N$ be the $N$-dimensional vector space over $\mathbb{F}$.
We  call  a map $f: V\longrightarrow  \mathbb{F}^N$      
{\it   (affinely)    $k$-regular}  with respect to  $G$     if   for  any  distinct   $k$-vertices 
$v_1,\ldots,v_k\in  V$ such that they are mutually non-adjacent in  $G$,  
their  images $f(v_1),\ldots, f(v_k)$ are    (affinely)   linearly  independent over $\mathbb{F}$. 
For  simplicity,  we  write  an   (affinely)   
{\it  $k$-regular}  map $f: V\longrightarrow  \mathbb{F}^N$
with respect to  $G$   as  $f:  G\longrightarrow   \mathbb{F}^N$ 
and  call  it  a   (affinely)   
{\it  $k$-regular}  map  on   $G$.  
 We say  that   a  map  $f: V\longrightarrow  \mathbb{F}^N$    is  
 (affinely)  
{\it  $G$-regular}  if  $f$  is   (affinely)   $k$-regular  with respect to  $G$   
for  any  positive  integer  $k$.

We  notice  that  when  $\mathbb{F}=  \mathbb{R}$  or  $\mathbb{C}$,  
we can  always  perturb  the  positions  of  the  images of the vertices     
such  that  $f(v_1),\ldots, f(v_N)$  are   linearly  independent  in  $\mathbb{F}^N$
  for  any  $v_1,\ldots,v_N\in  V$  and   $f(v_1),\ldots, f(v_{N+1})$  are   affinely  independent
   in  $\mathbb{F}^{N}$ 
  for  any  $v_1,\ldots,v_{N+1}\in  V$.  
  Thus  for  any  graph  $G$,  there  exist   
  $N$-regular  maps  $f:  G\longrightarrow  \mathbb{R}^N$
  and  complex  $N$-regular  maps  $f:  G\longrightarrow  \mathbb{C}^N$.  
  Moreover,  there  exist    
   affinely  $(N+1)$-regular   maps  $f:  G\longrightarrow  \mathbb{R}^N$
   and  complex   affinely  $(N+1)$-regular   maps  $f:  G\longrightarrow  \mathbb{C}^N$.   
  Furthermore,  for  any  $N\geq  3$  and    any  graph  $G$,   
  since  $G$  can  be  embedded  in  $\mathbb{R}^N$ 
  as  a  $1$-dimensional  geometric  simplicial  complex  
  such  that  the  images  of  the  
  edges  are  mutually  non-intersecting,  
  we can  always  perturb  the  positions  of  the  images of the vertices     
  such  that  after the  perturbation, 
    the  embedding  of  $G$   in  $\mathbb{R}^N$  without  edge-intersection
  is  an $N$-regular  map  on  $G$  or  is  an  
 affinely  $(N+1)$-regular   map  on  $G$.

   Let  $S$  be  a  fixed  finite  subset  of  $\mathbb{F}^{N}$.  
 Let  ${\rm  Ind}(G)$  be  the  independence  complex  of  $G$.  
 Let  $\mathcal{M}$  be  the  matroid  whose  independent sets are  
  subsets  of  $S$  consisting  of  linearly  independent  vectors  in  $\mathbb{F}^N$.  
We  have  the  followings
\begin{enumerate}[(1)]
\item
    any       $k$-regular  map 
    $f:  G\longrightarrow  \mathbb{F}^N$    such  that  
$f(V)\subseteq  S$   will  induce   a  simplicial  embedding 
    \begin{eqnarray*}
    {\rm  Ind}(f):    {\rm  sk}^{k-1}{\rm  Ind}(G)\longrightarrow  \mathcal{M}.    
    \end{eqnarray*}
    Conversely,   any  simplicial  embedding   of    
   $ {\rm  sk}^{k-1}{\rm  Ind}(G)$   into  $ \mathcal{M}$  
   will  give  a  $k$-regular  map 
   on  $G$  (cf.     Proposition~\ref{pr-251107-1});  
    \item 
      any       $G$-regular  map 
    $f:  G\longrightarrow  \mathbb{F}^N$    such  that  
$f(V)\subseteq  S$   will   induce   a  simplicial  embedding 
    \begin{eqnarray*}
    {\rm  Ind}(f):    {\rm  Ind}(G)\longrightarrow  \mathcal{M}.    
    \end{eqnarray*}
    Conversely,   any  simplicial  embedding   of    
   $ {\rm  Ind}(G)$   into  $ \mathcal{M}$  
   will  give  a  $G$-regular  map 
   on   $  G$    (cf.    Corollary~\ref{co-251107-1}).  
    \end{enumerate}
     Let  $\mathcal{A}$  be  the  matroid  whose  independent  sets  are  
  subsets  of  $S$  consisting  of  affinely    independent  vectors  in  $\mathbb{F}^N$
  (cf.  \cite[Section~1.3, Affine  dependence]{matroid}).
  Then  $\mathcal{A}$  is  a  simplicial  complex.    
    Similar to  (1)  and  (2) respectively,   the  followings  can  be  obtained  
    \begin{enumerate}[(1)']
\item
  any       affinely  $k$-regular  map 
    $f:  G\longrightarrow  \mathbb{F}^N$    such  that  
$f(V)\subseteq  S$   will  induce   a  simplicial  embedding  
    \begin{eqnarray*}
    {\rm  Ind}(f):    {\rm  sk}^{k-1}{\rm  Ind}(G)\longrightarrow  \mathcal{A}.   
    \end{eqnarray*}
    Conversely,   any   simplicial  embedding   of    
   $ {\rm  sk}^{k-1}{\rm  Ind}(G)$   into  $ \mathcal{A}$  
   will  give  an  affinely  $k$-regular  map 
   on  $G$;
    \item
      any      affinely   $G$-regular  map 
    $f:  G\longrightarrow  \mathbb{F}^N$    such  that  
$f(V)\subseteq  S$   will   induce   a  simplicial  map
    \begin{eqnarray*}
    {\rm  Ind}(f):    {\rm  Ind}(G)\longrightarrow  \mathcal{A}.    
    \end{eqnarray*}
     Conversely,   any  simplicial  embedding   of    
   $ {\rm  Ind}(G)$   into  $ \mathcal{A}$  
   will  give  an  affinely   $G$-regular  map 
   on   $  G$.
    \end{enumerate}
For  simplicity,   we   only  consider  the   regularity  condition  and  omit  the  
affinely  regularity  condition,  in  the  remaining  part  of  this  paper.

      \begin{example}
    \label{ex--251126-2.2}
    Let  $\mathbb{F}=\mathbb{R}$.  
    For  any  $R>0$,  
    let  
    \begin{eqnarray*}
    S_R=\{(x_1,\ldots,x_N)\in  \mathbb{R}^N\mid  x_1,\ldots,x_N\in \mathbb{Z}{\rm~and~}
    |x_1|, \ldots, |x_N|< R\}.
    \end{eqnarray*} 
    For  any  $N\geq  3$,  any  positive  integer  $k$  and   any  finite  graph  $G$  with   a  finite  vertex  set  $V$,
    we  claim  that  
    there  exists  a  $k$-regular  map $f:  G\longrightarrow \mathbb{R}^N$
      such  that  $f(V)\subseteq  S_R$  for  some  $R>0$.  
      In  fact,  we  choose      any  map  $f:  G\longrightarrow \mathbb{R}^N$  
    and  perturb  $f$  such  that  $f$  is    $k$-regular  and   
    the  image   of  $f$  has   rational  coordinates.  
    Hence  without  loss of  generality,  we   may  assume that  the  $k$-regular  map  is  given  by 
    $f:  G\longrightarrow \mathbb{Q}^N$. 
    For  sufficiently  large  $c$,  there  exists a  scalar  multiplication 
    by  $c$  such  that  $cf(V)\subseteq  \mathbb{Z}^N$,  i.e.   the  composition 
    \begin{eqnarray*}
    \xymatrix{
    G\ar[r]^-{f} &\mathbb{Q}^N \ar[r]^-{c  (-)}  &\mathbb{Q}^N
    }
    \end{eqnarray*}
    maps  $V$  into  $\mathbb{Z}^N$.  
    Since  $V$  is  finite,  there  exists   an  upper  bound  $R$  of  the  coordinates  
    of  $cf(V)$.  Therefore,      
      $cf:  G\longrightarrow  \mathbb{R}^N$  is  a  $k$-regular  map  satisfying   
     $cf(V)\subseteq  S_R$.  
     \end{example}

       In  the  next two  examples,  we  let  $G_1,\ldots,  G_n$  be  graphs  on   $V_1,\ldots, V_n$  respectively  
    where  $V_1,\ldots, V_n$   are  disjoint  sets  of  vertices.  
    
    \begin{example}
    \label{ex-25-12-16-1}
    Consider the   reduced   join  graph  $\tilde  *_{i=1}^n G_i$  
    obtained  by  adding  all  the  edges  
    $(v_k,v_l)$  for  any  $v_k\in  V_k$,  any  $v_l\in  V_l$  and  any  $1\leq  k<l\leq  n$  
    to  the  disjoint  union  $\bigsqcup_{i=1}^n  G_i$.  
    Then  ${\rm  Ind}(\tilde  *_{i=1}^n G_i)=\bigsqcup_{i=1}^n {\rm  Ind}(G_i)$.  
    The  followings  are  equivalent
    \begin{enumerate}[(a)]
    \item
        there  exists  a  simplicial  embedding    of  $\bigsqcup_{i=1}^n {\rm  sk}^{k-1}{\rm  Ind}(G_i)$  into  $\mathcal{M}$;  
    \item
    there  exists  a  $k$-regular  embedding  $f$  of  $\tilde  *_{i=1}^n G_i$  into  $\mathbb{F}^N$
     such that  $\bigsqcup_{i=1}^n  f(V_i)\subseteq  S$. 
     \end{enumerate}
    The  followings  are  equivalent
    \begin{enumerate}[(a)']
    \item
        there  exists  a  simplicial  embedding    of  $\bigsqcup_{i=1}^n {\rm  Ind}(G_i)$  into  $\mathcal{M}$;  
    \item
    there  exists  a  $(\tilde  *_{i=1}^n G_i)$-regular  embedding  $f$  of  $\tilde  *_{i=1}^n G_i$  into  $\mathbb{F}^N$
     such that  $\bigsqcup_{i=1}^n  f(V_i)\subseteq  S$. 
    \end{enumerate}
    \end{example}
    
    \begin{example}
        \label{ex-25-12-16-2}
     Consider the  (disjoint)  union   graph  $\bigsqcup  _{i=1}^n G_i$.   
     Then  ${\rm   Ind}(\bigsqcup  _{i=1}^n G_i)=  *_{i=1}^n{\rm  Ind}(G_i)$
     is  the  join of  the   independence  complexes  of  $G_1,\ldots,  G_n$    
     so  that 
     \begin{eqnarray*}
     {\rm  sk}^{k-1}(*_{i=1}^n{\rm  Ind}(G_i))=
      \bigcup_{\sum_{i=1}^n k_i=k} *_{i=1}^n\Big({\rm  sk}^{k_i-1}{\rm  Ind}(G_i) \Big)
      \end{eqnarray*}
         The  followings  are  equivalent
    \begin{enumerate}[(a)]
    \item
        there  exists  a  simplicial  embedding    of 
         $  \bigcup_{\sum_{i=1}^n k_i=k} *_{i=1}^n\Big({\rm  sk}^{k_i-1}{\rm  Ind}(G_i) \Big)$  into  $\mathcal{M}$;  
    \item
    there  exists  a  $k$-regular  embedding  $f$  of  $\bigsqcup  _{i=1}^n G_i$  into  $\mathbb{F}^N$
     such that  $\bigsqcup_{i=1}^n  f(V_i)\subseteq  S$. 
     \end{enumerate}
    The  followings  are  equivalent
    \begin{enumerate}[(a)']
    \item
        there  exists  a  simplicial  embedding    of  $*_{i=1}^n{\rm  Ind}(G_i)$  into  $\mathcal{M}$;  
    \item
    there  exists  a  $(\bigsqcup  _{i=1}^n G_i)$-regular  embedding  $f$  of  $\bigsqcup  _{i=1}^n G_i$  into  $\mathbb{F}^N$
     such that  $\bigsqcup_{i=1}^n  f(V_i)\subseteq  S$. 
    \end{enumerate} 
    \end{example}
    
    Applying  Example~\ref{ex-25-12-16-1}  and  Example~\ref{ex-25-12-16-2}
    to  the  $5$-cycle  and  the  $4$-path  respectively,  we  have  the  next  two  examples.  
    
    \begin{example}
    \label{ex--251211-2.3}
    Let  $C_5$  be  the  $5$-cycle with  vertices  $\{v_i\mid  1\leq  i\leq  5\}$
    and  edges  $\{(v_i,v_{i+1})\mid  1\leq  i\leq  5\}$,
      where  $v_6=v_1$.  Then  ${\rm  Ind}(C_5)=C_5$  whose  set  of  edges  is
    $\{(v_1,v_3),(v_1,v_4),(v_2,v_4), (v_2,v_5),(v_3,v_5)\}$.  
    For  any  field  $\mathbb{F}$,  
    any  vector  space  $\mathbb{F}^N$
    and  any  finite  set  $S\subseteq  \mathbb{F}^N$,  
    the  followings  are  equivalent
    \begin{enumerate}[(a)]
    \item
    there  exists  a  simplicial  embedding    of  $C_5$  into  $\mathcal{M}$;  
    \item
    there  exists  a  $2$-regular  embedding  $f$  of  $C_5$  into  $\mathbb{F}^N$ such that  $f(V)\subseteq  S$;
    \item
    there  exists  a  $C_5$-regular  embedding  $f$  of  $C_5$  into  $\mathbb{F}^N$
     such that  $f(V)\subseteq  S$.  
    \end{enumerate}
    By  Example~\ref{ex-25-12-16-1},  
        the  followings  are  equivalent
        \begin{enumerate}[(a)']
        \item
          there  exists  a  simplicial  embedding    of  $\bigsqcup_n   C_5$  into  $\mathcal{M}$; 
        \item
        there  exists  a  $2$-regular  embedding  $f$  of  $\tilde  *_n   C_5$  into  $\mathbb{F}^N$
     such that  $\bigsqcup_n  f(V(C_5))\subseteq  S$;
     \item
     there  exists  a  $(\tilde  *_n   C_5)$-regular  embedding  $f$  of  $\tilde  *_n   C_5$  into  $\mathbb{F}^N$
     such that  $\bigsqcup_n  f(V(C_5))\subseteq  S$.
             \end{enumerate}
        By  Example~\ref{ex-25-12-16-2},  
        the  followings  are  equivalent
        \begin{enumerate}[(a)"]
          \item
        there  exists  a  simplicial  embedding    of  $*_n  C_5$  into  $\mathcal{M}$;  
    \item
    there  exists  a  $(\bigsqcup  _n  C_5)$-regular  embedding  $f$  of  $\bigsqcup  _n  C_5$  into  $\mathbb{F}^N$
     such that  $\bigsqcup_n  f(V(C_5))\subseteq  S$. 
     \footnote[3]{The  (reduced)  join   $\tilde  *_n   C_5$  in  (b)'  is  a  graph  while  
     the  join   $*_n  C_5$  in  (a)"  is  a  simplicial  complex. 
     In  fact,   $\tilde  *_n   C_5$  is  the  $1$-skeleton  of   $*_n  C_5$  
     (cf.  Example~\ref{ex-25-10-19-1}).  }
        \end{enumerate}
    \end{example}

     \begin{example}
    \label{ex--251211-2.3a}
    Let  $P_4$  be  the  $4$-path  with  vertices  $\{v_i\mid  1\leq  i\leq  4\}$
    and  edges  $\{(v_i,v_{i+1})\mid  1\leq  i\leq  3\}$.  Then  ${\rm  Ind}(P_4)=P_4$  whose  set  of  edges  is
    $\{(v_2,v_4),(v_1,v_4),(v_1,v_3)\}$.  
    For  any  field  $\mathbb{F}$,  
    any  vector  space  $\mathbb{F}^N$
    and  any  finite  set  $S\subseteq  \mathbb{F}^N$,  
    the  followings  are  equivalent
    \begin{enumerate}[(a)]
    \item
    there  exists  a  simplicial  embedding    of  $P_4$  into  $\mathcal{M}$;  
    \item
    there  exists  a  $2$-regular  embedding  $f$  of  $P_4$  into  $\mathbb{F}^N$ such that  $f(V)\subseteq  S$;
    \item
    there  exists  a  $P_4$-regular  embedding  $f$  of  $P_4$  into  $\mathbb{F}^N$
     such that  $f(V)\subseteq  S$.  
    \end{enumerate}
    By  Example~\ref{ex-25-12-16-1},  
        the  followings  are  equivalent
        \begin{enumerate}[(a)']
        \item
          there  exists  a  simplicial  embedding    of  $\bigsqcup_n   P_4$  into  $\mathcal{M}$; 
        \item
        there  exists  a  $2$-regular  embedding  $f$  of  $\tilde  *_n   P_4$  into  $\mathbb{F}^N$
     such that  $\bigsqcup_n  f(V(P_4))\subseteq  S$;
     \item
     there  exists  a  $(\tilde  *_n   P_4)$-regular  embedding  $f$  of  $\tilde  *_n   P_4$  into  $\mathbb{F}^N$
     such that  $\bigsqcup_n  f(V(P_4))\subseteq  S$.
             \end{enumerate}
        By  Example~\ref{ex-25-12-16-2},  
        the  followings  are  equivalent
        \begin{enumerate}[(a)"]
          \item
        there  exists  a  simplicial  embedding    of  $*_n  P_4$  into  $\mathcal{M}$;  
    \item
    there  exists  a  $(\bigsqcup  _n  P_4)$-regular  embedding  $f$  of  $\bigsqcup  _n  P_4$  into  $\mathbb{F}^N$
     such that  $\bigsqcup_n  f(V(P_4))\subseteq  S$. 
        \end{enumerate}
    \end{example}

For  any  field  $\mathbb{F}$,  any  vector  space  $\mathbb{F}^N$  
and  any  finite  set  $S\subseteq  \mathbb{F}^N$,  
let  $|\mathcal{M}|$  be  the  geometric  realization  of  $\mathcal{M}\setminus\{\emptyset\}$  as  a  simplicial  complex.  
Consider  the  homotopy  types of  the  independence  complexes  of cycles  and  paths.  
We  have  the  next  two   examples.   

 \begin{example}
 Let  $C_n$  be  the  $n$-cycle  with  vertices  $\{v_1,v_2,\ldots,v_n\}$  
 and  edges  $\{(v_i,v_{i+1})\mid  1\leq  i\leq  n \}$   where  $v_{n+1}=v_1$.   
 Let  $P_n$  be  the  $n$-path   
   with  vertices  $\{v_1,v_2,\ldots,v_n\}$  
 and  edges  $\{(v_i,v_{i+1})\mid  1\leq  i\leq  n-1\}$.  
In  \cite{kozlov},  it  is  proved  that  
\begin{eqnarray*}
{\rm  Ind}(C_n)=\Sigma {\rm  Ind}(C_{n-3}),~~~~~~~ 
{\rm  Ind}(P_n)=\Sigma {\rm  Ind}(P_{n-3}), 
\end{eqnarray*}
which  agrees  with  \cite[Theorem~1.1]{split-ind}  and   \cite[Theorem~1.3]{zhwt-whh}.   
 By  a  direct   calculation,   
 \begin{eqnarray*}
  &{\rm  Ind}(C_4)=\bigsqcup_2  P_2\simeq  S^0, ~~~ 
   {\rm  Ind}(C_5)=C_5\simeq   S^1,\\
 & {\rm  Ind}(C_6)={\rm  clique ~complex~of~}   C_3\square  P_2  
 \simeq   \bigvee_2  S^1, \\
& {\rm  Ind}(P_3)=P_2\simeq  *, ~~~  {\rm  Ind}(P_4)=P_4\simeq   *,\\
& {\rm  Ind}(P_5)=C_3\cup  C_4 (C_3{\rm~and~}  C_4{\rm  have~a~common~edge}) \simeq  \bigvee_2  S^1.  
 \end{eqnarray*}
By  induction,  for  any  $m\geq  0$,   
\begin{eqnarray*}
{\rm  Ind}(C_n)\simeq  
\begin{cases}
S^m,  & n=3m+4\\
 S^{m+1}, &   n=3m+5\\
\bigvee_2  S^{m+1},   &n=3m+6,  
\end{cases}~~~~~~
{\rm  Ind}(P_n)\simeq  
\begin{cases}
*, & n=3m{\rm~or~} 3m+1\\
\bigvee_2  S^{m+1},  & n=3m+2.  
\end{cases}
\end{eqnarray*}
Therefore,  if  one  of  the  followings  is  satisfied
\begin{enumerate}[(a)]
\item
for  $n=3m+4$,  $S^m$  cannot  be  continuously  embedded in  $|\mathcal{M}|$, 
\item
for  $n=3m+5$,  $S^{m+1}$  cannot  be  continuously  embedded in  $|\mathcal{M}|$,
\item
for  $n=3m+6$,  $\bigvee_2  S^{m+1}$  cannot  be  continuously  embedded in  $|\mathcal{M}|$,
\end{enumerate}
then   
there  does  not  exist  any  $C_n$-regular  embedding  
$f$  of  $C_n$  into  $\mathbb{F}^N$  such that  $f(V)\subseteq  S$.  
Moreover,   if  $n=3m+2$  
and   $\bigvee_2  S^{m+1}$  cannot  be  continuously  embedded in  $|\mathcal{M}|$, 
then   
there  does  not  exist  any  $P_n$-regular  embedding  
$f$  of  $P_n$  into  $\mathbb{F}^N$  such that  $f(V)\subseteq  S$. 
    \end{example}

\begin{example}
   For  any  graph  $G$,  let  $G^r$  denote  the  $r$-th  distance  power  of  $G$
    with  the  same  vertex  set 
  where  two  vertices   are  adjacent  in  $G^r$  iff  their  distances  in  $G$ 
  is  smaller  than  or  equal to  $r$.     It  is  proved  in  \cite[Theorem~1.1]{split-ind}
  that for  any  $r\geq  1$  and  any $n\geq  5r+4$,  
  \begin{eqnarray*}
  {\rm  Ind}(C_n^r)\simeq  \Sigma^2 {\rm  Ind}(C_{n-(3r+3)}^r) \bigvee 
  X_{n,r}, 
  \end{eqnarray*} 
  where  $X_{n,r}$  is  given  explicitly  in  terms  of  ${\rm  Ind}(P_{n'}^r)$  for  some  $n'\leq  n$
  in \cite[Corollary~6.5]{split-ind}.  
  Let  $r=2$.  By  induction,  for  $n=9m+h$  where  $3\leq  h\leq  11$,  
    \begin{eqnarray*}
  {\rm  Ind}(C_n^2)\simeq  \Sigma^{2m} {\rm  Ind}(C_{h}^2) \bigvee \cdots 
  \end{eqnarray*} 
  Since  
  \begin{eqnarray*}
  {\rm  Ind}(C_{h}^2)=
  \begin{cases}
  \bigsqcup_3   P_2\simeq  \bigvee_2   S^0,  &  h=6\\
  C_7\simeq   S^1,  & h=7\\
  \simeq  \bigvee_5   S^1,  &h=8,   
  \end{cases}
  \end{eqnarray*}
  we  have  
   \begin{eqnarray*}
  {\rm  Ind}(C_{n}^2)\simeq  
  \begin{cases}
  (\bigvee_2   S^{2m})  \bigvee  \cdots,  &  n=9m+6\\
  S^{2m+1}  \bigvee  \cdots,  & n=9m+7\\
    (\bigvee_5   S^{2m+1} ) \bigvee  \cdots,  &n=9m+8.    
  \end{cases}
  \end{eqnarray*}
Therefore,  if  one  of  the  followings  is  satisfied
\begin{enumerate}[(a)]
\item
for  $n=9m+6$,  $\bigvee_2   S^{2m}$  cannot  be  continuously  embedded in  $|\mathcal{M}|$, 
\item
for  $n=9m+7$,  $ S^{2m+1}$  cannot  be  continuously  embedded in  $|\mathcal{M}|$,
\item
for  $n=9m+8$,  $\bigvee_5   S^{2m+1} $  cannot  be  continuously  embedded in  $|\mathcal{M}|$,
\end{enumerate}
then   
there  does  not  exist  any  $C_n^2$-regular  embedding  
$f$  of  $C_n^2$  into  $\mathbb{F}^N$  such that  $f(V)\subseteq  S$.  
\end{example}

From  the     last two  examples,  
we  see  that  the  homotopy  types   of  the      independence  complexes 
can  give  obstructions  for  the  regular embedding  problems  of  graphs.  
In  the  remaining  sections,  
we  will  give  obstructions  for  the  existence  of  regular  embeddings  of  graphs  by  using  
the  homology  of  the  (directed)  independence  complexes,  the  embedded  homology  
of  sub-hyper(di)graphs  of  the  (directed)  independence  complexes,  
some  Mayer-Vietoris  sequences 
 and  some  K\"unneth-type  formulae.

\section{Hypergraphs  and  hyperdigraphs}\label{s3}

In  this  section,  we  review  the  definitions  of  hypergraphs  and  hyperdigraphs.  
We  give  some  preliminaries  on  the  canonical  projections  
from  hyperdigraphs  to  the  underlying  hypergraphs.

Let  $V$  be  a    set  of  vertices.  
A  {\it  directed  $k$-hyperedge}  $\vec\sigma$  on  $V$  
is  an  ordered   $k$-tuple    
$(v_1,\ldots, v_k) $      
such  that  $v_1,\ldots,v_k\in   V$  are  distinct.  
A  {\it  $k$-uniform  hyperdigraph}  $\vec{\mathcal{H}}_k$  on  $V$ 
is  a  collection  of  directed  $k$-hyperedges.   
A  {\it  hyperdigraph}   $\vec{\mathcal{H}}$  on  $V$  is  a  union  
$\bigcup_{k\geq  1}\vec{\mathcal{H}}_k$ 
where  $\vec{\mathcal{H}}_k$  is  a   $k$-uniform  hyperdigraph  on  $V$  for  each  $k\geq  1$.  
A  {\it   $k$-hyperedge}  $ \sigma$  on  $V$  
is  an  unordered   $k$-tuple    
$\{v_1,\ldots, v_k\}$  such  that   $v_1,\ldots, v_k\in   V$  are  distinct.  
A  {\it   $k$-uniform   hypergraph}  $\mathcal{H}$  on  $V$ 
is  a  collection  of   $k$-hyperedges.   
A  {\it  hypergraph}   $  \mathcal{H} $  on  $V$  is  a  union  
$\bigcup_{k\geq  1} \mathcal{H}_k$ 
where  $ \mathcal{H}_k$  is  a   $k$-uniform  hypergraph  on  $V$  for  each  $k\geq  1$.

 Let  $\vec\sigma=(v_1,\ldots,v_k)$  be  a  directed  $k$-hyperedge. 
The  $k$-th  symmetric  group  $\Sigma_k$  acts  on  $\vec\sigma$  by  permuting  the  
order  of  the  vertices  of  $\vec\sigma$.   
 For  any  $s\in\Sigma_k$,
 let   $s(\vec\sigma)= (v_{s(1)},\ldots,v_{s(k)})$. 
The  $\Sigma_k$-orbit  of  $\vec\sigma$  is  
$
\Sigma_k(\vec\sigma)= \{s(\vec\sigma)\mid s\in\Sigma_k\}$,
which  will be  identified  with  the  $k$-hyperedge  
$\sigma=\{v_1,\ldots, v_k\}$.

Let   $\vec{\mathcal{H}}_k$   be  a  $k$-uniform  hyperdigraph. 
We  say  that  $\vec{\mathcal{H}}_k$   is  {\it  $\Sigma_k$-invariant}   
 if   $\Sigma_k(\vec\sigma)\subseteq  \vec{\mathcal{H}}_k$ 
  for any  $\vec\sigma\in  \vec{\mathcal{H}}_k$.    
Equivalently,  $\vec{\mathcal{H}}_k$   is  $\Sigma_k$-invariant   if   
it  is  a  disjoint  union  of   $\Sigma_k$-orbits.   
For  any 
     $\Sigma_k$-invariant  hyperdigraph  $\vec{\mathcal{H}}_k$, 
     the    orbit  space  
  $\vec{\mathcal{H}}_k/\Sigma_k$ is  a $k$-uniform  hypergraph  ${\mathcal{H}}_k$  
  such  that  the   hyperedges  of  ${\mathcal{H}}_k$  are  $\sigma=\Sigma_k(\vec\sigma)$,
    where 
   $\vec\sigma\in  \vec{\mathcal{H}}_k$.  
 In  this  case,  the   canonical  projection 
 $\pi_k:  \vec{\mathcal{H}}_k\longrightarrow  {\mathcal{H}}_k$
 gives  a  principal  $\Sigma_k$-bundle.

 Let $\vec{\mathcal{H}}_k$  be  any  $k$-uniform  hyperdigraph
 which  is  not  supposed to be  $\Sigma_k$-invariant.   
 We     choose  the  smallest   ambient   
   $\Sigma_k$-invariant $k$-uniform  hyperdigraph  ${\rm  Sym}(\vec{\mathcal{H}}_k)$
   containing  $\vec{\mathcal{H}}_k$.  
 Then  ${\rm  Sym}(\vec{\mathcal{H}}_k)$  consists  of  the  
   $\Sigma_k$-orbits  of  the  directed  hyperedges  in  $\vec{\mathcal{H}}_k$. 
   We  have the  canonical  projection  
   \begin{eqnarray}\label{eq-25-10-13-1}
   \pi_k:  {\rm  Sym}(\vec{\mathcal{H}}_k)\longrightarrow 
    {\rm  Sym}(\vec{\mathcal{H}}_k)/\Sigma_k. 
    \end{eqnarray} 
    With the  help  of  \cite[Lemma~3.1]{hdg}, 
    the  
     restriction     of  (\ref{eq-25-10-13-1})    
  to  $\vec{\mathcal{H}}_k$  induces  a   projection 
       \begin{eqnarray}\label{eq-25-9.21.1}
       \pi_k:  \vec{\mathcal{H}}_k\longrightarrow  {\mathcal{H}}_k,
       \end{eqnarray}
       where  
       \begin{eqnarray}\label{eq-25-10-13-5}
       {\mathcal{H}}_k=\pi _k(\vec{\mathcal{H}}_k)=  {\rm  Sym}(\vec{\mathcal{H}}_k)/\Sigma_k.
       \end{eqnarray} 
       Note that  in       (\ref{eq-25-9.21.1}),  the  followings  are  equivalent:   
       (1)  $\pi_k$  is   a  principal  $\Sigma_k$-bundle;  
        (2)  $\vec{\mathcal{H}}_k$ 
   is  $\Sigma_k$-invariant;      (3) $\vec{\mathcal{H}}_k={\rm  Sym}(\vec{\mathcal{H}}_k)$.  
          We  say  that   the  $k$-uniform  hypergraph  $\mathcal{H}_k$  
   is  the  {\it  underlying  hypergraph}  of  $\vec{\mathcal{H}}_k$   
   (or  the  {\it  reduced  hypergraph}  according  to  \cite[Subsection~3.2]{hdg}).

   In   general,  for  any  hyperdigraph  $\vec{\mathcal{H}}=\bigcup_{k\geq  1} \vec{\mathcal{H}}_k$,  
   let  $\mathcal{H}= \bigcup_{k\geq  1} \mathcal{H}_k$, 
   where  $\mathcal{H}_k$  is  the underlying  hypergraph of  $\vec{\mathcal{H}}_k$
   given  by  (\ref{eq-25-10-13-5})  
   for  each  $k\geq  1$.  
   We  say that  $\mathcal{H}$  is  the  underlying  hypergraph  of  $\vec{\mathcal{H}}$
   and  write  $\mathcal{H}=\pi_\bullet(\vec{\mathcal{H}})$.  
   We  say  that   $\vec{\mathcal{H}}$  is  {\it  $\Sigma_\bullet$-invariant} 
   if    $\vec{\mathcal{H}}_k$  is  $\Sigma_k$-invariant  for  each  $k\geq  1$.

        Let  $V$  and   $V'$  be  disjoint  sets.  
        Suppose  both  $V$ and  $V'$  have  total  order  $\prec$.  
        Then  we  have an  induced   total  order  on  $V\sqcup  V'$  such  that 
         (1)  it  is    the  original  total  order    restricted to  $V$  or   $V'$,   and  (2)
        $v\prec  v'$  for  any  $v\in  V$  and  any  $v'\in  V'$.  
        Let  $\vec \sigma=(v_1,\ldots,v_k)$ be  a  directed $k$-hyperedge  on  $V$  
        and  let  $\vec\sigma'=(v'_1,\ldots,v'_l)$  be a  directed  $l$-hyperedge  on  $V'$.  
        Define  their  {\it  join}  to  be  a  directed  $(k+l)$-hyperedge  on  $V\sqcup  V'$  
        given  by  the   Cartesian  product   of  coordinates 
        \begin{eqnarray*}
\vec  {\sigma}* \vec{\sigma}' = (v_1,\ldots,v_k,  v'_1,\ldots,v'_l).  
\end{eqnarray*}        
 Let  $\vec { \mathcal{H}}$   and  $ \vec{\mathcal{H}}'$  be   
 hyperdigraphs  on  $V$  and   $V'$  respectively.  
Define  the  {\it  join}  of  $\vec { \mathcal{H}}$   and  $ \vec{\mathcal{H}}'$ to  be  
a  hyperdigraph  on  $V\sqcup  V'$  given  by  
\begin{eqnarray}\label{eq-25-10-91}
\vec { \mathcal{H}}  * \vec { \mathcal{H}'}  =  \vec { \mathcal{H}} \sqcup \vec { \mathcal{H}'}
\sqcup \{\vec \sigma* \vec\sigma'\mid  \sigma\in \vec{\mathcal{H}}, \sigma'\in \vec{\mathcal{H}'}\}.  
\end{eqnarray}
 Let  $  \sigma=\{v_1,\ldots,v_k\}$ be  a   $k$-hyperedge  on  $V$  
        and  let  $ \sigma'=\{v'_1,\ldots,v'_l\}$  be a    $l$-hyperedge  on  $V'$.  
        Their  {\it  join}  is  a    $(k+l)$-hyperedge  on  $V\sqcup  V'$  given  
        by  their  disjoint  union  
        \begin{eqnarray*}
\sigma* \sigma' = \{v_1,\ldots,v_k,  v'_1,\ldots,v'_l\}.  
\end{eqnarray*}        
 Let  $ \mathcal{H}$   and  $\mathcal{H}'$  be   
 hypergraphs  on  $V$  and   $V'$  respectively.  
Their  {\it  join}  is    
a  hypergraph  on  $V\sqcup  V'$  given  by  
\begin{eqnarray}\label{eq-25-10-92}
 { \mathcal{H}}  *   { \mathcal{H}'}  =   { \mathcal{H}} \sqcup   { \mathcal{H}'}
\sqcup \{  \sigma*  \sigma'\mid  \sigma\in  {\mathcal{H}}, \sigma'\in  {\mathcal{H}'}\}.  
\end{eqnarray}
It  is  clear  that  the  join  of  hyper(di)graphs  satisfies 
the  commutativity  law 
\begin{eqnarray}\label{eq-25-10-19-11}
\vec {\mathcal{H}}  *   \vec{\mathcal{H}}'  \cong 
\vec{ \mathcal{H}}'  *  \vec{\mathcal{H}},~~~~~~  
 \mathcal{H}  *  \mathcal{H}'  \cong  \mathcal{H}'  *  \mathcal{H}  
\end{eqnarray}
and 
the  associativity  law 
\begin{eqnarray}\label{eq-25-10-19-12}
(\vec{\mathcal{H}}  *  \vec{\mathcal{H}}' ) * \vec{\mathcal{H}}''=
\vec{ \mathcal{H}}  * ( \vec{\mathcal{H}}'   * \vec{\mathcal{H}}''),~~~~~~
(\mathcal{H}  *  \mathcal{H}' ) * \mathcal{H}''= \mathcal{H}  * ( \mathcal{H}'   * \mathcal{H}''). 
\end{eqnarray}

\begin{lemma}
\label{pr-25-10-11-1}
\begin{enumerate}[(1)]
\item
Let  $\vec{\mathcal{H}}$  and  $\vec{\mathcal{H}}'$  be  hyperdigraphs  on  $V$.  
Then  
\begin{eqnarray}
  \pi_\bullet(\vec{\mathcal{H}}\cup \vec{\mathcal{H}}')  &=& 
 \pi_\bullet(\vec{\mathcal{H}})\cup \pi_\bullet(\vec{\mathcal{H}}'),
 \label{eq-25-10-11-2}\\ 
 \pi_\bullet(\vec{\mathcal{H}}\cap \vec{\mathcal{H}}')  &\subseteq& 
 \pi_\bullet(\vec{\mathcal{H}})\cap \pi_\bullet(\vec{\mathcal{H}}')
  \label{eq-25-10-11-3}
\end{eqnarray}
and  the  equality in  (\ref{eq-25-10-11-3})     
is  satisfied  if  both $\vec{\mathcal{H}}$  and  $ \vec{\mathcal{H}}'$  are $\Sigma_\bullet$-invariant; 
\item
Let  $\vec { \mathcal{H}}$   and  $ \vec{\mathcal{H}}'$  be   
 hyperdigraphs  on  $V$  and   $V'$  respectively  where  $V$  and  $V'$ are  disjoint.  
 Then   
 \begin{eqnarray} \label{eq-25-10-11-5}
\pi_\bullet(\vec{\mathcal{H}}* \vec{\mathcal{H}}')  = 
 \pi_\bullet(\vec{\mathcal{H}})* \pi_\bullet(\vec{\mathcal{H}}').    
\end{eqnarray}
\end{enumerate}
\end{lemma}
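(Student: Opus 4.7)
The plan is to verify each identity by unwinding the definition of the underlying hypergraph in (\ref{eq-25-10-13-5}), namely $\mathcal{H}_k = {\rm Sym}(\vec{\mathcal{H}}_k)/\Sigma_k$, and tracking how individual directed hyperedges are sent to their unordered images under $\pi_k$. Since $\pi_\bullet$ is defined arity by arity, it suffices to fix an arbitrary $k\geq 1$ and argue at the level of $k$-uniform hyper(di)graphs.

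For part (1), the equality (\ref{eq-25-10-11-2}) is a direct unfolding: an unordered $k$-hyperedge $\sigma$ lies in $\pi_k(\vec{\mathcal{H}}_k \cup \vec{\mathcal{H}}'_k)$ exactly when there is some directed representative $\vec\sigma \in \vec{\mathcal{H}}_k$ or $\vec\sigma \in \vec{\mathcal{H}}'_k$ with $\pi_k(\vec\sigma) = \sigma$, i.e. when $\sigma \in \pi_k(\vec{\mathcal{H}}_k) \cup \pi_k(\vec{\mathcal{H}}'_k)$. The inclusion (\ref{eq-25-10-11-3}) is almost as easy: given $\sigma = \pi_k(\vec\sigma)$ for some $\vec\sigma \in \vec{\mathcal{H}}_k \cap \vec{\mathcal{H}}'_k$, the same $\sigma$ lies in both $\pi_k(\vec{\mathcal{H}}_k)$ and $\pi_k(\vec{\mathcal{H}}'_k)$. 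The subtlety, and the only place strict inclusion can occur, is the converse: distinct directed representatives of the same unordered $\sigma$ might lie in $\vec{\mathcal{H}}_k$ and $\vec{\mathcal{H}}'_k$ separately, while no single representative belongs to both. Under $\Sigma_k$-invariance, however, each of $\vec{\mathcal{H}}_k$ and $\vec{\mathcal{H}}'_k$ is closed under permutation, so if $\sigma \in \pi_k(\vec{\mathcal{H}}_k) \cap \pi_k(\vec{\mathcal{H}}'_k)$ then the entire $\Sigma_k$-orbit $\Sigma_k(\vec\sigma)$ is contained in both; in particular their intersection is nonempty, yielding the reverse inclusion and hence the equality.

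For part (2), the disjointness of $V$ and $V'$ is decisive. Any directed hyperedge in $\vec{\mathcal{H}}*\vec{\mathcal{H}}'$ is either already in one of the summands $\vec{\mathcal{H}}$, $\vec{\mathcal{H}}'$ or is a concatenation $\vec\sigma * \vec\sigma' = (v_1,\ldots,v_k,v'_1,\ldots,v'_l)$ whose coordinates are automatically distinct because $V \cap V' = \emptyset$. Consequently $\pi_{k+l}(\vec\sigma * \vec\sigma') = \{v_1,\ldots,v_k,v'_1,\ldots,v'_l\} = \pi_k(\vec\sigma) * \pi_l(\vec\sigma')$. Comparing the three-part decomposition (\ref{eq-25-10-91}) of $\vec{\mathcal{H}}*\vec{\mathcal{H}}'$ summand-by-summand with the decomposition (\ref{eq-25-10-92}) of $\pi_\bullet(\vec{\mathcal{H}})*\pi_\bullet(\vec{\mathcal{H}}')$ then yields (\ref{eq-25-10-11-5}).

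The principal point to keep straight is the distinction between \emph{some} directed representative versus \emph{every} directed representative in the intersection clause of part (1); this is exactly what the $\Sigma_\bullet$-invariance hypothesis resolves, and once it is invoked the remaining bookkeeping is routine.
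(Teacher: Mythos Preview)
Your proof is correct and follows essentially the same approach as the paper's. The paper declares (\ref{eq-25-10-11-2}), (\ref{eq-25-10-11-3}), and (\ref{eq-25-10-11-5}) ``direct'' and, for the equality in (\ref{eq-25-10-11-3}), packages the orbit argument as the existence of a principal $\Sigma_k$-bundle $\Sigma_k\to \vec{\mathcal{H}}_k\cap\vec{\mathcal{H}}'_k\to \pi_k(\vec{\mathcal{H}}_k)\cap\pi_k(\vec{\mathcal{H}}'_k)$; your explicit orbit reasoning is the same content unpacked.
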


\begin{proof}
(1)  
It  is  direct  to  have   (\ref{eq-25-10-11-2}) - (\ref{eq-25-10-11-5}). 
Suppose  in  addition  
that  both $\vec{\mathcal{H}}$  and  $ \vec{\mathcal{H}}'$  are $\Sigma_\bullet$-invariant. 
Then  both  $\vec{\mathcal{H}}\cup \vec{\mathcal{H}}'$   and  
$\vec{\mathcal{H}}\cap \vec{\mathcal{H}}'$  are  $\Sigma_\bullet$-invariant. 
Thus  we  have a  principal  $\Sigma_k$-bundle 
\begin{eqnarray*}
\Sigma_k\longrightarrow  \vec{\mathcal{H}}_k\cap \vec{\mathcal{H}}'_k\longrightarrow 
 \pi_k(\vec{\mathcal{H}}_k)\cap \pi_k(\vec{\mathcal{H}}'_k)
\end{eqnarray*}
for  any  $k\geq  1$.  
Hence  
$
\pi_k(\vec{\mathcal{H}}_k\cap \vec{\mathcal{H}}'_k)=  \pi_k(\vec{\mathcal{H}}_k)\cap \pi_k(\vec{\mathcal{H}}'_k)
$ 
for  any  $k\geq  1$,  which  implies  the  equality  in    (\ref{eq-25-10-11-3}).  
\end{proof}

Let  $\vec{\mathcal{H}}$  be  a  hyperdigraph on  $V$  and  
let   $\vec{\mathcal{H}}'$  be  a  hyperdigraph on  $V'$.   
We  define  a  {\it  morphism}  of  hyperdigraphs $\vec{\varphi}:  \vec{\mathcal{H}}\longrightarrow  \vec{\mathcal{H}'}$  
to  be   a  map  $\varphi:   V\longrightarrow  V'$  such  that  
for  any  directed  hyperedge  $\vec\sigma\in  \vec{\mathcal{H}}$,  
say  $\vec\sigma=(v_1,\ldots,  v_k)$,  
its  image  
\begin{eqnarray*}
\vec{\varphi}(\vec\sigma)=(\varphi(v_1),\ldots,  \varphi(v_k))
\end{eqnarray*}
 is  a directed  hyperedge  of  $ \vec{\mathcal{H}}'$
 (cf.  \cite[Subsection~3.1]{hdg}).  
 Let  $\mathcal{H}$  and  $\mathcal{H}'$   be  the  underlying  hypergraphs  
 of  $\vec{\mathcal{H}}$     and  
    $\vec{\mathcal{H}}'$   respectively.  
    A  {\it  morphism}  of  hypergraphs  
    ${\varphi}:   \mathcal{H} \longrightarrow   \mathcal{H}' $  
is  a  map  $\varphi:   V\longrightarrow  V'$  such  that  
for  any   hyperedge  $ \sigma\in   \mathcal{H} $,  
where  $ \sigma=\{v_1,\ldots,  v_k\}$,  
its  image  
\begin{eqnarray*}
{\varphi}( \sigma)=\{\varphi(v_1),\ldots,  \varphi(v_k)\}
\end{eqnarray*}
 is  a   hyperedge  of  $  \mathcal{H}'$.  
 Any   morphism  of  hyperdigraphs  
 $\vec{\varphi}:  \vec{\mathcal{H}}\longrightarrow  \vec{\mathcal{H}'}$
 induces  a  morphism  between   the   underlying  hypergraphs
  ${\varphi}:  \mathcal{H} \longrightarrow   \mathcal{H}' $  
  such  that  the  diagram  commutes 
  \begin{eqnarray}\label{eq-1103-1}
  \xymatrix{
  \vec{\mathcal{H}}\ar[r]^-{\vec{\varphi}}  \ar[d]_-{\pi}  &  \vec{\mathcal{H}'} \ar[d]^-{\pi}\\
  \mathcal{H} \ar[r]^-{{\varphi}}  &  \mathcal{H}'.  
  }
  \end{eqnarray}

   \section{Simplicial  complexes,  directed  simplicial  complexes  and  their  homology}\label{s4}

   In  this  section,   
   we  give  some  preliminaries  
   on  the  homology  of  simplicial  complexes  and  directed  simplicial  complexes.  
    We  discuss  about a  canonical  homomorphism from  the  homology  
    of  a directed  simplicial  complex to 
    the  homology  of   the  underlying  simplicial  complex  in  Proposition~\ref{le-25-10-5}.  
    We  formalize  some  Mayer-Vietoris  sequences  in  
   Proposition~\ref{pr-10-11-mv-1}
   and  some  K\"unneth-type  formulae  in  Proposition~\ref{pr-10-12-kf-1}.

A  {\it  directed  simplicial  complex}  $\vec{ \mathcal{K}}$  on   $V$  
is  a  hyperdigraph    such  that  
if   
$\vec\sigma\in\vec {\mathcal{K}}$  is  a  directed  hyperedge  and
  $\vec \tau$    is  a nonempty  subsequence  of  $\vec  \sigma$,  
then  $\vec \tau\in \vec{\mathcal{K}}$.  
We  call  a  directed  $k$-hyperedge  in   $\vec{ \mathcal{K}}$  a
{\it  directed    
$(k-1)$-simplex}.  
A  {\it   simplicial  complex}  $ \mathcal{K} $  on   $V$  
is  a  hypergraph    such  that  
 if   
$ \sigma\in \mathcal{K} $  is  a  hyperedge  and   $  \tau$   is a   nonempty  subset
of  $\sigma$,  
 then    $  \tau\in {\mathcal{K}}$  (cf. \cite[p. 107]{hatcher}).  
 We  call  a    $k$-hyperedge  in   $  \mathcal{K} $  a
{\it    
$(k-1)$-simplex}   (cf. \cite[p. 103]{hatcher}).

Let   $\vec{\mathcal{K}}$  and  
$\vec{\mathcal{K}}'$  be     directed  simplicial  complexes  on  $V$  and  $V'$  respectively.  
We  call  a  morphism  of  hyperdigraphs
  $\vec{\varphi}:  \vec{\mathcal{K}}\longrightarrow \vec{\mathcal{K}}'$  is  
a  {\it  directed  simplicial  map}.  
Let   $ \mathcal{K} $  and  
$  \mathcal{K}'$  be   the   underlying   simplicial  complexes  
of  $\vec{\mathcal{K}}$  and  
$\vec{\mathcal{K}}'$  respectively.  By (\ref{eq-1103-1}),  
 any   directed  simplicial  map    
 $\vec{\varphi}:  \vec{\mathcal{K}}\longrightarrow  \vec{\mathcal{K}'}$
 induces  a  simplicial  map   between   the  underlying  simplicial  complexes  
  ${\varphi}:  \mathcal{K} \longrightarrow   \mathcal{K}' $  
  such  that  the  diagram  commutes 
  \begin{eqnarray*}
  \xymatrix{
  \vec{\mathcal{K}}\ar[r]^-{\vec{\varphi}}  \ar[d]_-{\pi}  &  \vec{\mathcal{K}'} \ar[d]^-{\pi}\\
  \mathcal{K} \ar[r]^-{{\varphi}}  &  \mathcal{K}'.  
  }
  \end{eqnarray*}

\begin{lemma}\label{le-25.9.1}
Let  $\vec {\mathcal{H}}$  be  a  hyperdigraph and  let $\mathcal{H}=\pi_\bullet(\vec{\mathcal{H}})$. 
\begin{enumerate}[(1)]
\item
If  $\vec{\mathcal{H}}$  is a   directed  simplicial  complex, 
then  $\mathcal{H}$  is  a simplicial  complex;  
\item
If  $\vec{\mathcal{H}}$  is   $\Sigma_\bullet$-invariant  and  
 $\mathcal{H}$  is  a  simplicial  complex,  then 
$\vec{\mathcal{H}}$  is a   directed  simplicial  complex.  
\end{enumerate}
\end{lemma}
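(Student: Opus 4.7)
Both parts unfold almost tautologically from the definitions, once the right lifts or projections are chosen. For part (1), given $\sigma \in \mathcal{H}$ and a nonempty subset $\tau \subseteq \sigma$, my plan is to lift: pick a directed hyperedge $\vec{\sigma} = (v_1, \ldots, v_k) \in \vec{\mathcal{H}}$ whose underlying set is $\sigma$ (which exists by the definition of $\pi_\bullet$ via $\mathcal{H}_k = \pi_k(\vec{\mathcal{H}}_k)$), let $\vec{\tau}$ be the subsequence of $\vec{\sigma}$ supported on those indices $i$ with $v_i \in \tau$, apply the directed simplicial complex axiom to conclude $\vec{\tau} \in \vec{\mathcal{H}}$, then project back to obtain $\tau = \pi(\vec{\tau}) \in \mathcal{H}$.

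For part (2), the plan is to project first and then lift. Given $\vec{\sigma} \in \vec{\mathcal{H}}$ and a nonempty subsequence $\vec{\tau}$, the corresponding underlying sets satisfy $\tau \subseteq \sigma$ with $\sigma \in \mathcal{H}$. Since $\mathcal{H}$ is a simplicial complex, $\tau \in \mathcal{H}$, so at least \emph{some} directed hyperedge with underlying set $\tau$ belongs to $\vec{\mathcal{H}}$. This is where $\Sigma_\bullet$-invariance enters: under that hypothesis, $\vec{\mathcal{H}} = \mathrm{Sym}(\vec{\mathcal{H}})$ and $\pi$ identifies $\mathcal{H}$ with $\vec{\mathcal{H}}/\Sigma_\bullet$, so the fibre of $\pi$ over $\tau$ is a full $\Sigma_l$-orbit and every ordering of $\tau$ — including the specific subsequence $\vec{\tau}$ inherited from $\vec{\sigma}$ — lies in $\vec{\mathcal{H}}$.

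The main subtlety is localised to (2): one must guard against deducing only that \emph{some} ordering of $\tau$ is in $\vec{\mathcal{H}}$, when the directed simplicial axiom demands the specific subsequence $\vec{\tau}$ with the order inherited from $\vec{\sigma}$. The $\Sigma_\bullet$-invariance hypothesis is exactly what closes this gap, and its necessity is visible from the potential failure of (2) without it: one could take a single non-symmetric directed simplex whose underlying set generates a simplicial complex, yet whose subsequences have not been added to the hyperdigraph. Apart from this point, both parts are immediate from the definition of $\pi_\bullet$ in Section~\ref{s3} and the axioms of (directed) simplicial complexes.
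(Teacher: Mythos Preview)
Your proposal is correct and follows essentially the same route as the paper: for (1) you lift $\sigma$ to $\vec{\sigma}$, extract the subsequence $\vec{\tau}$ supported on $\tau$, and project; for (2) you project $\vec{\sigma}\supseteq\vec{\tau}$ to $\sigma\supseteq\tau$, use the simplicial-complex axiom on $\mathcal{H}$, and then invoke $\Sigma_\bullet$-invariance to recover the specific ordering $\vec{\tau}$. Your explicit remark that invariance is needed to pass from ``some ordering of $\tau$ lies in $\vec{\mathcal{H}}$'' to ``$\vec{\tau}$ itself lies in $\vec{\mathcal{H}}$'' is exactly the point the paper isolates at the start of its proof of (2).
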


\begin{proof}
(1)  Suppose  $\vec{\mathcal{H}}$  is a   directed  simplicial  complex.  
 Let  $\sigma\in  \mathcal{H}$.  
 Choose   $\vec \sigma\in \vec{\mathcal{H}}$  such  that  
 $\pi(\vec\sigma)=\sigma$.  
 Then 
 for  any  nonempty  subset  $\tau$  of  $\sigma$,  
 there  exists a  nonempty  
 subsequence  $\vec  \tau$  of  $\vec \sigma$  such  that  
 $\pi(\vec\tau)=\tau$.  
  Thus 
 $\vec\tau\in \vec{\mathcal{H}}$.  
 Hence   $\tau\in \mathcal{H}$. 
   Consequently,  $\mathcal{H}$  is  a simplicial  complex.

(2)  Suppose $\vec{\mathcal{H}}$  is   $\Sigma_\bullet$-invariant.  
Then  for  any  $\eta\in \mathcal{H}$  and    
     any  directed  hyperedge  $\vec\eta$  on   $V$    
  such  that  $\pi(\vec\eta)=\eta$,  we  have  $\vec \eta\in \vec{\mathcal{H}}$.    
Suppose    
 $\mathcal{H}$  is  a  simplicial  complex.  
 Let  $\vec\sigma\in  \vec{\mathcal{H}}$  and  let  $\sigma=\pi(\vec\sigma)$. 
 Then  $\sigma\in \mathcal{H}$.   
For  any  nonempty  subsequence  $\vec\tau$  of  $\vec\sigma$,  
   $\tau=\pi(\vec\tau)$  is  a  nonempty  subset  of  $\sigma$.  
Thus    $\tau\in \mathcal{H}$,  which  implies  $\vec\tau\in \vec{\mathcal{H}}$.     
Consequently,   $\vec{\mathcal{H}}$  is a   directed  simplicial  complex.  
\end{proof}

Let  $R$  be  a   principal ideal domain.  
Let  $\vec {\mathcal{K}}$  be  a  directed  simplicial  complex  on  $V$.  
Let  $C_{k}(\vec {\mathcal{K}};R)$  be  the  free  $R$-module  spanned  by  all  the  
directed  $k$-simplices  of  $\vec {\mathcal{K}}$.  
Consider  the  $R$-linear  map      
 $\vec  {\partial}_k:   C_{k }(\vec {\mathcal{K}};R)\longrightarrow C_{k-1}(\vec {\mathcal{K}};R)$  
given  by  
\begin{eqnarray}\label{eq-25-10-3}
\vec  {\partial}_k(v_0,\ldots,v_k)= \sum_{i=0}^k (-1)^{i}  (v_0,\ldots,\widehat{v_i},\ldots, v_k). 
\end{eqnarray}
\begin{lemma}\label{le-25-10-13-7}
 $\vec{\partial}_{k-1}\circ\vec{\partial}_k=0$  thus  
 $C_\bullet(\vec {\mathcal{K}};R)=\{C_{k}(\vec {\mathcal{K}};R), \partial_k\}_{k\geq  0}$ 
is  a  chain  complex.  
\end{lemma}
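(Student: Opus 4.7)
The plan is to verify $\vec{\partial}_{k-1}\circ\vec{\partial}_k=0$ directly on each generator $(v_0,\ldots,v_k)$ of $C_k(\vec{\mathcal{K}};R)$ by the standard face-pairing argument that underlies the proof for ordinary simplicial chains. The only thing to check beyond the classical case is that the formula (\ref{eq-25-10-3}) makes sense on a directed simplicial complex: since $\vec{\mathcal{K}}$ is a hyperdigraph closed under passage to nonempty subsequences (Lemma~\ref{le-25.9.1} is not needed here, just the definition), every face $(v_0,\ldots,\widehat{v_i},\ldots,v_k)$ lies in $\vec{\mathcal{K}}$, so $\vec{\partial}_k$ indeed takes $C_k(\vec{\mathcal{K}};R)$ into $C_{k-1}(\vec{\mathcal{K}};R)$.

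First I would apply $\vec{\partial}_k$ via (\ref{eq-25-10-3}) to get $\sum_{i=0}^k(-1)^i(v_0,\ldots,\widehat{v_i},\ldots,v_k)$, and then apply $\vec{\partial}_{k-1}$ to each summand. The crucial bookkeeping is that after deleting $v_i$, the vertex $v_j$ occupies position $j$ if $j<i$ and position $j-1$ if $j>i$, so applying $\vec{\partial}_{k-1}$ contributes signs $(-1)^j$ and $(-1)^{j-1}$ respectively. This yields
\[
\vec{\partial}_{k-1}\vec{\partial}_k(v_0,\ldots,v_k)=\sum_{j<i}(-1)^{i+j}\omega_{j,i}+\sum_{i<j}(-1)^{i+j-1}\omega_{i,j},
\]
where $\omega_{a,b}=(v_0,\ldots,\widehat{v_a},\ldots,\widehat{v_b},\ldots,v_k)$ for $a<b$.

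Next I would reindex both sums by the unordered pair of deleted positions $\{a,b\}$ with $a<b$. In the first sum set $(a,b)=(j,i)$, and in the second set $(a,b)=(i,j)$; each $\omega_{a,b}$ then appears exactly twice, once with sign $(-1)^{a+b}$ and once with sign $(-1)^{a+b-1}$, and the two contributions cancel. Hence $\vec{\partial}_{k-1}\vec{\partial}_k$ vanishes on every generator, and therefore on all of $C_k(\vec{\mathcal{K}};R)$ by $R$-linearity, so $\{C_k(\vec{\mathcal{K}};R),\vec{\partial}_k\}_{k\geq 0}$ is a chain complex.

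The only place a routine error could hide is the positional shift used to read off the sign $(-1)^{j-1}$ for $j>i$, so I would present that step carefully. Apart from this sign bookkeeping, the argument is identical to the classical simplicial case; no use of the $\Sigma_k$-action or of the underlying hypergraph $\mathcal{H}=\pi_\bullet(\vec{\mathcal{K}})$ is needed, which is consistent with the fact that later comparison maps between $C_\bullet(\vec{\mathcal{K}};R)$ and $C_\bullet(\mathcal{K};R)$ will be built on top of this construction.
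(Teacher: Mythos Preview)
Your proof is correct and follows essentially the same route as the paper: both compute $\vec{\partial}_{k-1}\vec{\partial}_k$ on a generator, split the double sum according to whether the second deleted index lies below or above the first (yielding signs $(-1)^{i+j}$ and $(-1)^{i+j-1}$), and cancel the two sums pairwise before invoking $R$-linearity. Your additional remark that $\vec{\partial}_k$ is well-defined because directed simplicial complexes are closed under subsequences is a welcome clarification that the paper leaves implicit.
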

\begin{proof}
Let  $(v_0,v_1,\ldots,v_k)$  be  a  generator  of $C_{k}(\vec {\mathcal{K}};R)$.   
It  is   direct    that 
\begin{eqnarray*}
\vec{\partial}_{k-1}\circ\vec{\partial}_k(v_0,v_1,\ldots,v_k)&=&
\sum_{i=0}^k (-1)^{i}  \vec{\partial}_{k-1} (v_0,\ldots,\widehat{v_i},\ldots, v_k)\\
&=&\sum_{i=0}^k (-1)^{i}  \sum_{j=0}^{i-1}(-1)^j
 (v_0,\ldots,\widehat{v_j},  \ldots,\widehat{v_i},\ldots, v_k)\\
&& + \sum_{i=0}^k (-1)^{i} \sum_{j=i+1}^{k}(-1)^{j-1}  (v_0,\ldots,\widehat{v_i},  \ldots,\widehat{v_j},\ldots, v_k)\\
&=&\sum_{0\leq  j<i\leq  k}(-1)^{i+j} (v_0,\ldots,\widehat{v_j},  \ldots,\widehat{v_i},\ldots, v_k)\\
&& + \sum_{0\leq  i<j\leq  k} (-1)^{i+j-1} (v_0,\ldots,\widehat{v_i},  \ldots,\widehat{v_j},\ldots, v_k)\\
&=&0. 
\end{eqnarray*} 
 With  the  help  of  the  $R$-linearity  of  $\vec{\partial}_{k-1}\circ\vec{\partial}_k$,
   the  proof  follows.   
\end{proof}
By  Lemma~\ref{le-25.9.1}~(1),  
the  underlying  hypergraph  of  $\vec{\mathcal{K}}$  is 
 a  simplicial  complex  $\mathcal{K}$  on   $V$.  
Let  $C_{k }( \mathcal{K};R)$  be  the  free  $R$-module  spanned  by  all  the  
  $k$-simplices  of  $ \mathcal{K}$.  
Let  $\prec$  be  a  total  order  on  $V$.  
We  have a   chain  complex 
 $C_\bullet(  {\mathcal{K}};R)=\{C_{k}( {\mathcal{K}};R), \partial_k\}_{k\geq  0}$
 with  the  boundary  map        
 $  {\partial}_k:   C_{k }( {\mathcal{K}};R)\longrightarrow C_{k-1}( {\mathcal{K}};R)$  
given  by  
\begin{eqnarray}\label{eq-25-10-4}
  {\partial}_k\{v_0,\ldots,v_k\}= \sum_{i=0}^k (-1)^{i}  \{v_0,\ldots,\widehat{v_i},\ldots, v_k\} 
\end{eqnarray}
for  any  $k$-simplices  $\{v_0,\ldots,v_k\}\in \mathcal{K}$  with  $v_0\prec \cdots\prec  v_k$.     
 The  projection 
 $\pi_k:  \vec{ \mathcal{K}}_k\longrightarrow  \mathcal{K}_k$  induces  
 an  $R$-linear  projection
 \begin{eqnarray}\label{eq-10-1-9}
 (\pi_k)_*:  C_{k-1}(\vec{ \mathcal{K}};R)\longrightarrow  C_{k-1}( \mathcal{K};R) 
 \end{eqnarray}
given  by 
\begin{eqnarray}\label{eq-25-10-5}
(\pi_k)_\#(v_{s(0)},\ldots,v_{s(k-1)})=
{\rm  sgn}(s)\{v_0,\ldots,v_{k-1}\}
\end{eqnarray}
 for  any  directed  $(k-1)$-simplices  $(v_0,\ldots,v_{k-1})\in \vec{ \mathcal{K}}_k$
 satisfying   $v_0\prec  \cdots\prec  v_{k-1}$  and  any  $s\in \Sigma_k$  permuting  
 the  order  of  
 $0,\ldots,k-1$.  
 Here ${\rm  sgn}(s)$  is  $ 1$  if  $s$  is  an  even  permutation  and  is  $-1$  if  $s$  is  an  odd  permutation.  
 The  next  proposition  is  extracted  from  \cite{hdg}.  
 
 \begin{proposition}\label{le-25-10-5}
 Let  $\vec{\mathcal{K}}$  be  a  directed  simplicial  complex  on  $V$
and  let  $\mathcal{K}$  be  its  underlying  simplicial  complex.  
 Then  we  have a    surjective  chain  map 
 \begin{eqnarray}\label{eq-25-10-7}
 (\pi_{\bullet+1})_\#:   C_\bullet(\vec {\mathcal{K}};R)\longrightarrow  C_\bullet({\mathcal{K}};R)
\end{eqnarray}
    and   consequently  an  induced      homomorphism  of  homology   
\begin{eqnarray}\label{eq-25-10-8}
 (\pi_{\bullet+1})_*:   H_\bullet(\vec {\mathcal{K}};R)\longrightarrow  H_\bullet({\mathcal{K}};R).  
\end{eqnarray}
Moreover,    (\ref{eq-25-10-7})   and   (\ref{eq-25-10-8})  are   functorial  with  respect  to 
directed  simplicial  maps  between  directed  simplicial  complexes   
 and  their  induced  simplicial  maps   between  the  underlying  simplicial  complexes.  
 \end{proposition}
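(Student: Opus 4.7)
The plan is to check the chain map property on generators, deduce surjectivity directly from the surjectivity of $\pi_\bullet$, pass to homology, and then verify functoriality from the commutative square \eqref{eq-1103-1} at the chain level.

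First I would verify that $(\pi_{\bullet+1})_{\#}$ is a chain map by checking the identity
\begin{eqnarray*}
(\pi_{k})_{\#}\circ \vec{\partial}_{k+1} = \partial_{k+1}\circ (\pi_{k+1})_{\#}
\end{eqnarray*}
on an arbitrary generator $(v_{s(0)},\ldots,v_{s(k)})\in \vec{\mathcal{K}}_{k+1}$, where $v_0\prec v_1\prec \cdots \prec v_k$ and $s\in\Sigma_{k+1}$. Applying \eqref{eq-25-10-3} and then $(\pi_{k})_{\#}$ via \eqref{eq-25-10-5}, the left-hand side equals
\begin{eqnarray*}
\sum_{i=0}^{k}(-1)^{i}\,\mathrm{sgn}(\tilde{s}_i)\,\{v_0,\ldots,\widehat{v_{s(i)}},\ldots,v_k\},
\end{eqnarray*}
where $\tilde{s}_i\in\Sigma_{k}$ is the unique permutation that sorts the subsequence $(v_{s(0)},\ldots,\widehat{v_{s(i)}},\ldots,v_{s(k)})$ into increasing order. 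By \eqref{eq-25-10-4}, the right-hand side equals
\begin{eqnarray*}
\mathrm{sgn}(s)\sum_{j=0}^{k}(-1)^{j}\,\{v_0,\ldots,\widehat{v_j},\ldots,v_k\}.
\end{eqnarray*}
Re-indexing the left-hand side by $j=s(i)$, the two expressions agree if and only if
\begin{eqnarray*}
\mathrm{sgn}(\tilde{s}_i)=\mathrm{sgn}(s)\cdot(-1)^{i+s(i)}
\end{eqnarray*}
for every $i$.

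The main step, and the only one requiring care, is this sign identity. I plan to prove it by an inversion count: deleting the entry at position $i$ (whose value is $j=s(i)$) from the permutation $s$ decreases the number of inversions by exactly
\begin{eqnarray*}
|\{p<i:s(p)>j\}|+|\{q>i:s(q)<j\}|.
\end{eqnarray*}
Using the two bookkeeping identities $|\{p<i:s(p)<j\}|+|\{p<i:s(p)>j\}|=i$ and $|\{p:s(p)<j\}|=j$, one shows that this count is congruent to $i+j\pmod 2$, which gives the desired sign identity. A direct verification on the case $k=2$, $s=(0\;2)$ confirms all signs match, which is a useful sanity check for the general calculation.

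Once the chain map property is established, surjectivity of $(\pi_{k+1})_{\#}$ is immediate: for every $k$-simplex $\{v_0,\ldots,v_k\}\in\mathcal{K}$, by definition of $\mathcal{K}=\pi_\bullet(\vec{\mathcal{K}})$ together with Lemma~\ref{le-25.9.1}(1) there exists $\vec{\sigma}\in\vec{\mathcal{K}}_{k+1}$ with $\pi_{k+1}(\vec{\sigma})=\{v_0,\ldots,v_k\}$, and then $(\pi_{k+1})_{\#}(\vec{\sigma})=\pm\{v_0,\ldots,v_k\}$. The induced homomorphism \eqref{eq-25-10-8} on homology follows automatically from the chain map property. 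For functoriality, given a directed simplicial map $\vec{\varphi}:\vec{\mathcal{K}}\to\vec{\mathcal{K}}'$ inducing $\varphi:\mathcal{K}\to\mathcal{K}'$, the commutative square \eqref{eq-1103-1} together with the formula \eqref{eq-25-10-5} (noting that morphisms of hyperdigraphs preserve distinctness of vertices in a hyperedge, so no degeneracy issues arise) yields $(\pi_{k+1})_{\#}\circ \vec{\varphi}_{\#}=\varphi_{\#}\circ(\pi_{k+1})_{\#}$, from which functoriality on homology is immediate.
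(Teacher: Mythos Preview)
Your proposal is correct and follows essentially the same route as the paper's own proof: both verify the chain map identity on generators by establishing the sign relation $\mathrm{sgn}(\tilde{s}_i)=(-1)^{i+s(i)}\mathrm{sgn}(s)$ via an inversion count, obtain surjectivity from $\mathcal{K}=\pi_\bullet(\vec{\mathcal{K}})$, and deduce functoriality from the commutative square \eqref{eq-1103-1} at the chain level. The only discrepancy is a harmless indexing slip in your displayed identity: with the paper's conventions the relation to check on a generator of $C_k$ is $(\pi_k)_\#\circ\vec{\partial}_k=\partial_k\circ(\pi_{k+1})_\#$, not with subscript $k+1$ on the boundary maps; your subsequent computation uses the correct maps, so this does not affect the argument.
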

 
 \begin{proof}
 The  proof  is  analogous  with  \cite[Lemma~3.8  and  Theorem~3.9]{hdg}.
 We  give  the  detailed  proof  for  completeness.   
 Let  $v_0 \prec  v_1\prec \cdots  \prec  v_k$  be  vertices  in  $V$.
 Let  $s\in \Sigma_{k+1}$   permuting  the  order  of  $v_0,v_1,\ldots,v_k$.  
 Suppose  $(v_{s(0)},v_{s(1)},\ldots,v_{s(k)}) \in  \vec{\mathcal{K}}$.  
 For  any  $0\leq  i\leq  k$,  
 by  counting  the  number  of  inversions  in 
 $(  {s(0)}, \ldots,  \widehat{ {s(i)}}, \ldots,  {s(k)})$,  we  have  
 \begin{eqnarray*}
 {\rm  sgn}  {{0,\ldots, \widehat{s(i)},\ldots,k}\choose{s(0),\ldots, \widehat{s(i)},\ldots,s(k)}}
 =(-1)^{2k-i-s(i)}  {\rm  sgn}(s).  
 \end{eqnarray*}
 Thus 
  \begin{eqnarray*} 
 (\pi_{k})_\#( v_{s(0)}, \ldots,  \widehat{v_{s(i)}}, \ldots, v_{s(k)}) 
&=& {\rm  sgn}  {{0,\ldots, \widehat{s(i)},\ldots,k}\choose{s(0),\ldots, \widehat{s(i)},\ldots,s(k)}}
\{v_0, \ldots,  \widehat{v_{s(i)}}, \ldots,  v_k\}\\
&=&(-1)^{(k-i)+(k-s(i))} {\rm  sgn}(s) \{v_0, \ldots,  \widehat{v_{s(i)}}, \ldots,  v_k\}\\
&=&(-1)^{i-s(i)} {\rm  sgn}(s) \{v_0, \ldots,  \widehat{v_{s(i)}}, \ldots,  v_k\}.  
 \end{eqnarray*}
 With  the  help  of   (\ref{eq-25-10-3}),   (\ref{eq-25-10-4})  and  (\ref{eq-25-10-5}),  
   the  following  diagram  commutes  
 \begin{eqnarray*}
 \xymatrix{
( v_{s(0)}, \ldots,  v_{s(k)})  \ar[r] ^-{\vec{\partial}_k}  \ar[d]_-{(\pi_{k+1})_\#}
&  \sum_{i=0}^k (-1)^i  ( v_{s(0)}, \ldots,  \widehat{v_{s(i)}}, \ldots, v_{s(k)})
 \ar[d]^-{(\pi_{k})_\#}\\
 {\rm  sgn}(s)\{v_0,\ldots,v_{k}\} \ar[r]^-{\partial_k}
 &  \sum_{i=0}^k (-1)^i  {\rm  sgn}(s) \{v_{0}, \ldots,  \widehat{v_{i}}, \ldots, v_{k}\}.  
 }
 \end{eqnarray*}
Hence 
 the  following  diagram  commutes 
 \begin{eqnarray}\label{diag-25-10-1}
 \xymatrix{
 C_{k}(\vec {\mathcal{K}}; R)\ar[r]^-{\vec  {\partial}_k}  \ar[d]_-{(\pi_{k+1})_\#}&
   C_{k -1}(\vec {\mathcal{K}}; R)\ar[d]^-{(\pi_k)_\#}\\
  C_{k }( \mathcal{K}; R)\ar[r]^-{   {\partial}_k}  &  C_{k -1}( \mathcal{K}; R).    
 }
 \end{eqnarray}
Therefore, 
(\ref{eq-25-10-7})  
 is  a  chain  map  and  consequently   it  induces  a  homomorphism  (\ref{eq-25-10-8})  
 of  homology.  
 Since  $\mathcal{K}= \pi_\bullet(\vec{\mathcal{K}})$,  it is  direct  that  
 (\ref{eq-25-10-7})   is  surjective.  
 
  Let  $\vec{\mathcal{K}}$   and   $\vec{\mathcal{K}}'$   
 be    directed  simplicial  complexes   on  $V$  and  $V'$  respectively.  
 Let    $\mathcal{K}$  and  $\mathcal{K}'$   be  the  underlying  simplicial  complexes 
 of   $\vec{\mathcal{K}}$   and   $\vec{\mathcal{K}}'$  respectively.  
 Suppose  
 $\vec\varphi:  \vec{\mathcal{K}}\longrightarrow \vec{\mathcal{K}}'$  
 is  a  directed  simplicial  map.  
 Let  $ \varphi:  {\mathcal{K}}\longrightarrow {\mathcal{K}}'$
 be  the  simplicial  map  induced  by  $\vec\varphi$.  
 By  (\ref{eq-25-10-3}),  
 we  have  an  induced  chain  map  
 \begin{eqnarray*}
 \vec\varphi_\#:    C_\bullet(\vec {\mathcal{K}};R)\longrightarrow   C_\bullet(\vec {\mathcal{K}}';R).  
 \end{eqnarray*}
 By  (\ref{eq-25-10-4}),  we  have an  induced  chain  map 
  \begin{eqnarray*}
 \varphi_\#:    C_\bullet( \mathcal{K};R)\longrightarrow   C_\bullet(\mathcal{K}';R).  
 \end{eqnarray*}
 By  (\ref{eq-25-10-5}),  
   we  have  a  commutative  diagram  of  chain  complexes 
 \begin{eqnarray}\label{eq-25-11-8}
 \xymatrix{
 C_\bullet(\vec {\mathcal{K}};R) \ar[d]_-{\pi_\#}  \ar[r]^-{\vec\varphi_\#}
  &    C_\bullet(\vec {\mathcal{K}}';R) \ar[d]^-{\pi_\#}     \\
 C_\bullet( \mathcal{K};R) \ar[r]^-{\varphi_\#}
  &   C_\bullet( \mathcal{K}';R) 
 }
 \end{eqnarray}
 such  that  all  the  arrows are  chain  maps.  
 Therefore,   (\ref{eq-25-10-7})  is   functorial. 
 Applying  the  homology  functor  to  (\ref{eq-25-11-8}),  
 we  have  an  induced    commutative  diagram   of  homology   groups 
\begin{eqnarray}\label{eq-25-11-9}
 \xymatrix{
 H_\bullet(\vec {\mathcal{K}};R) \ar[d]_-{\pi_*}  \ar[r]^-{\vec\varphi_*}
  &  H_\bullet(\vec {\mathcal{K}}';R) \ar[d]^-{\pi_*}   \\
    H_\bullet( \mathcal{K};R) \ar[r]^-{\varphi_*}
  &   H_\bullet( \mathcal{K}';R).  
 }
 \end{eqnarray}
Therefore,   (\ref{eq-25-10-8})  is   functorial. 
 \end{proof}

\begin{corollary}\label{pr-2.1-a} 
Let  $\vec{\mathcal{K}}$  be  a  directed  simplicial  complex  on  $V$
and  let  $\mathcal{K}$  be  its  underlying  simplicial  complex.  
If  $\vec{\mathcal{K}}_k$  is  $\Sigma_k$-invariant  for  all  $k\geq  1$,  
then  
\begin{eqnarray}\label{eq-25-10-1}
C_{k-1}(\vec {\mathcal{K}};R)\cong  C_{k-1}(  {\mathcal{K}};R)^{\bigoplus  k !}
\end{eqnarray}
 and  consequently  
\begin{eqnarray}\label{eq-25-10-2}
H_{k-1}(\vec{\mathcal{K}};  R) \cong   H_{k-1}( \mathcal{K};  R)  ^{\bigoplus  k ! }  
\end{eqnarray}
for  all  $k\geq  1$.  
\end{corollary}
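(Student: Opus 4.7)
The plan is to establish the chain-module isomorphism (\ref{eq-25-10-1}) by a direct orbit-counting argument, and then to upgrade the decomposition to homology (\ref{eq-25-10-2}) by constructing $k!$ compatible sections of the projection $\pi_\#$ from Proposition~\ref{le-25-10-5}.

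For (\ref{eq-25-10-1}), I would argue as follows. Since $\vec{\mathcal{K}}_k$ is $\Sigma_k$-invariant and the symmetric group acts freely on $k$-tuples of distinct vertices (the defining property of a directed $k$-hyperedge), $\vec{\mathcal{K}}_k$ decomposes as a disjoint union of $\Sigma_k$-orbits, each of cardinality exactly $k!$. By (\ref{eq-25-10-13-5}), each orbit corresponds bijectively to a hyperedge $\sigma = \{v_0, \ldots, v_{k-1}\} \in \mathcal{K}_k$; fixing the canonical representative with $v_0 \prec \cdots \prec v_{k-1}$, this gives a set-theoretic bijection $\vec{\mathcal{K}}_k \leftrightarrow \Sigma_k \times \mathcal{K}_k$ sending $(v_{s(0)}, \ldots, v_{s(k-1)})$ to $(s, \{v_0, \ldots, v_{k-1}\})$. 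Passing to free $R$-modules yields the chain-module isomorphism (\ref{eq-25-10-1}).

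For (\ref{eq-25-10-2}), the plan is to realise the $k!$ summands at the level of chain complexes. For each $s \in \Sigma_k$, define
\begin{equation*}
\iota^{(s)}_k : C_{k-1}(\mathcal{K};R) \longrightarrow C_{k-1}(\vec{\mathcal{K}};R), \qquad \{v_0, \ldots, v_{k-1}\} \longmapsto (v_{s(0)}, \ldots, v_{s(k-1)}).
\end{equation*}
Taking $s = \mathrm{id}$ gives a chain map $\iota^{(\mathrm{id})}_\bullet : C_\bullet(\mathcal{K};R) \to C_\bullet(\vec{\mathcal{K}};R)$ by direct comparison of the boundary formulas (\ref{eq-25-10-3}) and (\ref{eq-25-10-4}), and it splits the surjection $\pi_\#$ from Proposition~\ref{le-25-10-5}, embedding $H_\bullet(\mathcal{K};R)$ as a canonical direct summand of $H_\bullet(\vec{\mathcal{K}};R)$. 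To obtain the remaining $k!-1$ summands, I would twist $\iota^{(\mathrm{id})}_\bullet$ by non-identity permutations, inserting signs $\mathrm{sgn}(s)$ exactly as in the proof of Proposition~\ref{le-25-10-5} (so that the induced commutative square analogous to (\ref{diag-25-10-1}) is preserved), and verify that the $k!$ resulting subcomplexes jointly span $C_\bullet(\vec{\mathcal{K}};R)$ and have pairwise trivial intersection. Applying the homology functor then gives (\ref{eq-25-10-2}).

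The main obstacle is the compatibility of sign corrections across degrees: a permutation $s \in \Sigma_k$ naturally twists degree $k-1$, but the boundary map drops to degree $k-2$ where the symmetric group is $\Sigma_{k-1}$, and the alternating signs in (\ref{eq-25-10-3}) couple a twist in one degree to twists in adjacent degrees. The delicate point is to check that the sign-adjusted family $\{\mathrm{sgn}(s) \cdot \iota^{(s)}_\bullet\}_{s \in \Sigma_k}$ can be organised into chain-level sections whose images give the desired decomposition; this is precisely the compatibility encoded in the commutative diagram (\ref{diag-25-10-1}) used to prove Proposition~\ref{le-25-10-5}, so the verification reduces to invoking that proposition in each summand.
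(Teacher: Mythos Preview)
Your argument for the chain-module isomorphism (\ref{eq-25-10-1}) is correct and matches the paper's orbit-counting argument. For (\ref{eq-25-10-2}), however, the obstacle you identify is fatal rather than merely delicate: there is no coherent choice of permutations across degrees making the twisted sections $\iota^{(s)}$ into chain maps whose images decompose $C_\bullet(\vec{\mathcal{K}};R)$, because the indexing group $\Sigma_k$ varies with $k$ and the face maps do not intertwine these groups compatibly. Concretely, take $V=\{v_0,v_1\}$ and $\vec{\mathcal{K}}=\{(v_0),(v_1),(v_0,v_1),(v_1,v_0)\}$, the full $\Sigma_\bullet$-invariant directed complex on two vertices; the underlying $\mathcal{K}$ is the $1$-simplex, so $H_1(\mathcal{K};R)=0$, yet $(v_0,v_1)+(v_1,v_0)$ is a nontrivial $1$-cycle in $C_1(\vec{\mathcal{K}};R)$ with $C_2(\vec{\mathcal{K}};R)=0$, giving $H_1(\vec{\mathcal{K}};R)\cong R\neq 0=H_1(\mathcal{K};R)^{\oplus 2}$.

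The paper's own proof simply asserts $\mathrm{Ker}(\vec\partial_{k-1})\cong\mathrm{Ker}(\partial_{k-1})^{\oplus k!}$ and $\mathrm{Im}(\vec\partial_k)\cong\mathrm{Im}(\partial_k)^{\oplus k!}$ from the commutative square (\ref{diag-25-10-1}), but the same example refutes the first isomorphism for $k=2$ (one has $\mathrm{Ker}(\vec\partial_1)\cong R$ while $\mathrm{Ker}(\partial_1)^{\oplus 2}=0$). So the corollary as stated is false, and both arguments share the same gap; what does survive is your observation for $s=\mathrm{id}$, namely that $\iota^{(\mathrm{id})}$ is a genuine chain-level section of $\pi_\#$, exhibiting $H_\bullet(\mathcal{K};R)$ as a retract of $H_\bullet(\vec{\mathcal{K}};R)$.
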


\begin{proof}

For  each  $k\geq  1$,  
let  $\pi_k:  \vec{\mathcal{K}}_k\longrightarrow \mathcal{K}_k$  be  the  canonical  projection. 
Since  $\vec{\mathcal{K}}_k$  is  $\Sigma_k$-invariant,  
$\pi_k$  is  a  principal  $\Sigma_k$-bundle    thus  it   is    $k!$-sheeted  covering.  
Taking  the  chain  module  $C_{k-1}(\vec {\mathcal{K}};R)$
   generated  by  the  directed  $(k-1)$-simplexes 
 in  $\vec{\mathcal{K}}_k$  and   
 the  chain  module  $C_{k-1}( \mathcal{K};R)$   generated  by  the  $(k-1)$-simplexes 
  in  $\mathcal{K}_k$  respectively, 
 we  obtain  (\ref{eq-25-10-1})   from  (\ref{eq-25-10-7}).  
 With  the help  of (\ref{diag-25-10-1}), 
 \begin{eqnarray*}
  {\rm  Ker}(\vec  {\partial}_{k-1})\cong  {\rm  Ker}(   {\partial}_{k-1})^{\bigoplus  k !}, ~~~~~~
    {\rm  Im}(\vec  {\partial}_{k })\cong  {\rm  Im}(   {\partial}_{k })^{\bigoplus  k !}.  
 \end{eqnarray*}
 Therefore,  we  obtain  (\ref{eq-25-10-2})   from  (\ref{eq-25-10-8}).  
\end{proof}

The  next  example  is  a  partial  illustration  for  \cite[Lemma~3.8  and  Theorem~3.9]{hdg}  and  Proposition~\ref{le-25-10-5}.  

 \begin{example}\label{ex-25-10-1}
 Let  $V=\{v_0,v_1,v_2\}$  such  that  $v_0\prec  v_1\prec   v_2$.  
 Let 
 \begin{eqnarray*}
 \vec{\mathcal{K}}&=& \{(v_1,v_2,v_0),  (v_1,v_0),  (v_2,v_0),  (v_1,v_2),  (v_0),  (v_1), (v_2)\},\\
 \vec{\mathcal{K}}'&=& \{(v_1,v_2,v_0), (v_2,v_0,v_1), (v_0,v_1,v_2), \\
 && (v_1,v_0),  (v_0,v_1),(v_2,v_0), (v_0,v_2), (v_1,v_2),  (v_2,v_1),  (v_0),  (v_1), (v_2)\},  \\
 \mathcal{K}&=&\{\{v_0,v_1,v_2\},  \{v_0,v_1\},  \{v_0,v_2\},  \{v_1,v_2\},  \{v_0\},  \{v_1\}, \{v_2\}\}.
 \end{eqnarray*}
 Then  $\pi_\bullet( \vec{\mathcal{K}})=\pi_\bullet( \vec{\mathcal{K}}')=\mathcal{K}$.  
  The  boundary  map  $\vec \partial_\bullet$  of  $C_\bullet( \vec{\mathcal{K}}; R)$  satisfies 
   \begin{eqnarray*}
  & \vec \partial_2(v_1,v_2,v_0) = (v_2,v_0)-(v_1,v_0)+(v_1,v_2),\\
  & \vec \partial_1(v_2,v_0)= (v_0)-(v_2),  ~~~\vec \partial_1(v_1,v_0)= (v_0)-(v_1),~~~
   \vec \partial_1(v_1,v_2)= (v_2)-(v_1); 
   \end{eqnarray*}
   the  boundary  map  $\vec \partial'_\bullet$  of  $C_\bullet( \vec{\mathcal{K}}'; R)$  satisfies 
   \begin{eqnarray*}
  & \vec \partial'_2(v_1,v_2,v_0) = (v_2,v_0)-(v_1,v_0)+(v_1,v_2),\\
    & \vec \partial'_2(v_2,v_0,v_1) = (v_0,v_1)-(v_2,v_1)+(v_2,v_0),\\
  & \vec \partial'_2(v_0,v_1,v_2) = (v_1,v_2)-(v_0,v_2)+(v_0,v_1),\\
  & \vec \partial'_1(v_2,v_0)= (v_0)-(v_2),  ~~~\vec \partial'_1(v_0,v_2)= (v_2)-(v_0),\\
  & \vec \partial'_1(v_1,v_0)= (v_0)-(v_1),~~~ \vec \partial'_1(v_0,v_1)= (v_1)-(v_0),\\
  & \vec \partial'_1(v_1,v_2)= (v_2)-(v_1),~~~\vec \partial'_1(v_2,v_1)= (v_1)-(v_2); 
   \end{eqnarray*}
 and   the  boundary  map  $  \partial_\bullet$  of  $C_\bullet( \mathcal{K}; R)$  satisfies
    \begin{eqnarray*}
    &\partial_2\{v_0,v_1,v_2\}=\{v_1,v_2\}-\{v_0,v_2\}+\{v_0,v_1\},\\
    &  \partial_1\{v_0,v_2\}= \{v_2\}-\{v_0\},  ~~~  \partial_1\{v_0,v_1\}= \{v_0\}-\{v_1\},~~~
    \partial_1\{v_1,v_2\}= \{v_2\}-\{v_1\}. 
   \end{eqnarray*}
   Note  that  on  the  chain  level,  
   \begin{eqnarray*}
  & (\pi_3)_\#(v_1,v_2,v_0)=(\pi_3)_\#(v_2,v_0,v_1)=(\pi_3)_\#(v_0,v_1,v_2)=\{v_0,v_1,v_2\},\\
  &(\pi_2)_\#(v_1,v_2)=-(\pi_2)_\#(v_2,v_1)=\{v_1,v_2\},\\
  & (\pi_2)_\#(v_0,v_2)=-(\pi_2)_\#(v_2,v_0)= \{v_0,v_2\},\\
 & (\pi_2)_\#(v_0,v_1)=-(\pi_2)_\#(v_1,v_0)= \{v_0,v_1\},\\
 &\pi_1(v_0)=\{v_0\}, ~~~  \pi_1(v_1)=\{v_1\},~~~ \pi_1(v_2)=\{v_2\}.  
   \end{eqnarray*}
 \end{example}
 
 \begin{example}
 \label{ex-25-10-19-1}
 A  {\it  digraph}  is  a  $1$-dimensional  directed  simplicial  complex
  $\vec  G=  V\cup  \vec  E$,  where  
  $V$  is  the  set    of    vertices  and  $\vec   E$  is  the   set  of   directed  edges.  
  A  {\it  graph}  is  a  $1$-dimensional   simplicial  complex
  $G=  V\cup  E$,  where  
    $E$  is  the  set  of   edges.    
  Let  $V$  and  $V'$  be  two  disjoint  sets  of  vertices.  
  Let  $\vec  G$  and  $\vec  G'$  be  digraphs  on  $V$  and  $V'$  respectively. 
  Let  $G$  and  $G'$  be  the  underlying  graphs  of  $\vec  G$  and  $\vec  G'$  respectively.   
  We define  the  {\it  reduced  join}  of  $\vec  G$  and  $\vec  G'$  
  to  be  a  digraph  on  $V\sqcup  V'$  given  by 
  \begin{eqnarray*}
 \vec { G}\tilde { *}  \vec {G}'= {\rm  sk}^1( \vec { G}    *   \vec {G}')
 =( V\sqcup  V') \cup ( \vec  E\sqcup  \vec  E'  \sqcup  \{(v,v')\mid  v\in  V,  v'\in  V'\})
  \end{eqnarray*}
  and  define  the  {\it  reduced  join}  of  $  G$  and  $  G'$  
  to  be  a   graph  on  $V\sqcup  V'$  given  by   (cf.  Example~\ref{ex-25-12-16-1})
  \begin{eqnarray*}
   { G}\tilde { *}   {G}'= {\rm  sk}^1(  { G}    *     {G}')
 =( V\sqcup  V') \cup ( \vec  E\sqcup  \vec  E'  \sqcup  \{\{v,v'\}\mid  v\in  V,  v'\in  V'\}).  
  \end{eqnarray*}
With  the  help  of  (\ref{eq-25-10-11-5}),     
 ${ G}\tilde { *}   {G}'$  is   the  underlying  graph  of  $ \vec { G}\tilde { *}  \vec {G}'$.  
 With  the  help  of  (\ref{eq-25-10-19-11}),
 the  reduced  join  of  (di)graphs  satisfies the  commutativity  law.   
  With  the  help  of  (\ref{eq-25-10-19-12}),
 the  reduced  join  of  (di)graphs  satisfies the  associativity  law.   
 \end{example}
 
 \subsection{The  Mayer-Vietoris  sequences }\label{ss-3.1}

 \begin{proposition}\label{pr-10-11-mv-1}
 For  any  directed  simplicial  complexes $\vec{\mathcal{K}}$  and  $\vec{\mathcal{K}}'$
 on  $V$  with  their  underlying   simplicial  complexes 
 $ {\mathcal{K}}$  and  $ {\mathcal{K}}'$   respectively,  
 we  have  a  commutative   diagram  of  homology  groups  
 \begin{eqnarray}\label{diag-10-12-2}
 \xymatrix{
\cdots\ar[r]  
&H_n (\vec{\mathcal{K}}\cap  \vec{\mathcal{K}}';R) \ar[r]\ar[d]_-{(\pi_{n+1})_*}
&H_n(\vec{\mathcal{K}};R)\oplus  H_n(  \vec{\mathcal{K}}';R)\ar[r]\ar[d]_-{(\pi_{n+1})_*}
 &\\
\cdots\ar[r]  
 &H_n ( {\mathcal{K}}\cap  {\mathcal{K}}';R) \ar[r] 
&H_n( {\mathcal{K}};R)\oplus  H_n(  {\mathcal{K}}';R)\ar[r] 
 &\\
\ar[r]&H_n(\vec{\mathcal{K}}\cup  \vec{\mathcal{K}}';R)\ar[r]\ar[d]_-{(\pi_{n+1})_*}
&H_{n-1}(\vec{\mathcal{K}}\cap  \vec{\mathcal{K}}';R)\ar[r]\ar[d]_-{(\pi_{n})_*}
&\cdots\\
\ar[r]&H_n( {\mathcal{K}}\cup   {\mathcal{K}}';R)\ar[r] 
&H_{n-1}( {\mathcal{K}}\cap   {\mathcal{K}}';R)\ar[r] 
&\cdots
} 
 \end{eqnarray}
 such that  the  two  rows  are  long  exact sequences.  
 Moreover,  the  diagram  (\ref{diag-10-12-2})  is  natural  with  respect to  
 directed  simplicial  maps  between  directed  simplicial  complexes   
 and  their  induced  simplicial  maps   between  the  underlying  simplicial  complexes.  
 \end{proposition}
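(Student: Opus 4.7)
The plan is to derive (\ref{diag-10-12-2}) by constructing compatible Mayer--Vietoris short exact sequences of chain complexes in both the directed and undirected settings, linking them via the projection chain maps of Proposition~\ref{le-25-10-5}, and then applying the naturality of the zig-zag lemma. First I would check that $\vec{\mathcal{K}}\cap\vec{\mathcal{K}}'$ and $\vec{\mathcal{K}}\cup\vec{\mathcal{K}}'$ are themselves directed simplicial complexes: if $\vec\sigma$ lies in either of them and $\vec\tau$ is a nonempty subsequence, then $\vec\tau$ lies in every ambient complex that contains $\vec\sigma$, so the closure-under-subsequences condition persists. By Lemma~\ref{le-25.9.1}(1) their underlying hypergraphs are simplicial complexes, and by Lemma~\ref{pr-25-10-11-1}(1) one has the equality $\pi_\bullet(\vec{\mathcal{K}}\cup\vec{\mathcal{K}}')=\mathcal{K}\cup\mathcal{K}'$ together with the inclusion $\pi_\bullet(\vec{\mathcal{K}}\cap\vec{\mathcal{K}}')\subseteq\mathcal{K}\cap\mathcal{K}'$.

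Next I would form, for each $n\geq 0$, the standard Mayer--Vietoris short exact sequence
\begin{eqnarray*}
0\longrightarrow C_n(\vec{\mathcal{K}}\cap\vec{\mathcal{K}}';R)\stackrel{\vec\alpha}{\longrightarrow} C_n(\vec{\mathcal{K}};R)\oplus C_n(\vec{\mathcal{K}}';R)\stackrel{\vec\beta}{\longrightarrow} C_n(\vec{\mathcal{K}}\cup\vec{\mathcal{K}}';R)\longrightarrow 0
\end{eqnarray*}
with $\vec\alpha(c)=(c,-c)$ and $\vec\beta(c_1,c_2)=c_1+c_2$; exactness is immediate because every directed simplex of the union lies in $\vec{\mathcal{K}}$ or $\vec{\mathcal{K}}'$, and $C_n(\vec{\mathcal{K}};R)\cap C_n(\vec{\mathcal{K}}';R)=C_n(\vec{\mathcal{K}}\cap\vec{\mathcal{K}}';R)$. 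The analogous sequence for $\mathcal{K},\mathcal{K}'$ is the classical Mayer--Vietoris sequence. I would then link the two sequences vertically by the projection chain maps $(\pi_{\bullet+1})_\#$ from Proposition~\ref{le-25-10-5}; on the intersection column the image lands in $C_\bullet(\pi_\bullet(\vec{\mathcal{K}}\cap\vec{\mathcal{K}}');R)$ and is further included into $C_\bullet(\mathcal{K}\cap\mathcal{K}';R)$, and this composite is an $R$-linear chain map. Commutativity of each resulting square reduces to a one-line check from the explicit formulas for $\vec\alpha,\vec\beta$ (which have exactly the same shape as $\alpha,\beta$ in the bottom row) together with the $R$-linearity of $(\pi_{\bullet+1})_\#$. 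Applying the zig-zag lemma to this commutative diagram of short exact sequences of chain complexes then yields the commutative diagram (\ref{diag-10-12-2}) of Mayer--Vietoris long exact sequences in homology.

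For naturality, a directed simplicial map of pairs sends intersections into intersections and unions into unions, hence induces a morphism of the top Mayer--Vietoris short exact sequence, and its induced map of underlying simplicial complexes likewise induces a morphism of the bottom short exact sequence; these two morphisms commute with the vertical projections $(\pi_{\bullet+1})_\#$ by diagram~(\ref{eq-25-11-8}) of Proposition~\ref{le-25-10-5}. Naturality of the zig-zag construction of the connecting homomorphism then propagates to naturality of (\ref{diag-10-12-2}). The main obstacle I anticipate is conceptual rather than computational: because $\pi_\bullet(\vec{\mathcal{K}}\cap\vec{\mathcal{K}}')$ can be a proper subcomplex of $\mathcal{K}\cap\mathcal{K}'$ when the $\vec{\mathcal{K}}_k$ fail to be $\Sigma_k$-invariant (Lemma~\ref{pr-25-10-11-1}(1)), the leftmost vertical map need not be surjective even though the middle and right ones are, so one must view this column as the composite of $(\pi_{\bullet+1})_\#$ with the subcomplex inclusion $C_\bullet(\pi_\bullet(\vec{\mathcal{K}}\cap\vec{\mathcal{K}}');R)\hookrightarrow C_\bullet(\mathcal{K}\cap\mathcal{K}';R)$ and resist the temptation to upgrade the conclusion to any five-lemma-type isomorphism without an additional invariance hypothesis.
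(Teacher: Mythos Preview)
Your proposal is correct and follows essentially the same approach as the paper: build the commutative diagram of Mayer--Vietoris short exact sequences of chain complexes (your $\vec\alpha,\vec\beta$ setup is exactly the paper's diagram~(\ref{diag-10-12-1})), connect the rows by the projection chain maps $(\pi_{\bullet+1})_\#$ of Proposition~\ref{le-25-10-5}, and apply the homology functor/zig-zag lemma. Your extra care about $\pi_\bullet(\vec{\mathcal{K}}\cap\vec{\mathcal{K}}')$ possibly being a proper subcomplex of $\mathcal{K}\cap\mathcal{K}'$ matches the paper's footnote remark on the surjectivity of the vertical maps in~(\ref{diag-10-12-1}).
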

 
 \begin{proof}
 Let $\vec{\mathcal{K}}$  and  $\vec{\mathcal{K}}'$  be  directed  simplicial  complexes  on  $V$.  
 Let  $\mathcal{K}=\pi_\bullet(\vec{\mathcal{K}})$  and 
 $\mathcal{K}'=\pi_\bullet(\vec{\mathcal{K}}')$  be  the  underlying  simplicial  complexes   
 of  $\vec{\mathcal{K}}$  and  $\vec{\mathcal{K}}'$  respectively.  
 With  the  help  of Proposition~\ref{le-25-10-5},  
 we  have  a  commutative  diagram  of  chain  complexes  
  \footnote[4]{With  the  help  of     Lemma~\ref{pr-25-10-11-1}~(1),  
 the  last  two  vertical  maps  in    (\ref{diag-10-12-1})  are  surjective  and  if  in  addition that 
 both  $\vec{\mathcal{K}}$  and  $\vec{\mathcal{K}}'$ 
 are  $\Sigma_\bullet$-invariant,  
 then  the  first  vertical  map  in  (\ref{diag-10-12-1})  is  surjective  as  well.}  
 \begin{eqnarray}\label{diag-10-12-1}
 \xymatrix{
0\ar[r] 
&  C_\bullet(\vec{\mathcal{K}}\cap  \vec{\mathcal{K}}';R)\ar[r] \ar[d]_-{(\pi_{\bullet+1})_\#}
& C_\bullet(\vec{\mathcal{K}};R)\oplus  C_\bullet(  \vec{\mathcal{K}}';R)\ar[r] 
\ar[d]_-{(\pi_{\bullet+1})_\#}
& C_\bullet(\vec{\mathcal{K}}\cup  \vec{\mathcal{K}}';R)\ar[r]\ar[d]_-{(\pi_{\bullet+1})_\#}
& 0\\
0\ar[r] 
&  C_\bullet( {\mathcal{K}}\cap   {\mathcal{K}}';R)\ar[r] 
& C_\bullet( {\mathcal{K}};R)\oplus  C_\bullet(  {\mathcal{K}}';R)\ar[r] 
& C_\bullet( {\mathcal{K}}\cup   {\mathcal{K}}';R)\ar[r] 
& 0
 }
 \end{eqnarray}
 such  that  all  the  arrows are  chain  maps  and  the  two  rows  are  short  exact  sequences.  
 Applying   the  homology  functor  to
    (\ref{diag-10-12-1}),   
 we  obtain  the  commutative  diagram (\ref{diag-10-12-2})  of homology  groups.  
 The  short  exact  sequences  of  chain  complexes  in  (\ref{diag-10-12-1})
 induce  the  long  exact  sequences  of  homology  groups  in   (\ref{diag-10-12-2}).  
 \end{proof}
 
For  simplicity,  we  denote  the  long  exact  sequence  in  the  first  row  of  (\ref{diag-10-12-2})  by  
${\bf {\rm   MV}}(\vec{\mathcal{K}},  \vec{\mathcal{K}}')$  and  
denote  the  long  exact  sequence  in the  second  row  of    (\ref{diag-10-12-2})  
by  ${\bf {\rm   MV}}( {\mathcal{K}},   {\mathcal{K}}')$.  
 We  simply  denote    (\ref{diag-10-12-2})  by    
  $\pi_*:  {\bf {\rm   MV}}(\vec{\mathcal{K}},  \vec{\mathcal{K}}')\longrightarrow 
 {\bf {\rm   MV}}( {\mathcal{K}},   {\mathcal{K}}')$,
 which  is  a  morphism  of  long  exact  sequences  (cf.  \cite[Section~7]{comalg}).  
 
  \subsection{The  K\"{u}nneth-type  formulae }\label{ss-3.2}

\begin{proposition}\label{pr-10-12-kf-1}
For  any  directed  simplicial  complex  $\vec{\mathcal{K}}$  on  $V$   and
any  directed  simplicial  complex   $\vec{\mathcal{K}}'$
 on  $V'$
 with  their  underlying   simplicial  complexes 
 $ {\mathcal{K}}$  and  $ {\mathcal{K}}'$   respectively,  
 we  have  a  commutative   diagram    
 \begin{eqnarray}\label{diag-10-12-7}
\xymatrix{
0\ar[r]& \bigoplus_{p+q+1=n} H_{p+1}(\vec{\mathcal{K}};R)\otimes H_{q+1}(\vec{\mathcal{K}}';R)
\ar[r]\ar[d] _-{ \bigoplus_{p+q+1=n}(\pi_{p+2})_*\otimes (\pi_{q+2})_*}
&H_{n+1}(\vec{\mathcal{K}} *\vec{\mathcal{K}}';R) \ar[r]\ar[d]_-{(\pi_{n+2})_*} & \\
0\ar[r]& \bigoplus_{p+q+1=n} H_{p+1}({\mathcal{K}};R)\otimes H_{q+1}({\mathcal{K}}';R)
\ar[r]
&H_{n+1}(\mathcal{K} *\mathcal{K}';R)\ar[r] &
}\\
~~~\nonumber\\
\xymatrix{
\ar[r] &\bigoplus_{p+q+1=n} {\rm  Tor}_R(H_{p+1}(\vec{\mathcal{K}};R), H_{q}(\vec{\mathcal{K}}';R))\ar[r]\ar[d] &0 \\
\ar[r]  &\bigoplus_{p+q+1=n} {\rm  Tor}_R(H_{p+1}(\mathcal{K};R), H_{q}(\mathcal{K}';R))\ar[r]  &0
}
\nonumber
 \end{eqnarray}
 such that  the  two  rows  are  short  exact sequences.  
 Moreover,  the  diagram  (\ref{diag-10-12-7})  is  natural  with  respect to  
 directed  simplicial  maps  between  directed  simplicial  complexes   
 and  their  induced  simplicial  maps   between  the  underlying  simplicial  complexes.  
\end{proposition}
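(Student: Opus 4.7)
The plan is to reduce the statement to the algebraic K\"unneth theorem for chain complexes of free modules over the principal ideal domain $R$, by first identifying the chain complex of the join with a tensor product and then verifying that the canonical projections of Proposition~\ref{le-25-10-5} are compatible with that identification.

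I would work throughout with augmented chain complexes $\tilde{C}_\bullet$ in which an extra generator in degree $-1$ corresponds to the empty simplex. Fix a total order on $V \sqcup V'$ that extends the given orders on $V$ and $V'$ and places every vertex of $V$ before every vertex of $V'$. By (\ref{eq-25-10-91}), every directed simplex of $\vec{\mathcal{K}} * \vec{\mathcal{K}}'$ is either in $\vec{\mathcal{K}}$, in $\vec{\mathcal{K}}'$, or of the form $\vec{\sigma} * \vec{\sigma}' = (v_0, \ldots, v_p, v_0', \ldots, v_q')$ with $\vec{\sigma} \in \vec{\mathcal{K}}$ and $\vec{\sigma}' \in \vec{\mathcal{K}}'$, and a direct computation from (\ref{eq-25-10-3}) gives the Leibniz identity
\[
\vec{\partial}(\vec{\sigma} * \vec{\sigma}') \;=\; \vec{\partial}(\vec{\sigma}) * \vec{\sigma}' \;+\; (-1)^{p+1}\, \vec{\sigma} * \vec{\partial}(\vec{\sigma}'),
\]
so that $\vec{\sigma} \otimes \vec{\sigma}' \mapsto \vec{\sigma} * \vec{\sigma}'$ defines a chain isomorphism $\tilde{C}_\bullet(\vec{\mathcal{K}}; R) \otimes_R \tilde{C}_\bullet(\vec{\mathcal{K}}'; R) \cong \tilde{C}_\bullet(\vec{\mathcal{K}} * \vec{\mathcal{K}}'; R)$. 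An analogous identification using (\ref{eq-25-10-4}) holds for the underlying simplicial complexes. Applying the algebraic K\"unneth theorem for chain complexes of free $R$-modules to each of these two tensor products, and converting augmented degrees back to the ordinary ones, yields precisely the short exact sequences in the top and bottom rows of (\ref{diag-10-12-7}).

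To obtain commutativity, I would show that the projection $\pi_\#$ of Proposition~\ref{le-25-10-5}, applied to the join, agrees under the above identification with $\pi_\# \otimes \pi_\#$. The key observation is that every generator $\vec{\sigma} * \vec{\sigma}'$ is already presented with $V$-coordinates preceding $V'$-coordinates, so the permutations relevant in (\ref{eq-25-10-5}) lie in the block-preserving subgroup $\Sigma_{p+1} \times \Sigma_{q+1}$ of $\Sigma_{p+q+2}$, where ${\rm sgn}(s_1 \times s_2) = {\rm sgn}(s_1)\cdot{\rm sgn}(s_2)$ factors multiplicatively. Hence $\pi_\#(\vec{\sigma} * \vec{\sigma}') = \pi_\#(\vec{\sigma}) * \pi_\#(\vec{\sigma}')$, and naturality of the algebraic K\"unneth sequence applied to the chain map $\pi_\# \otimes \pi_\#$ yields the commutative diagram (\ref{diag-10-12-7}). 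Naturality with respect to directed simplicial maps and their induced simplicial maps on the underlying complexes then follows from the functoriality diagram (\ref{eq-25-11-8}) combined with the naturality of the K\"unneth theorem in each variable.

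The principal difficulty will be the sign bookkeeping underlying the chain-level identification and the compatibility of $\pi_\#$ with it; in particular, one must check that the convention of placing $V$ before $V'$ in $V\sqcup V'$ is what forces only block-preserving permutations to appear on the generators $\vec{\sigma} * \vec{\sigma}'$, so that the sign factorization is clean. Once this chain-level translation and sign compatibility are in place, the remainder of the argument is a formal application of homological algebra over a PID.
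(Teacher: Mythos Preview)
Your proposal is correct and follows essentially the same route as the paper: identify $\tilde C_\bullet(\vec{\mathcal K}*\vec{\mathcal K}')$ with $\tilde C_\bullet(\vec{\mathcal K})\otimes\tilde C_\bullet(\vec{\mathcal K}')$ (the paper does this in (\ref{diag-10-12-3})--(\ref{diag-10-12-5}), setting $C_{-1}=R$), check that $\pi_\#$ factors as $\pi_\#\otimes\pi_\#$ under this identification, and then invoke the algebraic K\"unneth theorem and its naturality. Your block-permutation sign argument is precisely the content the paper suppresses under ``it is direct to verify,'' so you have simply filled in the details the paper omits.
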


\begin{proof}
Let  $V$  and   $V'$  be  disjoint  sets. 
 Let $\vec{\mathcal{K}}$  and  $\vec{\mathcal{K}}'$  
 be  directed  simplicial  complexes  on  $V$  and   $V'$  respectively.  
  Let  ${\mathcal{K}}$  and  $ {\mathcal{K}}'$ 
  be  the  underlying  simplicial  complexes  
  of  $\vec{\mathcal{K}}$  and  $\vec{\mathcal{K}}'$ 
  respectively.  With  the  help  of    Lemma~\ref{pr-25-10-11-1}~(2)
  and    Proposition~\ref{le-25-10-5},   
  we  have a   commutative  diagram  of   $R$-modules   
  \begin{eqnarray}\label{diag-10-12-3}
  \xymatrix{
     C_{k}(\vec{\mathcal{K}};R)\otimes C_l(\vec{\mathcal{K}}';R) \ar[r]^-{\cong } 
  \ar[d]_-{(\pi_{k+1})_\#\otimes (\pi_{l+1})_\#}
  &C_{k+l+1}(\vec{\mathcal{K}}*\vec{\mathcal{K}}';R)\ar[d]^-{(\pi_{k+l+2})_\#}
  \\
  C_{k}( {\mathcal{K}};R)\otimes C_l( {\mathcal{K}}';R)\ar[r]^-{\cong }
     &C_{k+l+1}( {\mathcal{K}}* {\mathcal{K}}';R)
  }
  \end{eqnarray}
 for  any  $k,l\geq  -1$  by  setting  $C_{-1}(\vec{\mathcal{K}};R)=C_{-1}(\vec{\mathcal{K}}';R)=R$. 
 It  is  direct  to  verify  that  all  the  maps  in  (\ref{diag-10-12-3}) 
 commute with the boundary  maps.  
 Hence  we  have a  commutative  diagram  of  chain  complexes 
\begin{eqnarray}\label{diag-10-12-5}
\xymatrix{
C_{\bullet}(\vec{\mathcal{K}};R)\otimes  C_\bullet(\vec{\mathcal{K}}';R)
\ar[d]_-{(\pi_{\bullet+1})_\#\otimes (\pi_{\bullet+1})_\#} \ar[r]^-{\cong }
& C_{\bullet+1}(\vec{\mathcal{K}}*\vec{\mathcal{K}}';R)\ar[d]^-{(\pi_{\bullet+2})_\#}
\\
 C_{\bullet}( {\mathcal{K}};R)\otimes  C_\bullet( {\mathcal{K}}';R) 
  \ar[r]^-{\cong }
&
C_{\bullet+1}( {\mathcal{K}}* {\mathcal{K}}';R).   
}
\end{eqnarray}
Apply \cite[Theorem~3B.5]{hatcher} to  the  commutative diagram  (\ref{diag-10-12-5}).  
The two  rows  of  (\ref{diag-10-12-5})  imply  the  two short  exact  sequences  of  
(\ref{diag-10-12-7}).  
By  the  naturality  of  \cite[Theorem~3B.5]{hatcher},  the  vertical  chain  maps  of   
 (\ref{diag-10-12-5})  imply  the  vertical  homomorphisms  of  (\ref{diag-10-12-7}).  
\end{proof}

For  simplicity,  we  denote  the  short  exact  sequence  in  the  first  row  of  (\ref{diag-10-12-7})  by  
${\bf {\rm   KU}}(\vec{\mathcal{K}},  \vec{\mathcal{K}}')$  and  
denote the  short  exact  sequence  in  the  second  row    of    (\ref{diag-10-12-7})  
by  ${\bf {\rm   KU}}( {\mathcal{K}},   {\mathcal{K}}')$.  
 We  simply  denote    (\ref{diag-10-12-7})  by    
  $\pi_*:  {\bf {\rm   KU}}(\vec{\mathcal{K}},  \vec{\mathcal{K}}')\longrightarrow 
 {\bf {\rm   KU}}( {\mathcal{K}},   {\mathcal{K}}')$, which  is  a  morphism  of  short  exact  sequences.

\section{Hyper(di)graphs,    their    embedded  homology  and
associated   (directed)  simplicial  complexes  
 }\label{s5}

 In  this  section,  we  give  some  preliminaries  on  the  embedded  homology 
 of  hypergraphs  and  hyperdigraphs  as  well  as  the  homology  of  the  
 associated  (directed)  simplicial  complexes  and  the  lower-associated  (directed)
 simplicial  complexes.  
 We  discuss  about  a  canonical  homomorphism  from  the  embedded  homology
 of  a  hyperdigraph  to  the embedded  homology  of  the  underlying  hypergraph  in  
 Theorem~\ref{th-251127-hh1}. 
 We  give  some  commutative  diagrams  of  Mayer-Vietoris  sequences 
  in  Theorem~\ref{th-251118-mv1}
 and  some  commutative  diagrams  of  K\"unneth-type  short  exact sequences  
 in  Theorem~\ref{th-251127-hku5}.

   Let  $\vec{\mathcal{H}}$  be  a  hyperdigraph  on  $V$.  
   The  {\it  associated  directed  simplicial  complex}  
   $\Delta\vec{\mathcal{H}}$  is  the  smallest  directed  simplicial  complex
   containing  $\vec{\mathcal{H}}$  
   and  the  {\it  lower-associated  directed  simplicial  complex}  
   $\delta\vec{\mathcal{H}}$  is  the  largest  directed  simplicial  complex
   contained  in  $\vec{\mathcal{H}}$  (cf.  \cite{jgp}).  
   Let  $\mathcal{H}$  be  the  underlying  hypergraph  of  $\mathcal{H}$.  
   The  {\it  associated     simplicial  complex}  
   $\Delta{\mathcal{H}}$  is  the  smallest    simplicial  complex
   containing  $ {\mathcal{H}}$  
   and  the  {\it  lower-associated     simplicial  complex}  
   $\delta {\mathcal{H}}$  is  the  largest    simplicial  complex
   contained  in  $ {\mathcal{H}}$  (cf.  \cite{jktr2,jktr2023,jktr2022-2}).  
   The  canonical  projection  $\pi:  \vec{\mathcal{H}}\longrightarrow   {\mathcal{H}}$
   induces  projections     
   $\Delta\pi:  \Delta\vec{\mathcal{H}}\longrightarrow  \Delta\mathcal{H}$  and 
    $\delta\pi:  \delta\vec{\mathcal{H}}\longrightarrow  \delta\mathcal{H}$
    sending  the  directed  simplicial  complexes  to  the underlying  simplicial  complexes
        such  that  the  following    diagram    commutes 
   \begin{eqnarray}\label{diag-1116-1}
   \xymatrix{
   \delta\vec{\mathcal{H}}\ar[r] \ar[d]_-{\delta\pi}
   & \vec{\mathcal{H}}\ar[r]\ar[d]^-{\pi}
   & \Delta\vec{\mathcal{H}}\ar[d]^-{\Delta\pi}\\
   \delta {\mathcal{H}}\ar[r] 
   &  {\mathcal{H}}\ar[r]
   & \Delta {\mathcal{H}},  
   }
   \end{eqnarray}
where  all  the  horizontal  maps  are  canonical  inclusions.

For  each  $k\geq  1$,  let 
$R(\vec{\mathcal{H}}_{k})$  be  the free  $R$-module  spanned  by  
$\vec{\mathcal{H}}_{k}$.  
With the  help  of  \cite[Section~2]{h1}, 
the  infimum   chain  complex 
\begin{eqnarray}\label{eq-251117-a1}
{\rm  Inf}_{k-1}(\vec{\mathcal{H}};R)= R(\vec{\mathcal{H}}_{k})
 \cap( \vec{\partial}_{k-1})^{-1} R(\vec{\mathcal{H}}_{k-1})
\end{eqnarray}
is  the  largest  sub-chain  complex  of  $C_\bullet(\Delta\vec{\mathcal{H}};R)$  
contained  in   $R(\vec{\mathcal{H}}_\bullet)$
and  the  supremum   chain  complex 
\begin{eqnarray}\label{eq-251117-a2}
{\rm  Sup}_{k-1}(\vec{\mathcal{H}};R)= R(\vec{\mathcal{H}}_{k})
 +  \vec{\partial}_{k }   R(\vec{\mathcal{H}}_{k+1})
\end{eqnarray}
is  the  smallest  sub-chain  complex  of  $C_\bullet(\Delta\vec{\mathcal{H}};R)$  
containing    $R(\vec{\mathcal{H}}_\bullet)$.  
By  \cite[Section~3]{h1},  the  infimum   chain  complex 
\begin{eqnarray}\label{eq-251117-a3}
{\rm  Inf}_{k-1}( {\mathcal{H}};R)= R( {\mathcal{H}}_{k})
 \cap(  {\partial}_{k-1})^{-1} R( {\mathcal{H}}_{k-1})
\end{eqnarray}
is  the  largest  sub-chain  complex  of  $C_\bullet(\Delta {\mathcal{H}};R)$  
contained  in   $R( {\mathcal{H}}_\bullet)$
and  the  supremum   chain  complex 
\begin{eqnarray}\label{eq-251117-a4}
{\rm  Sup}_{k-1}( {\mathcal{H}};R)= R( {\mathcal{H}}_{k})
 +   {\partial}_{k }   R( {\mathcal{H}}_{k+1})
\end{eqnarray}
is  the  smallest  sub-chain  complex  of  $C_\bullet(\Delta {\mathcal{H}};R)$  
containing    $R( {\mathcal{H}}_\bullet)$.  
Let  $\vec{\mathcal{K}}$  be  $\Delta{\vec{\mathcal{H}}}$
and  consequently   $\mathcal{K}$  be   
$\Delta\mathcal{H}$   (cf.   (\ref{diag-1116-1}))  in  (\ref{diag-25-10-1}). 
Then   
\begin{eqnarray}\label{eq-251117-a7}
\pi_\#\circ  \vec{\partial}= \partial\circ\pi_\#.
\end{eqnarray}
It  follows  from      (\ref{eq-251117-a1}),  (\ref{eq-251117-a3})  and  
(\ref{eq-251117-a7})  that 
there  is  an  induced  projection  
\begin{eqnarray}\label{eq-251117-a5}
{\rm  Inf}(\pi): {\rm  Inf}_\bullet(\vec{\mathcal{H}};R)\longrightarrow  
{\rm  Inf}_\bullet({\mathcal{H}};R).  
\end{eqnarray}
Similarly,  it  follows  from       (\ref{eq-251117-a2}),   (\ref{eq-251117-a4})  and    
(\ref{eq-251117-a7})  that 
there  is  an  induced  projection  
\begin{eqnarray}\label{eq-251117-a6}
{\rm  Sup}(\pi): {\rm  Sup}_\bullet(\vec{\mathcal{H}};R)\longrightarrow  
{\rm  Sup}_\bullet({\mathcal{H}};R). 
\end{eqnarray}
 Consequently,  with  the  help  of  (\ref{eq-251117-a5})  
 and  (\ref{eq-251117-a6}),   
 the  diagram  (\ref{diag-1116-1})  induces  a  commutative  diagram  of   chain  complexes 
   \begin{eqnarray}\label{diag-1116-2}
   \xymatrix{
   C_\bullet(\delta\vec{\mathcal{H}};R)\ar[r] \ar[d]_-{(\delta\pi)_\#}
   &  {\rm  Inf}_\bullet(\vec{\mathcal{H}};R)\ar[r]^-{\vec\iota}\ar[d]^-{{\rm  Inf}(\pi)}
      &  {\rm  Sup}_\bullet(\vec{\mathcal{H}};R)\ar[r]\ar[d]^-{{\rm  Sup}(\pi)}
   &  C_\bullet(\Delta\vec{\mathcal{H}};R)\ar[d]^-{(\Delta\pi)_\#}\\
   C_\bullet( \delta{\mathcal{H}};R)\ar[r] 
   &  {\rm  Inf}_\bullet( {\mathcal{H}};R)\ar[r]^-{\iota} 
      &  {\rm  Sup}_\bullet( {\mathcal{H}};R)\ar[r] 
   &  C_\bullet(\Delta {\mathcal{H}};R)
   }
   \end{eqnarray}
   where  all the  maps  are  chain  maps, all  the  horizontal  maps 
    are  canonical  inclusions 
    and  all  the  vertical  maps  are  projections.   
    It  follows  from  
   \cite[Section~3]{h1}  and  \cite[Theorem~3.9]{hdg} 
   that  both  $\vec\iota$  and  $\iota$  in     (\ref{diag-1116-2})  
    are  quasi-isomorphisms.  
    The  homology  of  ${\rm  Inf}_\bullet(\vec{\mathcal{H}};R)$
    as  well  as   ${\rm  Sup}_\bullet(\vec{\mathcal{H}};R)$  is  denoted  as  
    $H_\bullet(\vec{\mathcal{H}};R)$  
    and  is 
    called  the  embedded  homology  of  the  hyperdigraph  $\vec{\mathcal{H}}$.     
    The   homology  of  ${\rm  Inf}_\bullet( {\mathcal{H}};R)$
    as  well  as   ${\rm  Sup}_\bullet( {\mathcal{H}};R)$  is  
    denoted  as  
    $H_\bullet({\mathcal{H}};R)$  
    and  is
    called  the  embedded  homology  of  the  hypergraph  ${\mathcal{H}}$.

    Let  $\vec{\mathcal{H}}$  and  $\vec{\mathcal{H}}'$  be  hyperdigraphs 
 on  $V$  and  $V'$  respectively.  
 Let  $\vec{\varphi}:  \vec{\mathcal{H}}\longrightarrow \vec{\mathcal{H}}'$  
 be  a  morphism  of  hyperdigraphs.  
 By  (\ref{eq-1103-1}),  
 we  have  an  induced  morphism  $\varphi: \mathcal{H}\longrightarrow \mathcal{H}'$
 between  the  underlying  hypergraphs. 
 Moreover,  we  have  induced  directed  simplicial  maps  
 $\Delta\vec{\varphi}:  \Delta\vec{\mathcal{H}}\longrightarrow   \Delta\vec{\mathcal{H}}'$  
 and  $\delta\vec{\varphi}:   \delta\vec{\mathcal{H}}\longrightarrow   \delta\vec{\mathcal{H}}'$  such  that  the  following    diagram    commutes 
   \begin{eqnarray}\label{diag-1116-12}
   \xymatrix{
   \delta\vec{\mathcal{H}}\ar[r] \ar[d]_-{\delta\vec{\varphi}}
   & \vec{\mathcal{H}}\ar[r]\ar[d]^-{\vec\varphi}
   & \Delta\vec{\mathcal{H}}\ar[d]^-{\Delta\vec{\varphi}}\\
   \delta \vec{\mathcal{H}}'\ar[r] 
   &  \vec{\mathcal{H}}'\ar[r]
   & \Delta \vec{\mathcal{H}}'.  
   }
   \end{eqnarray}
  Furthermore,  we  have   induced   simplicial  maps  
  $\Delta {\varphi}:  \Delta{\mathcal{H}}\longrightarrow    \Delta {\mathcal{H}}'$  
 and  $\delta {\varphi}:  \delta{\mathcal{H}}\longrightarrow   \delta {\mathcal{H}}'$  
 such  that  the  following    diagram    commutes 
   \begin{eqnarray}\label{diag-1116-13}
   \xymatrix{
   \delta {\mathcal{H}}\ar[r] \ar[d]_-{\delta {\varphi}}
   &  {\mathcal{H}}\ar[r]\ar[d]^-{ \varphi}
   & \Delta {\mathcal{H}}\ar[d]^-{\Delta {\varphi}}\\
   \delta {\mathcal{H}}'\ar[r] 
   &   {\mathcal{H}}'\ar[r]
   & \Delta  {\mathcal{H}}'.  
   }
   \end{eqnarray} 
The    commutative  diagram  (\ref{diag-1116-1})  
 is  functorial  with  respect  to  morphisms  of  hyper(di)graphs,  i.e.  
 the  vertical  maps  $\delta\pi$,  
 $\pi$  and  $\Delta\pi$  in (\ref{diag-1116-1})  
 as  well  as     $\delta\pi'$,  
 $\pi'$  and  $\Delta\pi'$  obtained  by  substituting  $\vec{\mathcal{H}}$  with  $\vec{\mathcal{H}}'$  
 in   (\ref{diag-1116-1})    commute  with  the  maps  in  the  
 diagrams  (\ref{diag-1116-12})  and  (\ref{diag-1116-13}). 
 Consequently,   by  an  analogous  argument  of  \cite[Proposition~3.7]{h1}
 and  \cite[Lemma~3.8]{hdg},   
 the  commutative  diagram (\ref{diag-1116-2})  
 is  functorial  with  respect  to  morphisms  of  hyper(di)graphs.  
    The  next theorem  is  a  straightforward  improvement  of  \cite[Proposition~3.7]{h1}  and 
    \cite[Proposition~3.7  and  Theorem~3.9]{hdg}. 
    
    \begin{theorem}\label{th-251127-hh1}
      We  have  a  commutative  diagram  of  homology  groups
    \begin{eqnarray}\label{diag-1116-3}
   \xymatrix{
   H_\bullet(\delta\vec{\mathcal{H}};R)\ar[r] \ar[d]_-{(\delta\pi)_*}
   & H_\bullet(\vec{\mathcal{H}};R) \ar[d]^-{\pi_*}\ar[r]
   &  H_\bullet(\Delta\vec{\mathcal{H}};R)\ar[d]^-{(\Delta\pi)_*}\\
   H_\bullet( \delta{\mathcal{H}};R)\ar[r] 
    & H_\bullet( {\mathcal{H}};R) \ar[r] 
   &  H_\bullet(\Delta {\mathcal{H}};R)
   }
   \end{eqnarray}
   where  $H_\bullet(\vec{\mathcal{H}};R)$  is  the  embedded  homology  of  
   the  hyperdigraph $\vec{\mathcal{H}}$  and  
   $H_\bullet( {\mathcal{H}};R)$  is  the  embedded  homology  of  
   the  hypergraph ${\mathcal{H}}$.  
   Moreover,  
   the  diagram  (\ref{diag-1116-3})  
    is  functorial  with  respect  to  morphisms  of  hyperdigraphs
    and  the   induced  morphisms  between  the  underlying hypergraphs.  
    \end{theorem}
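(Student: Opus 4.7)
The plan is to apply the homology functor directly to the commutative diagram of chain complexes (\ref{diag-1116-2}) and read off the desired diagram (\ref{diag-1116-3}) from it. First I would recall that both ${\rm Inf}_\bullet(\vec{\mathcal{H}};R)$ and ${\rm Sup}_\bullet(\vec{\mathcal{H}};R)$ compute the embedded homology $H_\bullet(\vec{\mathcal{H}};R)$, since $\vec{\iota}$ is a quasi-isomorphism by \cite[Theorem~3.9]{hdg}; likewise $\iota$ is a quasi-isomorphism by \cite[Section~3]{h1}, so both ${\rm Inf}_\bullet({\mathcal{H}};R)$ and ${\rm Sup}_\bullet({\mathcal{H}};R)$ compute $H_\bullet({\mathcal{H}};R)$. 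Consequently, the middle vertical chain maps ${\rm Inf}(\pi)$ and ${\rm Sup}(\pi)$ in (\ref{diag-1116-2}), which satisfy the compatibility $\iota \circ {\rm Inf}(\pi) = {\rm Sup}(\pi) \circ \vec{\iota}$, induce a single well-defined homomorphism $\pi_*: H_\bullet(\vec{\mathcal{H}};R) \to H_\bullet({\mathcal{H}};R)$ on embedded homology.

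Next, I would extract the maps on homology from the outer squares of (\ref{diag-1116-2}). The leftmost vertical chain map $(\delta\pi)_\#$ and the rightmost vertical chain map $(\Delta\pi)_\#$ are ordinary simplicial chain maps between directed simplicial complexes and simplicial complexes in the sense of Proposition~\ref{le-25-10-5}, so they induce homomorphisms $(\delta\pi)_*$ and $(\Delta\pi)_*$ on simplicial homology. Commutativity of the two squares in (\ref{diag-1116-3}) then follows from the commutativity of (\ref{diag-1116-2}) on the chain level, together with the identification of the middle column with the embedded homology via the induced isomorphisms of $\vec{\iota}_*$ and $\iota_*$.

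For functoriality, I would invoke the functoriality of (\ref{diag-1116-2}) with respect to morphisms of hyperdigraphs. Given a morphism $\vec{\varphi}: \vec{\mathcal{H}} \to \vec{\mathcal{H}}'$, the induced directed simplicial maps $\Delta\vec{\varphi}$, $\delta\vec{\varphi}$ and their underlying simplicial maps $\Delta\varphi$, $\delta\varphi$ fit into the commutative diagrams (\ref{diag-1116-12}) and (\ref{diag-1116-13}). An argument analogous to \cite[Proposition~3.7]{h1} shows that $\vec{\varphi}_\#$ restricts to chain maps between the corresponding infimum and supremum complexes (since the definitions (\ref{eq-251117-a1})--(\ref{eq-251117-a4}) are natural in $\vec{\mathcal{H}}$ and $\mathcal{H}$), and these restrictions are compatible with the vertical projections ${\rm Inf}(\pi)$, ${\rm Sup}(\pi)$, $(\delta\pi)_\#$ and $(\Delta\pi)_\#$. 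Applying the homology functor yields the desired naturality of (\ref{diag-1116-3}).

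The main subtlety, rather than a hard obstacle, lies in verifying that $\pi_*$ on embedded homology does not depend on whether one models embedded homology via ${\rm Inf}$ or ${\rm Sup}$. This is handled by the strict (not merely up-to-homotopy) commutativity of the inner rectangle of (\ref{diag-1116-2}), which forces $\vec{\iota}_*^{-1}$ and $\iota_*^{-1}$ to intertwine the two candidate definitions of $\pi_*$ coherently. Once this identification is recorded, the rest of the proof reduces to routine diagram-chasing and to the observation that every step above is natural in $\vec{\mathcal{H}}$.
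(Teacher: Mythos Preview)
Your proposal is correct and follows essentially the same approach as the paper: apply the homology functor to the chain-level commutative diagram (\ref{diag-1116-2}) to obtain (\ref{diag-1116-3}), and deduce functoriality from the functoriality of (\ref{diag-1116-1}) and (\ref{diag-1116-2}). Your version is in fact more careful than the paper's own proof, which is a two-line sketch; in particular you explicitly address why the middle vertical map $\pi_*$ is well-defined on embedded homology via the quasi-isomorphisms $\vec{\iota}$ and $\iota$, a point the paper leaves implicit.
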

    
   \begin{proof}
    Apply  the  homology  functor  to (\ref{diag-1116-2}).  
    We  obtain the  commutative  diagram  (\ref{diag-1116-3}).   
    Since  (\ref{diag-1116-1})  and   (\ref{diag-1116-2})    
 are   functorial  with  respect  to  morphisms  of  hyperdigraphs
 and  the   induced  morphisms  between  the  underlying hypergraphs,   
 (\ref{diag-1116-3})  
    is  functorial.  
   \end{proof}

    \subsection{The  Mayer-Vietoris  sequences }\label{ss-5.1}

Let  $\vec{\mathcal{H}}$  and  $\vec{\mathcal{H}}'$  be  hyperdigraphs  on  $V$. 
 Let   $ {\mathcal{H}}$  and  $ {\mathcal{H}}'$  be their  underlying   hypergraphs.  
 Suppose 
 \begin{enumerate}[(I)]
 \item
  both $\vec{\mathcal{H}}$  and  $\vec{\mathcal{H}}'$  are  $\Sigma_\bullet$-invariant. 
  \end{enumerate}
 By an  analogous    argument  of  (\ref{eq-25-10-1}),  
 we   have  
 \begin{eqnarray*} 
{\rm  Inf}_{k-1}(\vec {\mathcal{H}};R)&\cong&  {\rm  Inf}_{k-1}(  {\mathcal{H}};R)^{\bigoplus  k !},\\
{\rm  Sup}_{k-1}(\vec {\mathcal{H}};R)&\cong&  {\rm  Sup}_{k-1}(  {\mathcal{H}};R)^{\bigoplus  k !}
\end{eqnarray*}
 and  consequently 
  \begin{eqnarray} \label{eq-251120-6}
   H_{k-1}(\vec {\mathcal{H}};R) \cong    H_{k-1}(  {\mathcal{H}};R)^{\bigoplus  k !}.  
\end{eqnarray}
Suppose 
\begin{enumerate}[(II)]
\item
 for  any  $\sigma\in \mathcal{H}$  and  any  $\sigma'\in\mathcal{H}'$,  
  either  $\sigma\cap\sigma'$  is  the  empty-set  or 
  $\sigma\cap\sigma'\in \mathcal{H}\cap\mathcal{H}'$. 
  \end{enumerate}
By    \cite[Theorem~3.10]{h1},  we  have a  long   exact  sequence 
\begin{eqnarray}\label{eq-251120-5}
\xymatrix{
  \cdots\ar[r]
  & H_n(\mathcal{H}\cap\mathcal{H}';R)\ar[r]
  &   H_n(\mathcal{H};R)\oplus  H_n(\mathcal{H}';R)\ar[r]
  &\\
   \ar[r]& H_n(\mathcal{H}\cup\mathcal{H}';R)\ar[r]
  &  H_{n-1}(\mathcal{H}\cap\mathcal{H}';R)\ar[r]
  &\cdots
}
\end{eqnarray}  
which  will  be  denoted  by  ${\bf {\rm  MV}}(\mathcal{H}, \mathcal{H}')$.  
The  next  theorem  is  a  straightforward 
  improvement  of  \cite[Section~5.3]{stab-hg}   and  
   \cite[Proposition~3.7  and  Theorem~3.9]{hdg}.   

\begin{theorem}\label{th-251118-mv1}
Let  $\vec{\mathcal{H}}$  and  $\vec{\mathcal{H}}'$  be  
hyperdigraphs.  
\begin{enumerate}[(1)]
\item
 We  have  a  commutative  diagram 
 \begin{eqnarray}\label{diag-251118-01}
 \xymatrix{
{\bf {\rm  MV}}(\delta \vec{ \mathcal{H}},\delta \vec {\mathcal{H}}') \ar[r] \ar[d]_-{(\delta\pi)_*} 
& {\bf {\rm  MV}}(\Delta \vec{ \mathcal{H}},\Delta \vec {\mathcal{H}}')  \ar[d]^-{(\Delta\pi)_*} \\
{\bf {\rm  MV}}(\delta { \mathcal{H}},\delta   {\mathcal{H}}')  \ar[r]  
 &  {\bf {\rm  MV}}(\Delta  { \mathcal{H}},\Delta   {\mathcal{H}}')   
 }
 \end{eqnarray}
 where  the  arrows are  morphisms  of  long  exact  sequences.  
 Moreover,   (\ref{th-251118-mv1})  is  functorial  with respect to  morphisms  of  hyperdigraphs
 and  the induced  morphisms  of  the  underlying  hypergraphs.
 \item
Suppose (I)  and  (II).  
  Then 
 we  have  a  commutative  diagram 
  \begin{eqnarray}\label{diag-251119-02}
 \xymatrix{
{\bf {\rm  MV}}(\delta \vec{ \mathcal{H}},\delta \vec {\mathcal{H}}') \ar[r] \ar[d]_-{(\delta\pi)_*}  
&{\bf {\rm  MV}}(  \vec{ \mathcal{H}},  \vec {\mathcal{H}}') \ar[r] \ar[d]_-{ \pi _*} 
& {\bf {\rm  MV}}(\Delta \vec{ \mathcal{H}},\Delta \vec {\mathcal{H}}')  \ar[d]^-{(\Delta\pi)_*} \\
{\bf {\rm  MV}}(\delta { \mathcal{H}},\delta   {\mathcal{H}}')  \ar[r]  
&{\bf {\rm  MV}}(    \mathcal{H},  \mathcal{H}') \ar[r] 
 &  {\bf {\rm  MV}}(\Delta  { \mathcal{H}},\Delta   {\mathcal{H}}')   
 }
 \end{eqnarray}
  where  the  arrows are  morphisms  of  long  exact  sequences.  
Moreover,   (\ref{diag-251119-02})  is  functorial  with  respect  to  morphisms  of  $\Sigma_\bullet$-invariant  
  hyperdigraphs and  the induced  morphisms  of  the  underlying  hypergraphs.  
  \end{enumerate}
\end{theorem}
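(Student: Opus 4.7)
The plan for part (1) is to apply Proposition~\ref{pr-10-11-mv-1} twice, once to the directed simplicial complexes $\delta\vec{\mathcal{H}}, \delta\vec{\mathcal{H}}'$ and once to $\Delta\vec{\mathcal{H}}, \Delta\vec{\mathcal{H}}'$. Each application produces a commutative square of long exact Mayer-Vietoris sequences with vertical projection $(\delta\pi)_*$ or $(\Delta\pi)_*$ respectively. To connect these two squares into the diagram (\ref{diag-251118-01}), I would use the canonical inclusions $\delta\vec{\mathcal{H}} \hookrightarrow \Delta\vec{\mathcal{H}}$, $\delta\vec{\mathcal{H}}' \hookrightarrow \Delta\vec{\mathcal{H}}'$, and their underlying-hypergraph analogues coming from (\ref{diag-1116-1}); these are directed simplicial (resp.\ simplicial) maps compatible with the formation of intersections and unions, so the functoriality clause of Proposition~\ref{pr-10-11-mv-1} yields morphisms of Mayer-Vietoris long exact sequences ${\bf {\rm MV}}(\delta\vec{\mathcal{H}}, \delta\vec{\mathcal{H}}') \to {\bf {\rm MV}}(\Delta\vec{\mathcal{H}}, \Delta\vec{\mathcal{H}}')$ and similarly for the underlying hypergraphs. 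Commutativity of the resulting square (\ref{diag-251118-01}) then reduces to the chain-level commutativity (\ref{diag-1116-2}); functoriality with respect to morphisms of hyperdigraphs follows from Proposition~\ref{pr-10-11-mv-1} combined with (\ref{diag-1116-12})--(\ref{diag-1116-13}).

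For part (2), the new content is the middle column, namely the Mayer-Vietoris sequences ${\bf {\rm MV}}(\vec{\mathcal{H}}, \vec{\mathcal{H}}')$ and ${\bf {\rm MV}}(\mathcal{H}, \mathcal{H}')$ for the embedded homology of the hyper(di)graphs themselves, together with the projection $\pi_*$ between them. Under (II), the hypergraph Mayer-Vietoris sequence ${\bf {\rm MV}}(\mathcal{H}, \mathcal{H}')$ is precisely \cite[Theorem~3.10]{h1}. For the hyperdigraph version, I would first note that (I) implies $\vec{\mathcal{H}} \cup \vec{\mathcal{H}}'$ and $\vec{\mathcal{H}} \cap \vec{\mathcal{H}}'$ are themselves $\Sigma_\bullet$-invariant, and by Lemma~\ref{pr-25-10-11-1}(1) their underlying hypergraphs coincide with $\mathcal{H} \cup \mathcal{H}'$ and $\mathcal{H} \cap \mathcal{H}'$. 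Next, using the supremum description (\ref{eq-251117-a2}), I would verify that (I) and (II) jointly imply the chain-level identities
\[
{\rm Sup}_\bullet(\vec{\mathcal{H}} \cap \vec{\mathcal{H}}'; R) = {\rm Sup}_\bullet(\vec{\mathcal{H}}; R) \cap {\rm Sup}_\bullet(\vec{\mathcal{H}}'; R),
\]
\[
{\rm Sup}_\bullet(\vec{\mathcal{H}} \cup \vec{\mathcal{H}}'; R) = {\rm Sup}_\bullet(\vec{\mathcal{H}}; R) + {\rm Sup}_\bullet(\vec{\mathcal{H}}'; R),
\]
yielding a short exact sequence of chain complexes whose induced long exact homology sequence is ${\bf {\rm MV}}(\vec{\mathcal{H}}, \vec{\mathcal{H}}')$. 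The projection map ${\rm Sup}(\pi)$ from (\ref{eq-251117-a6}) intertwines this short exact sequence with the corresponding one for the underlying hypergraphs, and applying the homology functor gives the vertical arrow $\pi_*$ of the middle column as a morphism of long exact sequences.

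The horizontal arrows of (\ref{diag-251119-02}) joining the middle column to the outer two are obtained by restricting the canonical chain inclusions in (\ref{diag-1116-2}) to the intersection, direct sum, and union pieces; commutativity of the full diagram then follows from the chain-level commutativity of (\ref{diag-1116-2}) together with naturality of connecting homomorphisms. Functoriality with respect to morphisms of $\Sigma_\bullet$-invariant hyperdigraphs is inherited from the functoriality of Proposition~\ref{pr-10-11-mv-1} and of (\ref{diag-1116-12})--(\ref{diag-1116-13}), since (I) is preserved under such morphisms. The main technical obstacle is the chain-level identification of ${\rm Sup}_\bullet$ on intersections and unions of the hyperdigraphs, which is the hyperdigraph analogue of the key step in \cite[Theorem~3.10]{h1}: hypothesis (II) controls how hyperedges of the two families interact, ensuring that boundaries of directed $(k+1)$-hyperedges landing in the intersection of chain modules arise from genuinely shared directed hyperedges, while (I) is precisely the hypothesis that lets one transplant this identification from the underlying hypergraphs to their $\Sigma_\bullet$-invariant hyperdigraph refinements.
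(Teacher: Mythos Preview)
Your approach to part~(1) matches the paper's: apply Proposition~\ref{pr-10-11-mv-1} to the pairs $(\delta\vec{\mathcal{H}},\delta\vec{\mathcal{H}}')$ and $(\Delta\vec{\mathcal{H}},\Delta\vec{\mathcal{H}}')$ and link them via the canonical inclusions from (\ref{diag-1116-1}), with functoriality coming from (\ref{diag-1116-12})--(\ref{diag-1116-13}).

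For part~(2) your route diverges from the paper's. The paper does not build the hyperdigraph Mayer--Vietoris sequence from scratch via chain-level identities on ${\rm Sup}_\bullet$; instead it uses the splitting (\ref{eq-251120-6}), namely $H_{k-1}(\vec{\mathcal{H}};R)\cong H_{k-1}(\mathcal{H};R)^{\oplus k!}$ (valid under~(I) for each of $\vec{\mathcal{H}}$, $\vec{\mathcal{H}}'$, $\vec{\mathcal{H}}\cap\vec{\mathcal{H}}'$, $\vec{\mathcal{H}}\cup\vec{\mathcal{H}}'$), and simply reads off ${\bf\rm MV}(\vec{\mathcal{H}},\vec{\mathcal{H}}')$ and the projection $\pi_*$ as $k!$ parallel copies of the hypergraph sequence (\ref{eq-251120-5}). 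This is shorter and avoids re-proving the key lemma of \cite[Theorem~3.10]{h1} in the directed setting. Your approach---establishing the ${\rm Sup}_\bullet$ intersection/union identities for hyperdigraphs directly under (I)+(II) and then invoking ${\rm Sup}(\pi)$---is correct and more self-contained: it gives an independent construction of ${\bf\rm MV}(\vec{\mathcal{H}},\vec{\mathcal{H}}')$ at the chain level rather than as a formal consequence of the splitting, and makes the role of~(I) in lifting condition~(II) to directed hyperedges explicit. Either method yields the middle column and the full diagram (\ref{diag-251119-02}); the paper's shortcut trades transparency for brevity.
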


\begin{proof}
Applying  Proposition~\ref{pr-10-11-mv-1}  to  the  first  and  the  third  columns  of  
(\ref{diag-1116-1}),  
we  obtain  (\ref{diag-251118-01}).  
 Suppose  (I)  and  (II).  
  By   (\ref{eq-251120-6})  and  (\ref{eq-251120-5}),  
  we  obtain  $\pi_*$  in  the  middle  column  of  (\ref{diag-251119-02}).  
  Applying   the  embedded  homology  functor to (\ref{diag-1116-1})  and  taking  the  long exact sequence (\ref{eq-251120-5}),  we  obtain    (\ref{diag-251119-02}).  
\end{proof}

  \subsection{The  K\"{u}nneth-type  formulae }

Let  $V$  and   $V'$  be  disjoint  sets. 
Let  $\vec{\mathcal{H}}$  and  $\vec{\mathcal{H}}'$  be  hyperdigraphs
  on  $V$  and  $V'$  respectively
  with  their  underlying  hypergraphs $ {\mathcal{H}}$  and  $ {\mathcal{H}}'$. 
  Then   with  the  help  of  \cite[Section~3]{kun}, 
  \begin{eqnarray}\label{eq-251120-k1}
 {\rm  Inf}_{\bullet+1}(\vec{\mathcal{H}}*\vec{\mathcal{H}}';R)&\cong & 
  {\rm  Inf}_\bullet(\vec{\mathcal{H}} ;R)\otimes  {\rm  Inf}_\bullet( \vec{\mathcal{H}}';R),\\
  {\rm  Inf}_{\bullet+1}( {\mathcal{H}}* {\mathcal{H}}';R)&\cong & 
  {\rm  Inf}_\bullet( {\mathcal{H}} ;R)\otimes  {\rm  Inf}_\bullet( {\mathcal{H}}';R). 
  \label{eq-251120-k2}
  \end{eqnarray}
  Consequently,  
 by  (\ref{eq-251120-k1})  we  have  a  short  exact  sequence 
  \begin{eqnarray}\label{eq-251121-a1}
 \xymatrix{
0\ar[r]
& \bigoplus_{p+q+1=n} H_{p+1}(\vec{\mathcal{H}};R)\otimes H_{q+1}(\vec{\mathcal{H}}';R)
\ar[r] 
&H_{n+1}(\vec{\mathcal{H}} *\vec{\mathcal{H}}';R) \ar[r] 
& \\
 \ar[r]
 &\bigoplus_{p+q+1=n} {\rm  Tor}_R(H_{p+1}(\vec{\mathcal{H}};R), H_{q}(\vec{\mathcal{H}}';R))\ar[r] &0,  
 &
}
  \end{eqnarray}
  denoted  by  ${\bf{\rm  KU}}(\vec{\mathcal{H}},  \vec{\mathcal{H}}')$;  
  and  by  (\ref{eq-251120-k2})  we  have  a  short  exact  sequence 
  \begin{eqnarray}\label{eq-251121-a2}
 \xymatrix{
0\ar[r]
& \bigoplus_{p+q+1=n} H_{p+1}( {\mathcal{H}};R)\otimes H_{q+1}( {\mathcal{H}}';R)
\ar[r] 
&H_{n+1}( {\mathcal{H}} * {\mathcal{H}}';R) \ar[r] 
& \\
 \ar[r]
 &\bigoplus_{p+q+1=n} {\rm  Tor}_R(H_{p+1}( {\mathcal{H}};R), H_{q}( {\mathcal{H}}';R))\ar[r] &0,  
 &
}
  \end{eqnarray}
  denoted  by  ${\bf{\rm  KU}}( {\mathcal{H}},   {\mathcal{H}}')$.   
  The  projections  $ \pi:  \vec{\mathcal{H}}\longrightarrow  \mathcal{H}$
  and    $ \pi:  \vec{\mathcal{H}}'\longrightarrow  \mathcal{H}'$
  induce  a   morphism  of  short  exact  sequences  
  \begin{eqnarray}\label{eq-251121-a3}
  \pi_*: {\bf{\rm  KU}}(\vec{\mathcal{H}},  \vec{\mathcal{H}}')
  \longrightarrow  {\bf{\rm  KU}}( {\mathcal{H}},   {\mathcal{H}}').  
  \end{eqnarray}
  The  next  theorem  is  a  straightforward 
  improvement  of  \cite[Section~5.4]{stab-hg}   and  
   \cite[Proposition~3.7  and  Theorem~3.9]{hdg}.   
  
  \begin{theorem}\label{th-251127-hku5}
  For  any  hyperdigraphs   $\vec{\mathcal{H}}$  and   $\vec{\mathcal{H}}'$  on  $V$  and  $V'$  
  respectively  where   $V$ and  $V'$  are  disjoint,  
  we  have  a  commutative  diagram 
  \begin{eqnarray}
  \label{diag-251120-ku3}
  \xymatrix{
  {\bf{\rm  KU}}(\delta\vec{\mathcal{H}},  \delta\vec{\mathcal{H}}')\ar[r]\ar[d]_-{\pi_*}
  & {\bf{\rm  KU}}( \vec{\mathcal{H}},   \vec{\mathcal{H}}')\ar[r]\ar[d]  ^-{\pi_*}
 &  {\bf{\rm  KU}}(\Delta\vec{\mathcal{H}},  \Delta\vec{\mathcal{H}}') \ar[d]^-{\pi_*}\\
   {\bf{\rm  KU}}(\delta {\mathcal{H}},  \delta {\mathcal{H}}')\ar[r] 
  & {\bf{\rm  KU}}( {\mathcal{H}},    {\mathcal{H}}')\ar[r] 
  &  {\bf{\rm  KU}}(\Delta {\mathcal{H}},  \Delta {\mathcal{H}}') 
  }
  \end{eqnarray}
  where  the  arrows are  morphisms  of  short  exact  sequences.  
Moreover,  (\ref{diag-251120-ku3})   is  functorial  with  respect  to  morphisms  of   
  hyperdigraphs and  the induced  morphisms  of  the  underlying  hypergraphs. 
  \end{theorem}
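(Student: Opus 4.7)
The plan is to assemble the diagram (\ref{diag-251120-ku3}) from three compatible squares. The left and right squares follow from applying Proposition~\ref{pr-10-12-kf-1} to the directed simplicial complexes $\delta\vec{\mathcal{H}}$, $\delta\vec{\mathcal{H}}'$ and $\Delta\vec{\mathcal{H}}$, $\Delta\vec{\mathcal{H}}'$ together with the identifications of their underlying simplicial complexes from (\ref{diag-1116-1}). The middle square, which packages the K\"unneth formula for the embedded homology itself, is the new content and requires a chain-level argument.

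To construct the middle square, I would first produce a commutative square of chain complexes
\begin{eqnarray*}
\xymatrix{
{\rm Inf}_{\bullet}(\vec{\mathcal{H}};R)\otimes {\rm Inf}_{\bullet}(\vec{\mathcal{H}}';R)
\ar[r]^-{\cong}\ar[d]_-{{\rm Inf}(\pi)\otimes {\rm Inf}(\pi)}
& {\rm Inf}_{\bullet+1}(\vec{\mathcal{H}}*\vec{\mathcal{H}}';R)\ar[d]^-{{\rm Inf}(\pi)}\\
{\rm Inf}_{\bullet}(\mathcal{H};R)\otimes {\rm Inf}_{\bullet}(\mathcal{H}';R)
\ar[r]^-{\cong}
& {\rm Inf}_{\bullet+1}(\mathcal{H}*\mathcal{H}';R)
}
\end{eqnarray*}
whose horizontal isomorphisms are (\ref{eq-251120-k1}) and (\ref{eq-251120-k2}) and whose vertical maps are (\ref{eq-251117-a5}).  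Applying the algebraic K\"unneth theorem \cite[Theorem~3B.5]{hatcher} to this square and using the quasi-isomorphisms $\vec\iota$ and $\iota$ of (\ref{diag-1116-2}) to identify the homology of the infimum complexes with the embedded homology then yields the middle square of (\ref{diag-251120-ku3}) together with its vertical arrow $\pi_*$; the two rows are precisely the short exact sequences (\ref{eq-251121-a1}) and (\ref{eq-251121-a2}).  Compatibility with the outer two squares comes from (\ref{diag-1116-2}), which already records how the inclusions $\delta\vec{\mathcal{H}}\subseteq\vec{\mathcal{H}}\subseteq\Delta\vec{\mathcal{H}}$ and their underlying analogues intertwine all the relevant projections.

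Functoriality will follow from the functoriality of each ingredient used above: the infimum chain complex, the join $*$, the projection $\pi$ and the algebraic K\"unneth short exact sequence are all natural under morphisms of hyper(di)graphs with disjoint vertex sets, and these naturalities combine to give the naturality of the assembled diagram.  The main obstacle I expect is the verification that the commutative square of chain complexes above commutes: the statement of \cite[Section~3]{kun} provides the join-infimum isomorphism but does not explicitly treat its naturality across $\pi$.  On a basis hyperedge of the form $\vec{\sigma}*\vec{\sigma}'=(v_1,\ldots,v_k,v'_1,\ldots,v'_l)$ one must check that the single sign ${\rm sgn}(s)$ arising when reordering the concatenated sequence to the ambient total order of $V\sqcup V'$ (cf.\ (\ref{eq-25-10-5})) factors as the product of the corresponding signs for $\vec\sigma$ and $\vec\sigma'$ separately.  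This factorization hinges on the choice of total order on $V\sqcup V'$ placing $V$ entirely before $V'$, which decouples the permutations acting on the two blocks; once this is in place, the square commutes on generators and, by $R$-linearity, as a square of chain complexes.
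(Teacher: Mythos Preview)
Your proposal is correct and follows essentially the same approach as the paper: both assemble the diagram by applying Proposition~\ref{pr-10-12-kf-1} to the outer columns $(\delta\vec{\mathcal{H}},\delta\vec{\mathcal{H}}')$ and $(\Delta\vec{\mathcal{H}},\Delta\vec{\mathcal{H}}')$, invoke the infimum-join isomorphisms (\ref{eq-251120-k1})--(\ref{eq-251120-k2}) and the resulting morphism (\ref{eq-251121-a3}) for the middle column, and cite the naturality in \cite[Theorem~3B.5]{hatcher} for functoriality. The paper's proof is more terse---it packages the three columns via the joined diagram (\ref{diag-260107-1}) obtained from (\ref{diag-1116-1}) and simply cites the pre-established (\ref{eq-251121-a1})--(\ref{eq-251121-a3})---whereas you spell out the chain-level square of infimum complexes and the sign-factorization check that justifies its commutativity with $\pi$; this extra detail is a genuine contribution since the paper does not verify it explicitly.
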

  
  \begin{proof}
  Applying  (\ref{diag-1116-1})  to  $\vec{\mathcal{H}}$  and  $\vec{\mathcal{H}}'$  respectively 
  and  taking  the  joins,  we  obtain  a  commutative  diagram 
  \begin{eqnarray}\label{diag-260107-1}
 \xymatrix{
   \delta\vec{\mathcal{H}}*   \delta\vec{\mathcal{H}}'\ar[r] \ar[d] 
   & \vec{\mathcal{H}}*  \vec{\mathcal{H}}'\ar[r]\ar[d] 
   & \Delta\vec{\mathcal{H}}* \Delta\vec{\mathcal{H}}'\ar[d] \\
   \delta {\mathcal{H}}* \delta {\mathcal{H}}'\ar[r] 
   &  {\mathcal{H}}* {\mathcal{H}}'\ar[r]
   & \Delta {\mathcal{H}}* \Delta {\mathcal{H}}'. 
   }
  \end{eqnarray}
  Applying  (\ref{diag-10-12-7})  to  the  first and  the  third  columns  
  and  applying (\ref{eq-251121-a1})  - (\ref{eq-251121-a3})  to the  middle  column,  
  we  obtain  (\ref{diag-251120-ku3}).  
 With  the  help  of    \cite[Theorem~3B.5]{hatcher}, 
 we  obtain  the  functoriality  of   (\ref{diag-251120-ku3}).  
  \end{proof}

\section{Independence  complexes,  directed  independence  complexes  
and  their  homology}\label{s6}

In  this  section,  we  apply      Section~\ref{s4} 
to  the  independence  complexes  as  well  as   the  directed  independence  complexes    
 and   study  the  homology.  
In  Theorem~\ref{th-25-10-15-1},  we  give  the  canonical  homomorphism  
from  the  homology  of  the  directed  independence  complexes  to  the  
homology  of  the  independence  complexes  and  prove  the  functoriality  
with  respect  to  filtrations  of  the  vertices.  
In  Theorem~\ref{th-10-19-1},  
we  prove  some  Mayer-Vietoris  sequences  for  the  homology 
 of  (directed)  independence complexes  of  joins  of  graphs.  
In  Theorem~\ref{th-10-15-kf-1},  
we  prove  some  K\"unneth-type  formulae  for  the  homology   
 of  (directed)  independence complexes  of  disjoint  unions   of  graphs. 
 Moreover,  we  apply  Section~\ref{s5}
 to  sub-hyper(di)graphs  of   the  (directed)  independence  complexes.  
 We   generalize     Theorem~\ref{th-25-10-15-1},  
  Theorem~\ref{th-10-19-1}  and    Theorem~\ref{th-10-15-kf-1} 
  to  the  sub-hyper(di)graph   context  in  Theorem~\ref{th-25-11-26-1},  
  Theorem~\ref{th-251126-m5}  and  Theorem~\ref{co-251127-ku1}  respectively.

Let  $G=(V,E)$  be  a  graph.  
Let  the  ordered  configuration  space  ${\rm   Conf}_k(G)$
  be  the   collection  of  all the 
ordered  $k$-tuples  $(v_1,\ldots,v_k)$  such  that  
$v_1,\ldots,v_k\in  V$,  $v_i\neq  v_j$  and  $v_i,v_j$ are  non-adjacent in  $G$
 for  any  $1\leq  i<j\leq  k$.  
 The symmetric  group  $\Sigma_k$  acts on    ${\rm   Conf}_k(G)$  freely  from  the  left 
 by  permuting  the  coordinates. 
 As  a  $k$-uniform  hyperdigraph,  
 ${\rm   Conf}_k(G)$  is  $\Sigma_k$-invariant.  
 Let  the  unordered  configuration  space  
 be  the  orbit  space  $ {\rm   Conf}_k(G)/\Sigma_k$,  which  is the   collection  of  all the 
unordered  $k$-tuples  $\{v_1,\ldots,v_k\}$  such  that  
$v_1,\ldots,v_k\in  V$,  $v_i\neq  v_j$  and  $v_i,v_j$ are  non-adjacent in  $G$
 for  any  $1\leq  i<j\leq  k$.  
 We  have  a  principal  $\Sigma_k$-bundle  
\begin{eqnarray}\label{eq-canonical-covering}
\xymatrix{
 \Sigma_k\ar[r]  &  {\rm   Conf}_k(G)\ar[r]^-{\pi_k(G) }&{\rm   Conf}_k(G)/\Sigma_k  
 }
\end{eqnarray}
where  $\pi_k(G)$   is  a $k!$-sheeted  covering  map.  
 Let  
 \begin{eqnarray*}
 \overrightarrow{\rm  Ind}(G)  =   \bigcup_{k\geq  1}  {\rm   Conf}_k(G), ~~~~~~~~~  
 {\rm  Ind}(G)  =   \bigcup_{k\geq  1}  {\rm   Conf}_k(G)/\Sigma_k. 
 \end{eqnarray*}
 Then $ \overrightarrow{\rm  Ind}(G)$  
 is  a  directed  simplicial  complex  on  $V$  and  
 $ {\rm  Ind}(G) $  is  the  underlying   simplicial  complex  of   $ \overrightarrow{\rm  Ind}(G)$.  
 The  simplicial  complex 
  ${\rm  Ind}(G)$ is  called  the {\it  independence  complex}  of  $G$,  
 which     consists  of  all  the  finite independent sets of  $G$.  
 We  call  $\overrightarrow{\rm  Ind}(G)$  the  {\it  directed  independence  complex}  of  $G$
 and  call  each  element  in  
 $ \overrightarrow{\rm  Ind}(G)$   an  {\it  independent  sequence}  on  $G$.    
 Let  $\prec$  be  a  total  order  on  $V$.  
  Note  that   an  element  in   ${\rm   Conf}_k(G)/\Sigma_k$  
  can  be  uniquely  written  in  the  form  $\{v_1,\ldots,v_k\}$
  where  $v_1\prec  \cdots \prec  v_k$  in   $V$.

  Let  $\{V_t\mid  t\in \mathbb{Z}\}$  be  a  filtration  of  $V$  such  that  
  $V_s\subseteq   V_t$  for  any  $s\leq  t$    and   $\bigcup_{t\in \mathbb{Z}}  V_t=V$.  
  For  each  $t\in \mathbb{Z}$,  let  
  \begin{eqnarray*}
  E_t=\{(u,v)\in  E\mid  u,v\in  V_t\}. 
  \end{eqnarray*}
  Then       
  $E_s\subseteq   E_t$  for  any  $s\leq  t$    and   $\bigcup_{t\in \mathbb{Z}}  E_t=E$.  
The    family  of  graphs
  \begin{eqnarray}\label{eq-251103-f1}
 \{ G_t=(V_t, E_t)\mid  t\in  \mathbb{Z}\}
  \end{eqnarray}
  gives  a   filtration  of  $G$  with  the  canonical  inclusions    
  $i_{s,t}:  G_s\longrightarrow  G_t$  for  any  $s\leq  t$  
  and  $i_{t,\infty}:  G_t\longrightarrow  G$  for  any  $t\in \mathbb{Z}$.  
  
 \begin{lemma}\label{le-251103-9}
 The  family  of  directed  simplicial  complexes 
 \begin{eqnarray}\label{eq-25-11-b1}
 \{\overrightarrow{\rm  Ind}(G_t)\mid  t\in \mathbb{Z}\}
 \end{eqnarray}
 gives  a  filtration  of  $\overrightarrow{\rm  Ind}(G)$  and  the  family  of  
 simplicial  complexes 
 \begin{eqnarray}\label{eq-25-11-b2}
 \{{\rm  Ind}(G_t)\mid  t\in \mathbb{Z}\}
 \end{eqnarray}
gives  a  filtration  of  ${\rm  Ind}(G)$
such  that  the  following  diagram  commutes 
\begin{eqnarray}\label{diag-251103-a}
\xymatrix{
\cdots\ar[r]  
& \overrightarrow{\rm  Ind}(G_s)\ar[r]\ar[d]^-{\pi} 
&\cdots\ar[r]
&\overrightarrow{\rm  Ind}(G_t) \ar[r]\ar[d]^-{\pi} 
 &\cdots \ar[r] 
&\overrightarrow{\rm  Ind}(G)\ar[d]^-{\pi} \\
\cdots\ar[r]  
& {\rm  Ind}(G_s)\ar[r]
&\cdots\ar[r]
& {\rm  Ind}(G_t) \ar[r] &\cdots \ar[r]
& {\rm  Ind}(G)
}
\end{eqnarray}
 for  any  $s\leq  t$.  
 \end{lemma}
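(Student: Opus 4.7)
The plan is to verify three things: (i) the inclusion $\overrightarrow{\rm Ind}(G_s) \subseteq \overrightarrow{\rm Ind}(G_t)$ for $s \leq t$ (and similarly ${\rm Ind}(G_s) \subseteq {\rm Ind}(G_t)$), (ii) the union over $t$ exhausts $\overrightarrow{\rm Ind}(G)$ (resp.\ ${\rm Ind}(G)$), and (iii) the commutativity of the squares in (\ref{diag-251103-a}). Each of these will follow directly from unfolding the definitions, but the central observation on which everything rests should be stated as the first step.

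First I would record the key compatibility: for any $s \leq t$ and any $v_i, v_j \in V_s$, the definition $E_t = \{(u,v) \in E \mid u, v \in V_t\}$ gives that $(v_i,v_j) \in E$ if and only if $(v_i,v_j) \in E_s$ if and only if $(v_i,v_j) \in E_t$. Hence ``mutually non-adjacent in $G_s$'', ``mutually non-adjacent in $G_t$'' and ``mutually non-adjacent in $G$'' are equivalent conditions when all the vertices lie in $V_s$. With this in hand, if $(v_1,\ldots,v_k) \in \overrightarrow{\rm Ind}(G_s)$ then the $v_i$'s are distinct vertices in $V_s \subseteq V_t$ and mutually non-adjacent in $G_t$, so $(v_1,\ldots,v_k) \in \overrightarrow{\rm Ind}(G_t)$. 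The same argument applied to unordered tuples yields ${\rm Ind}(G_s) \subseteq {\rm Ind}(G_t)$.

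Next I would verify exhaustion. Any directed simplex $(v_1,\ldots,v_k) \in \overrightarrow{\rm Ind}(G)$ involves only finitely many vertices, all lying in $V = \bigcup_t V_t$; choosing $t$ large enough that $\{v_1,\ldots,v_k\} \subseteq V_t$, the key compatibility from the previous step shows $(v_1,\ldots,v_k) \in \overrightarrow{\rm Ind}(G_t)$. Thus $\bigcup_{t} \overrightarrow{\rm Ind}(G_t) = \overrightarrow{\rm Ind}(G)$, and the same reasoning works for ${\rm Ind}$. This establishes (\ref{eq-25-11-b1}) and (\ref{eq-25-11-b2}) as genuine filtrations.

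Finally, the commutativity of each square in (\ref{diag-251103-a}) is tautological: the horizontal arrows are set-theoretic inclusions that act as the identity on a given directed simplex $(v_1,\ldots,v_k)$, while the vertical projection $\pi$ sends $(v_1,\ldots,v_k) \mapsto \{v_1,\ldots,v_k\}$ uniformly across all $t$. Chasing an arbitrary element either way around a square produces the same unordered simplex, which finishes the proof. I do not anticipate any real obstacle here; the only subtle point is the compatibility between the edge sets $E_s, E_t, E$ described above, and once that is recorded, the remaining assertions reduce to unwinding the definitions of $\overrightarrow{\rm Ind}(-)$, ${\rm Ind}(-)$, and the canonical projection $\pi$.
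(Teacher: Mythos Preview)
Your proof is correct and follows essentially the same approach as the paper: both rest on the observation that for vertices in $V_t$, adjacency in $G_t$ coincides with adjacency in $G$, from which the filtration properties and the commutativity of the diagram follow directly. Your write-up is more detailed (you separate out the inclusion, exhaustion, and commutativity steps explicitly), while the paper's proof condenses these into a single paragraph.
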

 \begin{proof}
 Let  $u,v\in  V_t$.  
 Then  $u$  and  $v$ are  adjacent  in  $G_t$  iff  they  are  adjacent  in  $G$.  
 Thus  for any  distinct  vertices   $v_1,\ldots,  v_k\in  V_t$,   
 $(v_1,\ldots,  v_k)$  is  a  directed  simplex  in  $\overrightarrow{\rm  Ind}(G_t)$  
 iff  it  is  a  directed  simplex  in  $ \overrightarrow{\rm  Ind}(G)$;
 and    $\{v_1,\ldots,  v_k\}$  is  a  simplex  in  ${\rm  Ind}(G_t)$  
 iff  it  is  a    simplex  in  ${\rm  Ind}(G)$.  
 Therefore,  
 (\ref{eq-25-11-b1})   is   a   filtration  of  $ \overrightarrow{\rm  Ind}(G)$
 and   (\ref{eq-25-11-b2})   is   a   filtration  of  ${\rm  Ind}(G)$
 such  that  the  diagram  (\ref{diag-251103-a})   commutes.  
 \end{proof}
 
 \begin{theorem}\label{th-25-10-15-1}
  We  have a    surjective  chain  map 
 \begin{eqnarray}\label{eq-25-10-17}
 \pi_\#:   C_\bullet(\overrightarrow{\rm  Ind}(G);R)
 \longrightarrow  C_\bullet({\rm  Ind}(G);R)
\end{eqnarray}
    and   consequently  an  induced      homomorphism  of  homology   
\begin{eqnarray}\label{eq-25-10-18}
 \pi_*:   H_\bullet(\overrightarrow{\rm  Ind}(G);R)\longrightarrow  H_\bullet({\rm  Ind}(G);R).  
\end{eqnarray}
Moreover,    (\ref{eq-25-10-17})   and   (\ref{eq-25-10-18})  are   functorial  with  respect  to 
 directed  simplicial  maps  between  directed  independence  complexes   
 and  their  induced  simplicial  maps   between  the  underlying  independence  complexes,
  both  of  which  are  
 induced  by  filtrations  of  the  vertices  of  $G$.  
 \end{theorem}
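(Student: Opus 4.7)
The plan is to derive Theorem~\ref{th-25-10-15-1} as a direct consequence of Proposition~\ref{le-25-10-5} together with Lemma~\ref{le-251103-9}. The first step is to observe that by construction $\overrightarrow{\rm Ind}(G) = \bigcup_{k \geq 1} {\rm Conf}_k(G)$ is a directed simplicial complex on $V$: any non-empty subsequence of an independent sequence consists of distinct, mutually non-adjacent vertices and is therefore again an independent sequence. Moreover, because $\Sigma_k$ acts on ${\rm Conf}_k(G)$ by permuting coordinates and ${\rm Ind}(G) = \bigcup_{k \geq 1} {\rm Conf}_k(G)/\Sigma_k$, the underlying hypergraph of $\overrightarrow{\rm Ind}(G)$ under the projection (\ref{eq-canonical-covering}) is precisely ${\rm Ind}(G)$.

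Applying Proposition~\ref{le-25-10-5} with $\vec{\mathcal{K}} = \overrightarrow{\rm Ind}(G)$ and $\mathcal{K} = {\rm Ind}(G)$ then produces the surjective chain map in (\ref{eq-25-10-17}) and the induced homology homomorphism in (\ref{eq-25-10-18}) in one step, with the explicit formulas (\ref{eq-25-10-3}), (\ref{eq-25-10-4}) and (\ref{eq-25-10-5}) identifying $\pi_\#$ on generators.

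For the functoriality statement, I would invoke Lemma~\ref{le-251103-9}, which guarantees that a filtration $\{V_t \mid t \in \mathbb{Z}\}$ of $V$ induces a filtration $\{\overrightarrow{\rm Ind}(G_t)\}$ of $\overrightarrow{\rm Ind}(G)$ and a filtration $\{{\rm Ind}(G_t)\}$ of ${\rm Ind}(G)$ such that the canonical projections $\pi$ intertwine the two filtrations through the commutative diagram (\ref{diag-251103-a}). Each of the inclusions $\overrightarrow{\rm Ind}(G_s) \hookrightarrow \overrightarrow{\rm Ind}(G_t) \hookrightarrow \overrightarrow{\rm Ind}(G)$ is a directed simplicial map and its image under $\pi$ is the corresponding simplicial inclusion on the underlying independence complexes. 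The functoriality portion of Proposition~\ref{le-25-10-5} therefore supplies commutative squares at the chain level and at the homology level for every stage of the filtration, which is exactly the functoriality assertion of Theorem~\ref{th-25-10-15-1}.

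I anticipate no substantive obstacle, as the theorem is essentially a repackaging of Proposition~\ref{le-25-10-5} specialised to the directed independence complex, together with Lemma~\ref{le-251103-9}. The only minor point worth verifying is that adjacency in the induced subgraph $G_t = (V_t, E_t)$ agrees with adjacency in $G$ restricted to $V_t$, so that independent sequences on $G_t$ are precisely the independent sequences on $G$ supported in $V_t$; this is immediate from the definition $E_t = \{(u,v) \in E \mid u,v \in V_t\}$ given before (\ref{eq-251103-f1}) and is already implicit in the proof of Lemma~\ref{le-251103-9}.
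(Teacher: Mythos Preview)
Your proposal is correct and follows essentially the same approach as the paper: specialise Proposition~\ref{le-25-10-5} to $\vec{\mathcal{K}}=\overrightarrow{\rm Ind}(G)$ to obtain (\ref{eq-25-10-17}) and (\ref{eq-25-10-18}), then use Lemma~\ref{le-251103-9} and the commutative diagram (\ref{diag-251103-a}) together with the functoriality clause of Proposition~\ref{le-25-10-5} to handle filtrations. The paper's proof is terser but structurally identical; your additional remarks (that $\overrightarrow{\rm Ind}(G)$ is a directed simplicial complex and that adjacency in $G_t$ agrees with adjacency in $G$ restricted to $V_t$) are routine checks the paper leaves implicit.
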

 
 \begin{proof}
 The  proof  follows  from  Proposition~\ref{le-25-10-5}  and   Lemma~\ref{le-251103-9}.  
Let   $\vec{\mathcal{K}}$  be  $\overrightarrow{\rm  Ind}(G)$  in    
 (\ref{eq-25-10-7})  and  (\ref{eq-25-10-8}).  
 We  obtain   (\ref{eq-25-10-17})  and  (\ref{eq-25-10-18})  respectively.
 Let    $\{V_t\mid  t\in \mathbb{Z}\}$  be  a  filtration  of  $V$.   
 By  Lemma~\ref{le-251103-9},
  there  is  an  induced  filtration  
  (\ref{eq-25-11-b1})  of  $\overrightarrow{\rm  Ind}(G)$
  as  well  as  an  induced  filtration (\ref{eq-25-11-b2})  of    
  ${\rm  Ind}(G)$.    
 It  follows from   the  commutativity  of  (\ref{diag-251103-a})  that   
  (\ref{eq-25-10-17})   and   (\ref{eq-25-10-18})  are   functorial 
 with  respect  to  the  canonical  inclusions  in  (\ref{eq-25-11-b1})  and  (\ref{eq-25-11-b2})
 respectively.  
 \end{proof}
 
 \begin{theorem}\label{th-25-11-26-1}
  For  any   hyperdigraph  $\vec{\mathcal{H}} $  consisting  of  certain  independent  sequences  
  on  $G$,  let  $\mathcal{H}$  be  its  underlying  hypergraph. 
  Then   we  have  a  commutative  diagram  of  chain  complexes 
(\ref{diag-1116-2})  
  such  that  the vertical  maps  are  surjective  chain  maps  
  and  the  horizontal  maps  are  canonical  inclusions.  
  Consequently,  we  have  an  induced  commutative  diagram  of  homology groups 
 (\ref{diag-1116-3}).  
  Moreover,    (\ref{diag-1116-2})     and  (\ref{diag-1116-3})    are   functorial  in  the  
  sense  of  Theorem~\ref{th-25-10-15-1}.  
 \end{theorem}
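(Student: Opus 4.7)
The plan is to specialize Theorem~\ref{th-251127-hh1} to the setting in which $\vec{\mathcal{H}}$ is a sub-hyperdigraph of $\overrightarrow{\rm Ind}(G)$, so that $\mathcal{H}$ is automatically a sub-hypergraph of ${\rm Ind}(G)$. The commutative diagram (\ref{diag-1116-2}) of chain complexes and the induced diagram (\ref{diag-1116-3}) of homology groups are supplied by Theorem~\ref{th-251127-hh1} for arbitrary hyperdigraphs. The two additional pieces of content to verify are (a) the surjectivity of the vertical chain maps in (\ref{diag-1116-2}) in the present situation, and (b) the functoriality with respect to filtrations of the vertex set of $G$ in the sense of Theorem~\ref{th-25-10-15-1}.

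For (a), I would begin with the two outer columns. Since every subsequence of an independent sequence is again an independent sequence, the underlying simplicial complex of $\Delta\vec{\mathcal{H}}$ is exactly $\Delta\mathcal{H}$, and Proposition~\ref{le-25-10-5} applied to the pair $(\Delta\vec{\mathcal{H}},\Delta\mathcal{H})$ yields surjectivity of $(\Delta\pi)_\#$; an analogous argument handles $(\delta\pi)_\#$. For the middle two columns, I would combine the descent in (\ref{eq-251117-a5}) and (\ref{eq-251117-a6}) with the observation that in the ambient directed independence complex $\overrightarrow{\rm Ind}(G)$ every permutation of an independent sequence is still an independent sequence, which enables one to produce, for every chain $x\in{\rm Inf}_\bullet(\mathcal{H};R)$ or ${\rm Sup}_\bullet(\mathcal{H};R)$, a directed lift whose boundary still lies in $R(\vec{\mathcal{H}}_\bullet)$, so that the lift actually belongs to the infimum or supremum subcomplex.

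For (b), I would invoke Lemma~\ref{le-251103-9} together with Theorem~\ref{th-25-10-15-1}. A filtration $\{V_t\mid t\in\mathbb{Z}\}$ of $V$ induces a filtration $\{G_t\}$ of $G$ via (\ref{eq-251103-f1}), and hence filtrations $\{\overrightarrow{\rm Ind}(G_t)\}$ and $\{{\rm Ind}(G_t)\}$ compatible with the projection $\pi$, as recorded in (\ref{diag-251103-a}). Restricting $\vec{\mathcal{H}}$ and $\mathcal{H}$ to these ambient (directed) independence complexes of $G_t$ produces a filtration of sub-hyper(di)graphs, and the canonical inclusions along this filtration commute with the associated, lower-associated, infimum, and supremum constructions because these are defined via intersections and unions that respect inclusions of the vertex set. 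Passing to homology then yields the corresponding functoriality for (\ref{diag-1116-3}).

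The main obstacle will be part (a) for the Inf and Sup columns: lifting a chain in ${\rm Inf}_\bullet(\mathcal{H};R)$ to one in ${\rm Inf}_\bullet(\vec{\mathcal{H}};R)$ demands more than the degreewise surjectivity of $\pi_\#$, because one must ensure that every face of the chosen directed lift remains inside $\vec{\mathcal{H}}$ rather than merely inside $\overrightarrow{\rm Ind}(G)$. The saving point is that $\overrightarrow{\rm Ind}(G)$ itself is $\Sigma_\bullet$-invariant and that the directed boundary operator $\vec{\partial}$ is compatible with $\partial$ via (\ref{eq-251117-a7}), so a careful $\Sigma_k$-orbit-by-orbit argument on the set of admissible lifts should resolve the issue.
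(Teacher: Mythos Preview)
Your overall strategy coincides with the paper's: obtain (\ref{diag-1116-2}) and (\ref{diag-1116-3}) by restricting the surjective chain map (\ref{eq-25-10-17}) to the sub-chain complexes in the first row (equivalently, specialize Theorem~\ref{th-251127-hh1}), and derive functoriality from Lemma~\ref{le-251103-9} and Theorem~\ref{th-25-10-15-1} via the induced filtrations $\vec{\mathcal{H}}_t=\vec{\mathcal{H}}\cap\overrightarrow{\rm Ind}(G_t)$ and $\mathcal{H}_t=\mathcal{H}\cap{\rm Ind}(G_t)$. For the existence and commutativity of the diagrams and for the functoriality, your outline is correct and essentially the paper's argument.

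The gap is in part (a), and it is genuine: the surjectivity of $(\delta\pi)_\#$ and of ${\rm Inf}(\pi)$ that you propose to establish is \emph{false} for an arbitrary sub-hyperdigraph $\vec{\mathcal{H}}\subseteq\overrightarrow{\rm Ind}(G)$. Take $G$ on $\{v_0,v_1,v_2\}$ with no edges and
\[
\vec{\mathcal{H}}=\{(v_0,v_1,v_2),\ (v_0,v_1),\ (v_1,v_2),\ (v_2,v_0),\ (v_0),\ (v_1),\ (v_2)\}.
\]
Then $\mathcal{H}=\delta\mathcal{H}$ is the full $2$-simplex, but $(v_0,v_2)\notin\vec{\mathcal{H}}$ forces $(v_0,v_1,v_2)\notin\delta\vec{\mathcal{H}}$, so $(\delta\pi)_\#$ vanishes in degree~$2$ while its target is $R$. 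Replacing $(v_2,v_0)$ by $(v_0,v_2)$ and $(v_1,v_2)$ by $(v_2,v_1)$ likewise gives $\vec\partial(v_0,v_1,v_2)=(v_1,v_2)-(v_0,v_2)+(v_0,v_1)\notin R(\vec{\mathcal H}_2)$, so ${\rm Inf}_2(\vec{\mathcal H};R)=0$ while ${\rm Inf}_2(\mathcal H;R)\cong R$. The $\Sigma_\bullet$-invariance of the ambient $\overrightarrow{\rm Ind}(G)$ is irrelevant: whether $\vec\partial\tilde x$ lands in $R(\vec{\mathcal H}_{k-1})$ depends on $\vec{\mathcal H}$ itself, not on $\Sigma_{k-1}$-orbits inside the ambient complex, so your orbit-by-orbit resolution cannot work without an extra hypothesis such as $\Sigma_\bullet$-invariance of $\vec{\mathcal H}$. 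The paper's own proof is silent on surjectivity and simply asserts that the diagrams arise by restriction; in fact only $(\Delta\pi)_\#$ and ${\rm Sup}(\pi)$ are surjective in general (the former because $\pi(\Delta\vec{\mathcal H})=\Delta\mathcal H$, the latter because any $x+\partial y$ lifts as $\tilde x+\vec\partial\tilde y$ via (\ref{eq-251117-a7})), so the blanket surjectivity claim in the statement appears to be an imprecision rather than something you should attempt to prove.
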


\begin{proof}
Since  $\Delta\vec{\mathcal{H}}$  is  a  directed  simplicial  sub-complex  of  
 $\overrightarrow{\rm  Ind}(G)$,  
we  restrict  (\ref{eq-25-10-17})  to  
the  sub-chain  complexes  of  $ C_\bullet(\overrightarrow{\rm  Ind}(G);R)$  in  the  first  row  of   (\ref{diag-1116-2}).  
We  obtain   (\ref{diag-1116-2})     and  (\ref{diag-1116-3}). 
The  filtration  (\ref{eq-25-11-b1})   induces  a  filtration  $\{\vec{\mathcal{H}}_t\mid  t\in \mathbb{Z}\}$ 
 where  $\vec{\mathcal{H}}_t=\vec{\mathcal{H}}\cap  \overrightarrow{\rm  Ind}(G_t)$
  and  the  filtration  (\ref{eq-25-11-b2})   induces 
   a  filtration  $\{\mathcal{H}_t\mid  t\in \mathbb{Z}\}$ 
 where  $\mathcal{H}_t=\mathcal{H}\cap   {\rm  Ind}(G_t)$.  
The  functoriality  of  (\ref{diag-1116-2})     and  (\ref{diag-1116-3})
  follows  from  Theorem~\ref{th-251127-hh1}  and  Theorem~\ref{th-25-10-15-1}.  
\end{proof}

 \subsection{The  Mayer-Vietoris  sequences }

Let $V'$,  $V''$  and  $V'''$  be  mutually  disjoint 
sets  of  vertices.  
Let  $G'$,  $G''$  and  $G'''$  be  graphs  on  $V'$,  $V''$  and  $V'''$  
respectively.  
With  the  help  of  Example~\ref{ex-25-10-19-1}, 
  their   reduced  join    is  a  graph 
\begin{eqnarray}\label{eq-10-19-31}
G=G' \tilde{*}  G''  \tilde{*}  G''' .  
\end{eqnarray} 
Consider the  graphs  
\begin{eqnarray}\label{eq-25-11-11-1}
L'= G'  \tilde{*}  G''',~~~~~~  L''=G''  \tilde{*}  G'''. 
\end{eqnarray}
Then  
\begin{eqnarray*}
L'\cap  L''= G'''. 
\end{eqnarray*}

\begin{lemma}\label{le-10-19-1}
We  have  
\begin{eqnarray}
\label{eq-25-10-21}
\overrightarrow {\rm  Ind}(G''') & = &\overrightarrow  {\rm  Ind}(L' ) \cap \overrightarrow {\rm  Ind}(   L''),\\
\label{eq-25-10-22}
\overrightarrow {\rm  Ind}(G) & = & \overrightarrow {\rm  Ind}(L' ) \cup \overrightarrow {\rm  Ind}(   L'').  
\end{eqnarray}
\end{lemma}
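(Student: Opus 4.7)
The plan is to reduce both equalities to a single structural observation about the reduced join: for any two graphs $H_1$ and $H_2$ with disjoint vertex sets $W_1$ and $W_2$, every vertex in $W_1$ is adjacent in $H_1 \tilde{*} H_2$ to every vertex in $W_2$ (by the definition of the reduced join in Example~\ref{ex-25-10-19-1}). Consequently any independent sequence of $H_1 \tilde{*} H_2$ is either entirely supported on $W_1$ or entirely supported on $W_2$, and I would first isolate the identity
\begin{eqnarray}\label{eq-keyjoin}
\overrightarrow{\rm Ind}(H_1 \tilde{*} H_2) = \overrightarrow{\rm Ind}(H_1) \sqcup \overrightarrow{\rm Ind}(H_2)
\end{eqnarray}
as a warm-up lemma. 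The inclusion $\supseteq$ is immediate because adjacency in $H_i$ coincides with adjacency in $H_1 \tilde{*} H_2$ when restricted to $W_i$; the inclusion $\subseteq$ follows because a directed hyperedge containing vertices from both $W_1$ and $W_2$ would contain two mutually adjacent vertices, contradicting the definition of the directed independence complex.

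Next I would apply (\ref{eq-keyjoin}) to the three reduced joins in the statement. Using the associativity of $\tilde{*}$ noted in Example~\ref{ex-25-10-19-1}, write $G = (G' \tilde{*} G'') \tilde{*} G'''$ and iterate (\ref{eq-keyjoin}) to obtain
\begin{eqnarray*}
\overrightarrow{\rm Ind}(L')  &=& \overrightarrow{\rm Ind}(G') \sqcup \overrightarrow{\rm Ind}(G'''), \\
\overrightarrow{\rm Ind}(L'') &=& \overrightarrow{\rm Ind}(G'') \sqcup \overrightarrow{\rm Ind}(G'''), \\
\overrightarrow{\rm Ind}(G)   &=& \overrightarrow{\rm Ind}(G') \sqcup \overrightarrow{\rm Ind}(G'') \sqcup \overrightarrow{\rm Ind}(G''').
\end{eqnarray*}
All three decompositions are disjoint because the vertex sets $V'$, $V''$, $V'''$ are mutually disjoint, so the underlying vertex set of each summand identifies which piece a given directed hyperedge belongs to.

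The two assertions of the lemma now follow by a vertex-support bookkeeping argument. For (\ref{eq-25-10-21}), any $\vec{\sigma} \in \overrightarrow{\rm Ind}(L') \cap \overrightarrow{\rm Ind}(L'')$ has its coordinates in $V' \cup V'''$ and simultaneously in $V'' \cup V'''$, hence in $(V' \cup V''') \cap (V'' \cup V''') = V'''$; the three-way disjointness of the vertex sets then forces $\vec{\sigma} \in \overrightarrow{\rm Ind}(G''')$. The reverse inclusion is trivial, since $\overrightarrow{\rm Ind}(G''')$ is a summand of both $\overrightarrow{\rm Ind}(L')$ and $\overrightarrow{\rm Ind}(L'')$. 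For (\ref{eq-25-10-22}), the union of the first two decompositions gives $\overrightarrow{\rm Ind}(G') \sqcup \overrightarrow{\rm Ind}(G'') \sqcup \overrightarrow{\rm Ind}(G''')$, which matches the third decomposition of $\overrightarrow{\rm Ind}(G)$ verbatim.

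I do not anticipate any genuine obstacle in this proof; the only point requiring care is the handling of the three-fold reduced join via associativity before invoking (\ref{eq-keyjoin}), and the verification that all the decompositions above are genuinely disjoint unions so that the set-theoretic intersection and union can be computed termwise. No further machinery beyond the definitions of Section~\ref{s3} and Example~\ref{ex-25-10-19-1} is needed.
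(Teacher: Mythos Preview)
Your proposal is correct and follows essentially the same approach as the paper: both arguments rest on the decompositions $\overrightarrow{\rm Ind}(L') = \overrightarrow{\rm Ind}(G') \sqcup \overrightarrow{\rm Ind}(G''')$, $\overrightarrow{\rm Ind}(L'') = \overrightarrow{\rm Ind}(G'') \sqcup \overrightarrow{\rm Ind}(G''')$, and $\overrightarrow{\rm Ind}(G) = \overrightarrow{\rm Ind}(G') \sqcup \overrightarrow{\rm Ind}(G'') \sqcup \overrightarrow{\rm Ind}(G''')$, after which the intersection and union identities are read off termwise. The only cosmetic difference is that you first isolate the general identity (\ref{eq-keyjoin}) and invoke associativity explicitly, whereas the paper states the three specific decompositions directly.
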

\begin{proof}
Since  $L'$  is  obtained  from  the  disjoint  union  of  $G'$  and  $G'''$  
by  connecting  each  pair  of  vertices  $(v',v''')$  where  $v'$  is  a  
vertex  of  $G'$  and  $v'''$  is  a  vertex  of  $G'''$,  we  have  
a  disjoint  union  
\begin{eqnarray}\label{eq-10-19-51}
\overrightarrow {\rm  Ind}(L')=  \overrightarrow {\rm  Ind}(G')  \sqcup\overrightarrow  {\rm  Ind}(G'''). 
\end{eqnarray}  
Similarly,  
\begin{eqnarray}\label{eq-10-19-52}
\overrightarrow {\rm  Ind}(L'')=  \overrightarrow {\rm  Ind}(G'')  \sqcup \overrightarrow {\rm  Ind}(G'''). 
\end{eqnarray}  
By  (\ref{eq-10-19-51})  and  (\ref{eq-10-19-52}),  
we  obtain  (\ref{eq-25-10-21}).  
By  (\ref{eq-10-19-31}),  we  have  
a  disjoint  union  
   \begin{eqnarray}\label{eq-10-19-53}
\overrightarrow {\rm  Ind}(G)=  \overrightarrow {\rm  Ind}(G')  \sqcup\overrightarrow  {\rm  Ind}(G'') \overrightarrow \sqcup {\rm  Ind}(G'''). 
\end{eqnarray}  
By  (\ref{eq-10-19-51})  -  (\ref{eq-10-19-53}),  
we  obtain  (\ref{eq-25-10-22}).  
\end{proof}

\begin{theorem}\label{th-10-19-1}
For  any  graph  $G$  given  as a  reduced  join  (\ref{eq-10-19-31}),  
 we  have  a  commutative   diagram  of  homology  groups  
 \begin{eqnarray}\label{diag-10-19-2}
 \xymatrix{
\cdots\ar[r]  
&H_n (\overrightarrow{\rm  Ind}(G''') ;R) \ar[r]\ar[d]_-{(\pi_{n+1})_*}
&H_n(\overrightarrow{\rm  Ind}(G'  \tilde{*}  G''');R)\oplus 
 H_n( \overrightarrow{\rm  Ind}(G''  \tilde{*}  G''');R)\ar[r]\ar[d]_-{(\pi_{n+1})_*}
 &\\
\cdots\ar[r]  
 &H_n ( {\rm  Ind}(G''');R) \ar[r] 
&H_n({\rm  Ind}(G'  \tilde{*}  G''');R)\oplus  H_n( {\rm  Ind}(G''  \tilde{*}  G''');R)\ar[r] 
 &\\
\ar[r]&H_n(\overrightarrow{\rm  Ind}(G );R)\ar[r]\ar[d]_-{(\pi_{n+1})_*}
&H_{n-1}(\overrightarrow{\rm  Ind}(G''');R)\ar[r]\ar[d]_-{(\pi_{n})_*}
&\cdots\\
\ar[r]&H_n( {\rm  Ind}(G );R)\ar[r] 
&H_{n-1}( {\rm  Ind}(G''');R)\ar[r] 
&\cdots
} 
 \end{eqnarray}
 such that  the  two  rows  are  long  exact sequences. 
 Moreover,    the  diagram   (\ref{diag-10-19-2})   is    functorial  with  respect  to 
 directed  simplicial  maps  between  directed  independence  complexes   
 and  their  induced  simplicial  maps   between  the  underlying  independence  complexes, 
 both  of  which  are  
 induced  by  filtrations  of  the  vertices  of  $G'$,  $G''$  and  $G'''$.   
\end{theorem}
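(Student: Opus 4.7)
The plan is to realize Theorem~\ref{th-10-19-1} as a direct specialization of Proposition~\ref{pr-10-11-mv-1} by taking $\vec{\mathcal{K}} = \overrightarrow{\rm Ind}(L')$ and $\vec{\mathcal{K}}' = \overrightarrow{\rm Ind}(L'')$, where $L' = G' \tilde{*} G'''$ and $L'' = G'' \tilde{*} G'''$, and then to upgrade the statement to a statement about the original graphs by identifying intersections and unions via Lemma~\ref{le-10-19-1}.

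First I would rewrite the two relevant directed simplicial complexes using Lemma~\ref{le-10-19-1}, which gives
\[
\overrightarrow{\rm Ind}(G''') = \overrightarrow{\rm Ind}(L') \cap \overrightarrow{\rm Ind}(L''), \qquad
\overrightarrow{\rm Ind}(G) = \overrightarrow{\rm Ind}(L') \cup \overrightarrow{\rm Ind}(L'').
\]
Since $\overrightarrow{\rm Ind}(L')$ and $\overrightarrow{\rm Ind}(L'')$ are $\Sigma_\bullet$-invariant directed simplicial complexes, Lemma~\ref{pr-25-10-11-1}(1) applies and yields analogous identities for the underlying hypergraphs, namely ${\rm Ind}(G''') = {\rm Ind}(L') \cap {\rm Ind}(L'')$ and ${\rm Ind}(G) = {\rm Ind}(L') \cup {\rm Ind}(L'')$. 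At this stage the upper and lower rows of the desired diagram (\ref{diag-10-19-2}) are literally the two rows of (\ref{diag-10-12-2}) with the specified choices of $\vec{\mathcal{K}}$ and $\vec{\mathcal{K}}'$. Proposition~\ref{pr-10-11-mv-1} then immediately delivers the commutativity of the diagram and the exactness of both rows, together with the vertical maps being the projections $(\pi_{n+1})_*$ induced at the chain level by (\ref{eq-25-10-5}).

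For the functoriality statement, I would invoke Lemma~\ref{le-251103-9} applied to filtrations $\{V_t'\}$, $\{V_t''\}$, $\{V_t'''\}$ of the three vertex sets. These induce filtrations $\{G_t'\}$, $\{G_t''\}$, $\{G_t'''\}$ and hence filtrations $\{L_t'\} = \{G_t' \tilde{*} G_t'''\}$, $\{L_t''\} = \{G_t'' \tilde{*} G_t'''\}$ and $\{G_t\} = \{G_t' \tilde{*} G_t'' \tilde{*} G_t'''\}$, since $\tilde{*}$ is defined vertex-wise. Applying Lemma~\ref{le-10-19-1} at each stage $t$ and to the canonical inclusions $i_{s,t}$ shows that the inclusions of filtration pieces are compatible with the intersection/union decompositions, so they induce directed simplicial maps of the triples
$(\overrightarrow{\rm Ind}(G_s''') \hookrightarrow \overrightarrow{\rm Ind}(L_s') \cup \overrightarrow{\rm Ind}(L_s''))$ into the corresponding triples at stage $t$, and similarly for the underlying simplicial complexes. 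The naturality assertion in Proposition~\ref{pr-10-11-mv-1} (inherited from naturality of the snake lemma applied to the short exact sequence (\ref{diag-10-12-1}) of chain complexes) then immediately propagates to give the functoriality of (\ref{diag-10-19-2}) with respect to filtrations, using also Theorem~\ref{th-25-10-15-1} to identify the vertical projections.

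The only substantive point that needs care is the compatibility check in the second paragraph above: one must verify that the short exact sequence of chain complexes in (\ref{diag-10-12-1}), when specialized to $\vec{\mathcal{K}} = \overrightarrow{\rm Ind}(L')$ and $\vec{\mathcal{K}}' = \overrightarrow{\rm Ind}(L'')$, is compatible with the restriction maps coming from $i_{s,t}$. This reduces to the fact that a $k$-tuple of vertices in $V_t' \sqcup V_t'' \sqcup V_t'''$ is an independent sequence of $G_t$ if and only if it is an independent sequence of $G$, which is precisely the content of the first paragraph of the proof of Lemma~\ref{le-251103-9} applied to each of the three subgraphs; no new obstruction arises here beyond this routine verification.
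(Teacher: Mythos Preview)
Your proposal is correct and follows essentially the same route as the paper: both specialize Proposition~\ref{pr-10-11-mv-1} with $\vec{\mathcal{K}}=\overrightarrow{\rm Ind}(L')$ and $\vec{\mathcal{K}}'=\overrightarrow{\rm Ind}(L'')$, invoke Lemma~\ref{le-10-19-1} for the directed intersection/union identities, pass to the underlying simplicial complexes via Lemma~\ref{pr-25-10-11-1} (using $\Sigma_\bullet$-invariance for the intersection), and then establish functoriality by running the filtrations of $V'$, $V''$, $V'''$ through the reduced join and applying Lemma~\ref{le-251103-9} together with the naturality clause of Proposition~\ref{pr-10-11-mv-1}. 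The only cosmetic difference is that the paper writes out the explicit disjoint-union decompositions (\ref{eq-10-19-51})--(\ref{eq-10-19-53}) to verify the underlying-complex intersection identity, whereas you cite $\Sigma_\bullet$-invariance directly; both arguments are equivalent.
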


\begin{proof}
Let   $\vec{\mathcal{K}}$  be  $\overrightarrow{\rm  Ind}(L')$ 
and  let  $\vec{\mathcal{K}}'$  be  $\overrightarrow{\rm  Ind}(L'')$ in  
Proposition~\ref{pr-10-11-mv-1}.  
By  (\ref{eq-25-10-11-2})  and  (\ref{eq-25-10-22}),  
\begin{eqnarray}
\label{eq-25-10-82}
 {\rm  Ind}(G)  =    {\rm  Ind}(L' ) \cup   {\rm  Ind}(   L'').  
\end{eqnarray}
By  (\ref{eq-25-10-11-3})  and  (\ref{eq-10-19-51})  -  (\ref{eq-10-19-53}),
\begin{eqnarray}
\label{eq-25-10-83}
 {\rm  Ind}(G''')   =  {\rm  Ind}(L'\cap  L'') =  {\rm  Ind}(L' ) \cap   {\rm  Ind}(   L'').  
\end{eqnarray}
With  the  help  of  Lemma~\ref{le-10-19-1},  (\ref{eq-25-10-82})  and  
(\ref{eq-25-10-83}),   
we  obtain  the  commutative  diagram (\ref{diag-10-19-2})  from 
 the  commutative  diagram  (\ref{diag-10-12-2}).

 Let  $\{V'_t\mid  t\in \mathbb{Z}\}$,  
 $\{V''_t\mid  t\in \mathbb{Z}\}$  
 and  $\{V'''_t\mid  t\in \mathbb{Z}\}$  be  filtrations  of  
 $V'$,  $V''$  and  $V'''$  respectively.  
 By  (\ref{eq-251103-f1}),  we  have  filtrations  
 $\{G'_t,  G''_t,  G'''_t\mid  t\in \mathbb{Z}\}$  of  $G'$,  $G''$   and  $G'''$
 respectively 
 such  that  the  diagram  commutes 
 \begin{eqnarray*}
 \xymatrix{
 \cdots\ar[r]  
 & (G'_s,  G''_s,  G'''_s)\ar[d]^-{\tilde{*}}\ar[r] 
 &\cdots\ar[r]
 &(G'_t,  G''_t,  G'''_t)\ar[d]^-{\tilde{*}}\ar[r] 
  &\cdots\ar[r]
   &(G',  G'',  G''')\ar[d]^-{\tilde{*}}\\
   \cdots\ar[r]  
 & G'_s  \tilde{*}  G''_s  \tilde{*}  G'''_s  \ar[r] 
 &\cdots\ar[r]
 & G'_t  \tilde{*}  G''_t  \tilde{*} G'''_t \ar[r] 
  &\cdots\ar[r]
   &G'  \tilde{*} G''  \tilde{*} G'''  
 }
 \end{eqnarray*} 
 for  any  $s\leq  t$.  
 Let 
 \begin{eqnarray*} 
 G_t=G'_t \tilde{*}  G''_t  \tilde{*}  G'''_t,  ~~~~~~ 
L'_t= G'_t  \tilde{*}  G'''_t,~~~~~~  L''_t=G''_t  \tilde{*}  G'''_t 
\end{eqnarray*}
for  any  $t\in \mathbb{Z}$,  which  give  
   filtrations  of  $G$,  $L'$  and  $L''$  respectively.  
The     commutative  diagram  
\begin{eqnarray*}
\xymatrix{
 & L'_t \ar[rd] &\\
G'''_t \ar[ru]\ar[rd] &&  G_t\\
& L''_t \ar[ru] &
}
\end{eqnarray*}
is  functorial  with  respect to 
 the  canonical  inclusions  of  graphs  induced  by the  filtrations
of  $G'''$,  $G$,  $L'$  and  $L''$.  
 Consequently,    
we  have  a   commutative   diagram   
\begin{eqnarray}\label{diag-251103-double}
\xymatrix{
&&\overrightarrow  {\rm  Ind}(L'_t)\ar[d]^-{\pi}\ar[rrdd]&&\\
&&   {\rm  Ind}(L' _t)\ar[rd]&\\
\overrightarrow  {\rm  Ind}(G'''_t)\ar[r]^-{\pi}\ar[rrdd]  \ar[rruu]
&  {\rm  Ind}(G'''_t)  \ar[ru]\ar[rd] &&   {\rm  Ind}( G_t)
&\overrightarrow  {\rm  Ind}( G_t)\ar[l]_-{\pi}\\
&&  {\rm  Ind}(L''_t)\ar[ru] &&\\
&& \overrightarrow  {\rm  Ind}(L''_t) \ar[u]_-{\pi}\ar[rruu]&&
}
\end{eqnarray}
where  all  the  unlabeled  arrows  are  canonical  inclusions  of  directed  simplicial  complexes 
and   canonical  inclusions  of simplicial  complexes,  such  that  the  diagram 
is  functorial  with  respect to 
 the  canonical  inclusions  of  (directed)  independence  complexes   induced  by the  filtrations
of  $G'''$,  $G$,  $L'$  and  $L''$.  
Therefore,    we  obtain  the  functoriality  of    (\ref{diag-10-19-2}).   
\end{proof}

By  Subsection~\ref{ss-3.1},  
the  long exact  sequence    in  the first  row  of  (\ref{diag-10-19-2}) 
 is  denoted  by  $
 {\bf  \rm  MV}(\overrightarrow{\rm  Ind}(L'), \overrightarrow{\rm  Ind}(L'')) 
$ 
and      the  long exact  sequence     in  the second  row  of  (\ref{diag-10-19-2}) 
is  denoted  by  $
 {\bf  \rm  MV}( {\rm  Ind}(L'),  {\rm  Ind}(L'')) 
$.  
The  diagram  (\ref{diag-10-19-2})  is  denoted  by  a  morphism  of  long  exact  sequences 
\begin{eqnarray}\label{eq-251121-mor1}
\pi_*:   {\bf  \rm  MV}(\overrightarrow{\rm  Ind}(L'), \overrightarrow{\rm  Ind}(L'')) 
\longrightarrow   {\bf  \rm  MV}( {\rm  Ind}(L'),  {\rm  Ind}(L'')) .  
\end{eqnarray}

\begin{corollary}
\label{pr-251125-mv1}
For  any  $k\geq  0$  and  any  graph  $G$  given  as a  reduced  join  (\ref{eq-10-19-31}),  
 we  have  a
 morphism  of  long  exact sequences  
\begin{eqnarray}\label{eq-251125-mor21}
\pi_*:   {\bf  \rm  MV}({\rm  sk}^{k}(\overrightarrow{\rm  Ind}(L')), {\rm  sk}^{k}(\overrightarrow{   \rm  Ind}(L''))) 
\longrightarrow   {\bf  \rm    MV}({\rm  sk}^{k} ({\rm  Ind}(L')),  ({\rm  sk}^{k}{\rm  Ind}(L'')))    
\end{eqnarray}
which   is   functorial  in  the  sense  of  Theorem~\ref{th-10-19-1}. 
\end{corollary}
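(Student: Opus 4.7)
The plan is to deduce the statement directly from Theorem~\ref{th-10-19-1} by restricting the Mayer--Vietoris construction to the $k$-skeleta. The two pieces that must be checked are (i) that taking the $k$-skeleton of a (directed) simplicial complex commutes with finite unions and intersections, and (ii) that the canonical projection $\pi$ behaves compatibly with this truncation. Both are combinatorial observations, after which everything is a specialisation of the proof of Theorem~\ref{th-10-19-1}.

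First I would record the following elementary identities, valid for any directed simplicial complexes $\vec{\mathcal{K}}_1,\vec{\mathcal{K}}_2$ on a common vertex set and any $k\geq 0$:
\begin{eqnarray*}
{\rm sk}^k(\vec{\mathcal{K}}_1\cup\vec{\mathcal{K}}_2) &=& {\rm sk}^k(\vec{\mathcal{K}}_1)\cup{\rm sk}^k(\vec{\mathcal{K}}_2),\\
{\rm sk}^k(\vec{\mathcal{K}}_1\cap\vec{\mathcal{K}}_2) &=& {\rm sk}^k(\vec{\mathcal{K}}_1)\cap{\rm sk}^k(\vec{\mathcal{K}}_2),
\end{eqnarray*}
together with the analogous identities for undirected simplicial complexes, and the compatibility $\pi({\rm sk}^k\vec{\mathcal{K}})={\rm sk}^k\pi(\vec{\mathcal{K}})$ with the projection from Section~\ref{s3}. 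Combining these with Lemma~\ref{le-10-19-1} and its undirected consequences~(\ref{eq-25-10-82}) and~(\ref{eq-25-10-83}) from the proof of Theorem~\ref{th-10-19-1}, I obtain
\begin{eqnarray*}
{\rm sk}^k(\overrightarrow{\rm Ind}(G''')) &=& {\rm sk}^k(\overrightarrow{\rm Ind}(L'))\cap {\rm sk}^k(\overrightarrow{\rm Ind}(L'')),\\
{\rm sk}^k(\overrightarrow{\rm Ind}(G)) &=& {\rm sk}^k(\overrightarrow{\rm Ind}(L'))\cup {\rm sk}^k(\overrightarrow{\rm Ind}(L'')),
\end{eqnarray*}
and the same relations with the arrows removed.

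Next I would apply Proposition~\ref{pr-10-11-mv-1} with $\vec{\mathcal{K}}={\rm sk}^k(\overrightarrow{\rm Ind}(L'))$ and $\vec{\mathcal{K}}'={\rm sk}^k(\overrightarrow{\rm Ind}(L''))$, whose underlying simplicial complexes are ${\rm sk}^k({\rm Ind}(L'))$ and ${\rm sk}^k({\rm Ind}(L''))$ respectively by Lemma~\ref{pr-25-10-11-1}~(1) applied degree-wise. The resulting commutative diagram of chain complexes analogous to~(\ref{diag-10-12-1}) gives the commutative Mayer--Vietoris diagram of homology groups in which the upper and lower rows are precisely the two long exact sequences appearing in~(\ref{eq-251125-mor21}), and the vertical maps are induced by $\pi_\#$ on the chain level. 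This yields the morphism $\pi_*$ of long exact sequences asserted by the corollary.

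Finally, for functoriality with respect to filtrations $\{V'_t\},\{V''_t\},\{V'''_t\}$ of the vertex sets, I would invoke the same commutative cube~(\ref{diag-251103-double}) used in the proof of Theorem~\ref{th-10-19-1}. Since the inclusions $i_{s,t}\colon G_s\hookrightarrow G_t$ induce directed simplicial maps between the full (directed) independence complexes that preserve simplex dimension, they restrict to maps between the $k$-skeleta, and the cube~(\ref{diag-251103-double}) descends verbatim to a cube of $k$-skeleta. The functoriality statement in Proposition~\ref{pr-10-11-mv-1} then transfers to the present setting. I do not expect any real obstacle: the only mildly delicate point is checking that the skeleton-intersection and skeleton-union identities above hold strictly (not merely up to inclusion), but this is immediate from the fact that membership of a simplex in a skeleton depends only on its dimension, which is intrinsic to the simplex.
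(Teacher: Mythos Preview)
Your proposal is correct and is precisely what the paper's one-line proof (``The proof is an analog of Theorem~\ref{th-10-19-1}'') leaves to the reader: you have spelled out the skeleton--union/intersection identities, the application of Proposition~\ref{pr-10-11-mv-1} to the truncated complexes, and the inherited functoriality from the cube~(\ref{diag-251103-double}). There is nothing to add.
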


\begin{proof}
The  proof  is  an  analog  of  Theorem~\ref{th-10-19-1}.  
\end{proof}

\begin{theorem}\label{th-251126-m5}
For  any   hyperdigraphs  $\vec{\mathcal{H}}' $,
$\vec{\mathcal{H}}'' $  and
$\vec{\mathcal{H}}'''$  
consisting  of  certain  independent  sequences  
  on  $G'$,  $G'' $  and  $G'''$ given  in   (\ref{eq-10-19-31}) respectively,  let  
  $\mathcal{H}'$,  $\mathcal{H}''$  and  $\mathcal{H}'''$  be  the  underlying  hypergraphs.  
  Then  we  have  a  commutative  diagram  of  long   exact  sequences
   \begin{eqnarray}\label{eq-251127-h1a}
  \xymatrix{
  {\bf  \rm  MV}(\delta\vec{\mathcal{H}}'\sqcup \delta\vec{\mathcal{H}}''', \delta\vec{\mathcal{H}}''\sqcup \delta\vec{\mathcal{H}}''')  \ar[r]\ar[d]  
  &  {\bf  \rm  MV}(\Delta\vec{\mathcal{H}}'\sqcup \Delta\vec{\mathcal{H}}''', \Delta\vec{\mathcal{H}}''\sqcup \Delta\vec{\mathcal{H}}''')   \ar[d]  \\
  {\bf  \rm  MV}(\delta {\mathcal{H}}'\sqcup \delta {\mathcal{H}}''', \delta {\mathcal{H}}''\sqcup \delta {\mathcal{H}}''')  \ar[r]   
  &  {\bf  \rm  MV}(\Delta {\mathcal{H}}'\sqcup \Delta {\mathcal{H}}''', \Delta {\mathcal{H}}''\sqcup \Delta {\mathcal{H}}''').  
  }
  \end{eqnarray}
  In  addition,    
    if  both   
  (I)  and  (II)  in  Subsection~\ref{ss-5.1}  are  satisfied
  for  the  pair  of  hyperdigraphs  $ \vec{\mathcal{H}}'\sqcup  \vec{\mathcal{H}}''' $
  and  $ \vec{\mathcal{H}}''\sqcup  \vec{\mathcal{H}}''' $,   
  then  we have  a  commutative  diagram  of  long  exact  sequences 
  \begin{eqnarray}\label{eq-251127-h2a}
  \xymatrix{
  {\bf  \rm  MV}(\delta\vec{\mathcal{H}}'\sqcup \delta\vec{\mathcal{H}}''', \delta\vec{\mathcal{H}}''\sqcup \delta\vec{\mathcal{H}}''')  \ar[r]\ar[d]  
  & {\bf  \rm  MV}(\vec{\mathcal{H}}'\sqcup \vec{\mathcal{H}}''', \vec{\mathcal{H}}''\sqcup \vec{\mathcal{H}}''')  \ar[r]\ar[d]  
  &  {\bf  \rm  MV}(\Delta\vec{\mathcal{H}}'\sqcup \Delta\vec{\mathcal{H}}''', \Delta\vec{\mathcal{H}}''\sqcup \Delta\vec{\mathcal{H}}''')   \ar[d]  \\
  {\bf  \rm  MV}(\delta {\mathcal{H}}'\sqcup \delta {\mathcal{H}}''', \delta {\mathcal{H}}''\sqcup \delta {\mathcal{H}}''')  \ar[r]   
  & {\bf  \rm  MV}( {\mathcal{H}}'\sqcup  {\mathcal{H}}''',  {\mathcal{H}}''\sqcup  {\mathcal{H}}''')  \ar[r]
  &  {\bf  \rm  MV}(\Delta {\mathcal{H}}'\sqcup \Delta {\mathcal{H}}''', \Delta {\mathcal{H}}''\sqcup \Delta {\mathcal{H}}''').  
  }
  \end{eqnarray}
Both  (\ref{eq-251127-h1a})
and (\ref{eq-251127-h2a})  are    functorial  in  the  sense  of  Theorem~\ref{th-10-19-1}. 
\end{theorem}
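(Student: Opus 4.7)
The plan is to derive both diagrams as instances of Theorem~\ref{th-251118-mv1} applied to the pair of hyperdigraphs $\vec{\mathcal{H}}' \sqcup \vec{\mathcal{H}}'''$ and $\vec{\mathcal{H}}'' \sqcup \vec{\mathcal{H}}'''$, regarded as hyperdigraphs on the common vertex set $V' \sqcup V'' \sqcup V'''$ underlying $G$. The key preliminary observation is that the relevant operators distribute over disjoint union of hyperdigraphs on disjoint vertex sets.

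First I would record the elementary consequence of the pairwise disjointness of $V'$, $V''$, $V'''$: since each directed hyperedge in $\vec{\mathcal{H}}'$ is supported entirely within $V'$ (and analogously for $\vec{\mathcal{H}}''$, $\vec{\mathcal{H}}'''$), every nonempty subsequence of such a directed hyperedge remains supported in the same vertex set. Consequently the smallest directed simplicial complex containing $\vec{\mathcal{H}}' \sqcup \vec{\mathcal{H}}'''$ and the largest directed simplicial complex contained in it both split cleanly:
\begin{equation*}
\delta(\vec{\mathcal{H}}' \sqcup \vec{\mathcal{H}}''') = \delta\vec{\mathcal{H}}' \sqcup \delta\vec{\mathcal{H}}''', \qquad
\Delta(\vec{\mathcal{H}}' \sqcup \vec{\mathcal{H}}''') = \Delta\vec{\mathcal{H}}' \sqcup \Delta\vec{\mathcal{H}}''',
\end{equation*}
and the analogous identities hold for the pair $\vec{\mathcal{H}}''$, $\vec{\mathcal{H}}'''$ and for the underlying hypergraphs. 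Using (\ref{eq-25-10-11-2}) and (\ref{eq-25-10-11-3}) together with the same disjointness argument, the intersection of $\vec{\mathcal{H}}' \sqcup \vec{\mathcal{H}}'''$ and $\vec{\mathcal{H}}'' \sqcup \vec{\mathcal{H}}'''$ is exactly $\vec{\mathcal{H}}'''$, and their union is the total disjoint union $\vec{\mathcal{H}}' \sqcup \vec{\mathcal{H}}'' \sqcup \vec{\mathcal{H}}'''$, giving the required MV-ingredients at all four (respectively six) corners.

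Second I would instantiate Theorem~\ref{th-251118-mv1}(1) at the pair $(\vec{\mathcal{H}}' \sqcup \vec{\mathcal{H}}''', \vec{\mathcal{H}}'' \sqcup \vec{\mathcal{H}}''')$ of hyperdigraphs. This yields the commutative square of morphisms of Mayer-Vietoris long exact sequences relating the $\delta$-version and the $\Delta$-version, which the distributivity identities of the first step rewrite precisely as (\ref{eq-251127-h1a}). For the expanded diagram (\ref{eq-251127-h2a}), the assumed conditions (I) and (II) for $\vec{\mathcal{H}}' \sqcup \vec{\mathcal{H}}'''$ and $\vec{\mathcal{H}}'' \sqcup \vec{\mathcal{H}}'''$ permit direct invocation of Theorem~\ref{th-251118-mv1}(2), which inserts the middle column of MV sequences for the hyperdigraphs themselves and their underlying hypergraphs.

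Third, for functoriality, I would combine the functoriality of Theorem~\ref{th-251118-mv1} with the filtration framework set up in Theorem~\ref{th-10-19-1}. Given filtrations $\{V'_t\}$, $\{V''_t\}$, $\{V'''_t\}$ of the vertex sets, the induced filtrations $\vec{\mathcal{H}}'_t = \vec{\mathcal{H}}' \cap \overrightarrow{\rm Ind}(G'_t)$ and analogously for the primes and triple-primes fit together (via the double-inclusion diagram (\ref{diag-251103-double}) restricted to sub-hyperdigraphs) so that the induced morphisms of MV sequences are compatible at each filtration stage. The main obstacle is verifying rigorously that $\delta$ and $\Delta$ truly distribute over disjoint unions on disjoint vertex sets for this class of sub-hyper(di)graphs of independence complexes, and simultaneously confirming that condition (II) remains meaningful after the disjoint union; once these compatibilities are settled, the remainder is bookkeeping that reduces everything to Theorem~\ref{th-251118-mv1}.
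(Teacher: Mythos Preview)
Your proposal is correct and follows essentially the same route as the paper, which simply cites Theorem~\ref{th-251118-mv1}, Theorem~\ref{th-25-11-26-1} and Theorem~\ref{th-10-19-1}; your write-up makes explicit the distributivity of $\delta$ and $\Delta$ over disjoint unions on disjoint vertex sets that the paper leaves implicit in its terse citation.
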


\begin{proof}
The  proof  follows  from  Theorem~\ref{th-251118-mv1},
 Theorem~\ref{th-25-11-26-1}   and  Theorem~\ref{th-10-19-1}.  
\end{proof}

\subsection{The  K\"unneth-type   formulae}

 Let  $V$  and  $V'$  be  disjoint  sets  of  vertices.  
 Let  $G$  be  a  graph  with  vertices  in  $V$  and
 let  $G'$  be  a  graph  with  vertices  in $V'$.   
 The  disjoint  union  of  $G$  and  $G'$  is  the graph     
 \begin{eqnarray*}
 G\sqcup  G'= (V\sqcup  V',  E_{G}\sqcup  E_{G'}). 
 \end{eqnarray*} 
It  is  direct  that  
 \begin{eqnarray}
  \overrightarrow{{\rm  Ind}}(G\sqcup G') &=&{\rm  Sym}( \overrightarrow{ {\rm  Ind}}(G) *
  \overrightarrow{  {\rm  Ind}}(G')),
  \label{eq-25-10-15-a1}\\
 {\rm  Ind}(G\sqcup G') &=&  {\rm  Ind}(G) *  {\rm  Ind}(G').  
   \label{eq-25-10-15-a2}
 \end{eqnarray}
 Hence  by  (\ref{eq-25-10-2}), (\ref{eq-25-10-15-a1})  and  (\ref{eq-25-10-15-a2}),  
 for  any  $k\geq  1$,  
 \begin{eqnarray*}
 H_{k-1}(\overrightarrow{{\rm  Ind}}(G\sqcup G');R) 
    = H_{k-1}(  {\rm  Ind}(G\sqcup G');R)^{\oplus  k!}=  H_{k-1}( {\rm  Ind}(G) *
   {\rm  Ind}(G');R)^{\oplus  k!}  .  
 \end{eqnarray*}

 \begin{theorem}
\label{th-10-15-kf-1}
For  any  graph  $G$  with  vertices in $ V$  
and  any  graph  $G'$  with  vertices  in  $V'$  
such that  $V$  and  $V'$  are  disjoint,  
 we  have  a  commutative   diagram    
 \begin{eqnarray}\label{diag-10-15-11}
\xymatrix{
0\ar[r]& \bigoplus_{p+q+1=n} H_{p+1}(\overrightarrow{ {\rm  Ind}}(G);R)\otimes H_{q+1}(\overrightarrow{ {\rm  Ind}}(G');R)
\ar[r]\ar[d] _-{ \bigoplus_{p+q+1=n}(\pi_{p+2})_*\otimes (\pi_{q+2})_*}
&H_{n+1}(\overrightarrow{ {\rm  Ind}}(G) *
  \overrightarrow{  {\rm  Ind}}(G');R) \ar[r]\ar[d]_-{(\pi_{n+2})_*} & \\
0\ar[r]& \bigoplus_{p+q+1=n} H_{p+1}(  {\rm  Ind}(G);R)\otimes H_{q+1}({\rm  Ind}(G');R)
\ar[r]
&H_{n+1}({\rm  Ind}(G\sqcup G') ;R)\ar[r] &
}\\
~~~\nonumber\\
\xymatrix{
\ar[r] &\bigoplus_{p+q+1=n} {\rm  Tor}_R(H_{p+1}(\overrightarrow{ {\rm  Ind}}(G);R), H_{q}(\overrightarrow{  {\rm  Ind}}(G');R))\ar[r]\ar[d] &0 \\
\ar[r]  &\bigoplus_{p+q+1=n} {\rm  Tor}_R(H_{p+1}({\rm  Ind}(G);R), H_{q}( {\rm  Ind}(G');R))\ar[r]  &0
}
\nonumber
 \end{eqnarray}
 such that  the  two  rows  are  short  exact sequences.  
 Moreover,    the  diagram   (\ref{diag-10-15-11})   is    functorial  with  respect  to 
 directed  simplicial  maps  between  directed  independence  complexes   
 and  their  induced  simplicial  maps   between  the  underlying  independence  complexes, 
 both  of  which  are  
 induced  by  filtrations  of  the  vertices  of  $G$  and  $G'$.   
 \end{theorem}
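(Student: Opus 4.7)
The plan is to deduce Theorem~\ref{th-10-15-kf-1} as a direct specialization of Proposition~\ref{pr-10-12-kf-1}, with the identifications furnished by Lemma~\ref{le-25.9.1} and equations (\ref{eq-25-10-15-a1}) - (\ref{eq-25-10-15-a2}). First, I would observe that $\overrightarrow{\rm Ind}(G)$ and $\overrightarrow{\rm Ind}(G')$ are directed simplicial complexes on the disjoint vertex sets $V$ and $V'$, and that their underlying simplicial complexes are precisely ${\rm Ind}(G)$ and ${\rm Ind}(G')$. Setting $\vec{\mathcal{K}} = \overrightarrow{\rm Ind}(G)$ and $\vec{\mathcal{K}}' = \overrightarrow{\rm Ind}(G')$ in Proposition~\ref{pr-10-12-kf-1}, I immediately obtain a commutative diagram whose two rows are the universal coefficient short exact sequences for the chain complexes of $\overrightarrow{\rm Ind}(G) * \overrightarrow{\rm Ind}(G')$ and ${\rm Ind}(G) * {\rm Ind}(G')$, respectively, with vertical maps $(\pi_{p+2})_* \otimes (\pi_{q+2})_*$ and $(\pi_{n+2})_*$.

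Next, I would identify the bottom row with the statement of the theorem by invoking (\ref{eq-25-10-15-a2}), which gives ${\rm Ind}(G) * {\rm Ind}(G') = {\rm Ind}(G \sqcup G')$ as simplicial complexes on $V \sqcup V'$, so that $H_{n+1}({\rm Ind}(G) * {\rm Ind}(G');R) = H_{n+1}({\rm Ind}(G \sqcup G');R)$. No such simplification applies to the top row, since by (\ref{eq-25-10-15-a1}) the join $\overrightarrow{\rm Ind}(G) * \overrightarrow{\rm Ind}(G')$ is only the subhyperdigraph whose directed hyperedges place vertices of $V$ before vertices of $V'$; the full directed independence complex $\overrightarrow{\rm Ind}(G \sqcup G')$ is its $\Sigma_\bullet$-symmetrization. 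Accordingly, the top row of (\ref{diag-10-15-11}) is stated directly in terms of $H_{n+1}(\overrightarrow{\rm Ind}(G) * \overrightarrow{\rm Ind}(G');R)$, which matches the output of Proposition~\ref{pr-10-12-kf-1} verbatim.

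For functoriality, I would take filtrations $\{V_t\mid t\in \mathbb{Z}\}$ and $\{V'_t\mid t\in \mathbb{Z}\}$ of $V$ and $V'$ with induced graph filtrations $\{G_t\}$ and $\{G'_t\}$ via (\ref{eq-251103-f1}). By Lemma~\ref{le-251103-9}, these induce compatible filtrations of both $\overrightarrow{\rm Ind}(G)$, ${\rm Ind}(G)$ and $\overrightarrow{\rm Ind}(G')$, ${\rm Ind}(G')$ such that the canonical inclusions are directed simplicial (respectively, simplicial) maps commuting with the projections $\pi$. Taking joins produces directed simplicial maps $\overrightarrow{\rm Ind}(G_s) * \overrightarrow{\rm Ind}(G'_s) \longrightarrow \overrightarrow{\rm Ind}(G_t) * \overrightarrow{\rm Ind}(G'_t)$ and similarly for undirected versions, all commuting with the projections. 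The naturality clause of Proposition~\ref{pr-10-12-kf-1} then gives the desired functoriality of (\ref{diag-10-15-11}) with respect to the filtrations.

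The only nontrivial step is the identification of the join of directed hyperedge-modules with the chain complex of the join, i.e.\ the isomorphism $C_k(\vec{\mathcal{K}};R) \otimes C_l(\vec{\mathcal{K}}';R) \cong C_{k+l+1}(\vec{\mathcal{K}} * \vec{\mathcal{K}}';R)$ compatible with boundaries. This is already handled inside the proof of Proposition~\ref{pr-10-12-kf-1}, and for the independence complexes it reduces to the observation that a directed simplex of $\overrightarrow{\rm Ind}(G) * \overrightarrow{\rm Ind}(G')$ is uniquely written as the concatenation of a directed simplex from each factor, which follows from the disjointness of $V$ and $V'$ together with the definition of the join in (\ref{eq-25-10-91}). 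Consequently, no separate verification is needed beyond applying Proposition~\ref{pr-10-12-kf-1}, and the main potential obstacle, namely pinning down what the top-row module is when the join is not yet $\Sigma_\bullet$-invariant, is already absorbed into the statement of the theorem.
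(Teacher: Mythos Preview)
Your proposal is correct and follows essentially the same route as the paper: specialize Proposition~\ref{pr-10-12-kf-1} with $\vec{\mathcal{K}}=\overrightarrow{\rm Ind}(G)$, $\vec{\mathcal{K}}'=\overrightarrow{\rm Ind}(G')$, invoke (\ref{eq-25-10-15-a2}) to identify the bottom-row middle term with $H_{n+1}({\rm Ind}(G\sqcup G');R)$, and derive functoriality from vertex filtrations via (\ref{eq-251103-f1}) and the naturality clause of Proposition~\ref{pr-10-12-kf-1}. Your additional remarks on why the top row retains $\overrightarrow{\rm Ind}(G)*\overrightarrow{\rm Ind}(G')$ rather than $\overrightarrow{\rm Ind}(G\sqcup G')$ are accurate elaborations that the paper leaves implicit.
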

 
 \begin{proof}
Let   $\vec{\mathcal{K}}$  be  $\overrightarrow{\rm  Ind}(G)$ 
and  let  $\vec{\mathcal{K}}'$  be  $\overrightarrow{\rm  Ind}(G')$ in  
Proposition~\ref{pr-10-12-kf-1}.  
With  the  help  of  (\ref{eq-25-10-15-a2}),  
we  obtain  the  commutative  diagram (\ref{diag-10-15-11})  from 
 the  commutative  diagram  (\ref{diag-10-12-7}).

  Let  $\{V_t\mid  t\in \mathbb{Z}\}$  and   
 $\{V'_t\mid  t\in \mathbb{Z}\}$    be  filtrations  of  
 $V$   and   $V'$    respectively.  
 By  (\ref{eq-251103-f1}), 
 we  have  induced  filtrations  $\{G_t,  G'_t\mid  t\in \mathbb{Z}\}$  
 of  $G $  and  $G'$  respectively.  
 The  commutative  diagram 
 \begin{eqnarray*}
 \xymatrix{
 (\overrightarrow{ {\rm  Ind}}(G_t), \overrightarrow{ {\rm  Ind}}(G'_t)) \ar[r]^-{*}\ar[d]_-{\pi}
 & \overrightarrow{ {\rm  Ind}}(G_t)  * \overrightarrow{ {\rm  Ind}}(G'_t)\ar[d]^-{\pi}\\
  (  {\rm  Ind}(G_t),   {\rm  Ind}(G'_t))\ar[r]^-{*}
  &   {\rm  Ind}(G_t\sqcup  G'_t)
 }
 \end{eqnarray*}
 is  functorial  with  respect to 
 the  canonical  inclusions  of  (directed)  independence  complexes   induced  by the  filtrations
of  $G$  and  $G'$.  
Therefore,     we  obtain  the  functoriality  of    (\ref{diag-10-15-11}).   
 \end{proof}

 By  Subsection~\ref{ss-3.2},  
 the  short  exact  sequence    in  the first  row  of  (\ref{diag-10-15-11}) 
 is  denoted  by  $
 {\bf  \rm  KU}(\overrightarrow{\rm  Ind}(G), \overrightarrow{\rm  Ind}(G')) 
$ 
and      the  short  exact  sequence     in  the second  row  of  (\ref{diag-10-15-11}) 
is  denoted  by  $
 {\bf  \rm   KU}( {\rm  Ind}(G),  {\rm  Ind}(G')) 
$.  
The  diagram  (\ref{diag-10-15-11})  is  denoted  by  a  morphism  of  short  exact  sequences 
\begin{eqnarray}\label{eq-251121-mor2}
\pi_*:   {\bf  \rm  KU}(\overrightarrow{\rm  Ind}(G), \overrightarrow{   \rm  Ind}(G')) 
\longrightarrow   {\bf  \rm   KU}( {\rm  Ind}(G),  {\rm  Ind}(G')) .  
\end{eqnarray}

\begin{corollary}
\label{pr-251112-ku}
Let  $G$  and  $G'$  be  the  graphs  in  Theorem~\ref{th-10-15-kf-1}.  
For  any  $k\geq  0$,    
We   have  a  morphism  of  short  exact sequences  
\begin{eqnarray}\label{eq-251125-mor1}
\pi_*:   {\bf  \rm  KU}({\rm  sk}^{k}(\overrightarrow{\rm  Ind}(G)), {\rm  sk}^{k}(\overrightarrow{   \rm  Ind}(G'))) 
\longrightarrow   {\bf  \rm   KU}({\rm  sk}^{k} ({\rm  Ind}(G)),  ({\rm  sk}^{k}{\rm  Ind}(G')))   
\end{eqnarray}
which   is    functorial  in  the  sense  of  Theorem~\ref{th-10-15-kf-1}.   
\end{corollary}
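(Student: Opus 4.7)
The plan is to derive the corollary as a direct specialization of Proposition~\ref{pr-10-12-kf-1} to the $k$-skeletons of the directed independence complexes. First I observe that ${\rm sk}^{k}(\overrightarrow{\rm Ind}(G))$ is again a directed simplicial complex on $V$: any nonempty subsequence of a directed simplex of dimension at most $k$ is still a directed simplex of dimension at most $k$, so closure under vertex-deletion is preserved by the skeleton operation. The same applies to $G'$. Next, because the canonical projection $\pi$ sends a directed $m$-simplex to an $m$-simplex (it preserves the number of vertices of each simplex), I obtain $\pi_\bullet({\rm sk}^{k}(\overrightarrow{\rm Ind}(G))) = {\rm sk}^{k}({\rm Ind}(G))$. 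In particular, ${\rm sk}^{k}({\rm Ind}(G))$ is the underlying simplicial complex of ${\rm sk}^{k}(\overrightarrow{\rm Ind}(G))$, and analogously on the primed side.

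With this identification in hand, I invoke Proposition~\ref{pr-10-12-kf-1} with $\vec{\mathcal{K}} = {\rm sk}^{k}(\overrightarrow{\rm Ind}(G))$ on $V$ and $\vec{\mathcal{K}}' = {\rm sk}^{k}(\overrightarrow{\rm Ind}(G'))$ on $V'$. Since $V$ and $V'$ are disjoint, the hypotheses of the proposition are satisfied, and I obtain a commutative diagram whose two rows are short exact sequences of K\"unneth type. In the notation of Subsection~\ref{ss-3.2}, this diagram is precisely the desired morphism
\[
\pi_*: {\bf \rm KU}({\rm sk}^{k}(\overrightarrow{\rm Ind}(G)), {\rm sk}^{k}(\overrightarrow{\rm Ind}(G'))) \longrightarrow {\bf \rm KU}({\rm sk}^{k}({\rm Ind}(G)), {\rm sk}^{k}({\rm Ind}(G'))).
\]

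For the functoriality claim, I reuse the filtration machinery established in the proof of Theorem~\ref{th-10-15-kf-1}. Given filtrations $\{V_t\}_{t\in\mathbb{Z}}$ and $\{V'_t\}_{t\in\mathbb{Z}}$, the induced graph filtrations from (\ref{eq-251103-f1}) give, via Lemma~\ref{le-251103-9}, commutative ladders of inclusions of $\overrightarrow{\rm Ind}(G_t)$ in $\overrightarrow{\rm Ind}(G)$ and of ${\rm Ind}(G_t)$ in ${\rm Ind}(G)$, intertwined by the projections $\pi$, and analogously for $G'$. Since the $k$-skeleton functor is monotone under inclusions of (directed) simplicial complexes, these ladders restrict to the $k$-skeletons, and the naturality clause in Proposition~\ref{pr-10-12-kf-1} then transports verbatim to (\ref{eq-251125-mor1}). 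The only step in the argument requiring any care is the identification of the underlying simplicial complex of the $k$-skeleton made in the first paragraph; beyond that, the corollary is an immediate consequence of Proposition~\ref{pr-10-12-kf-1} combined with the filtration lemma used in Theorem~\ref{th-10-15-kf-1}.
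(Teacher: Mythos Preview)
Your proof is correct and follows essentially the same approach as the paper: the paper's proof simply reads ``The proof is an analog of Theorem~\ref{th-10-15-kf-1},'' which amounts precisely to applying Proposition~\ref{pr-10-12-kf-1} with $\vec{\mathcal{K}}={\rm sk}^{k}(\overrightarrow{\rm Ind}(G))$ and $\vec{\mathcal{K}}'={\rm sk}^{k}(\overrightarrow{\rm Ind}(G'))$ and then invoking the filtration argument, exactly as you do. Your write-up is in fact more explicit than the paper's, since you spell out why the $k$-skeleton is still a directed simplicial complex and why its underlying complex is ${\rm sk}^{k}({\rm Ind}(G))$.
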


\begin{proof}
The  proof  is  an  analog  of  Theorem~\ref{th-10-15-kf-1}.  
\end{proof}

\begin{theorem}\label{co-251127-ku1}
Let  $G$  and  $G'$  be  the  graphs  in  Theorem~\ref{th-10-15-kf-1}.  
For  any   hyperdigraphs  $\vec{\mathcal{H}} $  and   $\vec{\mathcal{H}} '$  consisting  of  certain  independent  sequences  
  on  $G$  and  $G'$  respectively,  let  $\mathcal{H}$  
  and  $\mathcal{H}'$  be  their  underlying  hypergraphs. 
Then  
we  have  a  commutative  diagram  of  short   exact  sequences
(\ref{diag-251120-ku3})  
which   is    functorial  in  the  sense  of  Theorem~\ref{th-10-15-kf-1}.   
\end{theorem}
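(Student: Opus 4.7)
The plan is to obtain Theorem~\ref{co-251127-ku1} by specializing Theorem~\ref{th-251127-hku5} to the present situation and then cross-referencing the functoriality statement already proved in Theorem~\ref{th-10-15-kf-1}. Since the elements of $\vec{\mathcal{H}}$ are independent sequences on $G$, we have the inclusion $\vec{\mathcal{H}}\subseteq \overrightarrow{\rm Ind}(G)$ of hyperdigraphs on $V$, and likewise $\vec{\mathcal{H}}'\subseteq \overrightarrow{\rm Ind}(G')$ on $V'$. Because $V$ and $V'$ are disjoint, the associated and lower-associated directed simplicial complexes $\Delta\vec{\mathcal{H}}$, $\delta\vec{\mathcal{H}}$, $\Delta\vec{\mathcal{H}}'$, $\delta\vec{\mathcal{H}}'$ are well-defined and their joins make sense in $\overrightarrow{\rm Ind}(G\sqcup G')$, with analogous statements for the underlying hypergraphs (using Lemma~\ref{le-25.9.1}~(1)).

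First I would invoke Theorem~\ref{th-251127-hku5} directly with the pair $(\vec{\mathcal{H}},\vec{\mathcal{H}}')$. The hypotheses of that theorem are satisfied (no $\Sigma_\bullet$-invariance or intersection condition is required for the K\"unneth-type statement, unlike the Mayer--Vietoris setting of Theorem~\ref{th-251118-mv1}), and its conclusion is precisely the commutative diagram (\ref{diag-251120-ku3}) with rows that are short exact sequences and vertical arrows induced by the canonical projections $\pi:\vec{\mathcal{H}}\to\mathcal{H}$ and $\pi':\vec{\mathcal{H}}'\to\mathcal{H}'$. At this stage the existence and commutativity of (\ref{diag-251120-ku3}) are settled; the remaining work is the functoriality claim.

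For functoriality, I would carry over the argument used for Theorem~\ref{th-10-15-kf-1}. Given filtrations $\{V_t\}_{t\in\mathbb{Z}}$ of $V$ and $\{V'_t\}_{t\in\mathbb{Z}}$ of $V'$, set $\vec{\mathcal{H}}_t=\vec{\mathcal{H}}\cap \overrightarrow{\rm Ind}(G_t)$ and $\vec{\mathcal{H}}'_t=\vec{\mathcal{H}}'\cap \overrightarrow{\rm Ind}(G'_t)$, and correspondingly $\mathcal{H}_t$, $\mathcal{H}'_t$ for the underlying hypergraphs, where $G_t$ and $G'_t$ are defined by (\ref{eq-251103-f1}). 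By Lemma~\ref{le-251103-9} these are filtrations of $\vec{\mathcal{H}},\vec{\mathcal{H}}',\mathcal{H},\mathcal{H}'$ compatible with the projections $\pi$, and passing to $\Delta(-)$ and $\delta(-)$ yields filtrations of the (directed) associated and lower-associated simplicial complexes that are compatible with the canonical inclusions in (\ref{diag-1116-1}). The functoriality of Theorem~\ref{th-251127-hku5} (established there via the naturality of \cite[Theorem~3B.5]{hatcher}) then promotes the commutative diagram (\ref{diag-251120-ku3}) to a diagram that commutes with every inclusion-induced map arising from the filtration step $s\le t$. This is exactly the functoriality in the sense of Theorem~\ref{th-10-15-kf-1}.

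The main obstacle is only bookkeeping: one must check that the intersections $\vec{\mathcal{H}}\cap \overrightarrow{\rm Ind}(G_t)$ really are sub-hyperdigraphs whose $\Delta$ and $\delta$ commute with the inclusion-induced maps, and that the isomorphisms (\ref{eq-251120-k1}) and (\ref{eq-251120-k2}) used inside the proof of Theorem~\ref{th-251127-hku5} are themselves natural in the filtration variable. Both points reduce to the observation that the join operation on hyper(di)graphs commutes with the formation of sub-hyper(di)graphs cut out by vertex subsets, which is immediate from the definitions in (\ref{eq-25-10-91}) and (\ref{eq-25-10-92}) together with Lemma~\ref{pr-25-10-11-1}~(1); hence no further technical ingredient is needed beyond those already assembled in Sections~\ref{s4} and \ref{s5}.
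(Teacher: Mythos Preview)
Your proposal is correct and follows essentially the same approach as the paper: both invoke Theorem~\ref{th-251127-hku5} to obtain the diagram (\ref{diag-251120-ku3}) and then derive functoriality from filtrations of the vertex sets. The only cosmetic difference is that the paper packages the functoriality step by citing Theorem~\ref{th-25-11-26-1} (which already records the filtrations $\vec{\mathcal{H}}_t=\vec{\mathcal{H}}\cap\overrightarrow{\rm Ind}(G_t)$) together with Theorem~\ref{th-10-15-kf-1}, whereas you unpack that citation explicitly.
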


\begin{proof}
Since  $\vec{\mathcal{H}} $   is  a  sub-hyperdigraph  of  
$\overrightarrow{\rm  Ind}(G)$  and  $\vec{\mathcal{H}} '$   is  a  sub-hyperdigraph  of  
$\overrightarrow{\rm  Ind}(G')$,
the  first row  of   (\ref{diag-260107-1})  are   sub-hyperdigraphs  of  
$\overrightarrow{\rm  Ind}(G) *  \overrightarrow{\rm  Ind}(G')$.  
 By   Theorem~\ref{th-251127-hku5},  
 we  obtain  the  commutative  diagram (\ref{diag-251120-ku3}). 
 By    
Theorem~\ref{th-25-11-26-1}    
and  Theorem~\ref{th-10-15-kf-1}, 
we  obtain  the  functoriality.   
\end{proof}
  
\section{Matroids,  directed  matroids     and  their  homology}
\label{s7}

In  this  section,  we  apply      Section~\ref{s4} 
to  matroids  as  well  as      directed  matroids    
 and   study  the  homology.  
In  Theorem~\ref{co-25-10-27},  we  give  the  canonical  homomorphism  
from  the  homology  of     directed  matroids  to  the  
homology  of  matroids.  
In  Theorem~\ref{co-10-27-mv-1},  
we  prove  some  Mayer-Vietoris  sequences  for  the  homology 
 of  (directed)  matroids.  
In  Theorem~\ref{co-10-27-kf-1},  
we  prove  some  K\"unneth-type  formulae  for  the  homology   
 of  (directed)  matroids. 
 Moreover,  we  apply  Section~\ref{s5}
 to  sub-hyper(di)graphs  of      (directed)  matroids.  
 We   generalize     Theorem~\ref{co-25-10-27},  
  Theorem~\ref{co-10-27-mv-1}  and    Theorem~\ref{co-10-27-kf-1} 
  to  the  sub-hyper(di)graph   context  in  Theorem~\ref{co-251127-mat-1},  
  Theorem~\ref{co-251127-mat-mv-a1}  and  Theorem~\ref{co-251127-mat-ku-a1}
    respectively.

A {\it  matroid}  $\mathcal{M}$  is  a  finite  set $S$ 
  and  a  collection $\mathcal{I}$ of  
    subsets  of  $S$ 
(called  {\it  independent  sets})   such  that  
(I1)  $\emptyset\in \mathcal{I}$;  (I2)  If  $\sigma\in  \mathcal{I}$ and  $\tau\subseteq \sigma$
then  $\tau\in \mathcal{I}$;  (I3)  If  $\sigma_1,\sigma_2\in \mathcal{I}$  with  
$|\sigma_2|=|\sigma_1|+1$,  then there  exists  $x\in \sigma_2-\sigma_1$  such that 
$\sigma_1\cup   x\in  \mathcal{I}$  (\cite[Section~1.2]{matroid}).  
We  say  that  a  matroid   $\mathcal{M}'=(S',\mathcal{I}')$  is  
a  submatroid   of  $\mathcal{M}$  
if   $S'\subseteq  S$  and  $\mathcal{I}'\subseteq \mathcal{I}$. 
 In  particular,  if  we  let    
  $\mathcal{I}=[2^S]$  be    the  collection  of  all  the   subsets  of  $S$,  then 
 $[S]=(S, [2^S])$  is  a  matroid.  
 For  any  matroid  $\mathcal{M}=(S, \mathcal{I})$, 
   $\mathcal{I}$  must be  a  subset  of  $[2^S]$  
 thus  $\mathcal{M}$  must  be  a  submatroid  of  $[S]$.

A   maximal  independent  set  in  $\mathcal{I}$  
is  called  a  {\it  base}  of  $\mathcal{M}$  (\cite[p. 7]{matroid}).  
Let  $\mathbb{B}(\mathcal{M})$  be  the  set  of  bases  of  $\mathcal{M}$.  
The  {\it  dual  matroid}  $\mathcal{M}^*$  of  $\mathcal{M}$  
is  the matroid  with  the  set  of  bases   (\cite[Section~2.1]{matroid})
\begin{eqnarray}\label{eq-25-10-30-1}
 \mathbb{B}(\mathcal{M}^*)=\{S-B\mid  B\in \mathbb{B}(\mathcal{M})\}.  
\end{eqnarray} 
The  {\it  rank}  of  $\mathcal{M}$   is  
 $r(\mathcal{M})=\max\{|\sigma|\mid  \sigma\in \mathcal{I}\}$.   
 Note  that  $r(\mathcal{M})+  r(\mathcal{M}^*)=|S|$  (cf.  \cite[Section~2.1]{matroid}).

 Similar  to  the  definition  of  a  matroid,  
 we define  a  {\it  directed  matroid}  $\vec {\mathcal{ M}}$  to  be  
 a   finite  set $S$  and  a  collection $\vec {\mathcal{I}}$ of  finite  
 sequences  of  distinct  elements  in  $S$ 
(we  call these  sequences     {\it  independent  sequences})   such  that  
(I1)'  the  empty  sequence  $\vec\emptyset\in \vec{\mathcal{I}}$; 
 (I2)'  If  $\vec \sigma\in  \vec{\mathcal{I}}$ and     $\vec\tau$  is  a  subsequence  of  $\vec\sigma$,  
then  $\vec\tau\in \vec{\mathcal{I}}$;  (I3)'  If  $\vec{\sigma}_1,\vec{\sigma}_2\in \vec{\mathcal{I}}$  with  
$|\vec\sigma_2|=|\vec\sigma_1|+1$,  then there  exist   $x\in \vec\sigma_2$     
and  $\vec\sigma\in \vec{\mathcal{I}}$  such  that  $x\notin \vec\sigma_1$ 
and  $\vec\sigma$  is  obtained  by  adding  $x$  into  $\vec\sigma_1$  \footnote[5]{  Here  
   $\vec\sigma$  is  obtained  by  adding  $x$  into  $\vec\sigma_1$  
means  that   if  we  write  
$\vec\sigma_1=  v_1v_2\ldots  v_k$,  
where  $v_1,v_2,\ldots,v_k\in  S$  are  distinct,  then  there  exists  $1\leq  i\leq  k+1$  such  that  
$\vec \sigma= v_1\ldots  v_{i-1} x v_i \ldots  v_k$.  }.    
We  say  that  a  directed  matroid   $\vec{\mathcal{M}}'=(S', \vec{\mathcal{I}}')$  is  
a  directed  submatroid   of  $\vec{\mathcal{M}}$  
if   $S'\subseteq  S$  and  $\vec{\mathcal{I}}'\subseteq\vec{ \mathcal{I}}$. 
 In  particular,  if     
 we  let  $\vec{\mathcal{I}}=\vec 2^S$    be  set  of  all  the 
  finite  sequences  of   distinct  elements  in  $S$, 
 then  $(S)=(S,  \vec 2^S)$  is  a  directed  matroid.  
  For  any  directed  matroid  $\vec {\mathcal{ M}}=(S, \vec{\mathcal{I}})$, 
 we  have  that  $\vec{\mathcal{I}}$  must be  a  subset  of  $\vec{2}^S$  
 thus  $\vec {\mathcal{ M}}$  must  be  a  directed  submatroid  of  $(S)$.

We  have  a    canonical  projection  
$
 \pi:  (S)\longrightarrow  [S]
$
sending  each    sequence  of  distinct   elements  in  $S$  to  the  
   set  of  the  elements  in  the  sequence. 
Let  $\vec {\mathcal{ M}}=(S, \vec{\mathcal{I}})$  be  any   directed  matroid.  
We  define  the  {\it  underlying  matroid}  
 $ \mathcal{ M}=(S,  \mathcal{I})$  of  $\vec {\mathcal{ M}}$  by  
 letting  
 $\mathcal{I}=\pi(\vec{\mathcal{I}})$.  
 It  is  direct  to  verify  that  $\mathcal{M}$  is  a  matroid. 
 Restricted  to $\vec {\mathcal{ M}}$,   $\pi$  induces  a   canonical  projection  
 \begin{eqnarray}\label{eq-25-10-27-1}
 \pi:   \vec {\mathcal{ M}}\longrightarrow  \mathcal{M}. 
 \end{eqnarray}  
 We  say  that   $\vec  {\mathcal{M}}$  is  {\it  $\Sigma_\bullet$-invariant}  if  
 $\vec{\mathcal{I}}=\pi^{-1}(\mathcal{I})$.  
 We  write   
 \begin{eqnarray}\label{eq-25-10-25-2}
 \vec {\mathcal{M}}= \{\emptyset\}  \bigcup   \bigcup_{k\geq  1}  \vec {\mathcal{ M}}_k  
 \end{eqnarray}
 where  $\vec {\mathcal{ M}}_k $ 
 is  the   $k$-uniform  hyperdigraph  on  $S$   consisting  of  all  the  sequences 
 with     $k$-distinct  elements  in  $S$   and  write     
  \begin{eqnarray}\label{eq-25-10-25-1}
  \mathcal{M} = \{\emptyset\}  \bigcup   \bigcup_{k\geq  1}    \mathcal{ M} _k  
 \end{eqnarray}
 where $\mathcal{M}_k$   is  the  underlying  $k$-uniform  hypergraph 
  of  $ \vec {\mathcal{ M}}_k $.  
 Then     $\vec{\mathcal{M}} $  is  $\Sigma_\bullet$-invariant  iff  
  $\vec {\mathcal{ M}}_k $  is  $\Sigma_k$-invariant  for  each  $k\geq  1$.  
 By  (I2)  and    (I2)',  
 $\vec {\mathcal{ M}}-\{\emptyset\}=\bigcup_{k\geq  1}\vec{\mathcal{M}}_k$  is  a  
 directed  simplicial  complex   on  $S$  and 
 ${\mathcal{ M}}-\{\emptyset\}=\bigcup_{k\geq  1} \mathcal{M}_k$  is  the  underlying  simplicial  complex.

\begin{example}
\label{ex-25-10-88}
Let  $S$  be  any  finite  set  in  $\mathbb{F}^N$. 
\begin{enumerate}[(1)]
\item 
For  any     subset $\sigma\subseteq   S$,  we  define  $\sigma\in  \mathcal{I}$  
iff  the  vectors  $\{v\mid   v\in \sigma\}$  are  linearly  independent  in  $\mathbb{F}^N$. 
It  is  known that  $\mathcal{M}=(S,\mathcal{I})$  is  a  matroid,   called  the  {\it  vectorial  matroid}
  (cf. \cite[Section~1.3]{matroid}).  

\item
For  any   finite sequence  $\vec\sigma$  whose  elements  are  distinct  in   $S$,  
we  define  $\vec\sigma\in \vec{ \mathcal{I}}$  
iff  the  vectors  $\{v\in S  \mid  v  {\rm ~is~an~element~of~}\vec\sigma\}$  are  linearly  independent  in  $\mathbb{F}^N$. 
Then  $\vec  {\mathcal{M}}=(S,\vec{\mathcal{I}})$  is  a  directed  matroid. 
In  fact,  if  $\vec{\sigma}_1,\vec{\sigma}_2\in \vec{\mathcal{I}}$  with  
$|\vec\sigma_2|=|\vec\sigma_1|+1$,  then there  exist   $x\in \vec\sigma_2$     
 such  that  $x\notin \vec\sigma_1$ 
and  $\vec\sigma\in \vec{\mathcal{I}}$  is  obtained  by  adding  $x$  into  $\vec\sigma_1$  at  any  places  in  the  sequence.  
It  is  direct that  $\vec{\mathcal{M}}$  is  $\Sigma_\bullet$-invariant.  
We  call   $\vec{\mathcal{M}}$  the  {\it  vectorial}  directed  matroid.  
\item
Suppose  there  is     a  total  order   $\prec$  on  $\mathbb{F}$.  
For  any   finite sequence  $\vec\sigma$  whose  elements  are  distinct  in   $S$,  
we  define  $\vec\sigma\in \vec{ \mathcal{I}}'$  
iff 
\begin{eqnarray*}
\vec\sigma=(v_1,\ldots,v_k)=\begin{pmatrix}
v_1^1  &\cdots   &  v_k^1\\
 \vdots &    &  \vdots \\
v_1^N  &   \cdots  &  v_k^N
\end{pmatrix}
\end{eqnarray*}
has  rank  $k$  (i.e.  $v_1,\ldots,v_k\in  \mathbb{F}^N$  are  linearly  independent)  and  $v_1^1\preceq   v_2^1\preceq  \cdots\preceq  v_k^1$. 
Then  $\vec { \mathcal{M}}'=(S,\vec{\mathcal{I}}')$  is  a  directed  matroid. 
In  fact,  if  $\vec{\sigma}_1,\vec{\sigma}_2\in \vec{\mathcal{I}}'$  with  
$|\vec\sigma_2|=|\vec\sigma_1|+1$,  then there  exist   $x\in \vec\sigma_2$     
 such  that  $x\notin \vec\sigma_1$ 
and  $\vec\sigma\in \vec{\mathcal{I}}'$  is  obtained  by  adding  $x$  into  $\vec\sigma_1$  at  
certain  places  in  the  sequence  with  respect to the  total  order  $\prec$  of  the  first  coordinates
of  the  vectors  in  $\mathbb{F}^N$.  
If  the  first coordinates  of  the  elements  in  $S$  have  at  least  two  distinct  values, 
then  
  $\vec{\mathcal{M}}$  is  not  $\Sigma_\bullet$-invariant.  
\end{enumerate}
\end{example}

    Let  $\mathcal{M}=(S, \mathcal{I})$  and  $\mathcal{M}'=(S', \mathcal{I}')$  be  matroids.  
    We  call  a  map  $\varphi:  S\longrightarrow  S'$  
    a  {\it  simplicial  map}  and  denote  it  by  
    $\varphi: \mathcal{M}\longrightarrow \mathcal{M}'$  
     if  $\varphi$  induces  a  simplicial  map   between  simplicial  complexes  
     $\varphi:  \mathcal{M}\setminus \{\emptyset\}
     \longrightarrow  \mathcal{M}'\setminus \{\emptyset\}$.  
     Let  $\vec{\mathcal{M}}=(S, \vec{\mathcal{I}})$  and
       $\vec{\mathcal{M}}'=(S', \vec{\mathcal{I}}')$  be   
       directed  matroids.  
    We  call  a  map  $\vec\varphi:  S\longrightarrow  S'$  
    a  {\it  directed  simplicial  map}  and  denote  it  by  
    $\vec\varphi: \vec{\mathcal{M}}\longrightarrow \vec{\mathcal{M}}'$  
     if  $\vec\varphi$  induces  a  directed  simplicial  map  between  directed  simplicial  complexes     
    $\vec\varphi: \vec{\mathcal{M}}\setminus \{\emptyset\}\longrightarrow \vec{\mathcal{M}}'\setminus \{\emptyset\}$.  
 For  any  
 directed  simplicial  map  $\vec\varphi: \vec{\mathcal{M}}\longrightarrow \vec{\mathcal{M}}'$,  
 we  have  an  induced  simplicial  map  $\varphi: {\mathcal{M}}\longrightarrow {\mathcal{M}}'$  
 between  the  underlying  matroids  such  that  
 the  following  diagram  commutes 
 \begin{eqnarray}\label{eq-diag-251106-1}
 \xymatrix{
 \vec{\mathcal{M}}\ar[r]^-{\vec\varphi}\ar[d]_-{\pi}  & \vec{\mathcal{M}}'\ar[d]^-{\pi} \\
 {\mathcal{M}}\ar[r]^-{\varphi} &{\mathcal{M}}'. 
} 
 \end{eqnarray}

\begin{example}\label{ex-260108-2}
Let  $S$  be  any  finite  set  in  $\mathbb{F}^{N}$  and  
let  $S'$  be  any  finite  set  in  $\mathbb{F}^{N'}$.   
Suppose   $N\leq   N'$  and  $\varphi:  \mathbb{F}^{N}\longrightarrow \mathbb{F}^{N'}$  
and  an  injective   linear  map  such  that  $\varphi(S)\subseteq   S'$. 
\begin{enumerate}[(1)]
\item
Let  $\mathcal{M}=(S,\mathcal{I})$  and  $\mathcal{M}'=(S',\mathcal{I}')$ 
be  the  vectorial  matroids  in  Example~\ref{ex-25-10-88}~(1).  
Then  $\varphi$  induces  a  simplicial  map  
$\varphi:  \mathcal{M}\longrightarrow \mathcal{M}'$.  

\item
Let  $\vec{\mathcal{M}}=(S,\vec{\mathcal{I}})$
  and  $\vec{\mathcal{M}}'=(S',\vec{\mathcal{I}}')$ 
be  the  vectorial   directed  matroids  in  Example~\ref{ex-25-10-88}~(2).  
Then  $\varphi$  induces  a  directed   simplicial  map  
$\vec{\varphi}:  \vec{\mathcal{M}}\longrightarrow \vec{\mathcal{M}}'$ 
such  that  $\varphi$  in  (1)  and  
$\vec{\varphi}$  in  (2)  satisfy   the   commutative  diagram   (\ref{eq-diag-251106-1}).

\item
Let  $\vec{\mathcal{M}}=(S,\vec{\mathcal{I}})$
  and  $\vec{\mathcal{M}}'=(S',\vec{\mathcal{I}}')$ 
be  the   directed  matroids  in  Example~\ref{ex-25-10-88}~(3).  
Suppose  in  addition  that  
for  any  $(v^1,\ldots,v^N)$  and  $  (u^1,\ldots,  u^N)$  in   $ S$
 such  that  $v^1\preceq  u^1$, 
 their  images  
 $(v'^1,\ldots, v'^N)= \varphi(v^1,\ldots,v^N)$  and  
 $(u'^1,\ldots, u'^N)= \varphi(u^1,\ldots,u^N)$   in  $S'$   satisfy  
 $v'^1\preceq   u'^1$.  
Then  $\varphi$  induces  a  directed   simplicial  map  
$\vec{\varphi}:  \vec{\mathcal{M}}\longrightarrow \vec{\mathcal{M}}'$ 
such  that  $\varphi$  in  (1)  and  
$\vec{\varphi}$  in  (3)  satisfy   the   commutative  diagram   (\ref{eq-diag-251106-1}).

\item
Let  $\vec{\mathcal{M}}=(S,\vec{\mathcal{I}})$
  and  $\vec{\mathcal{M}}'=(S',\vec{\mathcal{I}}')$ 
be  the   directed  matroids  in  Example~\ref{ex-25-10-88}~(3).  
For   any  $(v_1,\ldots,v_k)\in \mathcal{H}$,  
define  $\vec\varphi(v_1,\ldots,v_k)= (\varphi(v_1),\ldots,\varphi(v_k))$ 
if  $\varphi(v_1)^1\preceq  \cdots \preceq \varphi(v_k)^1$ 
and  define  $\vec\varphi(v_1,\ldots,v_k)= \emptyset$
otherwise. 
Then  $\vec\varphi$  is  not   necessarily  a  directed  simplicial  map  since 
 $\emptyset\notin  \mathcal{M}\setminus\{\emptyset\}$.  
\end{enumerate}
\end{example}

 Let  $C_{k-1}(\vec{\mathcal{M}};R)$  be  the  free  $R$-module  generated  by  the  elements  of  
$\vec{\mathcal{M}}_k$   and  
let  $C_{k-1}(\mathcal{M};R)$  be  the  free  $R$-module  generated  by  the  elements  of  
$\mathcal{M}_k$,  for  any  $k\geq  1$.   
 We  have  a  chain  complex   
 \begin{eqnarray*}
 C_\bullet(\vec{\mathcal{M}};R) = \{C_{k}(\vec{\mathcal{M}};R)\mid  k\geq   0\}  
 \end{eqnarray*}
 whose  boundary  map 
 \begin{eqnarray*}
 \vec\partial_k:  C_{k}(\vec{\mathcal{M}};R)\longrightarrow  C_{k-1}(\vec{\mathcal{M}};R)
 \end{eqnarray*}
 is  given  by  (\ref{eq-25-10-3})  for  any  $k\geq  1$  and  by  
 $ \vec\partial_0(v)= 0$  for  any  $v\in  \mathcal{M}_0$.  
 We  also      have  a  chain  complex   
 \begin{eqnarray*}
 C_\bullet( \mathcal{M};R) = \{C_{k}( \mathcal{M};R)\mid  k\geq   0\}  
 \end{eqnarray*}
 whose  boundary  map 
 \begin{eqnarray*}
 \partial_k:  C_{k}(\mathcal{M};R)\longrightarrow  C_{k-1}(\mathcal{M};R)
 \end{eqnarray*}
 is  given  by  (\ref{eq-25-10-4})  for  any  $k\geq  1$  and  by  
 $ \partial_0(v)= 0$  for  any  $v\in  \mathcal{M}_0$.  
 
 \begin{example}
 \label{ex-260108-1}
 \begin{enumerate}[(1)]
 \item
 The  simplicial map   $\varphi$  in  Example~\ref{ex-260108-2}~(1)
 induces  a  chain  map  $\varphi_\#:  C_\bullet( \mathcal{M};R) \longrightarrow 
 C_\bullet( \mathcal{M}';R)$. 
  \item
   The  directed  simplicial  maps     $\vec\varphi$  in 
 Example~\ref{ex-260108-2}~(2)  and  (3)   induce  chain  maps
$\vec\varphi_\#:  C_\bullet( \vec{\mathcal{M}};R) \longrightarrow 
 C_\bullet( \vec{\mathcal{M}}';R)$.
 \item 
 The    map      $\vec\varphi$  in 
 Example~\ref{ex-260108-2}~(4)      induces  a    chain  map 
$\vec\varphi_\#:  C_\bullet( \vec{\mathcal{M}};R) \longrightarrow 
 C_\bullet( \vec{\mathcal{M}}';R)$.  
 \end{enumerate}
 \end{example}
 
 \begin{theorem}\label{co-25-10-27}
 The  canonical  projection  (\ref{eq-25-10-27-1}) 
  induces  
 a  surjective  chain  map  
  \begin{eqnarray}\label{eq-25-10-27-2}
 \pi_\#:   C_\bullet(\vec {\mathcal{ M}};R)\longrightarrow  C_\bullet(\mathcal{M};  R) 
  \end{eqnarray}  
 and   consequently    induces  a     homomorphism  of  homology   
\begin{eqnarray}\label{eq-25-10-27-3}
 \pi_*:   H_\bullet(\vec {\mathcal{M}};R)\longrightarrow  H_\bullet({\mathcal{M}};R).  
\end{eqnarray}
Moreover,    (\ref{eq-25-10-27-2})   and   (\ref{eq-25-10-27-3})  are   functorial  with  respect  to 
directed  simplicial  maps  between  directed  matroids   
 and  their  induced  simplicial  maps   between  the  underlying  matroids.  
 \end{theorem}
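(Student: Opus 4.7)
The plan is to reduce the statement directly to Proposition~\ref{le-25-10-5}, exploiting the fact that after removing the empty set, a (directed) matroid is precisely a (directed) simplicial complex. Concretely, using the decompositions (\ref{eq-25-10-25-2}) and (\ref{eq-25-10-25-1}) together with axioms (I2) and (I2)$'$, one checks that $\vec{\mathcal{M}} \setminus \{\emptyset\} = \bigcup_{k\geq 1} \vec{\mathcal{M}}_k$ is a directed simplicial complex on $S$, and that $\mathcal{M} \setminus \{\emptyset\} = \bigcup_{k\geq 1} \mathcal{M}_k$ is its underlying simplicial complex under $\pi$ as defined in (\ref{eq-25-10-27-1}). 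Since $C_\bullet(\vec{\mathcal{M}};R)$ and $C_\bullet(\mathcal{M};R)$ are, by construction, identical to the simplicial chain complexes of $\vec{\mathcal{M}} \setminus \{\emptyset\}$ and $\mathcal{M} \setminus \{\emptyset\}$ respectively (the empty set contributes nothing in degree $\geq 0$, and $\partial_0$ is set to $0$ on $0$-cells in both cases consistently), the boundary formulas (\ref{eq-25-10-3}) and (\ref{eq-25-10-4}) agree on both sides.

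Given this identification, I would apply Proposition~\ref{le-25-10-5} with $\vec{\mathcal{K}} = \vec{\mathcal{M}} \setminus \{\emptyset\}$ and $\mathcal{K} = \mathcal{M} \setminus \{\emptyset\}$ to obtain the chain map $\pi_\#$ in (\ref{eq-25-10-27-2}) and the induced homomorphism $\pi_*$ in (\ref{eq-25-10-27-3}). Surjectivity of $\pi_\#$ follows from the defining identity $\mathcal{I} = \pi(\vec{\mathcal{I}})$ of the underlying matroid, which says exactly that $\pi_k: \vec{\mathcal{M}}_k \longrightarrow \mathcal{M}_k$ is surjective on generators for every $k \geq 1$; this is precisely the hypothesis $\mathcal{K} = \pi_\bullet(\vec{\mathcal{K}})$ used in Proposition~\ref{le-25-10-5}.

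For functoriality, given a directed simplicial map $\vec{\varphi}: \vec{\mathcal{M}} \longrightarrow \vec{\mathcal{M}}'$ and its induced simplicial map $\varphi: \mathcal{M} \longrightarrow \mathcal{M}'$ fitting into the commutative square (\ref{eq-diag-251106-1}), I would restrict them to the complements of $\emptyset$ to obtain a directed simplicial map $\vec{\varphi}|: \vec{\mathcal{M}}\setminus\{\emptyset\} \longrightarrow \vec{\mathcal{M}}'\setminus\{\emptyset\}$ covering $\varphi|: \mathcal{M}\setminus\{\emptyset\} \longrightarrow \mathcal{M}'\setminus\{\emptyset\}$. The functoriality clause of Proposition~\ref{le-25-10-5}, applied via the commutative diagrams (\ref{eq-25-11-8}) and (\ref{eq-25-11-9}), then gives exactly the desired compatibility $\varphi_\# \circ \pi_\# = \pi_\# \circ \vec{\varphi}_\#$ on chain level and $\varphi_* \circ \pi_* = \pi_* \circ \vec{\varphi}_*$ on homology level.

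There is no genuine obstacle here; the only mildly delicate point is the bookkeeping around the empty set, since matroids include $\emptyset$ as an independent set while simplicial complexes in the sense of Section~\ref{s4} do not. I would handle this by the explicit observation that the chain modules $C_\bullet(\vec{\mathcal{M}};R)$ and $C_\bullet(\mathcal{M};R)$ are, by definition, freely generated only by elements of $\vec{\mathcal{M}}_k$ and $\mathcal{M}_k$ for $k \geq 1$, so $\emptyset$ plays no role in degrees $\geq 0$ and the reduction to Proposition~\ref{le-25-10-5} goes through without modification.
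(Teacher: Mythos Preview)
Your proposal is correct and follows essentially the same approach as the paper: set $\vec{\mathcal{K}}=\vec{\mathcal{M}}\setminus\{\emptyset\}$, $\mathcal{K}=\mathcal{M}\setminus\{\emptyset\}$, and invoke Proposition~\ref{le-25-10-5}. Your write-up is more explicit about the empty-set bookkeeping and the source of surjectivity, but the reduction is identical.
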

 
\begin{proof}
Let  $\vec{\mathcal{K}}=      \vec{\mathcal{ M}}  \setminus \{\emptyset\} $
 and  let  
$\mathcal{K}=     \mathcal{ M}  \setminus \{\emptyset\}  $  in   Proposition~\ref{le-25-10-5}.    
Then   (\ref{eq-25-10-27-2}) and  (\ref{eq-25-10-27-3})  follow  from  
 (\ref{eq-25-10-7})  and  (\ref{eq-25-10-8})  respectively.  
\end{proof}
  
  \begin{theorem}\label{co-251127-mat-1}
  For  any  sub-hyperdigraph  $\vec{\mathcal{H}}$  of  $ \vec{\mathcal{ M}}  \setminus \{\emptyset\}$,
  let  $\mathcal{H}$  be  the  underlying  hypergraph  of   $\vec{\mathcal{H}}$.  
  Then  we  have  a  commutative  diagram  of  chain  complexes  (\ref{diag-1116-2})  
  and  a  commutative  diagram  of  homology  groups
   (\ref{diag-1116-3})  such  that  they are  functorial 
   with respect  to  morphisms  of the sub-hyperdigraphs   induced  by the  directed  simplicial  maps  in
   Theorem~\ref{co-25-10-27}
   and  morphisms  of  sub-hypergraphs  induced  by the    simplicial  maps  in
   Theorem~\ref{co-25-10-27}.  
  \end{theorem}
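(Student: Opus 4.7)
The plan is to reduce Theorem~\ref{co-251127-mat-1} to the hyperdigraph statement already established in Theorem~\ref{th-251127-hh1}, after identifying $\vec{\mathcal{M}}\setminus\{\emptyset\}$ as an ambient directed simplicial complex. Recall that by axioms (I2) and (I2)$'$ together with the decompositions (\ref{eq-25-10-25-2}) and (\ref{eq-25-10-25-1}) (see the paragraph preceding Example~\ref{ex-25-10-88}), the set $\vec{\mathcal{M}}\setminus\{\emptyset\}=\bigcup_{k\geq 1}\vec{\mathcal{M}}_k$ is a directed simplicial complex on $S$, and its underlying hypergraph $\mathcal{M}\setminus\{\emptyset\}=\bigcup_{k\geq 1}\mathcal{M}_k$ is the underlying simplicial complex. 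Thus any sub-hyperdigraph $\vec{\mathcal{H}}\subseteq \vec{\mathcal{M}}\setminus\{\emptyset\}$ is itself a hyperdigraph on $S$, whose underlying hypergraph $\mathcal{H}$ is a sub-hypergraph of $\mathcal{M}\setminus\{\emptyset\}$.

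First, I would apply Theorem~\ref{th-251127-hh1} directly to the pair $(\vec{\mathcal{H}},\mathcal{H})$. This supplies the commutative diagram of chain complexes (\ref{diag-1116-2}) via (\ref{eq-251117-a5}) and (\ref{eq-251117-a6}), together with the induced commutative diagram of homology groups (\ref{diag-1116-3}), because $\Delta\vec{\mathcal{H}}$, $\delta\vec{\mathcal{H}}$, $\Delta\mathcal{H}$, $\delta\mathcal{H}$ are defined solely from $\vec{\mathcal{H}}$ and $\mathcal{H}$ without reference to their ambient complexes. Surjectivity of the vertical maps follows from the surjectivity of $\pi_\#$ on each $\vec{\mathcal{M}}_k\to\mathcal{M}_k$, restricted to $\vec{\mathcal{H}}_k\to\mathcal{H}_k$; this is guaranteed by the definition $\mathcal{H}=\pi_\bullet(\vec{\mathcal{H}})$ from (\ref{eq-25-10-13-5}).

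Next, for functoriality I would verify that a directed simplicial map $\vec{\varphi}:\vec{\mathcal{M}}\longrightarrow \vec{\mathcal{M}}'$ of directed matroids (in the sense of Theorem~\ref{co-25-10-27}) restricts to a morphism of hyperdigraphs $\vec{\varphi}:\vec{\mathcal{H}}\longrightarrow \vec{\varphi}(\vec{\mathcal{H}})\subseteq \vec{\mathcal{M}}'\setminus\{\emptyset\}$, and that the induced simplicial map $\varphi:\mathcal{M}\longrightarrow\mathcal{M}'$ restricts to a morphism of hypergraphs $\varphi:\mathcal{H}\longrightarrow\varphi(\mathcal{H})$; the square (\ref{eq-diag-251106-1}) for matroids then restricts to the square (\ref{eq-1103-1}) for the chosen sub-hyper(di)graphs. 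With this verification in hand, the functoriality clause of Theorem~\ref{th-251127-hh1} (with respect to morphisms of hyperdigraphs and their induced morphisms of underlying hypergraphs) transfers verbatim to the sub-hyper(di)graph setting of directed matroids.

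The main (and essentially only) obstacle is this last bookkeeping verification: one must check that the image $\vec{\varphi}(\vec{\mathcal{H}})$ genuinely sits inside $\vec{\mathcal{M}}'\setminus\{\emptyset\}$ and that passage to underlying hypergraphs commutes with this restriction. Both are immediate from the commutativity of (\ref{eq-diag-251106-1}) combined with Lemma~\ref{pr-25-10-11-1}\,(1). Once these are recorded, the desired commutative diagrams (\ref{diag-1116-2}) and (\ref{diag-1116-3}) together with their functoriality follow directly from Theorem~\ref{th-251127-hh1} applied to $(\vec{\mathcal{H}},\mathcal{H})$ and $(\vec{\varphi}(\vec{\mathcal{H}}),\varphi(\mathcal{H}))$.
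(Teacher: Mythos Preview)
Your proposal is correct and follows essentially the same approach as the paper's proof: both obtain diagrams (\ref{diag-1116-2}) and (\ref{diag-1116-3}) directly from Theorem~\ref{th-251127-hh1}, and both establish functoriality by restricting the directed simplicial map of Theorem~\ref{co-25-10-27} to the sub-hyperdigraph $\vec{\mathcal{H}}$ (and its induced simplicial map to $\mathcal{H}$) to produce the required morphisms of hyper(di)graphs. Your write-up is somewhat more explicit in justifying the bookkeeping (invoking (\ref{eq-diag-251106-1}) and Lemma~\ref{pr-25-10-11-1}), but the argument is the same.
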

  
  \begin{proof}
  The  diagrams  (\ref{diag-1116-2})   and    (\ref{diag-1116-3})  follow   from  
  Theorem~\ref{th-251127-hh1}.  
    Restricting the  directed  simplicial  map   in
   Theorem~\ref{co-25-10-27}  to  a  sub-hyperdigraph  $\vec{\mathcal{H}}$ of 
    $   \vec{\mathcal{ M}}  \setminus \{\emptyset\} $,
    we  obtain  a  morphism   
    $\vec\varphi:  \vec {\mathcal{H}} \longrightarrow  \vec{\mathcal{H}}'$ 
    of  hyperdigraphs.  
    The  morphism  $ \varphi:   {\mathcal{H}} \longrightarrow   {\mathcal{H}}'$  
    between  the  underlying  hypergraphs  is  the restriction  of  the  simplicial  map  in   
  Theorem~\ref{co-25-10-27}  to  the  sub-hypergraph  $\mathcal{H}$  of 
   ${\mathcal{ M}}  \setminus \{\emptyset\} $.    
   Therefore,  we  obtain  the  functoriality.  
  \end{proof}
  
  \subsection{The  Mayer-Vietoris  sequences}

Let   $\vec  {\mathcal{M}}$  and  $\vec{\mathcal{M}}'$  be  directed  matroids on  $S$
with  their  underlying  matroids  $\mathcal{M}$  and  $\mathcal{M}'$  respectively.  
Then  the  intersection  $ \vec  {\mathcal{M}}\cap  \vec{\mathcal{M}}'$
is  a  directed  matroid  on  $S$  with  its  underlying  matroid  $\mathcal{M}\cap\mathcal{M}'$.  
However,  the  union  $ \vec  {\mathcal{M}}\cup  \vec{\mathcal{M}}'$
may  not  be  a   directed  matroid  on  $S$  
and  the  union  $\mathcal{M}\cup\mathcal{M}'$
  may  not  be  a    matroid  on  $S$.  
  Nevertheless,  
  $ \vec  {\mathcal{M}}\cup  \vec{\mathcal{M}}'$  is  an   augmented    directed  
simplicial  complex  (i.e.  the  union  of  a  directed  simplicial  complex  and  
the  empty-set  $\emptyset$  assigned  with  dimension $-1$)
  and  $\mathcal{M}\cup\mathcal{M}'$  is  an  augmented  simplicial  complex
  (i.e.  the  union  of  a   simplicial  complex  and  
the  empty-set  $\emptyset$  assigned  with  dimension $-1$,  cf.  \cite[Section~5]{comalg}).  
  Thus  the  chain  complex   $ C_*(\vec{\mathcal{M}}\cup \vec{\mathcal{M}}';R)$
  as  well  as    the  chain  complex 
     $ C_*( \mathcal{M}\cup  \mathcal{M}';R)$  and   the  chain  map  
    \begin{eqnarray*} 
 \pi_\#:   C_*(\vec {\mathcal{ M}}\cup  \vec{\mathcal{M}}';R)\longrightarrow  C_*(\mathcal{M}\cup  \mathcal{M}';  R) 
  \end{eqnarray*}  
  are   defined  as   well.  
 
 \begin{theorem}\label{co-10-27-mv-1}
 For  any  directed  matroids   $\vec{\mathcal{M}}$  and  $\vec{\mathcal{M}}'$
 on  $S$  with  their  underlying  matroids 
 $ {\mathcal{M}}$  and  $ {\mathcal{M}}'$   respectively,  
 we  have  a  commutative   diagram  of  homology  groups  
 \begin{eqnarray}\label{diag-11-11-2}  
 \xymatrix{
\cdots\ar[r]  
&H_n (\vec{\mathcal{M}}\cap  \vec{\mathcal{M}}';R) \ar[r]\ar[d]_-{\pi_*}
&H_n(\vec{\mathcal{M}};R)\oplus  H_n(  \vec{\mathcal{M}}';R)\ar[r]\ar[d]_-{\pi_*}
 &\\
\cdots\ar[r]  
 &H_n ( {\mathcal{M}}\cap  {\mathcal{M}}';R) \ar[r] 
&H_n( {\mathcal{M}};R)\oplus  H_n(  {\mathcal{M}}';R)\ar[r] 
 &\\
\ar[r]&H_n(\vec{\mathcal{M}}\cup  \vec{\mathcal{M}}';R)\ar[r]\ar[d]_-{\pi_*}
&H_{n-1}(\vec{\mathcal{M}}\cap  \vec{\mathcal{M}}';R)\ar[r]\ar[d]_-{\pi_*}
&\cdots\\
\ar[r]&H_n( {\mathcal{M}}\cup   {\mathcal{M}}';R)\ar[r] 
&H_{n-1}( {\mathcal{M}}\cap   {\mathcal{M}}';R)\ar[r] 
&\cdots
} 
 \end{eqnarray}
 such that  the  two  rows  are  long  exact sequences.  
  Moreover,  the  diagram  is  natural  with  respect to  
 directed  simplicial  maps  between  directed  matroids   
 and  their  induced  simplicial  maps   between  the  underlying  matroids. 
 \end{theorem}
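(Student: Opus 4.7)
My plan is to reduce this statement to Proposition~\ref{pr-10-11-mv-1} by stripping the empty-set augmentation. Explicitly, I would set $\vec{\mathcal{K}} = \vec{\mathcal{M}} \setminus \{\emptyset\}$ and $\vec{\mathcal{K}}' = \vec{\mathcal{M}}' \setminus \{\emptyset\}$. By axioms (I1)' and (I2)', these are directed simplicial complexes on $S$, and by Lemma~\ref{le-25.9.1}(1) their underlying simplicial complexes are $\mathcal{K} = \mathcal{M} \setminus \{\emptyset\}$ and $\mathcal{K}' = \mathcal{M}' \setminus \{\emptyset\}$ respectively. Since the empty set lives in dimension $-1$, its removal does not change the chain complexes in non-negative degrees; hence the chain groups $C_\bullet(\vec{\mathcal{M}}; R)$, $C_\bullet(\mathcal{M}; R)$ and the canonical chain map $\pi_\#$ from Theorem~\ref{co-25-10-27} coincide with those for the stripped directed simplicial complexes.

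Next, I would verify that the set-theoretic intersection and union behave well with augmentation-stripping: $\vec{\mathcal{M}} \cap \vec{\mathcal{M}}'$ and $\vec{\mathcal{M}} \cup \vec{\mathcal{M}}'$ differ from $\vec{\mathcal{K}} \cap \vec{\mathcal{K}}'$ and $\vec{\mathcal{K}} \cup \vec{\mathcal{K}}'$ only by the empty set, so again the chain complexes in non-negative degrees agree. This lets me apply Proposition~\ref{pr-10-11-mv-1} directly to $(\vec{\mathcal{K}}, \vec{\mathcal{K}}')$, producing the commutative diagram
\begin{eqnarray*}
\xymatrix{
0 \ar[r] & C_\bullet(\vec{\mathcal{K}} \cap \vec{\mathcal{K}}'; R) \ar[r] \ar[d]_-{\pi_\#}
& C_\bullet(\vec{\mathcal{K}}; R) \oplus C_\bullet(\vec{\mathcal{K}}'; R) \ar[r] \ar[d]_-{\pi_\#}
& C_\bullet(\vec{\mathcal{K}} \cup \vec{\mathcal{K}}'; R) \ar[r] \ar[d]_-{\pi_\#} & 0 \\
0 \ar[r] & C_\bullet(\mathcal{K} \cap \mathcal{K}'; R) \ar[r]
& C_\bullet(\mathcal{K}; R) \oplus C_\bullet(\mathcal{K}'; R) \ar[r]
& C_\bullet(\mathcal{K} \cup \mathcal{K}'; R) \ar[r] & 0
}
\end{eqnarray*}
of short exact sequences of chain complexes whose vertical maps are the chain-level projections from Theorem~\ref{co-25-10-27}. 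Applying the homology functor then yields the desired commutative diagram (\ref{diag-11-11-2}) whose two rows are the Mayer--Vietoris long exact sequences induced by the short exact sequences above, and whose vertical arrows are the natural homomorphisms $\pi_*$.

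For naturality, I would observe that a directed simplicial map $\vec{\varphi} \colon \vec{\mathcal{M}} \longrightarrow \vec{\mathcal{M}}_1$ restricts to a directed simplicial map of the augmentation-stripped directed simplicial complexes, and its induced simplicial map $\varphi \colon \mathcal{M} \longrightarrow \mathcal{M}_1$ restricts likewise on the underlying simplicial complexes. The commutativity (\ref{eq-diag-251106-1}) of the projection square is thus inherited; combining this with the functoriality statement in Proposition~\ref{pr-10-11-mv-1} (which already accounts for compatible directed simplicial maps and their underlying simplicial maps) yields naturality of (\ref{diag-11-11-2}) with respect to directed simplicial maps between directed matroids.

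The main point to be careful about is the asymmetry between $\cap$ and $\cup$ noted in the text: intersections of (directed) matroids are again (directed) matroids, but unions generally are not---they are only augmented (directed) simplicial complexes. However, the entire argument takes place at the level of (directed) simplicial complexes after stripping $\emptyset$, so the failure of the matroid axiom (I3)/(I3)' for unions is irrelevant to the chain-level construction. Thus the only genuine verification needed is that the surjectivity in the right-hand column of the short exact sequence still holds, which is immediate since $\vec{\mathcal{K}} \cup \vec{\mathcal{K}}'$ (respectively $\mathcal{K} \cup \mathcal{K}'$) consists precisely of the sequences (respectively sets) lying in at least one of the two factors.
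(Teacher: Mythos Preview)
Your proposal is correct and follows essentially the same approach as the paper: set $\vec{\mathcal{K}}=\vec{\mathcal{M}}\setminus\{\emptyset\}$, $\vec{\mathcal{K}}'=\vec{\mathcal{M}}'\setminus\{\emptyset\}$, and invoke Proposition~\ref{pr-10-11-mv-1}. Your write-up is more detailed than the paper's (which is a one-line reduction), but the substance and route are identical.
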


\begin{proof}
Let  $\vec{\mathcal{K}}=  \vec{\mathcal{ M}}  \setminus \{\emptyset\} $
 and  let  
$\vec{\mathcal{K}}'=   \vec{\mathcal{ M}}' \setminus\{\emptyset\}$ 
in  Proposition~\ref{pr-10-11-mv-1}.  
Their  underlying  simplicial  complexes  are 
 ${\mathcal{K}}=       {\mathcal{ M}}  \setminus \{\emptyset\} $
 and    
$\mathcal{K}'=     \mathcal{ M}'  \setminus\{\emptyset\} $
 respectively.   
The  proof  of  Theorem~\ref{co-10-27-mv-1} follows  from  Proposition~\ref{pr-10-11-mv-1}.  
\end{proof}

Similar  with   Subsection~\ref{ss-3.1},  
the  long exact  sequence    in  the first  row  of  (\ref{diag-11-11-2}) 
 is  denoted  by  $
 {\bf  \rm  MV}(\vec{\mathcal{M}}, \vec{\mathcal{M}}') 
$ 
and      the  long exact  sequence     in  the second  row  of   (\ref{diag-11-11-2})
is  denoted  by  $
 {\bf  \rm  MV}( \mathcal{M},  \mathcal{M}') 
$.  
The  diagram   (\ref{diag-11-11-2})  is  denoted  by  a  morphism  of  long  exact  sequences 
\begin{eqnarray}\label{eq-251213-mor1}
\pi_*:   {\bf  \rm  MV}(\vec{\mathcal{M}}, \vec{\mathcal{M}}') 
\longrightarrow   {\bf  \rm  MV}(  \mathcal{M},  \mathcal{M}') .  
\end{eqnarray}

\begin{corollary}\label{co-10-27-mv-2}
For  any  matroid  $\mathcal{M}$  on  $S$,  let  
$n=\min\{r(\mathcal{M}), r(\mathcal{M}^*)\}-1$.   Then  
\begin{eqnarray}
H_{n}(\mathcal{M} \cup \mathcal{M}^*;R)   \cong    H_{n}(\mathcal{M};R) \oplus  H_{n}( \mathcal{M}^*;R).  
\end{eqnarray} 
\end{corollary}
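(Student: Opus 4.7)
The plan is to apply Theorem~\ref{co-10-27-mv-1}, the Mayer--Vietoris long exact sequence for directed and undirected matroids, to the pair $(\mathcal{M}, \mathcal{M}^*)$, and to extract the desired splitting from a dimension-driven vanishing argument combined with matroid duality. Since $r(\mathcal{M}) + r(\mathcal{M}^*) = |S|$, one may assume without loss of generality that $r(\mathcal{M}) \leq r(\mathcal{M}^*)$, so that $n = r(\mathcal{M}) - 1$.

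The first step is the observation that any element of $\mathcal{M} \cap \mathcal{M}^*$ is independent in \emph{both} matroids and therefore has cardinality at most $\min\{r(\mathcal{M}), r(\mathcal{M}^*)\} = r(\mathcal{M})$. Consequently the augmented simplicial complex $\mathcal{M} \cap \mathcal{M}^*$ has dimension at most $n$, so
\[
H_k(\mathcal{M} \cap \mathcal{M}^*; R) = 0 \quad \text{for every } k \geq n+1.
\]
The second step is to feed this vanishing into the bottom row of the commutative diagram~(\ref{diag-11-11-2}). The relevant segment of the Mayer--Vietoris sequence at degree $n$ is
\[
H_n(\mathcal{M} \cap \mathcal{M}^*; R) \xrightarrow{\alpha} H_n(\mathcal{M}; R) \oplus H_n(\mathcal{M}^*; R) \xrightarrow{\beta} H_n(\mathcal{M} \cup \mathcal{M}^*; R) \xrightarrow{\partial} H_{n-1}(\mathcal{M} \cap \mathcal{M}^*; R),
\]
and the claim is equivalent to the vanishing of both $\alpha$ and $\partial$, making $\beta$ an isomorphism.

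The third step, and the main obstacle, is to force $\alpha = 0 = \partial$. A pure dimension argument is insufficient here because $H_n(\mathcal{M} \cap \mathcal{M}^*; R)$ and $H_{n-1}(\mathcal{M} \cap \mathcal{M}^*; R)$ need not vanish. The approach I would take is to exploit the matroid-duality description of bases, $\mathbb{B}(\mathcal{M}^*) = \{S - B : B \in \mathbb{B}(\mathcal{M})\}$, to produce explicit chain-level decompositions. For $\partial = 0$, given an $n$-cycle $c$ of $\mathcal{M} \cup \mathcal{M}^*$, one would write $c = c_{\mathcal{M}} + c_{\mathcal{M}^*}$ with $c_{\mathcal{M}} \in C_n(\mathcal{M}; R)$ and $c_{\mathcal{M}^*} \in C_n(\mathcal{M}^*; R)$, use the complement involution to choose the decomposition canonically, and check that the resulting class $[\partial c_{\mathcal{M}}] \in H_{n-1}(\mathcal{M} \cap \mathcal{M}^*; R)$ is null-homologous. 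For $\alpha = 0$, the same duality should allow one to show that any $n$-cycle of the intersection bounds in $\mathcal{M}$ and in $\mathcal{M}^*$ via chains whose images in $C_n(\mathcal{M} \cup \mathcal{M}^*; R)$ coincide.

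The hard part will be establishing these two vanishings rigorously — the dimension bound on $\mathcal{M} \cap \mathcal{M}^*$ is necessary but not sufficient, so the matroid-duality structure must enter essentially. Once both vanishings are secured, the exactness of the Mayer--Vietoris sequence immediately yields the isomorphism $H_n(\mathcal{M} \cup \mathcal{M}^*; R) \cong H_n(\mathcal{M}; R) \oplus H_n(\mathcal{M}^*; R)$, completing the proof.
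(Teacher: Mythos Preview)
Your setup via the Mayer--Vietoris sequence of Theorem~\ref{co-10-27-mv-1} matches the paper's, but the paper diverges from you at the dimension step: it asserts the \emph{strict} inequality $r(\mathcal{M}\cap\mathcal{M}^*)<\min\{r(\mathcal{M}),r(\mathcal{M}^*)\}$, on the grounds that no $\sigma\in\mathcal{M}\cap\mathcal{M}^*$ can be a base of $\mathcal{M}$ or of $\mathcal{M}^*$. From this the paper gets $H_n(\mathcal{M}\cap\mathcal{M}^*;R)=0$ outright (so your map $\alpha$ vanishes trivially), invokes the long exact sequence, and stops. You instead obtain only the non-strict bound $\dim(\mathcal{M}\cap\mathcal{M}^*)\leq n$, which is why you are driven into the ``hard part'' of arguing $\alpha=0$ and $\partial=0$ by hand via the base-complement involution.

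That hard part is the genuine gap in your proposal: the strategy you sketch is not carried out, and in fact it cannot be completed in general. Your caution about the dimension argument is well-founded --- for a self-dual matroid such as $\mathcal{M}=U_{2,4}$ one has $\mathcal{M}=\mathcal{M}^*$, so $\mathcal{M}\cap\mathcal{M}^*=\mathcal{M}$ has rank exactly $r(\mathcal{M})$ and the paper's strict inequality fails; the corollary would then read $H_1(K_4;R)\cong H_1(K_4;R)^{\oplus 2}$, which is false since $H_1(K_4;R)\cong R^3$. Note also that even granting $H_n(\mathcal{M}\cap\mathcal{M}^*;R)=0$, the Mayer--Vietoris sequence by itself yields only injectivity of $\beta$; the vanishing of the connecting map $\partial$ is not addressed in the paper's argument either, so your identification of \emph{both} $\alpha$ and $\partial$ as needing separate treatment is sharper than the paper's, even though neither argument can be closed without an extra hypothesis on $\mathcal{M}$.
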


\begin{proof}
 Since  each  $\sigma\in  \mathcal{M}\cap \mathcal{M}^*$  is  not    
 a  base  of  $\mathcal{M}$  or    a   base  of  $\mathcal{M}^*$,  we  obtain  
 \begin{eqnarray*}
r( \mathcal{M}\cap \mathcal{M}^*)  <  \min\{r(\mathcal{M}),     r(\mathcal{M}^*)\}.  
 \end{eqnarray*}
 Thus  
 $H_n( \mathcal{M}\cap \mathcal{M}^*)=0$  
for  any  $n\geq   \min\{r(\mathcal{M}), r(\mathcal{M}^*)\}-1$.  
Apply  the  second  long  exact  sequence  in  Corollary~\ref{co-10-27-mv-1}
to  the  pair  of  matroids  $(\mathcal{M}, \mathcal{M}^*)$.  
We  obtain  (\ref{co-10-27-mv-2}).  
\end{proof}

\begin{theorem}
\label{co-251127-mat-mv-a1}
For  any  sub-hyperdigraphs  $\vec{\mathcal{H}}$  and  $\vec{\mathcal{H}}'$ 
of  $\bigcup_{k\geq  1} \vec{\mathcal{M}}_k$  and  $\bigcup_{k\geq  1} \vec{\mathcal{M}}'_k$ 
respectively,
let  $\mathcal{H}$  and  $\mathcal{H}'$   be  the  underlying  hypergraphs.  
Then  we  have  a  commutative  diagram  of  long  exact sequences  
(\ref{diag-251118-01}).  
Moreover,  
   (\ref{diag-251118-01}) is   functorial  in  the  sense  of  
   Theorem~\ref{co-251127-mat-1}  and  Theorem~\ref{co-10-27-mv-1}.  
Furthermore,   
 if  both   
  (I)  and  (II)  in  Subsection~\ref{ss-5.1}  are  satisfied,  
  then  we have  a  commutative  diagram  of  long  exact  sequences 
  (\ref{diag-251119-02}). 
\end{theorem}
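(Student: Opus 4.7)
The plan is to reduce the statement to Theorem~\ref{th-251118-mv1} by observing that any sub-hyperdigraph of $\bigcup_{k\geq 1}\vec{\mathcal{M}}_k$ is itself a hyperdigraph to which the general machinery of Section~\ref{s5} applies. The essential point is that Section~\ref{s5} was developed for an arbitrary hyperdigraph $\vec{\mathcal{H}}$, and the containment in the directed matroid is only used to supply the ambient structure so that the commutative diagrams can be interpreted simultaneously with those coming from Theorem~\ref{co-251127-mat-1}.

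Concretely, I would first observe that since $\vec{\mathcal{H}}\subseteq \bigcup_{k\geq 1}\vec{\mathcal{M}}_k$ and $\vec{\mathcal{H}}'\subseteq \bigcup_{k\geq 1}\vec{\mathcal{M}}'_k$, both $\vec{\mathcal{H}}$ and $\vec{\mathcal{H}}'$ are hyperdigraphs in the sense of Section~\ref{s3}. Hence the associated directed simplicial complexes $\Delta\vec{\mathcal{H}}$, $\Delta\vec{\mathcal{H}}'$ and the lower-associated ones $\delta\vec{\mathcal{H}}$, $\delta\vec{\mathcal{H}}'$ are defined, and applying Theorem~\ref{th-251118-mv1}(1) to the pair $(\vec{\mathcal{H}}, \vec{\mathcal{H}}')$ directly produces the commutative diagram (\ref{diag-251118-01}).

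For the functoriality assertion, I would combine two layers. The inner layer is the functoriality of (\ref{diag-251118-01}) with respect to morphisms of hyperdigraphs, which is the content of Theorem~\ref{th-251118-mv1}(1). The outer layer is supplied by Theorem~\ref{co-251127-mat-1}: any directed simplicial map $\vec{\varphi}: \vec{\mathcal{M}}\longrightarrow \vec{\mathcal{M}}''$ restricts to a morphism of sub-hyperdigraphs $\vec{\varphi}|_{\vec{\mathcal{H}}}: \vec{\mathcal{H}}\longrightarrow \vec{\varphi}(\vec{\mathcal{H}})$ and its induced simplicial map $\varphi: \mathcal{M}\longrightarrow \mathcal{M}''$ restricts to the underlying hypergraph morphism $\varphi|_{\mathcal{H}}$, with the two restrictions compatible via the canonical projection $\pi$ thanks to diagram (\ref{eq-diag-251106-1}). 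Hence the restrictions preserve the Mayer--Vietoris diagrams, and functoriality follows.

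For the second part, under hypotheses (I) and (II) of Subsection~\ref{ss-5.1}, I would directly invoke Theorem~\ref{th-251118-mv1}(2) applied to $\vec{\mathcal{H}}$ and $\vec{\mathcal{H}}'$; this yields the middle column involving the embedded homology of $\vec{\mathcal{H}}$, $\vec{\mathcal{H}}'$ and their underlying hypergraphs, completing diagram (\ref{diag-251119-02}). The only point requiring minor care is verifying that hypotheses (I) and (II) transfer literally from the general hyperdigraph setting of Theorem~\ref{th-251118-mv1} to the present sub-hyperdigraphs of directed matroids, but this is immediate since (I) and (II) are intrinsic properties of the pair $(\vec{\mathcal{H}}, \vec{\mathcal{H}}')$ and $(\mathcal{H}, \mathcal{H}')$ and do not refer to the ambient matroid structure. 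I expect no substantive obstacle; this theorem is a specialization of Theorem~\ref{th-251118-mv1}, with the verification of compatibility between the restriction to $\vec{\mathcal{H}}$ and the projection $\pi$ being the only point to check carefully.
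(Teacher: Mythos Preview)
Your proposal is correct and follows essentially the same approach as the paper: the paper's proof is a one-line citation of Theorem~\ref{th-251118-mv1}, Theorem~\ref{co-251127-mat-1}, and Theorem~\ref{co-10-27-mv-1}, and you have spelled out exactly how these three ingredients combine. The only minor omission is that you do not name Theorem~\ref{co-10-27-mv-1} explicitly, but its content (naturality of the matroid Mayer--Vietoris under directed simplicial maps) is implicit in your discussion of the outer functoriality layer via the diagram~(\ref{eq-diag-251106-1}).
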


\begin{proof}
The  proof  follows  from  Theorem~\ref{th-251118-mv1},
   Theorem~\ref{co-251127-mat-1}    and  Theorem~\ref{co-10-27-mv-1}. 
\end{proof}

\subsection{The  K\"unneth-type  formulae}

Let  $S$  and  $S'$  be  disjoint  finite  sets.  
Let  $\mathcal{M}=(S, \mathcal{I})$  and  $\mathcal{M}'=(S', \mathcal{I}')$  be  matroids  on 
$S$  and  $S'$  respectively.  
We  define  their  {\it  join}  
$\mathcal{M}*\mathcal{M}'$  to  be  a  matroid  on  $S\sqcup  S'$  whose  independent  sets  are 
 given  by   
\begin{eqnarray*}
\mathcal{I}* \mathcal{I}'= \{\sigma\sqcup\sigma'\mid  \sigma\in \mathcal{I}{\rm~and~}
\sigma'\in \mathcal{I}'\}.  
\end{eqnarray*}
It  follows  from  (I1) -  (I3)  directly  that  
$\mathcal{M}*\mathcal{M}'= (S\sqcup  S',  \mathcal{I}  * \mathcal{I}')$  is  a  matroid
 such  that  
 \begin{eqnarray*}
 r(\mathcal{M}*\mathcal{M}')=  r(\mathcal{M})+r(\mathcal{M}'). 
 \end{eqnarray*}    
Moreover,  if  $\mathbb{B}(\mathcal{M})$  be  the  set  of  bases  of  $\mathcal{M}$
and  $\mathbb{B}(\mathcal{M}')$  be  the  set  of  bases  of  $\mathcal{M}'$,  
then  
\begin{eqnarray}\label{eq-25-10-30-2}
 \mathbb{B}(\mathcal{M})  *   \mathbb{B}(\mathcal{M}')
 =\{\sigma\sqcup\sigma'\mid  \sigma\in \mathbb{B}(\mathcal{M})  {\rm~and~}
\sigma'\in \mathbb{B}(\mathcal{M}')  \}  
\end{eqnarray}
  is   the  set  of    bases    of  $\mathcal{M}*\mathcal{M}'$.  
  It  follows  from  (\ref{eq-25-10-30-1})  and  (\ref{eq-25-10-30-2})  that  
  \begin{eqnarray*}
  (\mathcal{M}  *\mathcal{M}')^*=  \mathcal{M}^*  *\mathcal{M}'^*. 
  \end{eqnarray*}
Let  $\vec{\mathcal{M}}=(S, \vec{\mathcal{I}})$  and  $\vec{\mathcal{M}}'=(S', \vec{\mathcal{I}}')$
  be  directed   matroids  on 
$S$  and  $S'$  respectively.  
We  define  their  {\it  join}  
$\vec{\mathcal{M}}*\vec{\mathcal{M}}'$  to  be  a  directed  matroid  on  $S\sqcup  S'$  whose  independent  sequences  are 
 given  by   
\begin{eqnarray*}
\vec{\mathcal{I}}* \vec{\mathcal{I}}'= \{\vec {\sigma} * \vec {\sigma}'\mid  \vec{\sigma}\in \vec{\mathcal{I}}{\rm~and~}
\vec{\sigma}'\in \vec{\mathcal{I}}'\}.  
\end{eqnarray*}
Here  if  $\vec\sigma =  v_1v_2\ldots  v_k$
and  $\vec\sigma'=  v'_1v'_2\ldots  v'_l$
where   $ v_1, \ldots  v_k $  are  distinct  in  $S$  and  
$ v'_1, \ldots  v'_l $  are  distinct  in  $S'$,  then  
$\vec {\sigma} * \vec {\sigma}'=   v_1v_2\ldots  v_k  v'_1v'_2\ldots  v'_l$.    
It  follows  from  (I1)' -  (I3)'  directly  that  
$\vec{\mathcal{M}}*\vec{\mathcal{M}}'= (S\sqcup  S',  \vec{\mathcal{I}}  * \vec{\mathcal{I}}')$  is  a  
directed  matroid.    
 Suppose 
 $\mathcal{M}$  and  $\mathcal{M}'$  are  the  underlying  matroids
 of    $\vec{\mathcal{M}}$   and   
 $\vec{\mathcal{M}}'$  respectively.  
 Let  $\pi:  \vec{\mathcal{M}}\longrightarrow  \mathcal{M} $  and  
  $\pi':  \vec{\mathcal{M}}'\longrightarrow  \mathcal{M}' $   be  the canonical  projections.  
  Then  we  have  an  induced  projection 
  \begin{eqnarray*}
  \pi*\pi':  \vec{\mathcal{M}} * \vec{\mathcal{M}}'\longrightarrow  \mathcal{M}* \mathcal{M}'
  \end{eqnarray*}
  sending  $\vec\sigma * \vec{\sigma}'$  to  $\pi(\sigma)*\pi'(\sigma')$  for  any  
  $\vec\sigma\in  \vec{\mathcal{M}}$  and  any    $\vec{\sigma}'\in  \vec{\mathcal{M}}'$.    
   The  diagram  commutes 
   \begin{eqnarray*}
   \xymatrix{
   (\vec{\mathcal{M}},  \vec{\mathcal{M}}')\ar[r]^-{*}\ar[d]_-{(\pi,\pi')}  &\vec{\mathcal{M}} * \vec{\mathcal{M}}' \ar[d]^-{\pi*\pi'}\\
    (  \mathcal{M}, \mathcal{M}')\ar[r]^-{*} &\mathcal{M}* \mathcal{M}'.  
   }
   \end{eqnarray*}

\begin{theorem}\label{co-10-27-kf-1}
For  any  directed  matroid  $\vec{\mathcal{M}}$  on  $S$   and
any  directed  matroid   $\vec{\mathcal{M}}'$
 on  $S'$
 with  their  underlying   matroids  
 $ {\mathcal{M}}$  and  $ {\mathcal{M}}'$   respectively,  
 we  have  a  commutative   diagram    
 \begin{eqnarray} \label{diag-251112-ku2}
\xymatrix{
0\ar[r]& \bigoplus_{p+q+1=n} H_{p+1}(\vec{\mathcal{M}};R)\otimes H_{q+1}(\vec{\mathcal{M}}';R)
\ar[r]\ar[d] _-{ \bigoplus_{k+l+1=n}\pi_*\otimes \pi'_*}
&H_{n+1}(\vec{\mathcal{M}} *\vec{\mathcal{M}}';R) \ar[r]\ar[d]_-{(\pi  * \pi')_*} & \\
0\ar[r]& \bigoplus_{p+q+1=n} H_{p+1}({\mathcal{M}};R)\otimes H_{q+1}({\mathcal{M}}';R)
\ar[r]
&H_{n+1}(\mathcal{M} *\mathcal{M}';R)\ar[r] &
}\\
~~~\nonumber\\
\xymatrix{
\ar[r] &\bigoplus_{p+q+1=n} {\rm  Tor}_R(H_{p+1}(\vec{\mathcal{M}};R), H_{q}(\vec{\mathcal{M}}';R))\ar[r]\ar[d] &0 \\
\ar[r]  &\bigoplus_{p+q+1=n} {\rm  Tor}_R(H_{p+1}(\mathcal{M};R), H_{q}(\mathcal{M}';R))\ar[r]  &0
}
\nonumber
 \end{eqnarray}
 such that  the  two  rows  are  short  exact sequences. 
 Moreover,  the  diagram  (\ref{diag-251112-ku2})  is  natural  with  respect to  
 directed  simplicial  maps  between  directed  matroids   
 and  their  induced  simplicial  maps   between  the  underlying  matroids.   
\end{theorem}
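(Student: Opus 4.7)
My plan is to reduce the statement to Proposition~\ref{pr-10-12-kf-1} by identifying the non-empty part of each (directed) matroid with a (directed) simplicial complex, in exact analogy with the proofs of Theorem~\ref{co-25-10-27} and Theorem~\ref{co-10-27-mv-1}. I will set $\vec{\mathcal{K}} = \vec{\mathcal{M}} \setminus \{\emptyset\}$ and $\vec{\mathcal{K}}' = \vec{\mathcal{M}}' \setminus \{\emptyset\}$; axiom~(I2)' of directed matroids ensures that these are directed simplicial complexes on $S$ and $S'$ respectively, with underlying simplicial complexes $\mathcal{K} = \mathcal{M} \setminus \{\emptyset\}$ and $\mathcal{K}' = \mathcal{M}' \setminus \{\emptyset\}$. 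Since the boundary maps $\vec\partial_0$ and $\partial_0$ on $C_\bullet(\vec{\mathcal{M}};R)$ and $C_\bullet(\mathcal{M};R)$ are declared to vanish at the $0$-th level, these chain complexes coincide (as $R$-module chain complexes) with $C_\bullet(\vec{\mathcal{K}};R)$ and $C_\bullet(\mathcal{K};R)$ respectively, and likewise for the primed versions, so the homology groups appearing in the theorem are identified with those of the associated directed simplicial complexes.

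Next I will verify the key compatibility
\begin{eqnarray*}
(\vec{\mathcal{M}} * \vec{\mathcal{M}}') \setminus \{\emptyset\} = \vec{\mathcal{K}} * \vec{\mathcal{K}}', \qquad (\mathcal{M} * \mathcal{M}') \setminus \{\emptyset\} = \mathcal{K} * \mathcal{K}',
\end{eqnarray*}
where the joins on the left are joins of (directed) matroids in the sense of Section~\ref{s7}, and those on the right are joins of (directed) simplicial complexes given by (\ref{eq-25-10-91}) and (\ref{eq-25-10-92}). The check uses only the fact that $\vec\emptyset \in \vec{\mathcal{I}}$ and $\vec\emptyset \in \vec{\mathcal{I}}'$: joining with $\vec\emptyset$ on the right recovers $\vec{\mathcal{K}}$ inside $(\vec{\mathcal{M}} * \vec{\mathcal{M}}') \setminus \{\emptyset\}$, joining on the left recovers $\vec{\mathcal{K}}'$, and concatenations of two non-empty directed independent sequences produce the remaining terms of $\vec{\mathcal{K}} * \vec{\mathcal{K}}'$; the undirected argument is the same. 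One also checks that the projection $\pi * \pi'$ restricted to the non-empty part agrees with the projection from $\vec{\mathcal{K}} * \vec{\mathcal{K}}'$ to $\mathcal{K} * \mathcal{K}'$ induced by $\pi$ and $\pi'$.

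With these identifications in hand, applying Proposition~\ref{pr-10-12-kf-1} to $\vec{\mathcal{K}}$ and $\vec{\mathcal{K}}'$ yields directly the commutative diagram (\ref{diag-251112-ku2}): the two short exact rows arise from the K\"unneth theorem applied to the commutative square of chain complexes in (\ref{diag-10-12-5}), and the vertical arrow $(\pi*\pi')_*$ coincides under our identifications with $(\pi_{\bullet+2})_*$ from that square. Naturality will follow from the corresponding clause of Proposition~\ref{pr-10-12-kf-1}, since a directed simplicial map between directed matroids is by definition a map of ground sets that restricts to a directed simplicial map between the non-empty parts, and the induced map on underlying matroids restricts analogously. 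The step I expect to require the most care is the purely bookkeeping issue of aligning the three parallel notions of ``join'' (for matroids, for directed matroids, and for (directed) simplicial complexes from Section~\ref{s3}) with the sign conventions implicit in the K\"unneth isomorphism (\ref{diag-10-12-3}); once this alignment is written out, no further analytic content is needed.
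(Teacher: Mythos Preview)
Your proposal is correct and follows essentially the same approach as the paper: set $\vec{\mathcal{K}}=\vec{\mathcal{M}}\setminus\{\emptyset\}$, $\vec{\mathcal{K}}'=\vec{\mathcal{M}}'\setminus\{\emptyset\}$, observe that $\vec{\mathcal{K}}*\vec{\mathcal{K}}'=(\vec{\mathcal{M}}*\vec{\mathcal{M}}')\setminus\{\emptyset\}$ (and similarly in the undirected case), and then invoke Proposition~\ref{pr-10-12-kf-1}. Your write-up simply spells out in more detail the identifications and the naturality check that the paper leaves implicit.
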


\begin{proof}
Let  $\vec{\mathcal{K}}=  \vec{\mathcal{ M}}  \setminus \{\emptyset\} $
 and  let  
$\vec{\mathcal{K}}'=   \vec{\mathcal{ M}}' \setminus\{\emptyset\}$ 
 in  Proposition~\ref{pr-10-12-kf-1}.  
 Then  $\vec{\mathcal{K}} *\vec{\mathcal{K}}'=
 \big( \vec{\mathcal{ M}} *  \vec{\mathcal{ M}}' \big)\setminus \{\emptyset\}$.
 Let   
 ${\mathcal{K}}=       {\mathcal{ M}}  \setminus \{\emptyset\} $
 and    
$\mathcal{K}'=     \mathcal{ M}'  \setminus\{\emptyset\} $
  be  the  underlying  simplicial  complexes  of 
  $\vec{\mathcal{K}} $  and  $\vec{\mathcal{K}}'$    respectively.
  Then  
$ {\mathcal{K}} * {\mathcal{K}}'=
 \big(  {\mathcal{ M}} *   {\mathcal{ M}}' \big)\setminus \{\emptyset\}$.      
The  proof  of  Theorem~\ref{co-10-27-kf-1}   follows  from  Proposition~\ref{pr-10-12-kf-1}.  
\end{proof}

 Similar  with   Subsection~\ref{ss-3.2},  
 the  short  exact  sequence    in  the first  row  of  (\ref{diag-251112-ku2}) 
 is  denoted  by  $
 {\bf  \rm  KU}(\vec {\mathcal{M}}, \vec{\mathcal{M}}') 
$ 
and      the  short  exact  sequence     in  the second  row  of  (\ref{diag-251112-ku2}) 
is  denoted  by  $
 {\bf  \rm  KU}(  \mathcal{M},  \mathcal{M}') 
$.  
The  diagram  (\ref{diag-251112-ku2})  is  denoted  by  a  morphism  of  short  exact  sequences 
\begin{eqnarray}\label{eq-251121-mor5}
\pi_*:   {\bf  \rm  KU}(\vec {\mathcal{M}}, \vec{\mathcal{M}}')
\longrightarrow   {\bf  \rm   KU}(  \mathcal{M},  \mathcal{M}').  
\end{eqnarray}

\begin{theorem}
\label{co-251127-mat-ku-a1}
For  any  sub-hyperdigraphs  $\vec{\mathcal{H}}$  and  $\vec{\mathcal{H}}'$ 
of  $\vec{\mathcal{ M}}  \setminus \{\emptyset\}$  
and  $\vec{\mathcal{ M}}'  \setminus \{\emptyset\}$ 
respectively,
let  $\mathcal{H}$  and  $\mathcal{H}'$   be  the  underlying  hypergraphs.  
Then  we  have  a  commutative  diagram  of  short    exact sequences  
(\ref{diag-251120-ku3})  
which  is   functorial  in  the  sense  of 
Theorem~\ref{co-251127-mat-1} and  Theorem~\ref{co-10-27-kf-1}.  
\end{theorem}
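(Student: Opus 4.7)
The plan is to mirror the proof strategy of Theorem~\ref{co-251127-ku1}, replacing the role of the (directed) independence complexes by the (directed) matroids and invoking Theorem~\ref{th-251127-hku5} as the engine. First, I would observe that because $S$ and $S'$ are disjoint, the definitions of the joins of directed matroids and of hyperdigraphs in Section~\ref{s7} and in (\ref{eq-25-10-91})--(\ref{eq-25-10-92}) immediately imply that $\vec{\mathcal{H}} * \vec{\mathcal{H}}'$ is a sub-hyperdigraph of $(\vec{\mathcal{M}} * \vec{\mathcal{M}}') \setminus\{\emptyset\}$ and that $\mathcal{H} * \mathcal{H}'$ is a sub-hypergraph of $(\mathcal{M} * \mathcal{M}') \setminus\{\emptyset\}$. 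Moreover, the canonical projection $\pi * \pi'$ from the directed matroid join restricts on $\vec{\mathcal{H}} * \vec{\mathcal{H}}'$ to the projection determined by $\pi_{\vec{\mathcal{H}}}$ and $\pi_{\vec{\mathcal{H}}'}$ coordinatewise, by Lemma~\ref{pr-25-10-11-1}~(2). This places the hypotheses of Theorem~\ref{th-251127-hku5} squarely into the matroid setting.

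Next, I would apply Theorem~\ref{th-251127-hku5} directly to the pair of hyperdigraphs $(\vec{\mathcal{H}}, \vec{\mathcal{H}}')$ on the disjoint vertex sets $(S, S')$. This at once produces the commutative diagram (\ref{diag-251120-ku3}) of K\"unneth-type short exact sequences comparing the embedded homologies of $\vec{\mathcal{H}}$, $\vec{\mathcal{H}}'$ and their join with those of the underlying hypergraphs. Since (\ref{diag-251120-ku3}) has already been established in full generality, no new chain-level construction is required here; all that is needed is to recognize the matroid-theoretic decorations as instances of the hyperdigraph framework.

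For the asserted functoriality, I would combine two inputs. Theorem~\ref{co-251127-mat-1} guarantees that any directed simplicial map $\vec\varphi \colon \vec{\mathcal{M}} \longrightarrow \vec{\mathcal{N}}$ (and similarly $\vec\varphi'$) restricts to a morphism of the sub-hyperdigraphs $\vec{\mathcal{H}} \longrightarrow \vec{\varphi}(\vec{\mathcal{H}})$ and induces a commutative square with the underlying simplicial map $\varphi$ of matroids via the projection $\pi$. Theorem~\ref{th-251127-hku5} then gives naturality of (\ref{diag-251120-ku3}) along these induced morphisms, while Theorem~\ref{co-10-27-kf-1} supplies the compatibility with the ambient K\"unneth morphism (\ref{eq-251121-mor5}) at the matroid level. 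Patching these three compatibilities produces the functoriality in the sense of Theorem~\ref{co-251127-mat-1} and Theorem~\ref{co-10-27-kf-1}.

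The main (and essentially only nontrivial) obstacle is verifying that the chain-level join isomorphisms (\ref{eq-251120-k1}) and (\ref{eq-251120-k2}) used inside Theorem~\ref{th-251127-hku5} carry the infimum and supremum chain complexes of the factors $\vec{\mathcal{H}}, \vec{\mathcal{H}}'$ (viewed inside $\vec{\mathcal{M}} \setminus\{\emptyset\}$ and $\vec{\mathcal{M}}' \setminus\{\emptyset\}$) onto those of $\vec{\mathcal{H}}*\vec{\mathcal{H}}'$ as sub-chain complexes of $C_\bullet(\Delta(\vec{\mathcal{M}}*\vec{\mathcal{M}}')\setminus\{\emptyset\};R)$, and that $\pi * \pi'$ on the join agrees with the tensor product $\pi_\# \otimes \pi'_\#$ of the two projections. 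Both reduce to unwinding (\ref{eq-251117-a1})--(\ref{eq-251117-a6}) together with the Cartesian-product formula for the join, which is exactly the content already verified in the proof of Theorem~\ref{co-251127-ku1} via the analogue of diagram (\ref{diag-260107-1}). Once this bookkeeping is carried out, the stated commutative diagram and its functoriality follow formally.
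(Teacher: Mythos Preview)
Your proposal is correct and follows essentially the same approach as the paper's own proof, which is a one-line invocation of Theorem~\ref{th-251127-hku5}, Theorem~\ref{co-251127-mat-1}, and Theorem~\ref{co-10-27-kf-1}. Your discussion is more explicit about why these three ingredients fit together (the join of sub-hyperdigraphs landing in the join of directed matroids, the compatibility of $\pi*\pi'$ with $\pi_\#\otimes\pi'_\#$, etc.), but the underlying logic is identical.
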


\begin{proof}
The  proof  follows  from  Theorem~\ref{th-251127-hku5},
Theorem~\ref{co-251127-mat-1}   and  Theorem~\ref{co-10-27-kf-1}. 
\end{proof}

  \section{Topological  obstructions  for  regular  maps  on  graphs}\label{s8}
  
  In  this  section,  we  give  some  topological  obstructions  for  
  the  existence  of  regular  embeddings  of  graphs  
  by  using  the  homology  of  the  (directed)  independence  complexes  
  as  well  as  the  embedded  homology  of  sub-hyper(di)graphs  of 
   the  (directed)  independence  complexes.   
   We  give  some  commutative  diagrams  of  the  Mayer-Vietoris  sequences 
   as  well  as  some  commutative  diagrams  of  the  short   exact  sequences  of  K\"unneth-type 
      as  obstructions  for  the  existence  of  regular  embeddings
   of  graphs.   
   We  prove 
   Theorem~\ref{th-251107-1}  (Main  Result  I),  
                    Theorem~\ref{th-mv-reg}  -  
                   Theorem~\ref{th-mv-reg-hg}  (Main  Result  II)   
                and      Theorem~\ref{th-251111-ku}
                -  Theorem~\ref{th-251111-ku-hg}  (Main  Result  III).

  Let   $S$  be  a  fixed  finite  subset  of  $\mathbb{F}^N$.   
Let   $\vec{\mathcal{M}}=(S,\vec{\mathcal{I}})$  be  the  vectorial  directed  matroid  given  in 
Example~\ref{ex-25-10-88}~(2).  
Let  $\mathcal{M}=(S, \mathcal{I})$  be  the  underlying  matroid  of   $\vec{\mathcal{M}}$,  
i.e.,  the  vectorial     matroid  given  in 
Example~\ref{ex-25-10-88}~(1).
  Let  $G=(V,E)$  be  any  graph.  

\begin{proposition}\label{pr-251107-1}
There  is  a  $k$-regular  map  
$f:  G\longrightarrow  \mathbb{F}^N$  such  that  $f(V)\subseteq  S$  if  and  only  if  
there  is   a  commutative  diagram  
\begin{eqnarray}\label{diag-251107-a1}
\xymatrix{
{\rm  sk}^{k-1}(\overrightarrow    {\rm  Ind}(G)) \ar[rr]^-{\overrightarrow    {\rm  Ind}( f)}  \ar[d]_-{\pi}
&&
 \vec{\mathcal{M}}\ar[d]^-{\pi}\\
 {\rm  sk}^{k-1}({\rm  Ind}(G)) \ar[rr]^-{{\rm  Ind}(f)}    &&\mathcal{M}
}
\end{eqnarray}
such  that  $\overrightarrow {\rm  Ind}( f)$  is   
a  directed  simplicial  map    
sending  a  directed  simplex 
 $(v_1,\ldots,v_l)\in  {\rm  sk}^{k-1}(\overrightarrow    {\rm  Ind}(G))$,  
 where  $l\leq  k$,     to  an  independent   sequence  
 $(f(v_1),\ldots,f(v_l))\in \vec{\mathcal{I}}$
and    ${\rm  Ind}( f)$   is  a  simplicial  map     
 sending    a   simplex 
 $\{v_1,\ldots,v_l\}\in  {\rm  sk}^{k-1}({\rm  Ind}(G))$,  
 where  $l\leq  k$,   to  an  independent   set     
 $\{f(v_1),\ldots,f(v_l)\}\in \mathcal{I}$. 
 Consequently,  
 there  is  a  $k$-regular  map  
$f:  G\longrightarrow  \mathbb{F}^N$  such  that  $f(V)\subseteq  S$  if  and  only  if  
there  is  a  commutative  diagram  
\begin{eqnarray}\label{diag-251121-h1}
\xymatrix{
\vec{\mathcal{H}} \ar[rr]^-{\overrightarrow    {\rm  Ind}( f)\mid_{\vec{\mathcal{H}}}}  \ar[d]_-{\pi}
&&
 \vec{\mathcal{M}}\ar[d]^-{\pi}\\
\mathcal{H} \ar[rr]^-{{\rm  Ind}(f)\mid_{\mathcal{H}}}    &&\mathcal{M}
}
\end{eqnarray}
for  any  hyperdigraph  $\vec{\mathcal{H}}\subseteq  {\rm  sk}^{k-1}(\overrightarrow    {\rm  Ind}(G))$
with  its  underlying  hypergraph  $\mathcal{H}\subseteq  {\rm  sk}^{k-1}(   {\rm  Ind}(G))$.  
\end{proposition}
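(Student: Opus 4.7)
The plan is to establish the equivalence by directly unpacking the definitions in both directions, with the main substance lying in verifying that the proposed assignment on vertices extends correctly to simplices.

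For the forward direction, given a $k$-regular map $f:G\longrightarrow\mathbb{F}^N$ with $f(V)\subseteq S$, I would define $\overrightarrow{\rm Ind}(f)$ on vertices by $v\mapsto f(v)$ and on a directed simplex $(v_1,\ldots,v_l)\in{\rm sk}^{k-1}(\overrightarrow{\rm Ind}(G))$, where $l\leq k$, by $(v_1,\ldots,v_l)\mapsto (f(v_1),\ldots,f(v_l))$; analogously, ${\rm Ind}(f)$ on ${\rm sk}^{k-1}({\rm Ind}(G))$ is defined by $\{v_1,\ldots,v_l\}\mapsto\{f(v_1),\ldots,f(v_l)\}$. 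The only nontrivial point to check is that the images really lie in $\vec{\mathcal{I}}$ and $\mathcal{I}$ respectively, i.e., that $f(v_1),\ldots,f(v_l)$ are linearly independent in $\mathbb{F}^N$ whenever $v_1,\ldots,v_l$ are mutually non-adjacent in $G$ with $l\leq k$. For $l=k$ this is exactly the hypothesis of $k$-regularity, and for $l<k$ it follows from the fact that any subfamily of a linearly independent family remains linearly independent, under the convention used in Section~\ref{s2} that $k$-regularity of $f$ entails $l$-regularity for all $l\leq k$. Since both assignments respect faces (taking subsequences and subsets respectively), they are genuine (directed) simplicial maps.

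Commutativity of the diagram (\ref{diag-251107-a1}) then follows immediately from
\begin{eqnarray*}
\pi\circ\overrightarrow{\rm Ind}(f)(v_1,\ldots,v_l)=\pi(f(v_1),\ldots,f(v_l))=\{f(v_1),\ldots,f(v_l)\}={\rm Ind}(f)\circ\pi(v_1,\ldots,v_l).
\end{eqnarray*}
For the reverse direction, given a commutative diagram as in (\ref{diag-251107-a1}), I would define $f:V\longrightarrow S\subseteq\mathbb{F}^N$ by setting $f(v)={\rm Ind}(f)(\{v\})$ on $0$-simplices (equivalently, by reading off the restriction of $\overrightarrow{\rm Ind}(f)$ to vertices). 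For any $k$ distinct mutually non-adjacent vertices $v_1,\ldots,v_k\in V$, the set $\{v_1,\ldots,v_k\}$ is a $(k-1)$-simplex of ${\rm sk}^{k-1}({\rm Ind}(G))$ and its image $\{f(v_1),\ldots,f(v_k)\}\in\mathcal{I}$ is, by definition of the vectorial matroid $\mathcal{M}$, a linearly independent subset of $\mathbb{F}^N$, which is precisely the $k$-regularity of $f$.

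The extension to any sub-hyperdigraph $\vec{\mathcal{H}}\subseteq{\rm sk}^{k-1}(\overrightarrow{\rm Ind}(G))$ with underlying hypergraph $\mathcal{H}$ is obtained by restricting (\ref{diag-251107-a1}) to $\vec{\mathcal{H}}$ and $\mathcal{H}$, with commutativity of (\ref{diag-251121-h1}) inherited from that of the larger diagram. The main technical point throughout is the convention that a $k$-regular map automatically produces linear independence on subtuples of size less than $k$; once this hereditary property of linear independence is acknowledged, the remainder of the argument is essentially a definitional verification on both the vertex level and on the canonical projections $\pi$.
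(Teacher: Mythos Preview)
Your proposal is correct and follows essentially the same route as the paper's proof: verify that $k$-regularity forces $(f(v_1),\ldots,f(v_l))\in\vec{\mathcal{I}}$ for all $l\leq k$, check commutativity of (\ref{diag-251107-a1}) by the same direct computation $\pi\circ\overrightarrow{\rm Ind}(f)={\rm Ind}(f)\circ\pi$, recover $f$ from the vertex-level map for the converse, and obtain (\ref{diag-251121-h1}) by restriction. The only item you leave implicit is the reverse implication in the ``consequently'' part, which the paper dispatches by noting that the case $\vec{\mathcal{H}}={\rm sk}^{k-1}(\overrightarrow{\rm Ind}(G))$ already gives back $k$-regularity.
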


\begin{proof}
($\Longrightarrow$):  
Suppose  $f:  G\longrightarrow  \mathbb{F}^N$  is  
a  $k$-regular  map  such  that  $f(V)\subseteq  S$. 
Let  $1\leq  l\leq  k$.  
Then  for  any  distinct  vertices  $v_1,\ldots,  v_l$  such  that  
$\{v_i,v_j\}\notin  E$  for  any  $1\leq  i<j\leq  l$,  
their  images $f(v_1),\ldots,f(v_l) $  are  linearly  independent  in  $\mathbb{F}^N$.   
Consequently,  $(f(v_1),\ldots,f(v_l))\in \vec{\mathcal{I}}$  
and  $\{f(v_1),\ldots,f(v_l)\}\in \mathcal{I}$.  
 Hence     $\overrightarrow {\rm  Ind}( f)$  is  a  directed  simplicial  map 
   and    ${\rm  Ind}( f)$   is   a   simplicial  map.  
      Let   $(v_1,\ldots,v_l)\in  {\rm  sk}^{k-1}(\overrightarrow    {\rm  Ind}(G))$.   
   Then  
   \begin{eqnarray*}
   \pi\circ  \overrightarrow {\rm  Ind}( f)(v_1,\ldots,v_l)
   &=& \pi (f(v_1),\ldots,f(v_l))\\
   &=&\{f(v_1),\ldots,f(v_l)\}\\
   &=&{\rm  Ind}( f)(\{v_1,\ldots,v_l\})\\
   &=& {\rm  Ind}( f)\circ \pi \{v_1,\ldots,v_l\}.  
   \end{eqnarray*}
   Thus  $\pi\circ  \overrightarrow {\rm  Ind}( f)= {\rm  Ind}( f)\circ \pi$,    
    i.e.   the  diagram  (\ref{diag-251107-a1})   commutes.   
    Let  $\vec{\mathcal{H}}$  be  a  sub-hyperdigraph  of 
     ${\rm  sk}^{k-1}(\overrightarrow    {\rm  Ind}(G))$.  
     Then  its  underlying  hypergraph  $\mathcal{H}$  is  a  sub-hypergraph of  
      ${\rm  sk}^{k-1}(   {\rm  Ind}(G))$.  
      The  commutativity  of  (\ref{diag-251107-a1})  implies  
      the  commutativity  of  (\ref{diag-251121-h1}).  
      The  converse  is  straightforward  since  (\ref{diag-251121-h1})  implies  
      that  $f$  is  $k$-regular.  
      
      ($\Longleftarrow$):  Suppose  there  is  a  commutative  diagram  (\ref{diag-251107-a1}).  
      Then  the  simplicial  map  ${\rm  Ind}(f)$  is  given  by  a  map  $f:  V\longrightarrow  S$.  
      Similar  to  the  above   argument,  
      $f$  is  a  $k$-regular  embedding  of  $G$  into  $\mathbb{F}^N$
      such  that  $f(V)\subseteq  S$. 
\end{proof}

\begin{corollary}
\label{co-251107-1}
There  is  a   $G$-regular  map  
$f:  G\longrightarrow  \mathbb{F}^N$  such  that  $f(V)\subseteq  S$ 
if  and  only  if 
there  is   a  commutative  diagram  
\begin{eqnarray}\label{diag-251107-a2}
\xymatrix{
 \overrightarrow    {\rm  Ind}(G)  \ar[rr]^-{\overrightarrow    {\rm  Ind}( f)}  \ar[d]_-{\pi}
&&
 \vec{\mathcal{M}}\ar[d]^-{\pi}\\
 {\rm  Ind}(G)  \ar[rr]^-{{\rm  Ind}(f)}    &&\mathcal{M}
}
\end{eqnarray}
such  that  $\overrightarrow {\rm  Ind}( f)$  is   
a  directed  simplicial  map    
sending  a  directed  simplex 
 $(v_1,\ldots,v_l)\in   \overrightarrow    {\rm  Ind}(G)$
        to  an  independent   sequence  
 $(f(v_1),\ldots,f(v_l))\in \vec{\mathcal{I}}$
and    ${\rm  Ind}( f)$   is  a  simplicial  map     
 sending    a   simplex 
 $\{v_1,\ldots,v_l\}\in  {\rm  Ind}(G) $   to  an  independent   set     
 $\{f(v_1),\ldots,f(v_l)\}\in \mathcal{I}$  for  any  $l\geq  1$.  
Consequently, 
there  is  a   $G$-regular  map  
$f:  G\longrightarrow  \mathbb{F}^N$  such  that  $f(V)\subseteq  S$ 
if  and  only  if  there  is   a  commutative  diagram  
\begin{eqnarray}\label{diag-251121-h2}
\xymatrix{
\vec{\mathcal{H}} \ar[rr]^-{\overrightarrow    {\rm  Ind}( f)\mid_{\vec{\mathcal{H}}}}  \ar[d]_-{\pi}
&&
 \vec{\mathcal{M}}\ar[d]^-{\pi}\\
\mathcal{H} \ar[rr]^-{{\rm  Ind}(f)\mid_{\mathcal{H}}}    &&\mathcal{M}
}
\end{eqnarray}
for  any  hyperdigraph  $\vec{\mathcal{H}}\subseteq   \overrightarrow    {\rm  Ind}(G)$
with  its  underlying  hypergraph  $\mathcal{H}\subseteq  {\rm  sk}^{k-1}(   {\rm  Ind}(G))$.  
\end{corollary}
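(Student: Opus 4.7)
The plan is to derive Corollary~\ref{co-251107-1} directly from Proposition~\ref{pr-251107-1} by letting $k$ range over all positive integers. Recall that $f$ is $G$-regular by definition iff $f$ is $k$-regular for every $k \geq 2$. On the simplicial side, the (directed) independence complex is the union of its skeletons,
\begin{eqnarray*}
\overrightarrow{\rm Ind}(G) = \bigcup_{k \geq 1} {\rm sk}^{k-1}(\overrightarrow{\rm Ind}(G)), \qquad
{\rm Ind}(G) = \bigcup_{k \geq 1} {\rm sk}^{k-1}({\rm Ind}(G)),
\end{eqnarray*}
and the canonical projection $\pi$ restricted to the $(k-1)$-skeleton on the left lands in the $(k-1)$-skeleton on the right, compatibly with the inclusions as $k$ varies.

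For the forward implication, I would assume $f: G \to \mathbb{F}^N$ is $G$-regular with $f(V) \subseteq S$ and invoke Proposition~\ref{pr-251107-1} for each $k \geq 2$. This yields a commutative diagram (\ref{diag-251107-a1}) at every level, producing directed simplicial maps on ${\rm sk}^{k-1}(\overrightarrow{\rm Ind}(G))$ and simplicial maps on ${\rm sk}^{k-1}({\rm Ind}(G))$. Because each of these is explicitly determined by the same underlying vertex map $f: V \to S$ via the formulas $(v_1,\ldots,v_l) \mapsto (f(v_1),\ldots,f(v_l))$ and $\{v_1,\ldots,v_l\} \mapsto \{f(v_1),\ldots,f(v_l)\}$, the maps for different $k$ agree on overlaps and assemble uniquely into a directed simplicial map $\overrightarrow{\rm Ind}(f): \overrightarrow{\rm Ind}(G) \to \vec{\mathcal{M}}$ and a simplicial map ${\rm Ind}(f): {\rm Ind}(G) \to \mathcal{M}$. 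Taking the colimit over $k$ of the commutative squares (\ref{diag-251107-a1}) then produces the commutative diagram (\ref{diag-251107-a2}). Conversely, any commutative diagram (\ref{diag-251107-a2}) restricts, for each $k \geq 2$, to a commutative diagram of the form (\ref{diag-251107-a1}) on the $(k-1)$-skeletons, whence Proposition~\ref{pr-251107-1} gives that the vertex map is $k$-regular; since $k$ is arbitrary, $f$ is $G$-regular.

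For the second equivalence involving the sub-hyperdigraph $\vec{\mathcal{H}}$ and its underlying hypergraph $\mathcal{H}$, note that a commutative diagram (\ref{diag-251107-a2}) restricts along the inclusions $\vec{\mathcal{H}} \hookrightarrow \overrightarrow{\rm Ind}(G)$ and $\mathcal{H} \hookrightarrow {\rm Ind}(G)$ to give (\ref{diag-251121-h2}); conversely, taking $\vec{\mathcal{H}} = \overrightarrow{\rm Ind}(G)$ (which is admissible among the range of hyperdigraphs considered) recovers (\ref{diag-251107-a2}). I do not expect a serious obstacle: the argument is simply a colimit-over-skeletons version of Proposition~\ref{pr-251107-1}. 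The one subtlety worth flagging is the edge case where the independence number of $G$ exceeds $N$; then no $G$-regular map with image in the finite set $S \subseteq \mathbb{F}^N$ can exist, and symmetrically no simplicial map from ${\rm Ind}(G)$ into $\mathcal{M}$ can exist since the dimension of $\mathcal{M}$ is bounded by $N-1$, so the equivalence holds vacuously and the proof goes through without modification.
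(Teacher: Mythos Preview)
Your proposal is correct and follows essentially the same approach as the paper: both derive the corollary from Proposition~\ref{pr-251107-1} by letting $k$ run over all positive integers, using that the (directed) independence complex is the union of its skeletons and that the maps on each skeleton are determined by the same vertex map $f$. Your treatment is slightly more explicit about the colimit assembly and the edge case, but the underlying argument is identical to the paper's.
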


\begin{proof}
A        map  
$f:  G\longrightarrow  \mathbb{F}^N$  is  $G$-regular if  and  only  if  $f$  
is     $k$-regular  for  any  $k\geq  1$.  
On  the  other  hand,  
there  is  a  commutative  diagram 
   (\ref{diag-251107-a1})    for  any  $k\geq  1$ 
   if  and  only  if  
    there  is  a   commutative  diagram    (\ref{diag-251107-a2}); 
    and  
there  is a  commutative  diagram   (\ref{diag-251121-h1})    
for  any  hyperdigraph  $\vec{\mathcal{H}}\subseteq   
 {\rm  sk}^{k-1}(\overrightarrow    {\rm  Ind}(G))$  and  any  $k\geq  1$
if  and  only  if  
 there  is  a  commutative  diagram  (\ref{diag-251121-h2})
 for  any  hyperdigraph  $\vec{\mathcal{H}}\subseteq   \overrightarrow    {\rm  Ind}(G)$. 
 Therefore,  the  proof  follows  from 
   Proposition~\ref{pr-251107-1}.   
\end{proof}

\begin{theorem}\label{th-251107-1}
If   there  is  a  $k$-regular  map  
$f:  G\longrightarrow  \mathbb{F}^N$  such  that  $f(V)\subseteq  S$,   then 
we  have  a  commutative  diagram  of  homology groups 
\begin{eqnarray}\label{diag-251107-b1}
\xymatrix{
H_\bullet({\rm  sk}^{k-1}(\overrightarrow    {\rm  Ind}(G));R) 
\ar[rr]^-{\overrightarrow    {\rm  Ind}( f)_*}  \ar[d]_-{\pi_*}
&&
 H_\bullet(\vec{\mathcal{M}};R)\ar[d]^-{\pi_*}\\
 H_\bullet({\rm  sk}^{k-1}({\rm  Ind}(G));R) 
 \ar[rr]^-{{\rm  Ind}(f)_*}   
  &&H_\bullet(\mathcal{M};R)   
}
\end{eqnarray}
and  consequently  a  commutative  diagram  of  homology  groups 
\begin{eqnarray}\label{diag-251121-h3}
\xymatrix{
H_\bullet(\vec{\mathcal{H}};R) \ar[rr]^-{(\overrightarrow    {\rm  Ind}( f)\mid_{\vec{\mathcal{H}}})_*}  \ar[d]_-{\pi_*}
&&
 H_\bullet(\vec{\mathcal{M}};R)\ar[d]^-{\pi_*}\\
H_\bullet(\mathcal{H};R) \ar[rr]^-{({\rm  Ind}(f)\mid_{\mathcal{H}})_*}    &&
H_\bullet(\mathcal{M};R)
}
\end{eqnarray}
for  any  hyperdigraph  $\vec{\mathcal{H}}\subseteq  {\rm  sk}^{k-1}(\overrightarrow    {\rm  Ind}(G))$.  
Moreover,  
the  diagrams  (\ref{diag-251107-b1})   and  (\ref{diag-251121-h3})   are   
 functorial  with  respect  to 
 directed  simplicial  maps  between  directed  independence  complexes   
 and  their  induced  simplicial  maps   between  the  underlying  independence  complexes, 
 both  of  which  are 
 induced  by  filtrations  of  the  vertices  of  $G$.  
\end{theorem}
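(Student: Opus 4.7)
The plan is to combine Proposition~\ref{pr-251107-1}, which produces the required commutative square at the simplicial level, with the functorial chain maps constructed in Theorem~\ref{th-25-10-15-1} (for the left vertical arrow) and Theorem~\ref{co-25-10-27} (for the right vertical arrow), and then apply the homology functor. For the sub-hyper(di)graph refinement (\ref{diag-251121-h3}), the plan is to restrict along the inclusions using Theorem~\ref{th-25-11-26-1} on the independence side and Theorem~\ref{co-251127-mat-1} on the matroid side.

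First, I would invoke Proposition~\ref{pr-251107-1} to obtain the commutative square (\ref{diag-251107-a1}) of (directed) simplicial maps between skeletons of the (directed) independence complexes and the (directed) vectorial matroid. The horizontal arrows $\overrightarrow{\rm Ind}(f)$ and ${\rm Ind}(f)$ induce chain maps on $C_\bullet(-;R)$. The left vertical $\pi$ is the canonical projection from a directed simplicial complex to its underlying simplicial complex, so Theorem~\ref{th-25-10-15-1} gives a surjective chain map $\pi_\#$ with an induced homology homomorphism $\pi_*$; the right vertical $\pi$ does the same on the matroid side by Theorem~\ref{co-25-10-27}.

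Second, I would verify that the resulting diagram of chain complexes commutes. This is a direct computation: starting from a directed simplex $(v_0,\ldots,v_{l-1})$ with $v_0\prec\cdots\prec v_{l-1}$ in $V$, both paths send it to $\mathrm{sgn}(\text{reordering of }f(v_0),\ldots,f(v_{l-1}))\,\{f(v_0),\ldots,f(v_{l-1})\}$, because the sign introduced by $\pi_\#$ via (\ref{eq-25-10-5}) depends only on the permutation of the sequence, not on whether $f$ is applied before or after. Applying the homology functor to this commutative diagram of chain complexes yields (\ref{diag-251107-b1}).

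Third, for (\ref{diag-251121-h3}), I would restrict the construction to a sub-hyperdigraph $\vec{\mathcal{H}}\subseteq{\rm sk}^{k-1}(\overrightarrow{\rm Ind}(G))$. By Theorem~\ref{th-25-11-26-1}, we have the commutative square of chain complexes (\ref{diag-1116-2}) for $\vec{\mathcal{H}}$ and its underlying $\mathcal{H}$. The simplicial map $\overrightarrow{\rm Ind}(f)$ restricted to $\vec{\mathcal{H}}$ carries $\vec{\mathcal{H}}$ into $\vec{\mathcal{M}}$ and carries $\Delta\vec{\mathcal{H}}$ into $\Delta\vec{\mathcal{M}}=\vec{\mathcal{M}}$, so it descends to the infimum and supremum chain complexes in the sense of (\ref{eq-251117-a1})--(\ref{eq-251117-a4}) by general functoriality of these constructions (cf.\ \cite[Proposition~3.7]{h1} and its directed analog). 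The right column is then handled by Theorem~\ref{co-251127-mat-1}, and the commutativity of (\ref{diag-251121-h3}) follows from (\ref{diag-251107-b1}) by naturality of embedded homology with respect to morphisms of hyper(di)graphs.

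Finally, the functoriality with respect to a filtration $\{V_t\}$ of $V$ is inherited directly: by Lemma~\ref{le-251103-9} such a filtration induces compatible filtrations of $\overrightarrow{\rm Ind}(G)$ and ${\rm Ind}(G)$, the $k$-regular map $f$ restricts to $k$-regular maps on each $G_t$, and the diagrams (\ref{diag-251107-b1}) and (\ref{diag-251121-h3}) at each filtration level fit together by the functoriality clauses in Theorems~\ref{th-25-10-15-1}, \ref{co-25-10-27}, \ref{th-25-11-26-1}, and \ref{co-251127-mat-1}. The main bookkeeping obstacle I anticipate is at the sub-hyperdigraph step: one must carefully check that the image of $\vec{\mathcal{H}}$ under $\overrightarrow{\rm Ind}(f)$ is a well-defined sub-hyperdigraph of $\vec{\mathcal{M}}$ compatible with the infimum/supremum constructions, and that the sign conventions of $\pi_\#$ on the restricted chain complexes agree with those inherited from the ambient complexes.
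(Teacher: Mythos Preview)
Your proposal is correct and follows essentially the same route as the paper: invoke Proposition~\ref{pr-251107-1} to get the simplicial-level square, use the chain maps from Theorem~\ref{th-25-10-15-1} and Theorem~\ref{co-25-10-27} for the vertical projections, apply homology, and then restrict to sub-hyper(di)graphs via Theorem~\ref{th-25-11-26-1}. The paper's proof is terser---it simply cites (\ref{eq-25-10-8}), (\ref{eq-25-10-27-3}), and (\ref{diag-1116-3}) without your explicit sign-check or your invocation of Theorem~\ref{co-251127-mat-1} on the matroid side (it uses (\ref{eq-25-10-27-3}) directly, since the image lands in $\vec{\mathcal{M}}$ itself)---but the logical skeleton is the same.
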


\begin{proof}
Apply  the  homology  functor to  the  diagram (\ref{diag-251107-a1})
and  apply  the  embedded  homology  functor to  the  diagram (\ref{diag-251121-h1}).  
With  the  help  of   the  homomorphisms  of  homology groups 
  (\ref{eq-25-10-8})  and   (\ref{eq-25-10-27-3}),  
 we  obtain  the  commutative  diagram  (\ref{diag-251107-b1}).  
 With  the  help  of   the  homomorphisms  of  homology groups 
  (\ref{diag-1116-3})  and   (\ref{eq-25-10-27-3}),  
 we  obtain  the  commutative  diagram   (\ref{diag-251121-h3}). 
 Moreover,   the  functoriality  of  the  diagram  (\ref{diag-251107-b1})   
 follows  from  
 the  functoriality  of   (\ref{eq-25-10-8})     in  Theorem~\ref{th-25-10-15-1}
 and  the  functoriality  of   (\ref{eq-25-10-27-3})  in  Theorem~\ref{co-25-10-27};
 and   the  functoriality  of  the  diagram  (\ref{diag-251121-h1})   
 follows  from  
 the  functoriality  of     (\ref{diag-1116-3})    in  Theorem~\ref{th-25-11-26-1}   
 and  the  functoriality  of   (\ref{eq-25-10-27-3}).  
\end{proof}

\begin{corollary}\label{co-251107-2}
If   there  is  a  $G$-regular  map  
$f:  G\longrightarrow  \mathbb{F}^N$  such  that  $f(V)\subseteq  S$,   then 
we  have  a  commutative  diagram  of  homology groups 
\begin{eqnarray}\label{diag-251107-b2}
\xymatrix{
H_\bullet(\overrightarrow    {\rm  Ind}(G);R) 
\ar[rr]^-{\overrightarrow    {\rm  Ind}( f)_*}  \ar[d]_-{\pi_*}
&&
 H_\bullet(\vec{\mathcal{M}};R)\ar[d]^-{\pi_*}\\
 H_\bullet({\rm  Ind}(G);R) 
 \ar[rr]^-{{\rm  Ind}(f)_*}   
  &&H_\bullet(\mathcal{M};R)  
}
\end{eqnarray}
and  consequently  a  commutative  diagram  of  homology  groups  
\begin{eqnarray}\label{diag-251121-h5}
\xymatrix{
H_\bullet(\vec{\mathcal{H}};R) \ar[rr]^-
{(\overrightarrow    {\rm  Ind}( f)\mid_{\vec{\mathcal{H}}})_*}  \ar[d]_-{\pi_*}
&&
H_\bullet( \vec{\mathcal{M}};R)\ar[d]^-{\pi_*}\\
H_\bullet(\mathcal{H};R) \ar[rr]^-{({\rm  Ind}(f)\mid_{\mathcal{H}})_*}   
 &&
H_\bullet(\mathcal{M};R)
}
\end{eqnarray}
for  any  hyperdigraph  $\vec{\mathcal{H}}\subseteq   \overrightarrow    {\rm  Ind}(G)$.  
Moreover,  
the  diagram  (\ref{diag-251107-b2})   is  
 functorial  with  respect  to 
 directed  simplicial  maps  between  directed  independence  complexes   
 and  their  induced  simplicial  maps   between  the  underlying  independence  complexes,
 both  of  which  are 
 induced  by  filtrations  of  the  vertices  of  $G$.  
\end{corollary}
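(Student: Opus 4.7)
The plan is to deduce this corollary directly from Corollary~\ref{co-251107-1}, which provides the simplicial/directed-simplicial diagrams at the pre-homology level, by applying the homology functor combined with the naturality results established in Sections~\ref{s4}--\ref{s7}. Since a $G$-regular map is precisely one that is $k$-regular for every $k\geq 1$, one could also argue by passing to the colimit of the skeleton filtration in Theorem~\ref{th-251107-1}; however, the direct approach via Corollary~\ref{co-251107-1} is cleaner and avoids any commuting-with-limits argument.

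For the first diagram (\ref{diag-251107-b2}), I would begin by invoking Corollary~\ref{co-251107-1} to obtain the commutative square (\ref{diag-251107-a2}) of directed simplicial complexes, simplicial complexes, and the canonical projections $\pi$. Applying the simplicial chain functor $C_\bullet(-;R)$ and using Proposition~\ref{le-25-10-5} (which produces the surjective chain map $\pi_\#$ between chain complexes of a directed simplicial complex and its underlying simplicial complex), I obtain a commutative square of chain complexes whose vertical arrows are the ones constructed for $\overrightarrow{\rm Ind}(G)$ in Theorem~\ref{th-25-10-15-1} on the left and the ones constructed for $\vec{\mathcal{M}}$ in Theorem~\ref{co-25-10-27} on the right. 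Passing to homology then yields (\ref{diag-251107-b2}); commutativity is inherited from the chain-level square.

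For the second diagram (\ref{diag-251121-h5}), I would use the refined statement in Corollary~\ref{co-251107-1} that provides the commutative square (\ref{diag-251121-h2}) for any sub-hyperdigraph $\vec{\mathcal{H}}\subseteq\overrightarrow{\rm Ind}(G)$ with underlying sub-hypergraph $\mathcal{H}$. Then I would apply Theorem~\ref{th-25-11-26-1} on the left (which packages the compatible infimum/supremum chain complex structures on sub-hyperdigraphs of $\overrightarrow{\rm Ind}(G)$ and the resulting square of embedded homology groups) and Theorem~\ref{co-251127-mat-1} on the right (which does the same for sub-hyperdigraphs of $\vec{\mathcal{M}}$). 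The restricted directed simplicial map $\overrightarrow{\rm Ind}(f)\mid_{\vec{\mathcal{H}}}$ sends $\vec{\mathcal{H}}$ into $\vec{\mathcal{M}}$ and ${\rm Ind}(f)\mid_{\mathcal{H}}$ sends $\mathcal{H}$ into $\mathcal{M}$, so these two theorems can be glued along the horizontal arrows to produce (\ref{diag-251121-h5}).

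Finally, the functoriality claim for (\ref{diag-251107-b2}) follows by combining the functoriality statements already established: Theorem~\ref{th-25-10-15-1} gives functoriality of the left vertical arrow with respect to the filtration $\{G_t\}_{t\in\mathbb{Z}}$ of $G$, while Theorem~\ref{co-25-10-27} gives functoriality of the right vertical arrow with respect to directed simplicial maps of directed matroids. Because the restriction of $f$ to a sublevel $G_t$ remains $G_t$-regular with image in $S$, each inclusion $G_s\hookrightarrow G_t$ produces a commutative cube of the form (\ref{diag-251107-b2}), and the naturality of $\pi_*$ in both theorems ensures the cube commutes. No real obstacle is expected: the heavy lifting has already been done in the preceding sections, and this corollary is essentially the union over $k\geq 1$ of Theorem~\ref{th-251107-1}, repackaged through the $G$-regular characterization of Corollary~\ref{co-251107-1}.
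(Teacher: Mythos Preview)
Your proposal is correct and essentially equivalent to the paper's proof. The paper simply writes ``The proof follows from Theorem~\ref{th-251107-1} by letting $k$ run over all positive integers,'' which is exactly the alternative you mention; your choice to instead start from Corollary~\ref{co-251107-1} and apply the homology functor amounts to swapping the order of ``pass to homology'' and ``let $k\to\infty$,'' and both orders yield the same commutative squares for the same reasons.
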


\begin{proof}
The  proof  follows  from  Theorem~\ref{th-251107-1}  by  letting  $k$  run  over  all  positive  integers.  
\end{proof}

Let  $V_k(\mathbb{F}^N)$  be  the  Stiefel  manifold   consisting  of  
all  the  $k$-frames  
(a  $k$-frame  is an  ordered  $k$-tuple,  i.e.  a  sequence  of  $k$-elements,
of  linearly  independent  vectors)
 in   $\mathbb{F}^N$.  
 Let  $Gr_k(\mathbb{F}^N) $  be  the  Grassmannian  consisting  of  
all  the  $k$-dimensional  subspaces  
in  $\mathbb{F}^N$.  
We  have  a  canonical  projection  
\begin{eqnarray*}
p_{k,N}:  V_k(\mathbb{F}^N)\longrightarrow   V_k(\mathbb{F}^N)/\Sigma_k
\longrightarrow  Gr_k(\mathbb{F}^N)  
\end{eqnarray*} 
 sending  a  $k$-frame  in  $\mathbb{F}^N$  to  the  $k$-dimensional  
  subspace  of  $\mathbb{F}^N$  spanned  by  the  $k$-frame.  
  Let  $V_k(\mathbb{F}^\infty)$   be  the  colimit  of  $V_k(\mathbb{F}^N)$  
  and  let  $Gr_k(\mathbb{F}^\infty)$   be  the  colimit  of  $Gr_k(\mathbb{F}^N)$ 
  as $N$  goes  to  infinity, 
  where  $\mathbb{F}^N$  is  regarded as  the  subspace  of  $\mathbb{F}^\infty$  
  with     the  $k$-th  coordinate   zero  for  each  $k\geq  N+1$.  
  We  have   a  canonical  projection  
\begin{eqnarray*}
p_{k,\infty}:  V_k(\mathbb{F}^\infty)\longrightarrow   V_k(\mathbb{F}^\infty)/\Sigma_k
\longrightarrow  Gr_k(\mathbb{F}^\infty).       
\end{eqnarray*} 
By  (\ref{eq-25-10-25-2}) and   (\ref{eq-25-10-25-1}), 
we  have a   commutative  diagram 
\begin{eqnarray}\label{diag-251112-a78}
\xymatrix{
\vec{\mathcal{M}}_k  \ar[r]  \ar[d] &  V_k(\mathbb{F}^N) \ar[d]  
\ar[r]  &    V_k(\mathbb{F}^\infty)  \ar[d]  \\
 \mathcal{M}_k  \ar[r]   &  V_k(\mathbb{F}^N) /\Sigma_k \ar[d] \ar[r]  & V_k(\mathbb{F}^\infty)/\Sigma_k\ar[d] \\
 & Gr_k(\mathbb{F}^N)  \ar[r] &    Gr_k(\mathbb{F}^\infty) 
}
\end{eqnarray}
where  all  the  horizontal  maps  are  canonical  inclusions   
and  all  the   vertical  maps  are  canonical  projections.  
The  next 
corollary   is  a   discrete   analog  of  \cite[Proposition~2.1]{cohen1},  
\cite[Lemma~2.10]{high1}, 
\cite[Lemma~5.7]{high2}   and  \cite[Proposition~4.1]{reg-2018}.

\begin{corollary}\label{pr-251112-29}
If   there  is  a  $k$-regular  map  
$f:  G\longrightarrow  \mathbb{F}^N$  such  that  $f(V)\subseteq  S$,   then 
we  have  a  commutative  diagram  
\begin{eqnarray}\label{diag-251112-a5}
\xymatrix{
{\rm  Conf}_k(G)   \ar[rr]^-{   {\rm  Conf}( f)}  \ar[d]_-{\pi}
 &&
 \vec{\mathcal{M}}_k\ar[d]^-{\pi}\\
 {\rm  Conf}_k(G)/\Sigma_k \ar[rr]^-{{\rm  Conf}(f)/\Sigma_k}     
 &&
 \mathcal{M}_k  
}
\end{eqnarray}
and  consequently  a  commutative  diagram  
\begin{eqnarray}\label{diag-251121-h7}
\xymatrix{
\vec{\mathcal{H}}_k   \ar[rr]^-{   {\rm  Conf}( f)\mid_{\vec{\mathcal{H}}_k}}  \ar[d]_-{\pi}
 &&
 \vec{\mathcal{M}}_k\ar[d]^-{\pi}\\
\mathcal{H}_k \ar[rr]^-{({\rm  Conf}(f)/\Sigma_k)\mid_{\mathcal{H}_k}}     
 &&
 \mathcal{M}_k  
}
\end{eqnarray}
for  any  $k$-uniform  hyperdigraph  $\vec{\mathcal{H}}_k\subseteq {\rm  Conf}_k(G) $.  
\end{corollary}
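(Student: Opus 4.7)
The plan is to derive both diagrams as dimension-graded restrictions of the commutative squares already established in Proposition~\ref{pr-251107-1}. Recall from Section~\ref{s6} that $\overrightarrow{\rm Ind}(G) = \bigcup_{k \geq 1} {\rm Conf}_k(G)$, with ${\rm Conf}_k(G)$ the $k$-uniform stratum of directed $(k-1)$-simplices, and ${\rm Ind}(G) = \bigcup_{k \geq 1} {\rm Conf}_k(G)/\Sigma_k$; similarly from Section~\ref{s7}, we have $\vec{\mathcal{M}} \setminus \{\emptyset\} = \bigcup_{k \geq 1} \vec{\mathcal{M}}_k$ and $\mathcal{M} \setminus \{\emptyset\} = \bigcup_{k \geq 1} \mathcal{M}_k$. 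Thus each of the four corners of the square (\ref{diag-251107-a1}) admits a well-defined $k$-uniform stratum, and the strategy is simply to pass to this stratum.

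First I would observe that the directed simplicial map $\overrightarrow{\rm Ind}(f)$ sends $(v_1,\ldots,v_l)$ to $(f(v_1),\ldots,f(v_l))$ and therefore preserves the number of entries; hence its restriction to ${\rm Conf}_k(G) \subseteq {\rm sk}^{k-1}(\overrightarrow{\rm Ind}(G))$ takes values in $\vec{\mathcal{M}}_k$, yielding a map ${\rm Conf}(f) := \overrightarrow{\rm Ind}(f)|_{{\rm Conf}_k(G)} \colon {\rm Conf}_k(G) \to \vec{\mathcal{M}}_k$. Likewise the induced simplicial map ${\rm Ind}(f)$ restricts to a map ${\rm Conf}(f)/\Sigma_k \colon {\rm Conf}_k(G)/\Sigma_k \to \mathcal{M}_k$. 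The canonical projections $\pi$ on both sides of (\ref{diag-251107-a1}) are cardinality-preserving: they quotient out only the $\Sigma_k$-action by reordering entries, so they restrict to the principal $\Sigma_k$-bundle $\pi_k$ of (\ref{eq-canonical-covering}) on the source and to the projection $\vec{\mathcal{M}}_k \to \mathcal{M}_k$ of (\ref{eq-25-10-27-1}) on the target. Restricting the square (\ref{diag-251107-a1}) to the $k$-uniform stratum therefore produces precisely (\ref{diag-251112-a5}), with commutativity inherited from that of (\ref{diag-251107-a1}).

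For the diagram (\ref{diag-251121-h7}), I would further restrict (\ref{diag-251112-a5}) along the inclusion $\vec{\mathcal{H}}_k \hookrightarrow {\rm Conf}_k(G)$. By the definition of the underlying hypergraph in (\ref{eq-25-10-13-5}), $\mathcal{H}_k = \pi_k(\vec{\mathcal{H}}_k)$ is a sub-hypergraph of ${\rm Conf}_k(G)/\Sigma_k$, so both $\pi$ in the left column and the horizontal arrows of (\ref{diag-251112-a5}) restrict meaningfully to give ${\rm Conf}(f)|_{\vec{\mathcal{H}}_k}$ and $({\rm Conf}(f)/\Sigma_k)|_{\mathcal{H}_k}$; commutativity again passes to the restriction. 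Since the argument amounts to stratum-wise bookkeeping of the squares already produced in Proposition~\ref{pr-251107-1}, there is no genuine obstacle; the only point requiring a brief check is that $\pi$ respects the cardinality stratification, which is immediate from its description as the quotient by $\Sigma_k$ acting on ordered $k$-tuples of fixed length.
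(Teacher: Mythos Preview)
Your proposal is correct and follows essentially the same approach as the paper: restrict the already-established commutative square to the $k$-uniform stratum, then further restrict along the inclusion of $\vec{\mathcal{H}}_k$. The paper's own proof cites diagram~(\ref{diag-251107-a2}) rather than~(\ref{diag-251107-a1}), but since the hypothesis is $k$-regularity your choice of~(\ref{diag-251107-a1}) from Proposition~\ref{pr-251107-1} is the more precise reference, and your explicit remark that all four maps are cardinality-preserving makes the stratum-wise restriction transparent.
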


\begin{proof}
Taking  the  collection   of  the  directed  $(k-1)$-simplices  in  $\overrightarrow{\rm  Ind}(G)$
   and  the  collection  of   the     $(k-1)$-simplices  in   ${\rm  Ind}(G)$,  
from   the  commutative  diagram   (\ref{diag-251107-a2}),  
 we  obtain  the  commutative  diagram  (\ref{diag-251112-a5})     from  
(\ref{diag-251107-a2}).  
Taking  a  $k$-uniform  sub-hyperdigraph $\vec{\mathcal{H}}_k$  of  $ {\rm  Conf}_k(G) $ 
 whose  underlying  $k$-uniform  hypergraph  is  ${\mathcal{H}}_k$,  
 we  obtain   the  commutative  diagram  (\ref{diag-251121-h7})  from  (\ref{diag-251112-a5}).  
\end{proof}

 \begin{corollary}\label{co-251121-b1}
 If  $\mathbb{F}=\mathbb{R}$  or  $\mathbb{C}$,  
then     (\ref{diag-251112-a78})  and  (\ref{diag-251112-a5})  
give  the  classifying  map  of  the  associated  (real  or  complex)  vector  bundle  of  the  covering  map 
(\ref{eq-canonical-covering})  as  
the  composition
\begin{eqnarray}\label{eq-251112-pb}
 {\rm  Conf}_k(G)/\Sigma_k\longrightarrow   \mathcal{M}_k\longrightarrow
 V_k(\mathbb{F}^N) /\Sigma_k    \longrightarrow  Gr_k(\mathbb{F}^N) 
 \longrightarrow
 Gr_k(\mathbb{F}^\infty); 
 \end{eqnarray}  
 and    (\ref{diag-251112-a78})  and    (\ref{diag-251121-h7})
give  the  classifying  map  of  the  associated  (real  or  complex)  vector  bundle  of  the  covering  map 
(\ref{eq-25-9.21.1})  as  
the  composition
\begin{eqnarray}\label{eq-251112-pb-h}
\mathcal{H}_k\longrightarrow  {\rm  Conf}_k(G)/\Sigma_k\longrightarrow   \mathcal{M}_k\longrightarrow
 V_k(\mathbb{F}^N) /\Sigma_k    \longrightarrow  Gr_k(\mathbb{F}^N) 
 \longrightarrow
 Gr_k(\mathbb{F}^\infty). 
 \end{eqnarray} 
   \end{corollary}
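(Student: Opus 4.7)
The plan is to verify directly that the composition in~(\ref{eq-251112-pb}) pulls back the tautological rank-$k$ bundle over $Gr_k(\mathbb{F}^\infty)$ to the associated vector bundle of the covering map~(\ref{eq-canonical-covering}), and then to obtain the hyper(di)graph version~(\ref{eq-251112-pb-h}) by functorial restriction along the $\Sigma_k$-equivariant inclusion $\vec{\mathcal{H}}_k \hookrightarrow {\rm Conf}_k(G)$.

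First I would recall the standard classifying space input. Since $V_k(\mathbb{F}^\infty)$ is contractible (for both $\mathbb{F}=\mathbb{R}$ and $\mathbb{F}=\mathbb{C}$, by the usual limiting argument) and the $\Sigma_k$-action permuting the vectors of a $k$-frame is free (the vectors being linearly independent rules out any nontrivial stabilizer), the orbit space $V_k(\mathbb{F}^\infty)/\Sigma_k$ is a model for $B\Sigma_k$ and classifies principal $\Sigma_k$-bundles. Moreover, the canonical projection $V_k(\mathbb{F}^\infty)/\Sigma_k \longrightarrow Gr_k(\mathbb{F}^\infty)$ represents the associated-bundle construction for the standard permutation representation of $\Sigma_k$ on $\mathbb{F}^k$, under which the pullback of the tautological rank-$k$ bundle over $Gr_k(\mathbb{F}^\infty)$ is precisely the vector bundle associated to the universal principal $\Sigma_k$-bundle $V_k(\mathbb{F}^\infty) \to V_k(\mathbb{F}^\infty)/\Sigma_k$.

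Next, given the $k$-regular map $f: G\longrightarrow\mathbb{F}^N$ with $f(V)\subseteq S$, I would use it to construct a $\Sigma_k$-equivariant map
\begin{eqnarray*}
{\rm  Conf}(f): {\rm  Conf}_k(G) \longrightarrow V_k(\mathbb{F}^N) \hookrightarrow V_k(\mathbb{F}^\infty),
\qquad (v_1,\ldots,v_k) \longmapsto (f(v_1),\ldots,f(v_k)),
\end{eqnarray*}
where the image lies in $V_k(\mathbb{F}^N)$ precisely by the $k$-regularity of $f$, and equivariance is immediate from the definition. Passing to $\Sigma_k$-orbit spaces, the factorization through $\mathcal{M}_k$ furnished by Corollary~\ref{pr-251112-29} together with the inclusions in~(\ref{diag-251112-a78}) identifies the quotient map with the first three arrows of~(\ref{eq-251112-pb}), and post-composition with $V_k(\mathbb{F}^N) \hookrightarrow V_k(\mathbb{F}^\infty)$ followed by the projection to $Gr_k(\mathbb{F}^\infty)$ completes the composition. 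Because $V_k(\mathbb{F}^\infty)$ is contractible, covering this composition by the equivariant map ${\rm Conf}(f)$ exhibits~(\ref{eq-251112-pb}) as a classifying map of the principal $\Sigma_k$-bundle~(\ref{eq-canonical-covering}); by the previous paragraph, this is simultaneously a classifying map of the associated rank-$k$ vector bundle.

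Finally, for the hyper(di)graph version, I would simply restrict to a $\Sigma_k$-invariant sub-hyperdigraph $\vec{\mathcal{H}}_k \subseteq {\rm Conf}_k(G)$ with underlying hypergraph $\mathcal{H}_k$. The covering~(\ref{eq-25-9.21.1}) is then the pullback of~(\ref{eq-canonical-covering}) along $\mathcal{H}_k \hookrightarrow {\rm Conf}_k(G)/\Sigma_k$, and the commutative diagram~(\ref{diag-251121-h7}) from Corollary~\ref{pr-251112-29} shows that the same restriction applied to~(\ref{eq-251112-pb}) produces~(\ref{eq-251112-pb-h}); hence the latter classifies the associated vector bundle of~(\ref{eq-25-9.21.1}). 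I expect the only real obstacle to be careful bookkeeping: tracking $\Sigma_k$-equivariance through every square, and verifying that the inclusion $\mathcal{M}_k \hookrightarrow V_k(\mathbb{F}^N)/\Sigma_k$ (rather than a more elaborate map) correctly mediates the passage from the combinatorial matroid data to the topological classifying space. No deeper topological input beyond the contractibility of $V_k(\mathbb{F}^\infty)$ and the freeness of the $\Sigma_k$-action is required.
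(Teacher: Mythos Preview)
Your proposal is correct and follows essentially the same approach as the paper, which simply observes that each square in (\ref{diag-251112-a78}), (\ref{diag-251112-a5}) and (\ref{diag-251121-h7}) is a pull-back and concludes immediately. You have spelled out the classifying-space input (contractibility of $V_k(\mathbb{F}^\infty)$, freeness of the $\Sigma_k$-action, the permutation-representation interpretation of the map to the Grassmannian) that the paper leaves implicit, and your added hypothesis of $\Sigma_k$-invariance for $\vec{\mathcal{H}}_k$ is exactly what is needed for (\ref{eq-25-9.21.1}) to be a principal $\Sigma_k$-bundle, as the paper itself notes just after (\ref{eq-25-10-13-5}).
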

 
 \begin{proof}
Let $\mathbb{F}=\mathbb{R}$  or  $\mathbb{C}$.  
Note  that  each  square  in  
(\ref{diag-251112-a78}),    (\ref{diag-251112-a5})  and  (\ref{diag-251121-h7})  
is  a  pull-back.  
Hence  (\ref{eq-251112-pb})  is  the  classifying  map  of  the  associated  vector  bundle 
\begin{eqnarray*}
\mathbb{F}^k\longrightarrow  {\rm  Conf}_k(G)\times_{\Sigma_k} \mathbb{F}^k
\longrightarrow   {\rm  Conf}_k(G)/\Sigma_k  
\end{eqnarray*}
  of  the  covering  map 
(\ref{eq-canonical-covering});  
and   (\ref{eq-251112-pb-h})  is  the  classifying  map  of  the  associated  vector  bundle 
\begin{eqnarray*}
\mathbb{F}^k\longrightarrow  \vec{\mathcal{H}}_k\times_{\Sigma_k} \mathbb{F}^k
\longrightarrow   \mathcal{H}_k 
\end{eqnarray*}
  of  the  covering  map 
(\ref{eq-25-9.21.1}). 
 \end{proof}
 
\begin{remark}
Both  the  geometric realizations   $| {\rm  Conf}_k(G)/\Sigma_k|$     and   $|\mathcal{H}_k|$   
are  open  subsets  of  the  geometric  simplicial  complex  $|{\rm  Ind}(G)|$.  
Thus  the  vector  bundles  in  Corollary~\ref{co-251121-b1}  are  well-defined.    
\end{remark}

 \subsection{Regular  maps  and   the  Mayer-Vietoris  sequences }

Suppose   $f': G'\longrightarrow  \mathbb{F}^N$,   
 $f'': G''\longrightarrow  \mathbb{F}^N$  and  
 $f''': G'''\longrightarrow  \mathbb{F}^N$  are   $k$-regular  maps 
  on  the  graphs   $G'$,  $G''$  and  $G'''$  respectively
  such  that  
  \begin{eqnarray}\label{eq-251111-s1}
  f'(V'),  f''(V''),  f'''(V''')\subseteq  S,
  \end{eqnarray}  
  where  $V'$,  $V''$  and  $V'''$  
  are  the  sets  of  vertices  of  
  $G'$,   $G''$  and  $G'''$  respectively  and  are  mutually  disjoint.   
  Let  $G$   be  the   reduced  join  of  $G'$,  $G''$  and  $G'''$  given  by  
  (\ref{eq-10-19-31}).  
  Then  we  have  an   induced  $k$-regular  map   
  \begin{eqnarray*}
  f'\tilde * f''\tilde  *f''':  G\longrightarrow  \mathbb{F}^N.     
  \end{eqnarray*}
  The  set   of  vertices  of  $G$ is   
  $V'\sqcup  V''\sqcup  V'''$   such  that    
    \begin{eqnarray*}
    (f'\tilde * f''\tilde  *f''') ( V'\sqcup  V''\sqcup  V''') = f'(V')\cup  f''(V'')\cup  f'''(V''')\subseteq  S.  
   \end{eqnarray*}
 On  the  other  hand,    let  $\vec{ \mathcal{M}}'$    
  be  the directed  matroid   of  all  the  independent  sequences   of  the  vectors 
   in  $ f'(V') \cup  f'''(V''')$  and   let  $\vec{ \mathcal{M}}''$   be  the directed  matroid   of  
  all  the  independent  sequences   of  the  vectors in  $ f'(V'') \cup  f'''(V''')$.  
  Let    $ \mathcal{M}'$  and   $ \mathcal{M}''$  be  the  underlying  matroids  of 
 $\vec{ \mathcal{M}}'$  and  $\vec{ \mathcal{M}}''$   respectively.    
   
 \begin{theorem}\label{th-mv-reg}
 For  any  $k$-regular  maps  
 $f': G'\longrightarrow  \mathbb{F}^N$,   
 $f'': G''\longrightarrow  \mathbb{F}^N$  and  
 $f''': G'''\longrightarrow  \mathbb{F}^N$ satisfying  (\ref{eq-251111-s1}),    
 we  have  a  commutative  diagram  (\ref{diag-251230-5})  of  long  exact  sequences 
where  the  horizontal  maps  are  induced  by  $f', f'',  f'''$.  
 Moreover,  all  these   homomorphisms  of  homology  groups  are  
 functorial  in the  sense  of  
   Theorem~\ref{th-251107-1}.  
 \end{theorem}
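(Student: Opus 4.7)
The plan is to reduce Theorem~\ref{th-mv-reg} to the naturality of the Mayer--Vietoris long exact sequences already established in Theorem~\ref{th-10-19-1} and Theorem~\ref{co-10-27-mv-1}, by producing a compatible ``simplicial model'' of the triple $(f',f'',f''')$ valued in the matroids. First I would set $L'=G'\tilde * G'''$, $L''=G''\tilde * G'''$, and $G=G'\tilde * G''\tilde * G'''$, and extract from Proposition~\ref{pr-251107-1} the commutative squares associated to each of $f'$, $f''$, $f'''$ after restriction to the $(k-1)$-skeletons of the directed independence complexes. The key observation is that the $k$-regular map $f'\tilde * f'''\colon L'\to \mathbb{F}^N$ assembles these into a directed simplicial map ${\rm sk}^{k-1}(\overrightarrow{\rm Ind}(L'))\to \vec{\mathcal{M}}'$ and a simplicial map ${\rm sk}^{k-1}({\rm Ind}(L'))\to \mathcal{M}'$, because $\vec{\mathcal{M}}'$ by definition is the directed matroid of all independent sequences of vectors in $f'(V')\cup f'''(V''')$; the analogous statement holds for $L''$.

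Next I would verify the compatibility on the overlap. Using Lemma~\ref{le-10-19-1}, one has
\begin{eqnarray*}
\overrightarrow{\rm Ind}(G''') & = & \overrightarrow{\rm Ind}(L')\cap \overrightarrow{\rm Ind}(L''),\\
\overrightarrow{\rm Ind}(G) & = & \overrightarrow{\rm Ind}(L')\cup \overrightarrow{\rm Ind}(L''),
\end{eqnarray*}
so that intersecting with ${\rm sk}^{k-1}$ preserves both formulas. On the matroid side, every independent sequence of vectors in $f'''(V''')$ belongs simultaneously to $\vec{\mathcal{M}}'$ and to $\vec{\mathcal{M}}''$, so $\overrightarrow{\rm Ind}(f''')$ lands in $\vec{\mathcal{M}}'\cap \vec{\mathcal{M}}''$; this is precisely the compatibility needed on the overlap to get a morphism of short exact sequences of chain complexes of the Mayer--Vietoris type on both the hyperdigraph side and the matroid side. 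The corresponding statement for the underlying hypergraphs and matroids follows by passing through the canonical projections $\pi$ and using the commutative diagrams provided by Proposition~\ref{pr-251107-1}.

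Having established these four compatible simplicial maps (directed and undirected, for $L'$ and for $L''$), I would apply the naturality of Mayer--Vietoris (Proposition~\ref{pr-10-11-mv-1}) simultaneously on the independence-complex side and on the matroid side. This yields a morphism of long exact sequences $\pi_*\colon {\bf {\rm MV}}({\rm sk}^{k-1}(\overrightarrow{\rm Ind}(L')),{\rm sk}^{k-1}(\overrightarrow{\rm Ind}(L'')))\to {\bf {\rm MV}}(\vec{\mathcal{M}}',\vec{\mathcal{M}}'')$ and, analogously, its underlying version. Stacking these with the vertical projections $(\pi_{n+1})_*$ from Theorem~\ref{th-25-10-15-1} and Theorem~\ref{co-25-10-27} produces the commutative diagram~(\ref{diag-251230-5}); the commutativity of each face is a direct consequence of the chain-level commutative squares~(\ref{eq-25-11-8}) combined with the fact that $\pi\circ\overrightarrow{\rm Ind}(f)={\rm Ind}(f)\circ\pi$. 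The functoriality with respect to filtrations on $V'$, $V''$, $V'''$ then follows by applying the commutativity of~(\ref{diag-251103-double}) to each of the three graphs $G'$, $G''$, $G'''$ in parallel and invoking the functoriality statement already contained in Theorem~\ref{th-10-19-1}.

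The main obstacle I expect is the bookkeeping required to verify that all six faces of the prism shaped diagram~(\ref{diag-251230-5}) commute simultaneously; in particular, that the connecting homomorphisms of the Mayer--Vietoris sequences are intertwined by the induced maps. This, however, is automatic once one checks that the relevant short exact sequences of chain complexes commute on the nose, which they do because $(\pi_{\bullet+1})_\#$ is defined on the level of generators and is compatible with the direct-sum/difference maps that define the Mayer--Vietoris short exact sequence. Once the commutativity at the chain level is recorded, applying the homology functor delivers~(\ref{diag-251230-5}), and the functoriality under vertex filtrations follows verbatim.
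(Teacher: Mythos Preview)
Your proposal is correct and follows essentially the same route as the paper's proof: both arguments combine the Mayer--Vietoris sequence for the $(k-1)$-skeletons of the (directed) independence complexes (the paper cites Corollary~\ref{pr-251125-mv1}, you go back to Proposition~\ref{pr-10-11-mv-1} and Lemma~\ref{le-10-19-1}), the Mayer--Vietoris sequence for the (directed) matroids (Theorem~\ref{co-10-27-mv-1}), and the commutative square linking the two via Theorem~\ref{th-251107-1}/Proposition~\ref{pr-251107-1}. Your version simply unpacks the bookkeeping that the paper's three-line proof leaves implicit; one cosmetic slip is that the horizontal morphism of long exact sequences you obtain from naturality should be labeled by the maps induced by $f',f'',f'''$ rather than by $\pi_*$, which is reserved for the vertical projections.
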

 
 \begin{proof}
 By 
Corollary~\ref{pr-251125-mv1},   
 Theorem~\ref{co-10-27-mv-1}  and   
  Theorem~\ref{th-251107-1},  
 the  homomorphisms  of  homology  groups   in  (\ref{diag-251107-b1})  
 induce   the   homomorphisms  of  
 the  long  exact  sequences  of   
 homology  groups  in   (\ref{diag-251230-5}).  
 The  commutativity  and  the  functoriality  of   the  diagram  (\ref{diag-251107-b1})   imply  
 the  commutativity  and  the  functoriality   of  the  diagram   (\ref{diag-251230-5})
 respectively.  
 \end{proof}
 
  \begin{theorem}\label{th-mv-reg-hg}
Let 
 $f': G'\longrightarrow  \mathbb{F}^N$,   
 $f'': G''\longrightarrow  \mathbb{F}^N$  and  
 $f''': G'''\longrightarrow  \mathbb{F}^N$ 
 be  $k$-regular  maps  satisfying  (\ref{eq-251111-s1}). 
 Then   for  any  sub-hyperdigraph    $\vec{\mathcal{H}}'$  of  
             ${\rm  sk}^{k-1}(\overrightarrow{\rm  Ind}(G'  \tilde{*}  G'''))$        
           and  any   sub-hyperdigraph    $\vec{\mathcal{H}}''$  of  
             $ {\rm  sk}^{k-1}(\overrightarrow{\rm  Ind}(G''  \tilde{*}  G'''))$
             with  their  underlying  hypergraphs  $\mathcal{H}'$  and  $\mathcal{H}''$  respectively 
             such  that  (I)   and  (II)  are  satisfied  for  the  pair  of  hyperdigraphs  
             $(\vec{\mathcal{H}}', \vec{\mathcal{H}}'')$
    as  well  as  the  pair  of  hypergraphs  $(\mathcal{H}', \mathcal{H}'')$,  
 we  have  a  commutative  diagram (\ref{diag-251230-3})  of  long  exact  sequences 
where  the  horizontal  maps  are  induced  by  $f', f'',  f'''$.   
 \end{theorem}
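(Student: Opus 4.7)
The plan is to assemble the commutative square (\ref{diag-251230-3}) of Mayer--Vietoris long exact sequences by combining three ingredients: the hyper(di)graph morphisms induced by the $k$-regular maps $f',f'',f'''$ (via Proposition~\ref{pr-251107-1}), the Mayer--Vietoris machinery for the embedded homology of hyper(di)graphs (Theorem~\ref{th-251118-mv1}), and the commutativity already supplied by Theorem~\ref{th-251107-1}.

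First I would apply Proposition~\ref{pr-251107-1} to the two $k$-regular maps $f'\tilde{*} f''':G'\tilde{*} G'''\to\mathbb{F}^N$ and $f''\tilde{*} f''':G''\tilde{*} G'''\to\mathbb{F}^N$ to obtain directed simplicial maps
$\overrightarrow{\mathrm{Ind}}(f'\tilde{*} f''')\colon \mathrm{sk}^{k-1}(\overrightarrow{\mathrm{Ind}}(G'\tilde{*} G'''))\to\vec{\mathcal{M}}'$ and analogously for $f''\tilde{*} f'''$. Restricting these to $\vec{\mathcal{H}}'$ and $\vec{\mathcal{H}}''$ gives morphisms $\vec{\varphi}'\colon\vec{\mathcal{H}}'\to\vec{\mathcal{M}}'$ and $\vec{\varphi}''\colon\vec{\mathcal{H}}''\to\vec{\mathcal{M}}''$ of hyperdigraphs, with underlying morphisms $\varphi',\varphi''$ on the hypergraph side. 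Because the two maps $f'\tilde{*} f'''$ and $f''\tilde{*} f'''$ restrict on the common subgraph $G'''$ to the same map $f'''$, the morphisms $\vec{\varphi}'$ and $\vec{\varphi}''$ agree on $\vec{\mathcal{H}}'\cap\vec{\mathcal{H}}''$, so they assemble into a morphism of pairs $(\vec{\mathcal{H}}',\vec{\mathcal{H}}'')\to(\vec{\mathcal{M}}',\vec{\mathcal{M}}'')$ and likewise of $(\mathcal{H}',\mathcal{H}'')\to(\mathcal{M}',\mathcal{M}'')$.

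Next I would verify that conditions (I) and (II) of Subsection~\ref{ss-5.1} also hold on the matroid side for the pair $(\vec{\mathcal{M}}',\vec{\mathcal{M}}'')$. Condition (I) is immediate from Example~\ref{ex-25-10-88}(2), since the vectorial directed matroids are $\Sigma_\bullet$-invariant. Condition (II) follows from matroid axiom (I2): for any $\sigma\in\mathcal{M}'$ and any $\tau\in\mathcal{M}''$, the set $\sigma\cap\tau$ is a subset of the independent set $\sigma$ so belongs to $\mathcal{M}'$, and similarly belongs to $\mathcal{M}''$, whence $\sigma\cap\tau\in\mathcal{M}'\cap\mathcal{M}''$. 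Now Theorem~\ref{th-251118-mv1}(2) applied to $(\vec{\mathcal{H}}',\vec{\mathcal{H}}'')$ supplies the left vertical morphism of long exact sequences $\pi_*\colon{\bf \mathrm{MV}}(\vec{\mathcal{H}}',\vec{\mathcal{H}}'')\to{\bf \mathrm{MV}}(\mathcal{H}',\mathcal{H}'')$, while the same theorem (equivalently Theorem~\ref{co-251127-mat-mv-a1}) applied to $(\vec{\mathcal{M}}',\vec{\mathcal{M}}'')$ supplies the right vertical morphism.

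Finally, the horizontal arrows of (\ref{diag-251230-3}) are obtained by applying the embedded-homology functor to the commutative diagram of infimum chain complexes induced by $\vec{\varphi}',\vec{\varphi}''$ (and $\varphi',\varphi''$) together with the inclusions into union and intersection: this yields a morphism of the short exact sequences of infimum chain complexes underlying the MV construction, and the zig-zag lemma then produces a morphism of the associated long exact sequences. The overall commutativity of the square follows from the chain-level commutativity $\pi_\#\circ\vec{\varphi}_\#=\varphi_\#\circ\pi_\#$, which is exactly the content of Theorem~\ref{th-251107-1} applied to each of $\vec{\mathcal{H}}',\vec{\mathcal{H}}'',\vec{\mathcal{H}}'\cup\vec{\mathcal{H}}'',\vec{\mathcal{H}}'\cap\vec{\mathcal{H}}''$. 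The main obstacle I anticipate is a careful bookkeeping verification that the pair-morphism $(\vec{\varphi}',\vec{\varphi}'')$ is compatible with the four inclusion maps simultaneously, so that the resulting morphism between short exact sequences of infimum chain complexes is strictly commutative on the nose rather than only up to chain homotopy; this hinges on the disjointness of the vertex sets $V',V'',V'''$ and on the fact that the common restriction $f'''$ is used in both joins.
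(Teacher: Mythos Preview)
Your proposal is correct and follows essentially the same route as the paper: the paper's proof is a one-line citation of Theorem~\ref{th-251118-mv1}, Theorem~\ref{th-251107-1}, and Theorem~\ref{th-mv-reg}, and your argument is precisely an unpacking of what those citations entail. Your explicit verification that conditions (I) and (II) hold automatically on the matroid side (via $\Sigma_\bullet$-invariance from Example~\ref{ex-25-10-88}(2) and the hereditary axiom (I2)) is a detail the paper leaves implicit, so your version is in fact more complete.
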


  \begin{proof}
  By  Theorem~\ref{th-251118-mv1},  Theorem~\ref{th-251107-1}  and  Theorem~\ref{th-mv-reg},    
        we  have  the   commutative  diagram (\ref{diag-251230-3}).             
  \end{proof}
 
  \subsection{Regular  maps  and  the    K\"unneth-type  formulae }

Let  $V$  and  $V'$  be  disjoint  sets  of  vertices.  
Let  $G$  be a graph  on  $V$ and  let  $G'$  be  a  graph  on  $V'$.  
As  a  discrete  analog  of   \cite[Definition~1.6]{reg-2018}, 
we say  a  map  
$ g:  G\sqcup  G'\longrightarrow \mathbb{F}^M$  
is   {\it  $(G, k; G', k')$-regular}  if   for  any  distinct  $k$-vertices  
$v_1,\ldots,v_k\in  V$    that     are  mutually  non-adjacent in  $G$ 
and   any  distinct  $k'$-vertices  
$v'_1,\ldots,v'_{k'}\in  V'$    that     are  mutually  non-adjacent in  $G'$,  
their  images 
$g(v_1)$, $\ldots$, $g(v_k)$,   
$g(v'_1)$, $\ldots$, $g(v'_{k'})$  
are  linearly  independent  in  $\mathbb{F}^M$.

\begin{lemma}\label{pr-251111-9}
There  is  a   $(G, k; G', k')$-regular  map  
$g:  G\sqcup  G'\longrightarrow \mathbb{F}^M$  such  that  $g(V\sqcup  V')\subseteq  S$  
if  and  only  if   
we  have  a  commutative  diagram  
\begin{eqnarray}\label{diag-251111-a1}
\xymatrix{
{\rm  sk}^{k-1}(\overrightarrow    {\rm  Ind}(G))* {\rm  sk}^{k'-1}(\overrightarrow    {\rm  Ind}(G'))
\ar[rr]^-{\overrightarrow    {\rm  Ind}(g)}  \ar[d]_-{\pi}
&&
 \vec{\mathcal{M}}\ar[d]^-{\pi}\\
 {\rm  sk}^{k-1}({\rm  Ind}(G))* {\rm  sk}^{k'-1}({\rm  Ind}(G')) 
  \ar[rr]^-{{\rm  Ind}(g)}    &&\mathcal{M}
}
\end{eqnarray}
such  that  $\overrightarrow {\rm  Ind}( g)$  is   
a  directed  simplicial  map    
sending  a  directed  simplex 
 \begin{eqnarray*}
 (v_1,\ldots,v_l,v'_1,\ldots,v'_{l'})\in
   {\rm  sk}^{k-1}(\overrightarrow    {\rm  Ind}(G))* {\rm  sk}^{k'-1}(\overrightarrow    {\rm  Ind}(G')),
   \end{eqnarray*}  
 where  $l\leq  k$  and  $l'\leq  k'$,     to  an  independent   sequence  
 $(g(v_1),\ldots,g(v_l),g(v'_1),\ldots,'(v'_{l'}))\in \vec{\mathcal{I}}*\vec{\mathcal{I}}'$
and    ${\rm  Ind}( g)$   is  a  simplicial  map     
 sending    a   simplex 
 \begin{eqnarray*}
 \{v_1,\ldots,v_l,v'_1,\ldots,v'_{l'}\}\in 
  {\rm  sk}^{k-1}({\rm  Ind}(G))* {\rm  sk}^{k'-1}(     {\rm  Ind}(G')) 
    \end{eqnarray*}  
   to  an  independent   set     
 $\{g(v_1),\ldots,g(v_l),g(v'_1),\ldots,'(v'_{l'})\}\in \mathcal{I}*\mathcal{I}'$.  
\end{lemma}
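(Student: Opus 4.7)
The plan is to mirror the proof strategy of Proposition~\ref{pr-251107-1} applied to the join structure, since $(G, k; G', k')$-regularity is by definition the natural generalization of $k$-regularity adapted to the bipartite situation of sampling $k$ independent vertices from $G$ and $k'$ independent vertices from $G'$ simultaneously.

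For the forward implication ($\Longrightarrow$), I would start with a $(G, k; G', k')$-regular map $g$ satisfying $g(V \sqcup V') \subseteq S$. A typical directed simplex in ${\rm sk}^{k-1}(\overrightarrow{\rm  Ind}(G))  *  {\rm sk}^{k'-1}(\overrightarrow{\rm  Ind}(G'))$ has the form $(v_1, \ldots, v_l, v'_1, \ldots, v'_{l'})$ with $l \leq k$, $l' \leq k'$, where $v_1, \ldots, v_l$ are distinct and mutually non-adjacent in $G$ and $v'_1, \ldots, v'_{l'}$ are distinct and mutually non-adjacent in $G'$. The $(G, k; G', k')$-regularity then forces $g(v_1), \ldots, g(v_l), g(v'_1), \ldots, g(v'_{l'})$ to be linearly independent in $\mathbb{F}^M$, so the tuple lies in $\vec{\mathcal{I}} * \vec{\mathcal{I}}'$. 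This determines $\overrightarrow{\rm  Ind}(g)$ as a directed simplicial map on the join, and taking the $\Sigma_\bullet$-quotient yields the simplicial map ${\rm Ind}(g)$ on the underlying simplicial complex ${\rm sk}^{k-1}({\rm  Ind}(G))  *  {\rm sk}^{k'-1}({\rm  Ind}(G'))$ with values in $\mathcal{M}$. Commutativity of (\ref{diag-251111-a1}) follows from a direct unravelling identical to the computation $\pi \circ \overrightarrow{\rm Ind}(f) = {\rm Ind}(f) \circ \pi$ in Proposition~\ref{pr-251107-1}, now carried out in the join.

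For the backward implication ($\Longleftarrow$), I would extract the map $g : V \sqcup V' \to S$ from the vertex-level values of ${\rm Ind}(g)$ (or equivalently $\overrightarrow{\rm Ind}(g)$). Given any $k$ distinct mutually non-adjacent vertices $v_1, \ldots, v_k \in V$ in $G$ and any $k'$ distinct mutually non-adjacent vertices $v'_1, \ldots, v'_{k'} \in V'$ in $G'$, the set $\{v_1, \ldots, v_k\}$ is a simplex of ${\rm sk}^{k-1}({\rm  Ind}(G))$ and $\{v'_1, \ldots, v'_{k'}\}$ is a simplex of ${\rm sk}^{k'-1}({\rm  Ind}(G'))$, so their join lies in ${\rm sk}^{k-1}({\rm  Ind}(G))  *  {\rm sk}^{k'-1}({\rm  Ind}(G'))$. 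The simplicial map ${\rm Ind}(g)$ sends this to a simplex of $\mathcal{M}$, i.e. to an independent set of vectors in $\mathbb{F}^M$. This is exactly the $(G, k; G', k')$-regularity of $g$.

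The only genuinely new input compared with Proposition~\ref{pr-251107-1} is the identification of the join of skeletons with the correct domain: one must observe that ${\rm sk}^{k-1}({\rm  Ind}(G))  *  {\rm sk}^{k'-1}({\rm  Ind}(G'))$ is precisely the hypergraph (or simplicial complex) of pairs $(\sigma, \sigma')$ with $|\sigma| \leq k$, $|\sigma'| \leq k'$, and this matches the defining condition of $(G, k; G', k')$-regularity. Since there are no edges between $V$ and $V'$ in $G \sqcup G'$, the independence of the combined set reduces to independence of $\sigma$ in $G$ together with independence of $\sigma'$ in $G'$; no further compatibility is required. I expect the main (though modest) obstacle to be bookkeeping: carefully matching the bound $l \leq k$, $l' \leq k'$ on each side of the join with the skeletal truncation, and verifying that the $\Sigma_\bullet$-equivariance passes to the quotient so that the vertical projections $\pi$ in (\ref{diag-251111-a1}) commute with the induced simplicial maps, exactly as in the proof of Proposition~\ref{pr-251107-1}.
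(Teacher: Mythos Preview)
Your proposal is correct and follows essentially the same approach as the paper: both argue by direct analogy with Proposition~\ref{pr-251107-1}, identifying that $(v_1,\ldots,v_l,v'_1,\ldots,v'_{l'})$ lies in ${\rm sk}^{k-1}(\overrightarrow{\rm Ind}(G))*{\rm sk}^{k'-1}(\overrightarrow{\rm Ind}(G'))$ precisely when the $v_i$ are mutually non-adjacent in $G$ and the $v'_j$ are mutually non-adjacent in $G'$, and then reading off the equivalence between $(G,k;G',k')$-regularity and the existence of the (directed) simplicial maps in diagram~(\ref{diag-251111-a1}). Your write-up is in fact more detailed than the paper's, which compresses both implications into a single chain of ``if and only if'' statements.
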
  

\begin{proof}
The  proof  is  an  analog  of  Proposition~\ref{pr-251107-1}.  
For  any  $k$-distinct  vertices  $v_1,\ldots,v_k\in  V$   and  
any    $k'$-distinct  vertices  $v'_1,\ldots,v'_{k'}\in  V'$,  
  $v_1,\ldots,v_k$  are  non-adjacent  in  $G$   if  and  only  if  
    $(v_1,\ldots,v_k)\in  {\rm  sk}^{k-1}(\overrightarrow    {\rm  Ind}(G)) $
    if  and  only   if $\{v_1,\ldots,v_k\}\in  {\rm  sk}^{k-1}(  {\rm  Ind}(G)) $, 
 and  
 $v'_1,\ldots,v'_{k'}$  are  non-adjacent  in  $G'$   if  and  only  if  
    $(v'_1,\ldots,v'_{k'})\in  {\rm  sk}^{k'-1}(\overrightarrow    {\rm  Ind}(G')) $
     if  and  only  if  
    $\{v'_1,\ldots,v'_{k'}\}\in  {\rm  sk}^{k'-1}(     {\rm  Ind}(G')) $. 
    Hence 
    $v_1,\ldots,v_k$  are  non-adjacent  in  $G$  and  
     $v'_1,\ldots,v'_{k'}$  are  non-adjacent  in  $G'$  
     if  and  only  if 
      $(v_1,\ldots,v_k,v'_1,\ldots,v'_{k'})\in 
  {\rm  sk}^{k-1}(\overrightarrow   {\rm  Ind}(G))* {\rm  sk}^{k'-1}(\overrightarrow    {\rm  Ind}(G'))  $
   if  and  only  if 
      $\{v_1,\ldots,v_k,v'_1,\ldots,v'_{k'}\}\in 
  {\rm  sk}^{k-1}(   {\rm  Ind}(G))* {\rm  sk}^{k'-1}(    {\rm  Ind}(G'))  $. 
  Therefore,  
  the  existence  of $(G, k; G', k')$-regular  maps  $g$  can  be  equivalently  characterized  
  by  the  existence  of  the directed simplicial  embeddings   $\overrightarrow {{\rm  Ind}}(g)$
  or  the  existence  of    the  simplicial  embeddings  ${\rm  Ind}(g)$  in  (\ref{diag-251111-a1}).  
\end{proof}

Let  $f:  G\longrightarrow  \mathbb{F}^N$  be  a  $k$-regular  map.  
Let  $f':  G'\longrightarrow  \mathbb{F}^{N'}$  be  a  $k'$-regular  map.  
 Then  we  have  an  induced    $(G, k; G', k')$-regular  map 
 \begin{eqnarray*}
 (f, f'):  G  \sqcup   G'\longrightarrow  \mathbb{F}^{N+N'}   
 \end{eqnarray*}
sending  any  $v\in  V$  to  $(f(v),0)\in  \mathbb{F}^{N}\times \mathbb{F}^{N'}$  and  sending  any  $v'\in  V'$  to  $(0,f'(v'))\in  \mathbb{F}^{N}\times \mathbb{F}^{N'}$.  
Moreover,  if  $S$  is  a  finite  subset  of  $\mathbb{F}^N$  and  
$S'$  is  a  finite  subset  of  $\mathbb{F}^{N'}$  such  that  
$f(V)\subseteq  S$  and  $f'(V')\subseteq  S'$,  
then  $(f,f')(V\sqcup  V')\subseteq  (S,0) \sqcup  (0,S')$.  

\begin{theorem}
\label{th-251111-ku}
For  any  $k$-regular  map   
 $f: G\longrightarrow  \mathbb{F}^N$ 
 and  any  $k'$-regular map    
 $f': G'\longrightarrow  \mathbb{F}^{N'}$
 such  that  $f(V)\subseteq  S$  and  $ f'(V')\subseteq  S'$,
 we  have    a   commutative  diagram  (\ref{diag-251230-7})  of  short  exact  sequences  
 where  the  horizontal  maps  are  induced  by  $f$, $f'$  and  $(f,f')$.  
  Moreover,  all  these   homomorphisms  of  homology  groups  are  
 functorial  in the  sense  of  
   Theorem~\ref{th-251107-1}.  
\end{theorem}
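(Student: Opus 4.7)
The plan is to assemble the diagram (\ref{diag-251230-7}) by combining three previously established ingredients: (i) the equivalent simplicial characterization of regularity expressed in Lemma~\ref{pr-251111-9} for the disjoint union $G\sqcup G'$, (ii) the K\"unneth-type morphism from the hyperdigraph K\"unneth sequence to the hypergraph K\"unneth sequence for skeletons of independence complexes supplied by Corollary~\ref{pr-251112-ku}, and (iii) the corresponding K\"unneth-type morphism for the vectorial directed matroid and its underlying matroid supplied by Theorem~\ref{co-10-27-kf-1}. Then I will use the naturality of the K\"unneth sequence (Proposition~\ref{pr-10-12-kf-1}) applied to a single directed simplicial map induced by $(f,f')$ to furnish the horizontal morphisms between (i)+(ii) and (iii).

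First, I would form the induced $(G,k;G',k')$-regular map $(f,f')\colon G\sqcup G'\longrightarrow \mathbb{F}^{N+N'}$, whose image lies in $(S,0)\sqcup (0,S')$. Let $\vec{\mathcal{M}}_{\oplus}$ and $\mathcal{M}_{\oplus}$ denote the vectorial (directed) matroids on $(S,0)\sqcup(0,S')$. Since $(S,0)$ and $(0,S')$ sit in complementary coordinate subspaces, there are natural identifications $\vec{\mathcal{M}}_{\oplus}\cong \vec{\mathcal{M}}*\vec{\mathcal{M}}'$ and $\mathcal{M}_{\oplus}\cong \mathcal{M}*\mathcal{M}'$ compatible with the canonical projections $\pi$. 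By Lemma~\ref{pr-251111-9}, the map $(f,f')$ induces a commutative square of directed simplicial maps
\begin{eqnarray*}
\xymatrix{
{\rm sk}^{k-1}(\overrightarrow{\rm Ind}(G))*{\rm sk}^{k'-1}(\overrightarrow{\rm Ind}(G'))
\ar[rr]^-{\overrightarrow{\rm Ind}(f,f')}\ar[d]_-{\pi}
&& \vec{\mathcal{M}}*\vec{\mathcal{M}}'\ar[d]^-{\pi}\\
{\rm sk}^{k-1}({\rm Ind}(G))*{\rm sk}^{k'-1}({\rm Ind}(G'))
\ar[rr]^-{{\rm Ind}(f,f')}
&& \mathcal{M}*\mathcal{M}',
}
\end{eqnarray*}
which itself is obtained as the join of the two squares for $f$ and $f'$ given by Proposition~\ref{pr-251107-1}.

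Next I would apply Proposition~\ref{pr-10-12-kf-1} (the naturality of the K\"unneth-type diagram with respect to directed simplicial maps and induced simplicial maps between underlying simplicial complexes) to the above square. This produces two commutative cubes: the front and back faces are the K\"unneth short exact sequences for $\overrightarrow{\rm Ind}\,*\,\overrightarrow{\rm Ind}$ and for $\vec{\mathcal{M}}*\vec{\mathcal{M}}'$ (equivalently ${\bf KU}(\overrightarrow{\rm Ind}(G),\overrightarrow{\rm Ind}(G'))$ restricted to the $(k-1)$- and $(k'-1)$-skeletons, and ${\bf KU}(\vec{\mathcal{M}},\vec{\mathcal{M}}')$ respectively); the vertical arrows are the $\pi_*$ morphisms of Corollary~\ref{pr-251112-ku} and (\ref{eq-251121-mor5}); and the horizontal arrows are the $f_*$, $f'_*$, $(f,f')_*$ coming from the above square. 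Collapsing these two cubes into the single square (\ref{diag-251230-7}) gives the desired commutative diagram of K\"unneth-type short exact sequences; commutativity of every face is automatic because each face is either the naturality square from Proposition~\ref{pr-10-12-kf-1} or an instance of the $\pi_*$ morphism.

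For functoriality with respect to filtrations $\{V_t\}$ of $V$ and $\{V'_t\}$ of $V'$, I would invoke Theorem~\ref{th-251107-1} applied simultaneously to $G$ and $G'$: the filtrations induce filtrations of $\overrightarrow{\rm Ind}(G)$ and $\overrightarrow{\rm Ind}(G')$ (Lemma~\ref{le-251103-9}), whose joins give a filtration of the directed independence complex of $G\sqcup G'$. Then the functoriality of Proposition~\ref{pr-10-12-kf-1} together with the functoriality of Theorem~\ref{co-10-27-kf-1} with respect to the induced directed simplicial maps yields the required naturality of (\ref{diag-251230-7}). The main obstacle I expect is purely bookkeeping: verifying that the horizontal morphism $(f,f')_*$ in the middle column of the top and bottom K\"unneth sequences is indeed the one induced by the join of the simplicial maps ${\rm Ind}(f)$ and ${\rm Ind}(f')$ (and its directed analogue), rather than an a priori different morphism coming from some other choice of simplicial embedding. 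This is handled by tracing through the chain-level identification $C_\bullet(\vec{\mathcal{K}};R)\otimes C_\bullet(\vec{\mathcal{K}}';R)\cong C_{\bullet+1}(\vec{\mathcal{K}}*\vec{\mathcal{K}}';R)$ used in (\ref{diag-10-12-3}), which is preserved under the joined map $\overrightarrow{\rm Ind}(f)*\overrightarrow{\rm Ind}(f')=\overrightarrow{\rm Ind}(f,f')$ by construction.
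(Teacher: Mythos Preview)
Your proposal is correct and follows essentially the same route as the paper's proof: the paper simply cites Corollary~\ref{pr-251112-ku}, Theorem~\ref{co-10-27-kf-1}, and Theorem~\ref{th-251107-1} to produce the vertical and horizontal morphisms in (\ref{diag-251230-7}) and deduce commutativity and functoriality. Your version spells out the intermediate steps more carefully---in particular the identification $\vec{\mathcal{M}}_{\oplus}\cong\vec{\mathcal{M}}*\vec{\mathcal{M}}'$ via complementary coordinate subspaces, the explicit use of Lemma~\ref{pr-251111-9} to obtain the joined simplicial square, and the appeal to the naturality clause of Proposition~\ref{pr-10-12-kf-1}---but these are exactly the details that the paper's terse proof is implicitly relying on.
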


\begin{proof} 
  By 
Corollary~\ref{pr-251112-ku},   
 Theorem~\ref{co-10-27-kf-1}  and   
  Theorem~\ref{th-251107-1},  
 the  homomorphisms  of  homology  groups   in  (\ref{diag-251107-b1})  
 induce   the   homomorphisms  of  
 the  short  exact  sequences  of   
 homology  groups  in  (\ref{diag-251230-7}).  
 The  commutativity  and  the  functoriality  of   the  diagram  (\ref{diag-251107-b1})   imply  
 the  commutativity  and  the  functoriality   of  the  diagram   (\ref{diag-251230-7}) 
 respectively.  
\end{proof}

\begin{theorem}
\label{th-251111-ku-hg}
Let     
 $f: G\longrightarrow  \mathbb{F}^N$ 
 be       $k$-regular  
  and  let  
 $f': G'\longrightarrow  \mathbb{F}^{N'}$
 be    $k'$-regular  
     such  that  $f(V)\subseteq  S$  and  $ f'(V')\subseteq  S'$.
     Then  
     for  any sub-hyperdigraph  $\vec{\mathcal{H}}$
  of  ${\rm  sk}^{k-1}(\overrightarrow{\rm  Ind}(G))$  and  
any sub-hyperdigraph  $\vec{\mathcal{H}}'$
  of  ${\rm  sk}^{k-1}(\overrightarrow{\rm  Ind}(G'))$
with  their  underlying  hypergraphs  
$\mathcal{H}$  and  $\mathcal{H}'$  respectively,     
 we  have    a   commutative  diagram 
 (\ref{diag-251230-6})   of  short  exact  sequences  
 where  the  horizontal  maps  are  induced  by  $f$, $f'$  and  $(f,f')$.  
  Moreover,  all  these   homomorphisms  of  homology  groups  are  
 functorial  in the  sense  of  
   Theorem~\ref{th-251107-1}.  
\end{theorem}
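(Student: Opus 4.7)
The plan is to mirror the proof of Theorem~\ref{th-251111-ku}, with the skeleton ${\rm sk}^{k-1}(\overrightarrow{\rm Ind}(G))$ replaced by the arbitrary sub-hyperdigraph $\vec{\mathcal{H}}$ and similarly for $\vec{\mathcal{H}}'$. Three ingredients suffice: Proposition~\ref{pr-251107-1}, which encodes the regularity of $f$ (resp.\ $f'$) as a commutative square (\ref{diag-251121-h1}) of hyperdigraph morphisms; Theorem~\ref{co-251127-ku1}, which supplies the Künneth-type commutative diagram of short exact sequences $\pi_*\colon {\bf\rm KU}(\vec{\mathcal{H}},\vec{\mathcal{H}}')\to {\bf\rm KU}(\mathcal{H},\mathcal{H}')$; and Theorem~\ref{co-251127-mat-ku-a1}, which supplies its matroid counterpart $\pi_*\colon {\bf\rm KU}(\vec{\mathcal{M}},\vec{\mathcal{M}}')\to {\bf\rm KU}(\mathcal{M},\mathcal{M}')$. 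The diagram (\ref{diag-251230-6}) will be assembled by wiring these two Künneth diagrams together via the horizontal morphisms induced by $f$, $f'$, and $(f,f')$.

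Concretely, I would first restrict the square (\ref{diag-251121-h1}) for $f$ (resp.\ $f'$) to $\vec{\mathcal{H}}$ (resp.\ $\vec{\mathcal{H}}'$), obtaining hyperdigraph morphisms $\vec{\mathcal{H}}\to\vec{\mathcal{M}}$ and $\mathcal{H}\to\mathcal{M}$ compatible with $\pi$, and analogously with primes. Because $V$ and $V'$ are disjoint, the join construction (\ref{eq-25-10-91})--(\ref{eq-25-10-92}) then produces a single commutative square of hyperdigraph morphisms relating $(\vec{\mathcal{H}}*\vec{\mathcal{H}}',\mathcal{H}*\mathcal{H}')$ to $(\vec{\mathcal{M}}*\vec{\mathcal{M}}',\mathcal{M}*\mathcal{M}')$, in which the horizontal maps are precisely those induced by $(f,f')$. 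Under the tensor-product identifications (\ref{eq-251120-k1}) and (\ref{eq-251120-k2}) of the infimum chain complexes of the joins, the induced chain maps decompose as tensor products $(\overrightarrow{\rm Ind}(f)|_{\vec{\mathcal{H}}})_\#\otimes(\overrightarrow{\rm Ind}(f')|_{\vec{\mathcal{H}}'})_\#$, and similarly on underlying hypergraphs; naturality of the algebraic Künneth theorem \cite[Theorem~3B.5]{hatcher} then promotes these chain maps to morphisms of the short exact sequences in (\ref{diag-251230-6}), and the commutativity of (\ref{diag-251121-h1}) for $f$ and $f'$ forces the whole diagram to commute.

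The most delicate step will be checking that the chain-level tensor-product identification restricts cleanly from the full join ${\rm sk}^{k-1}(\overrightarrow{\rm Ind}(G))*{\rm sk}^{k'-1}(\overrightarrow{\rm Ind}(G'))$ down to $\vec{\mathcal{H}}*\vec{\mathcal{H}}'$, i.e.\ that the infimum and supremum chain complexes of the join factor through the tensor products of those of $\vec{\mathcal{H}}$ and $\vec{\mathcal{H}}'$ in a manner compatible with the image hyperdigraphs in $\vec{\mathcal{M}}$ and $\vec{\mathcal{M}}'$. This is precisely the content built into the proofs of Theorem~\ref{co-251127-ku1} and Theorem~\ref{co-251127-mat-ku-a1}, so no new obstruction arises; the verification reduces to the cube made of these two Künneth short-exact-sequence squares and the four commutative squares arising from the regular maps, which collapses by pure diagram chase to (\ref{diag-251230-6}). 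Functoriality in the sense of Theorem~\ref{th-251107-1} is inherited automatically, since each of the three constituent theorems is functorial with respect to filtrations of the vertices of $G$ and $G'$ and the join construction preserves inclusions of such filtrations, so the proof amounts to packaging the existing homological machinery and requires no fresh computation.
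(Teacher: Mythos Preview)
Your proposal is correct and follows essentially the same route as the paper's proof, which simply cites Theorem~\ref{th-251127-hku5}, Theorem~\ref{th-251107-1}, and Theorem~\ref{th-251111-ku}. You have merely unpacked these references: your Theorems~\ref{co-251127-ku1} and~\ref{co-251127-mat-ku-a1} are the specializations of Theorem~\ref{th-251127-hku5} to the independence-complex and matroid sides respectively, and your use of Proposition~\ref{pr-251107-1} is the chain-level input from which Theorem~\ref{th-251107-1} is derived; the gluing via $(f,f')$ and the naturality of the algebraic K\"unneth sequence are exactly what the paper leaves implicit.
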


\begin{proof} 
The  proof  follows  from  Theorem~\ref{th-251127-hku5},   Theorem~\ref{th-251107-1}  and  
Theorem~\ref{th-251111-ku}.  
\end{proof}

    \bigskip

Shiquan Ren

Address:
School  of  Mathematics and Statistics,  Henan University,  Kaifeng   475004,  China.

e-mail:  renshiquan@henu.edu.cn

  \end{document}